\def\UM{{\mathbb{U}}}
\def\GM{{\mathbb{G}}}
\def\PM{{\mathbb{P}}}
\def\QM{{\mathbb{Q}}}
\def\FM{{\mathbb{F}}}
\def\ZM{{\mathbb{Z}}}
\def\AM{{\mathbb{A}}}
\def\sG{{\mathfrak s}}
\def\SG{{\mathfrak S}}
\def\AC{{\mathcal A}}
\def\SC{{\mathcal S}}
\def\CC{{\mathcal C}}
\def\HC{{\mathcal H}}
\def\NC{{\mathcal N}}
\def\RC{{\mathcal R}}
\def\OC{{\mathcal O}}
\def\MC{{\mathcal M}}
\def\PC{{\mathcal P}}
\def\LC{{\mathcal L}}
\def\EC{{\mathcal E}}
\def\VC{{\mathcal V}}
\def\BC{{\mathcal B}}
\def\YC{{\mathcal Y}}
\def\simto{\buildrel\hbox{\tiny{$\sim$}}\over\longrightarrow}
\def\leq{\leqslant}
\def\geq{\geqslant}
\def\injo{\hookrightarrow}
\def\id{\mathop{\mathrm{Id}}\nolimits}
\def\ba{\backslash}
\def\wt{\widetilde}
\def\wh{\widehat}
\def\o#1{\overline{#1}}
\def\application#1#2#3#4#5{\begin{array}{rcl}
                            #1 \;\;\; #2 & \to &  #3 \\
                              #4 & \mapsto & #5 
                            \end{array}}
\def\To#1{\buildrel\hbox{\tiny{$#1$}}\over\longrightarrow}
\def\to{\rightarrow}
\def\ker{\mathop{\hbox{\sl ker}\,}}
\def\coker{\mathop{\hbox{\sl coker}\,}}
\def\im{\mathop{\hbox{\sl im}\,}}
\def\Hom{\mathop{\hbox{\rm Hom}}\nolimits}
\def\Aut{\mathop{\hbox{\rm Aut}}\nolimits}
\def\Rep{{\rm {R}ep}}
\def\Irr#1#2{\mathop{{\rm Irr}_{#1}\left(#2\right)}}
\def\ind#1#2#3{\hbox {\rm Ind}_{#1}^{#2}\>\!\left(#3\right)}  
\def\cind#1#2#3{\hbox {\rm ind}_{#1}^{#2}\>\!\left(#3\right)} 
\def\ip#1#2#3{\hbox {\sl i}_{#1}^{#2}\>\!(#3)}  
\def\Ip#1#2{\hbox {\sl i}_{#1}^{#2}}
\def\dim{\mathop{\mbox{\rm dim}}\nolimits}
\def\dim{{\rm dim}}
\def\colim{{\rm colim}}
\def\ini{\setcounter{equation}{\value{subsubsection}}\addtocounter{subsubsection}{1}}
\renewcommand{\subsubsection}{\@startsection{subsubsection}{3}{\parindent}{-\baselineskip}{-0.01\baselineskip}{\bf}}
\renewcommand*{\@seccntformat}[1]{%
  \csname the#1\endcsname\
}
\def\alin#1{\setcounter{equation}{0}\subsubsection{\it  #1}. --- }
\newtheoremstyle{th}
  {\baselineskip}{.5\baselineskip}{\itshape}
  {\parindent}{\bf}
  { ---}{.5em}{}
\newtheoremstyle{def}
  {\baselineskip}{\baselineskip}{}
  {\parindent}{\bf}
  {--}{.5em}{}
\newtheoremstyle{th*}
  {.5\baselineskip}{.5\baselineskip}{\itshape}
  {\parindent}{\bf}
  { ---}{.5em}{}
\newtheoremstyle{remark*}
  {.5\baselineskip}{.5\baselineskip}{}
  {\parindent}{\bf}
  {--}{.5em}{}
\newtheoremstyle{remark}
  {.5\baselineskip}{.5\baselineskip}{}
  {\parindent}{\bf}
  { ---}{.5em}{}
\theoremstyle{th}
\newtheorem{theo}[subsubsection]{\it Theorem.\bf}
\newtheorem{lemme}[subsubsection]{\it Lemma.\bf}
\newtheorem{prop}[subsubsection]{\it Proposition.\bf}
\newtheorem{coro}[subsubsection]{\it Corollary.\bf}
\newtheorem{DEf}[subsubsection]{\it D{e}finition.\bf}
\newtheorem{fact}[subsubsection]{\it Fact\bf}
\theoremstyle{def}
\theoremstyle{remark}
\newtheorem{exam}[subsubsection]{\it Example.\bf}   
\theoremstyle{th*}
\newtheorem*{thm}{\it Theorem.}
\newtheorem*{lem}{\it Lemma.}
\newtheorem*{pro}{\it Proposition.}
\newtheorem*{cor}{\it Corollary.}
\newtheorem*{con}{\it Conjecture.}
\newtheorem*{defn}{\it Definition.}
\newtheorem*{fac}{\it Fact.}
\theoremstyle{remark*}
\newcommand{\findem}{\hfill$\Box$\par\medskip}
\newcommand{\dem}{\indent {\it Preuve :} \rm }
\title{Equivalences of tame blocks for $p$-adic linear groups}
\author{Jean-Fran\c{c}ois Dat\footnote{The author thanks the Institut
    Universitaire de France and the  ANR project  ANR-14-CE25-0002 PerCoLaTor for their
  support.}}
\date{March 2016}
\begin{document}
\maketitle
\bibliographystyle{plain}

\def\la{\langle}
\def\ra{\rangle}
\def\knr{{\wh{K^{nr}}}}
\def\ka{\wh{K^{ca}}}

\def\dd{D_d^\times}
\def\mdro{\MC_{Dr,0}}
\def\mdrn{\MC_{Dr,n}}
\def\mdr{\MC_{Dr}}
\def\mlto{\MC_{LT,0}}
\def\mltn{\MC_{{\rm LT},n}}
\def\mltno{\MC_{{\rm LT},n}^{(0)}}
\def\mlt{\MC_{\rm LT}}
\def\mltK{\MC_{LT,K}}
\def\LJ{{\rm LJ}}
\def\JL{{\rm JL}}
\def\SL{{\rm SL}}
\def\GL{{\rm GL}}

\def\Ql{\QM_{\ell}}
\def\Zl{\ZM_{\ell}}
\def\Fl{\FM_{\ell}}
\def\oQl{\o\QM_{\ell}}
\def\oZl{\o\ZM_{\ell}}
\def\bZl{\o\ZM_{\ell}}
\def\bQl{\o\QM_{\ell}}
\def\oFl{\o\FM_{\ell}}
\def\mltnc{\wh\MC_{{\tiny{\rm LT}},n}}
\def\mltnco{\wh\MC_{{\tiny{\rm LT}},n}^{(0)}}

\def\sp{{\rm sp}}
\def\Ens{{\SC ets}}
\def\Coef{{\rm Coef}}

\begin{abstract}
 Let $p$ and $\ell$ be two distinct primes, $F$ a $p$-adic field and
 $n$ an integer. We show that any level $0$ block of the category of
 smooth $\oZl$-valued representations of $ \GL_{n}(F)$ is equivalent
 to the unipotent block of an appropriate product of
 $\GL_{n_{i}}(F_{i})$. 
To this end, we first show that the level $0$ category of $\GL_{n}(F)$ is equivalent to a
category of ``modules'' over a certain $\oZl$-algebra ``with many
objects'' whose definition only involves $n$ and the residue field
of $F$. Then we use fine properties of  Deligne-Lusztig cohomology to
split this algebra and produce suitable Morita equivalences.
\end{abstract}

\tableofcontents
\section{Introduction}

Let $F$ be a $p$-adic field and let $R$ be a commutative ring in which $p$ is invertible. 
For $\mathbf{G}$ a reductive group
over $F$, we put $G:=\mathbf{G}(F)$ and  we denote by
$\Rep_{R}(G)$ the abelian category of smooth 
 representations of $G$ with coefficients in $R$, and by $\Irr{R}{G}$ the set of
 isomorphism classes of simple objects in $\Rep_{R}(G)$. 
In this paper we focus on the following situation :
\begin{itemize}
\item $\mathbf{G}$ is of $\GL$-type, by which we mean that
$\mathbf{G}$ is  a product of restrictions of scalars of general linear  groups. 
\item $R=\oZl$, for $\ell$ a prime number coprime to $p$, occasionnally
  $R=\oFl$ or $\oQl$.
\end{itemize}

\subsection{Main results}

\alin{The block decomposition} \label{mainv1}
For $\mathbf{G}=\GL_{n}$, Vign\'eras \cite{VigInduced} has obtained a
decomposition of the category $\Rep_{\oFl}(G)$ as a product of blocks, indexed by the
so-called ``inertial classes of supercuspidal pairs'' $[M,\pi]$ with $M$ a Levi subgroup
and $\pi\in\Irr\oFl M$  supercuspidal.
This decomposition  was further lifted to a decomposition of $\Rep_{\oZl}(G)$ by
Helm in \cite{HelmBernstein}. A simple object of $\Rep_{\oZl}(G)$ belongs to
$\Rep_{[M,\pi]}(G)$ if and only if it has ``mod $\ell$ inertial supercuspidal support''
equal to  $[M,\pi]$, in the sense of \cite[Def. 4.10]{HelmBernstein}.

At the moment,  Vigneras blocks and Helm blocks are not well understood.
Actually even the structure of the \emph{principal} block of
$\Rep_{\oZl}(G)$ (the one that contains the trivial representation, and corresponds to the
supercuspidal pair $[T,1]$ with $T$ a maximal torus) is
still a mystery.

However, it is expected that any Vigneras or Helm block is  equivalent to the principal block
of another group $\mathbf{G'}$ of $\GL$-type.
One consequence of this paper is the following result in this direction.

\begin{thm}
If $\mathbf{G}$ is of $\GL$-type,  any level $0$ block of $\Rep_{\oZl}(G)$ is equivalent to the
principal block of $\Rep_{\oZl}(G')$ for an appropriate $\mathbf{G'}$ of $\GL$-type.
\end{thm}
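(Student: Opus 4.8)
The plan is to reduce the statement to a purely "local" question about level 0 representations, and then exploit the combinatorial structure of level 0 blocks for $\GL$-type groups. First I would recall that the level 0 subcategory $\Rep_{\oZl}^0(G)$ decomposes along conjugacy classes of pairs $(\mathbf{L}, \sigma)$, where $\mathbf{L}$ runs over "standard Levi subgroups attached to the building" and $\sigma$ is a cuspidal representation of the finite reductive quotient; each block is a product over orbits. The key mechanism, announced in the abstract, is that $\Rep_{\oZl}^0(G)$ is equivalent to a category of modules over an explicit $\oZl$-algebra with many objects built from $n$ and the residue field $k_F$ of $F$ alone — in particular from a finite group of Lie type $\GL_n(k_F)$ and its parabolic induction data. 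So the first substantive step is to invoke this "finite-group model": a level 0 block of $\Rep_{\oZl}(G)$ is equivalent to a block of a category governed by Deligne--Lusztig theory for $\GL_n(\bar k_F)$-type data.

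The second step is to analyze that model using the fine properties of Deligne--Lusztig cohomology alluded to in the abstract. For $\GL$-type groups over finite fields, all cuspidal representations are "of Coxeter type" in a suitable sense, and the relevant $\oZl$-cohomology of Deligne--Lusztig varieties is concentrated and torsion-free enough to produce idempotents splitting the algebra with many objects into matrix pieces. Concretely, I would decompose the block according to the $\ell$-adic behaviour of the cuspidal datum $\sigma$: write its Brauer character support in terms of a cuspidal datum over $\oFl$, and group the occurring cuspidals by their "$\ell$-regular part". This is exactly the data that, on the side of $p$-adic groups, corresponds to a supercuspidal inertial class $[M,\pi]$; the resulting pieces are Morita equivalent to smaller algebras of the same shape, now attached to a \emph{twisted} product of general linear groups over extensions $F_i/F$ (the extensions encoding the unramified twisting, i.e. the cycle structure of the associated "Frobenius" action on the cuspidal support).

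The third step is bookkeeping: each such smaller piece is, by construction, the finite-group model attached to the \emph{unipotent} (equivalently principal, at level 0) datum of $\prod_i \GL_{n_i}(F_i)$, so reversing the equivalence of the first step identifies it with the principal block of $\Rep_{\oZl}(G')$ with $\mathbf{G}' = \prod_i \Res_{F_i/F}\GL_{n_i}$, which is again of $\GL$-type. Finally, for a general $\mathbf{G}$ of $\GL$-type one reduces to the $\GL_n$ case factor by factor, since $\Rep_{\oZl}$ of a product is the (completed) tensor product of the categories and level 0 blocks decompose accordingly.

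The main obstacle I expect is the second step: controlling the integral (i.e. $\oZl$, not just $\oQl$ or $\oFl$) structure of Deligne--Lusztig cohomology well enough to produce the splitting idempotents and the Morita equivalences. Over $\oQl$ everything is semisimple and the block decomposition is transparent; over $\oFl$ one has Bonnafé--Rouquier-type results; but gluing these into an honest equivalence of $\oZl$-linear categories requires knowing that the relevant cohomology groups are torsion-free in the degree where they are concentrated, and that the Hecke-algebra actions match up compatibly across the "many objects". Establishing this torsion-freeness and the compatibility of the endomorphism algebras — essentially, that the algebra with many objects really does split as predicted by the $\oQl$- and $\oFl$-pictures simultaneously — is where the real work lies; the rest is formal manipulation of Bernstein-style decompositions and restriction of scalars.
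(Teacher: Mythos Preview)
Your plan is essentially the paper's approach: pass to the Hecke ``algebra with many objects'' $e^{s}\oZl[\AM/\bar\GM]$ built from $(n,\FM,s)$ (this already gives the totally ramified base change equivalence, since only the residue field enters), and then construct a Morita equivalence with $e^{s'}\oZl[\AM'/\bar\GM']$ for a smaller $(n',\FM',s')$ via a bimodule made from Deligne--Lusztig cohomology, reducing to the principal datum $s'=1$.

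One correction on where the difficulty lies. You locate the main obstacle in the torsion-freeness and concentration of the $\oZl$-cohomology of the Deligne--Lusztig varieties. In fact that part is already Bonnaf\'e--Rouquier (Fact~\ref{Morita} in the paper): under the centralizer hypothesis, $R\Gamma_c(Y_{\mathbf P},\oZl)e_{t}^{L}$ is concentrated in top degree and the resulting bimodule is a Morita bimodule between the two group algebras. What is genuinely new --- and what the paper isolates as the heart of the matter (see \S\ref{subsecoutline}, paragraph~1.2.7) --- is the \emph{functoriality across the many objects}: one has, for each simplex $\sigma'$, a Morita bimodule $\PC^{0}_{\sigma'}$, but to assemble these into a single $(\RC,\RC')$-bimodule one must produce coherent transition isomorphisms $\phi_{\VC'_1}^{\VC'_2}$ relating $\Lambda P_2\otimes_{\Lambda P_1}\PC_{\iota,\VC'_1}$ and $\PC_{\iota,\VC'_2}e_{U'_1}$ (Proposition~\ref{prop_finite}, Lemma~\ref{keylemma}). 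This is not a torsion question at all; it requires the \emph{independence of the Deligne--Lusztig cohomology on the parabolic subgroup} under the centralizer hypothesis (Fact~\ref{change_par}), which is the input from \cite{cdvdl2}. Without that invariance, the individual Morita equivalences do not glue. So your second step is right in outline, but the ``compatibility across many objects'' is the whole point, and it is resolved by a geometric comparison of varieties $Y_{\mathbf Q}$, $Y_{\mathbf Q'}$, $Y_{\mathbf Q,\mathbf Q'}$ rather than by any splitting-idempotent or torsion argument.
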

Here, a block of $\Rep_{\oZl}(G)$ has level $0$  if any object in it
has non-zero invariants under the first congruence subgroup of some parahoric group in
$G$. In terms of supercuspidal pair $[M,\pi]$, this means that $\pi$ has level $0$ (or
``depth $0$'') in the usual sense.

In this result, the group $\mathbf{G'}$ may not be unique. This is already the case for 
$\oQl$-coefficients since the Iwahori-Hecke algebra of $\GL_{n}(F)$ only
depends on the residual field of $F$.
 However, we have explained in \cite{Datfuncto}  how a very natural $\mathbf{G'}$ stands out when we take the
Langlands-dual point of view.

\alin{Dual parametrization of blocks} \label{dual_param_blocks}
Let $W_{F}$ denote the absolute Weil group of $F$, and let $I_{F}$ be
its inertia subgroup. We will denote by $I_{F}^{\ell}$  the prime-to-$\ell$ radical of
$I_{F}$, \emph{i.e.} the kernel of the canonical map $I_{F}\To{}\ZM_{\ell}(1)$.
We have observed in \cite[\S 1.2.1]{Datfuncto} that
Vign\'eras' Langlands correspondence for supercuspidal $\oFl$-representations  of
$\GL_{n}(F)$  yields a bijection $[M,\pi] \leftrightarrow \phi$ between inertial classes of
supercuspidal pairs and isomorphism classes of  semi-simple continous representations 
$I_{F}^{\ell}\To{}\GL_{n}(\oQl)$ that can be extended to $W_{F}$.

More generally, let $^{L}\mathbf{G}=\hat\mathbf{G}\rtimes W_{F}$ denote ``the'' $L$-group
of a connected reductive group over $F$ and let $\Phi(\mathbf{G},\oQl)\subset
H^{1}(W'_{F},\hat\mathbf{G}(\oQl))$ denote the set
of admissible Weil-Deligne parameters for $\mathbf{G}$.
When $\mathbf{G}$  is of $\GL$-type, 
we again refer to  \cite[\S 1.2.1]{Datfuncto} for a bijection $[M,\pi]\leftrightarrow \phi$
 where now $\phi$ belongs to the set 
$$\Phi_{\ell-\rm inert}(\mathbf{G},\oQl)= \im( \Phi(\mathbf{G},\oQl) \To{\rm rest}
H^{1}(I_{F}^{\ell},\hat\mathbf{G}(\oQl))).$$
Therefore, to any $\phi\in \Phi_{\ell-\rm inert}(\mathbf{G},\oQl)$ is attached a block
$\Rep_{\phi}(G)$ of $\Rep_{\oZl}(G)$. The irreducible $\oQl$-representations $\pi$ in this block
are those whose Langlands parameter $\varphi_{\pi}$ satisfies $(\varphi_{\pi})_{|I_{F}^{\ell}}\sim\phi$.
In this parametrization, the \emph{principal block} corresponds to the \emph{trivial}
parameter $\phi : i\mapsto (1,i)$, 
while \emph{level $0$ blocks} correspond to 
\emph{tame} parameters $\phi$, in the sense that the restriction $\phi_{|P_{F}}$ to the wild inertia
subgroup $P_{F}$ is trivial.

\alin{The transfer conjecture} \label{transfer}
Having a dual parametrization suggests that some kind of  ``Langlands transfer'' principle
for blocks might exist. 
So, let us consider  a $L$-homomorphism $\xi:{^{L}\mathbf{G'}}\To{}{^{L}\mathbf{G}}$ of groups of
$\GL$-type,  fix an admissible parameter $\phi': I_{F}^{(\ell)}\To{}
{^{L}\mathbf{G'}}$ and put $\phi:=\xi\circ\phi'$. 
Let us denote by $C_{\hat\mathbf{G}}(\phi)$ the centralizer of the image of $\phi$.
We have formulated the following conjecture in \cite[\S 1.2.2]{Datfuncto}
\begin{con}
  Suppose that $\xi$ induces an isomorphism $C_{\hat \mathbf{G}'}(\phi')\simto C_{\hat
    \mathbf{G}}(\phi)$, and also that the projection of $\xi(W_{F})$ to $\hat G(\oQl)$ is bounded. 
Then there is an 
equivalence of categories $\Rep_{\phi'}(G')\simto \Rep_{\phi}(G)$ that interpolates the
Langlands transfer $\xi_{*}$ on irreducible $\oQl$-representations.
\end{con}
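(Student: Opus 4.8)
The plan is to reduce the conjecture to the Theorem above by collapsing \emph{both} blocks to principal blocks of smaller groups of $\GL$-type and then identifying those two groups by means of the centraliser hypothesis. Since the Theorem concerns level $0$ blocks, I only treat the case where $\phi$ — equivalently $\phi'$ — is tame, so that $\Rep_\phi(G)$ and $\Rep_{\phi'}(G')$ are level $0$ blocks; the general conjecture would require the positive-depth analogue of the Theorem, which one expects to obtain from the theory of types by reducing an arbitrary block of a $\GL$-type group to a depth-zero block of a smaller such group, and I do not pursue this here.

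\textbf{Reduction to principal blocks.} Applying the Theorem produces groups $\mathbf{H}_\phi=\prod_i\GL_{n_i}(F_i)$ and $\mathbf{H}'_{\phi'}=\prod_j\GL_{m_j}(E_j)$ of $\GL$-type (with $F_i,E_j$ unramified over $F$) together with equivalences $\Rep_\phi(G)\simto\Rep_1(\mathbf{H}_\phi)$ and $\Rep_{\phi'}(G')\simto\Rep_1(\mathbf{H}'_{\phi'})$ onto their principal blocks. The essential point is that these groups are not arbitrary: by the construction underlying the proof of the Theorem, $\mathbf{H}_\phi$ is the unramified group of $\GL$-type whose Langlands dual is the neutral component $C_{\wh{\mathbf{G}}}(\phi_{|I_F^\ell})^\circ$, equipped with the action of $W_F/I_F^\ell$ obtained from $\phi(\mathrm{Frob})$-conjugation; the hypothesis that $\xi(W_F)$ has bounded image in $\wh G(\oQl)$ is precisely what ensures that this datum is dual to a genuine reductive group over a product of $p$-adic fields and that the block occurring is its principal block. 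The same description applies to $\mathbf{H}'_{\phi'}$, with $\phi'$ replacing $\phi$.

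\textbf{Matching the two groups.} Since $\xi$ is an $L$-homomorphism and $\phi=\xi\circ\phi'$, it restricts to a $W_F$-equivariant homomorphism $C_{\wh{\mathbf{G'}}}(\phi')\To{}C_{\wh{\mathbf{G}}}(\phi)$, which is an isomorphism by hypothesis; tracking the $\phi(\mathrm{Frob})$-actions along $\xi$, this upgrades to an isomorphism between the $L$-groups of $\mathbf{H}'_{\phi'}$ and of $\mathbf{H}_\phi$. As a split group of $\GL$-type is determined by its $L$-group, we get $\mathbf{H}'_{\phi'}\simto\mathbf{H}_\phi=:\mathbf{H}$, and composing the three equivalences yields $\Rep_{\phi'}(G')\simto\Rep_1(\mathbf{H})\simto\Rep_\phi(G)$. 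This establishes the categorical part of the conjecture.

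\textbf{Interpolating the transfer --- the main obstacle.} It remains to check that on irreducible $\oQl$-objects this composite carries $\pi'$ to $\xi_*(\pi')$, and this is where the real difficulty lies. Over $\oQl$ each of the three blocks is Morita equivalent, through the level-zero/Iwahori theory, to a product of affine Hecke algebras, on which the Langlands correspondence is known explicitly and is compatible with the natural Hecke-algebra isomorphisms; so the claim reduces to showing that the chain of Hecke-algebra identifications underlying the Theorem corresponds, on the dual side, to the map that $\xi$ induces on the space of unramified parameters deforming $\phi$. Equivalently, one must show that the equivalence furnished by the Theorem is compatible with Bernstein centres in a way matching the deformation-space description of $\xi_*$. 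The obstruction is that the Theorem is proved by Deligne-Lusztig cohomology and Morita equivalences that do not manifestly record Langlands parameters, so one needs a rigidity input --- for instance, that an equivalence between principal blocks of $\GL$-type groups commuting with parabolic induction and with unramified twisting is unique up to unramified twist --- to pin the equivalence down to the transfer; granting this, the remaining verification takes place entirely over $\oQl$, and the $\oZl$-linear equivalence of the Theorem interpolates $\xi_*$ on its generic fibre.
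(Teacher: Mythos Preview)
The statement you are attempting to prove is a \emph{conjecture} in the paper, not a theorem: the paper does not contain a proof of it, so there is no ``paper's own proof'' to compare against. What the paper actually establishes is the weaker Theorem following \ref{main}, which gives the categorical equivalence when both $\xi$ and $\phi'$ are tame, and the Corollary after it, which is the special case $\xi:{}^{L}\mathbf{G}_{\phi}\hookrightarrow{}^{L}\mathbf{G}$. Crucially, the paper states explicitly (last paragraph of \S\ref{main}) that even in these cases the interpolation of the Langlands transfer $\xi_{*}$ on irreducible $\oQl$-representations is \emph{not} proved; it is expected to follow from a compatibility with parabolic induction that remains open.

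Your sketch therefore cannot be a proof of the conjecture, and you essentially acknowledge this in your final paragraph. Two further comments on the strategy itself. First, there is a circularity risk: the Theorem and Corollary you invoke (level $0$ block $\simeq$ principal block of $G_{\phi}$) are, in the paper's logic, \emph{consequences} of the two explicit constructions in \ref{main} i) and ii), which are themselves the established special cases of the conjecture; so using them to deduce the conjecture in the tame case only recovers what is already known, minus the interpolation. Second, the paper's route to those partial results is quite different from your reduction-to-principal-blocks outline: it builds explicit bimodules over ``Hecke algebras with many objects'' $e^{s}\oZl[\AM/\bar\GM]$ (see \S\ref{subsecoutline} and \S4), using Deligne--Lusztig cohomology and the invariance result of \cite{cdvdl2}, rather than appealing abstractly to an identification of $L$-groups. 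Your matching-of-$L$-groups argument is morally the content of the Corollary in \ref{main}, but it does not replace the hard work of constructing the equivalence, and it gives no new leverage on the missing interpolation property.
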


\alin{Main results}\label{main}
In this paper we 
construct an equivalence $\Rep_{\phi'}(G')\simto \Rep_{\phi}(G)$ in the following two cases.
  \begin{enumerate}
  \item $\phi'=1$ and $\xi$ is a base change $L$-morphism 
$^{L}\GL_{n}\To{} {^{L}{\rm  Res}_{F'|F}(\GL_{n})}$ with $F'$ totally ramified.
  \item $\phi'$ is tame and $\xi$ is an automorphic induction $L$-morphism 
${^{L}{\rm  Res}_{F'|F}(\GL_{n'})} \To{} {^{L}\GL_{n}}$, where $F'$ is unramified and $n=n'[F:F']$.
  \end{enumerate}

We have explained in \cite[\S 2.4]{Datfuncto} how these two particular cases imply the following
weak form of the above conjecture for tame parameters and tame $L$-morphisms.
\begin{thm} 
  Let $\xi$ be as in the conjecture, and suppose that both $\xi$ and $\phi'$ are
  tame\footnote{this means that the restrictions of $\xi$ and $\phi'$ to $P_{F}$ are $\hat\mathbf{G}$-conjugate to the map $\gamma\mapsto (1,\gamma)$}. 
Then there is  an equivalence of categories $\Rep_{\phi'}(G')\simto  \Rep_{\phi}(G)$. 
\end{thm}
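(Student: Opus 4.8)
This is established in \cite[\S 2.4]{Datfuncto} by reducing to the two cases constructed above; I only describe the shape of that reduction. Call (BC) the equivalence of case~(i) --- the principal block of $\GL_{n}(F)$ is equivalent to the principal block of $\GL_{n}(F')$ when $F'/F$ is totally ramified --- and (AI) the equivalence of case~(ii) --- automorphic induction along an unramified extension $F'/F$, starting from a tame parameter. To these one adjoins the ``formal'' equivalences: those coming from $L$-isomorphisms, from direct-sum decompositions of a $\GL$-type group into $\GL$-type Levi factors, and from twisting a block $\Rep_{\phi_{0}}(\GL_{m}(E))$ by a character $\psi\circ\det$, which identifies it with $\Rep_{\phi_{0}\otimes\psi}(\GL_{m}(E))$; in particular every isotypic tame block of $\GL_{m}(E)$ is equivalent to its principal block. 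The goal is to link $\Rep_{\phi'}(G')$ and $\Rep_{\phi}(G)$ by a finite zig-zag of such equivalences.

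\textbf{Step 1 (Factoring).} Since $\xi$ and $\phi'$ are tame, so is $\phi=\xi\circ\phi'$. One uses the classification of tame $L$-morphisms between $\GL$-type groups to write $\xi$, up to $\hat{\mathbf{G}}$-conjugacy, as a composition of elementary $L$-morphisms --- $L$-isomorphisms, product morphisms, totally ramified base changes, and unramified automorphic inductions --- and likewise to exhibit a $\GL$-type group $\mathbf{G}_{0}$ with trivial parameter together with a composition $\eta$ of such elementary morphisms realising $\eta_{*}(1)=\phi'$. Here one uses that a tame parameter is a direct sum of automorphic inductions of tame characters from unramified extensions, and that over the larger field each such tame character becomes unramified after a suitable totally (and tamely) ramified base change. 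The centralizer hypothesis $C_{\hat{\mathbf{G}}'}(\phi')\simto C_{\hat{\mathbf{G}}}(\phi)$ and the boundedness of the projection of $\xi(W_{F})$ to $\hat G(\oQl)$ are what control how these elementary morphisms fit together, and in particular match the residue-field data attached to the intermediate groups.

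\textbf{Step 2 (Concatenating).} One then runs along the resulting chains of elementary morphisms, starting from $(\mathbf{G}_{0},1)$, tracking the parameter at each stage --- it remains tame throughout --- and applying the corresponding equivalence of categories: (BC) at a totally ramified base change, (AI) at an unramified automorphic induction of the current tame parameter, and the formal equivalences at the remaining steps. The chains can be organised so that the source parameter is trivial whenever (BC) is invoked, which is exactly the hypothesis of case~(i). Composing all the equivalences obtained along the two chains from $(\mathbf{G}_{0},1)$ to $(\mathbf{G'},\phi')$ and to $(\mathbf{G},\phi)$ yields the desired equivalence $\Rep_{\phi'}(G')\simto\Rep_{\phi}(G)$.

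The main difficulty is not in either step taken individually but in the normal-form statement underlying Step~1: one must prove that \emph{every} tame $L$-morphism between $\GL$-type groups satisfying the centralizer and boundedness hypotheses decomposes, after conjugation and up to the formal moves, into totally ramified base changes (occurring while the parameter is still trivial) and unramified automorphic inductions of tame parameters, with all intermediate centralizers and residue fields compatible. This group-theoretic bookkeeping is where the work of the reduction is concentrated; the genuinely hard, Deligne--Lusztig-theoretic content has already been spent in proving the equivalences (BC) and (AI).
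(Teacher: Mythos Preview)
Your proposal is correct and matches the paper's own treatment: the paper does not prove this theorem directly but simply states that the two particular cases (i) and (ii) constructed here imply it via the reduction carried out in \cite[\S 2.4]{Datfuncto}. Your sketch of that reduction (factoring a tame $L$-morphism into elementary pieces and chaining the corresponding equivalences) is more detailed than what the paper itself provides, but the approach is the same.
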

This result, in turn, implies the following more precise version of Theorem \ref{mainv1}, for which
we refer again to \cite[\S 1.2.3]{Datfuncto}. For any $\phi\in\Phi_{\ell-\rm
  inert}(\mathbf{G},\oQl)$ there is a unique connected reductive group $\mathbf{G}_{\phi}$
of $\GL$-type over $F$, such that $\hat\mathbf{G}_{\phi}=C_{\hat\mathbf{G}}(\phi)$ and 
$^{L}\mathbf{G}_{\phi}$ is isomorphic to the normalizer $\NC_{^{L}\mathbf{G}}(\phi)$ of the image
of $\phi$ in $^{L}\mathbf{G}$. Moreover $\mathbf{G}_{\phi}$ is unramified if $\phi$ is tame.

\begin{cor}
  For any tame parameter $\phi$ of a group $\mathbf{G}$ of $\GL$-type, there is an
  equivalence of categories $\Rep_{1}(G_{\phi})\simto \Rep_{\phi}(G)$.
\end{cor}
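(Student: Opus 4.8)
The plan is to deduce the Corollary from Theorem 1.2.5 (the weak transfer conjecture for tame parameters and tame $L$-morphisms) by exhibiting, for a given tame $\phi$ on a group $\mathbf{G}$ of $\GL$-type, a suitable chain of tame $L$-morphisms connecting the pair $(\mathbf{G}_\phi,1)$ to the pair $(\mathbf{G},\phi)$. Concretely, I would first reduce to the case where $\mathbf{G}$ is a single restriction of scalars $\mathrm{Res}_{E|F}\GL_m$, since both sides of the asserted equivalence are compatible with products over the factors of $\mathbf{G}$ and with the block decomposition; a tame parameter of a product is a tuple of tame parameters of the factors, and $\mathbf{G}_\phi$ is built factorwise. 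For $\mathbf{G}=\mathrm{Res}_{E|F}\GL_m$, I would decompose the extension $E|F$ as $E/E^{nr}/F$ with $E^{nr}|F$ unramified of degree $f$ and $E|E^{nr}$ totally ramified; correspondingly, any tame parameter factors through the unramified part, so I can first strip off the totally ramified piece.

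The heart of the argument is then to interleave the two building blocks supplied by \S\ref{main}. First, because the totally ramified extension $E|E^{nr}$ contributes only a base-change $L$-morphism and any $\phi$ is (after conjugation) inflated from the unramified quotient, case (i) of \S\ref{main} — which treats $\phi'=1$ and $\xi$ a base change along a totally ramified extension — reduces the problem on $\mathrm{Res}_{E|F}\GL_m$ to the analogous problem on $\mathrm{Res}_{E^{nr}|F}\GL_m$, compatibly with the parameter. Next, on $\mathrm{Res}_{E^{nr}|F}\GL_m$ with $E^{nr}|F$ unramified, I would view the tame parameter $\phi$ through the automorphic induction $L$-morphism $\xi:\,{^L\mathrm{Res}_{E^{nr}|F}(\GL_m)}\To{}{^L\GL_{mf}}$ and use case (ii), but — crucially — applied not to $\mathbf{G}$ directly but to the pair $(\mathbf{G}_\phi,1)$ versus $(\mathbf{G},\phi)$. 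Here $\mathbf{G}_\phi$ is unramified (as recalled in \S\ref{main}, the last sentence of the paragraph before the Corollary), hence itself a product of $\mathrm{Res}$'s of $\GL$'s along unramified extensions, and $\hat{\mathbf{G}}_\phi = C_{\hat{\mathbf{G}}}(\phi)$. The composite $\xi\circ 1$ is a tame parameter of $\GL_{mf}$, and by \S\ref{dual_param_blocks} it corresponds to the same inertial class; so Theorem 1.2.5 applies to the pair $({^L\mathbf{G}_\phi},1)\To{}{^L\mathbf{G}}$ provided I check its two hypotheses.

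The main obstacle — and the only non-formal point — is verifying the centralizer condition $C_{\hat{\mathbf{G}}_\phi}(1)\simto C_{\hat{\mathbf{G}}}(\phi)$ together with the boundedness of the projection of $\xi(W_F)$ to $\hat G(\oQl)$, for the composite $L$-morphism I build. The centralizer identity is essentially the \emph{definition} of $\mathbf{G}_\phi$ via $\hat{\mathbf{G}}_\phi=C_{\hat{\mathbf{G}}}(\phi)$ and $^L\mathbf{G}_\phi\simeq\NC_{^L\mathbf{G}}(\phi)$ (recalled in \S\ref{main}): the $L$-morphism $\xi:\,{^L\mathbf{G}_\phi}\To{}{^L\mathbf{G}}$ is precisely the one induced by the inclusion $\NC_{^L\mathbf{G}}(\phi)\hookrightarrow {^L\mathbf{G}}$, and under it $C_{\hat{\mathbf{G}}_\phi}(1)=\hat{\mathbf{G}}_\phi=C_{\hat{\mathbf{G}}}(\phi)$, with the composite parameter $\xi\circ 1$ being $\phi$ up to $\hat{\mathbf{G}}$-conjugacy and inertial equivalence. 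Boundedness holds because, $\phi$ being tame, one can choose the extension data so that $\xi(W_F)$ has relatively compact image (this is exactly the normalization already used in \cite{Datfuncto}, \S 2.4, when deducing Theorem 1.2.5 from cases (i)–(ii)). Once these hypotheses are in place, Theorem 1.2.5 yields $\Rep_1(G_\phi)\simto\Rep_\phi(G)$ directly, and tracking through the reduction to products and to the tower $E/E^{nr}/F$ shows the equivalence is the expected one; the compatibility with the Langlands transfer on irreducible $\oQl$-representations is inherited from Theorem 1.2.5.
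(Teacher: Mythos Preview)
Your core idea is correct and matches what the paper does: the Corollary is deduced from the Theorem by applying it to the single $L$-morphism $\xi:\,{^{L}\mathbf{G}_{\phi}}\simeq\NC_{^{L}\mathbf{G}}(\phi)\hookrightarrow {^{L}\mathbf{G}}$ with $\phi'=1$. The paper does not spell this out here; it simply refers to \cite[\S 1.2.3]{Datfuncto} for the verification that this $\xi$ is tame, that $\xi\circ 1\sim\phi$, and that the centralizer and boundedness hypotheses of the Conjecture hold --- essentially what you check in your third paragraph.

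Where your proposal goes astray is the first half. The reduction to a single $\mathrm{Res}_{E|F}\GL_{m}$, the tower $E/E^{nr}/F$, and the invocation of cases (i) and (ii) are all superfluous for deducing the Corollary. Those two cases are the ingredients in the \emph{proof} of the Theorem (see \cite[\S 2.4]{Datfuncto}), not restrictions on its \emph{statement}: the Theorem applies to an arbitrary tame $L$-morphism between groups of $\GL$-type, so once you have the single $\xi$ above you may invoke it directly. Your second paragraph in effect rederives part of the Theorem rather than using it, which muddles the logic and makes the argument look harder than it is.

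Finally, your last sentence contains a genuine error. You assert that ``the compatibility with the Langlands transfer on irreducible $\oQl$-representations is inherited from Theorem 1.2.5.'' The paper explicitly disclaims this immediately after the Corollary: ``What the theorem and the corollary are missing at the moment is the compatibility with the transfer maps on irreducible $\oQl$-representations.'' The Theorem, as established in this paper, produces an equivalence of categories but does \emph{not} identify it with the Langlands transfer on irreducibles; that would require a further compatibility with parabolic induction that the author has not proved. You should drop that claim entirely.
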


What the theorem and the corollary are missing at the moment is the compatibility with the
transfer maps on irreducible $\oQl$-representations. This would follow from a natural and hoped-for
compatibility with parabolic induction of our explicit  constructions in cases i) and ii)
above, that we haven't proved yet.

\subsection{Outline of the construction} \label{subsecoutline}

Here we outline the construction of the desired equivalences in the two settings i) and
ii) of \ref{main}.
Note that the analog of i) for $\oQl$-coefficients boils down to the fact that the
  Iwahori-Hecke algebra only depends on the residue field of $F$. The analog of ii) for
  $\oQl$-coefficients is a little more complicated and uses the computation of Hecke
  algebras for level $0$ simple types (in the sense of Bushnell and Kutzko). 
These methods seem difficult to adapt to $\oZl$-coefficients at the moment,
 because there is no known connection between blocks and categories of modules over
  sufficiently explicit algebras.
Our method is therefore completely different. It relies on modular  Deligne-Lusztig
theory, that provides a crucial input from representation theory of finite linear groups, and on
coefficient systems on the building, that provide the necessary link between finite and
$p$-adic linear groups. Technically, we replace ``Hecke algebras'' by
``Hecke algebras with many objects'', see \ref{into_Hecke_alg}.

\alin{Tame inertial parameters of $\GL_{n}(F)$ and semisimple conjugacy
  classes in $\GL_{n}(\FM)$} \label{param-conjcl} 
Recall that the quotient  $I_{F}/P_{F}$ is procyclic of pro-order
prime to $p$. More precisely, let us fix an isomorphism $\iota:\,(\QM/\ZM)_{p'}\simto
\o\FM^{\times}$, where $\FM=\FM_{q}$ is the residue field of $F$. This lifts uniquely to an embedding
$\iota:\,(\QM/\ZM)_{p'}\injo \o F^{\times}$, and provides us with a generator
of $I_{F}/P_{F}=\ZM_{p'}(1)$, namely the system $\gamma:=(\iota(\frac 1 N))_{(N,p)=1}$.
Therefore, a continuous morphism $\phi:\,I_{F}/P_{F}\To{} \GL_{n}(\oQl)$ (with discrete topology on
target) is given by the image $s=\phi(\gamma)$, which has  finite and prime-to-$p$ order. Such a morphism extends to
$W_{F}$ if and only if $s^{q}$ is conjugate to $s$. Hence
a tame parameter $\phi\in\Phi_{\rm inert}(\GL_{n},\oQl)$ is given by a conjugacy class in
$\GL_{n}(\oQl)$ stable under $q^{th}$-power. Such a conjugacy class is determined by its
characteristic polynomial $P_{\phi}(X)\in\oQl[X]$, which has the form
$P_{\phi}(X)=\prod_{i=1}^{n}(X-\zeta_{i})=\prod_{i=1}^{n}(X-\zeta_{i}^{q})$ with $\zeta_{i}$ a
$p'$-root of unity. 

Let us now choose an embedding  $\jmath:\,(\QM/\ZM)_{p'}\injo
\oZl^{\times}$. Via $\iota\circ\jmath^{-1}$ we can transfer
$P_{\phi}(X)$ to  a polynomial in $\o\FM[X]$, and the functional equation
of $P_{\phi}(X)$ says that
 this polynomial is in fact in $\FM[X]$. Therefore it corresponds to a semisimple
conjugacy class $s$ in $\GL_{n}(\FM)$.  

In the same way, a tame $\ell$-inertia parameter
$I_{F}^{\ell}\To{}\GL_{n}(\oQl)$ corresponds to a semisimple conjugacy class
$s$ in $\GL_{n}(\FM)$ whose elements have prime-to-$\ell$ order. This fits into a diagram
$$\xymatrix{
\Phi_{\rm inert}(\GL_{n},\oQl) \ar[r]^-{\sim} \ar[d]_{\rm rest}
& 
\{\hbox{semi-simple conjugacy classes in }\GL_{n}(\FM)\} \ar[d]^{\ell'-\rm part}
\\
\Phi_{\ell-\rm inert}(\GL_{n},\oQl) \ar[r]^-{\sim}
&
\{\hbox{$\ell'$-semi-simple conjugacy classes in }\GL_{n}(\FM)\}.
}$$

\alin{Semisimple conjugacy classes and idempotents in  $\oZl\GL_{n}(\FM)$}
Green's classification of  irreducible representations of the finite
group $\GL_{n}(\FM)$ provides us with a
surjective map $\Irr{\oQl}{\GL_{n}(\FM)}\To{} \GL_{n}(\FM)^{\rm
  ss}_{/\rm conj}$, which a priori depends on our choices of $\iota$ and
$\jmath$. This map  can also be understood in terms of
 Deligne-Lusztig ``twisted'' induction, and its fibers are called
 ``(Lusztig) rational series'' (see section \ref{sec:finite-linear-groups} for a brief account). 
Let $s$ be a semisimple conjugacy class in $\GL_{n}(\FM)$ of order
prime to $\ell$.  Brou\'e and Michel have proved that the central idempotent 
$$e_{s}:=\sum_{(s')_{\ell'}=s}\sum_{\pi\mapsto s'}e_{\pi}$$ of
$\oQl\GL_{n}(\FM)$ actually lies in $\oZl\GL_{n}(\FM)$. Here $e_{\pi}\in\oQl\GL_{n}(\FM)$
is the central idempotent attached to the irreducible representation $\pi$.

\alin{$\Rep_{\phi}(G)$ as a category of coefficient systems} \label{intro_coeff}
Denote by $BT$ the reduced building of $\GL_{n}(F)$ with its simplicial structure. 
A vertex $x$ of $BT$ corresponds to an homothety class of $\OC_{F}$-lattices, so its compact
stabilizer comes with a reduction map $G_{x}^{0}\To{}\GL_{n}(\FM)$ well defined up to inner
automorphism of the target. 
Denote by $s$ the conjugacy class in $\GL_{n}(\FM)$ that corresponds to $\phi$ as in
\ref{param-conjcl}. 
 
Taking inverse image of $e_{s}$ by the reduction map, we get
a central idempotent $e_{x}^{\phi}$ in the smooth distribution algebra
$\HC_{\oZl}(G_{x}^{0})$.
More generally, the compact fixator $G_{\sigma}^{0}$ of a facet comes with a reduction map 
to a standard Levi subgroup of $\GL_{n}(\FM)$, and  we can again define a central idempotent $e_{\sigma}^{\phi}$ in $\HC_{\oZl}(G^{0}_{\sigma})$.

Consider the category ${\rm Coef}_{e^{\phi}}^{G}(BT)$ of all $G$-equivariant coefficient systems 
in $\oZl$-modules  $(\VC_{\sigma})_{\sigma\in BT_{\bullet}}$ on $BT$ such that for all $\tau\leq\sigma$, the structural map
$\VC_{\sigma}\To{}\VC_{\tau}$ is an isomorphism $\VC_{\sigma}\simto
e_{\sigma}^{\phi}\VC_{\tau}$. Using results of Meyer-Solleveld \cite{MS1} and Wang \cite{Wang3} we
prove in part \ref{sec:level-0-blocks} that the functor which takes $V\in\Rep_{\oZl}(G)$ to the
$G$-equivariant coefficient system 
 $(e_{\sigma}^{\phi}V)_{\sigma\in BT_{\bullet}}$ induces an equivalence of categories
$$ \Rep_{\phi}(G) \simto {\rm Coef}_{e^{\phi}}^{G}(BT).$$

\alin{$\Rep_{\phi}(G)$ as a category of modules over a Hecke ``algebra''} \label{into_Hecke_alg}
In \ref{abstract_appart} we introduce a category $[\AM/\bar\GM]$, that only depends on $n$ and on
the residual field $\FM$ of $F$. We start with the category $[\AM/W]$  given by the Coxeter complex
attached to the affine Weyl group $W$ of type $\wt A_{n-1}$ with its natural action by $W$.
Then we enrich it by plugging in, for each
simplex $\sigma$, a group of automorphisms $\bar\GM_{\sigma}$ isomorphic to a well-chosen product of finite $\GL_{n_{i}}(\FM)$.
Finally we consider the $\oZl$-span $\oZl[\AM/\bar\GM]$ of this ``enriched Coxeter
complex'', that we see as ``$\oZl$-algebra with many objects''.
The appendix
recollects some supposedly standard definitions regarding the notion of
modules on such gadgets. 

To any semisimple conjugacy class $s$ in $\GL_{n}(\FM)$, we also
attach an ``algebra with many objects'' $e_{s}\oZl[\AM/\bar\GM]$, and we show
 in \ref{hecke_algebra_s}  that
$\Rep_{\phi}(G)$ is naturally equivalent to the category of ``cartesian modules'' on this
``algebra'' for  $s=s(\phi)$. The proof of this  result takes the previous one about
coefficient systems on the building as a starting point.

The fact that the construction of $e_{s}\oZl[\AM/\bar\GM]$ only depends on $n,\FM$ and $s$
shows that we have in particular constructed an equivalence of categories as claimed in
\ref{main} i).

\alin{Automorphic induction setting}
Now, let $F'$,  $n'$ and $\xi$ be as in \ref{main} ii), and let $\FM'$ be the residual field of
$\FM$,  which is an extension of degree $n'$ of $\FM$. In order to simplify the notation a
bit, put $\mathbf{G}:=\GL_{n}$ and
$\mathbf{G}':={\rm Res}_{F'|F}(\GL_{n'})$. 
 We have a commutative diagram
$$\xymatrix{
\Phi_{\rm inert}(\mathbf{G'},\oQl) \ar[r]^-{\sim} \ar[d]_{\xi_{*}}
& 
\{\hbox{semi-simple conjugacy classes in }\GL_{n'}(\FM')\} \ar[d]^{\bar\xi_{*}}
\\
\Phi_{\rm inert}(\mathbf{G},\oQl) \ar[r]^-{\sim} 
&
\{\hbox{semi-simple conjugacy classes in }\GL_{n}(\FM)\}.
}$$
Here, the second line is the bijection explained in \ref{param-conjcl}, while the first
line is the same bijection for $\GL_{n',F'}$, composed with the  Shapiro bijection
$\Phi_{\rm inert}(\mathbf{G'},\oQl)\leftrightarrow \Phi_{\rm inert}(\GL_{n',F'},\oQl)$ of
\cite[Corollary 2.3.3]{Datfuncto}. The left vertical map is composition with $\xi$, and
the right vertical one  $\bar\xi_{*}$ is the transfer of conjugacy classes that one gets through any embedding
$\GL_{n'}(\FM')\injo \GL_{n}(\FM)$ obtained by choosing an $\FM$-basis of ${\FM'}^{n'}$.
In this setup, the hypothesis that $\xi$ induces an isomorphism $C_{\hat\mathbf{G'}}(\phi')\simto
C_{\hat\mathbf{G}}(\phi)$ translates into the hypothesis that the centralizer in $\GL_{n}(\FM)$ of
any element in the conjugacy class $s'$  corresponding to $\phi'$ is contained in $\GL_{n'}(\FM')$.

Of course we have a similar diagram if we replace ``inert'' by ``$\ell$-inert'' on the
left side, and restrict to conjugacy classes with order prime to $\ell$ on the right side.

\alin{Integral Deligne-Lusztig theory} \label{BRequiv}
If we embed $\GL_{n'}(\FM')$ in $ \GL_{n}(\FM)$ as above, the former group is the group of rational points
of an algebraic $\FM$-subgroup of $\GL_{n}$ that becomes isomorphic to the Levi subgroup
$\GL_{n'}^{f}$ over $\o\FM$. In this situation, any choice of a parabolic subgroup of $\GL_{n}$
with Levi subgroup $\GL_{n'}^{f}$ gives rise to a Deligne-Lusztig variety $Y$ over $\o\FM$ with a
left action of $\GL_{n}(\FM)$ and a right action of $\GL_{n'}(\FM')$.

Now, fix a conjugacy class $s'$ of $\ell'$-order in $\GL_{n'}(\FM')$, denote by $s$ the
image conjugacy class in 
$\GL_{n}(\FM)$, and suppose that $C_{\GL_{n}(\FM)}(s') \subset \GL_{n'}(\FM')$. 

The main result of \cite{BonRou} is that, in this setting,  
the cohomology space $H^{\dim}_{c}(Y,\oZl)e_{s'}$
induces  a Morita equivalence between $e_{s}\oZl \GL_{n}(\FM)$ and $e_{s'}\oZl \GL_{n'}(\FM')$.
All the necessary details are recalled in part \ref{sec:finite-linear-groups}.

\alin{A Morita equivalence for ``Hecke algebras''} Now, the strategy to exhibit an
equivalence of categories in the setting of \ref{main} ii) should be clear. We will
construct a Morita equivalence between the Hecke ``algebras'' $e_{s}\oZl[\AM/\bar\GM]$ and 
$e_{s'}\oZl[\AM'/\bar\GM']$ relative to $(\FM,n,s)$, resp. $(\FM',n',s')$. 
Such an equivalence will be given by a ``bimodule'' in the sense of \ref{bimodules},
i.e. a certain functor $[\AM'/\bar\GM']^{\rm opp}\times
[\AM/\bar\GM]\To{}\oZl\hbox{-Mod}$.

In order to construct this bimodule, we first embed $\iota:\,[\AM'/\bar\GM']\injo
[\AM/\bar\GM]$. This requires some choices, among which that of an $\FM$-basis of $\FM'$.
The value on a pair $(\sigma',\iota\sigma')$ of the desired ``bimodule'' will be given by
some cohomology space $\PC_{\sigma'}^{0}$ as in the previous paragraph ; in particular it
will induce a Morita equivalence between $e_{\iota\sigma'}^{s}\oZl[\bar\GM_{\iota\sigma'}]$
and $e^{s'}_{\sigma'}\oZl[\bar\GM_{\sigma'}]$.
The problem is then to extend this assignment to all pairs $(\sigma',\sigma)$ and make it 
into a functor.

This turned out to be a more subtle matter, and was only solved after we proved in
\cite{cdvdl2}  a property of invariance of the Deligne-Lusztig cohomology under certain change of parabolic subgroups.
The heart of this paper is therefore the  construction of this  ``bimodule'', which is
explained in \ref{cnstruc_bimodule} after some crucial step in Proposition
\ref{prop_finite}. Once the construction is done, the fact that it induces a Morita
equivalence and preserves the cartesian property is comparatively easier.

\alin{Organisation of the paper}
Section 2 reviews the needed input from Deligne-Lusztig theory. It is mostly
expository, with the exception of Proposition \ref{prop_finite} which is a crucial
technical point to be used in section 4.
Section 3 deals in detail with the content of \ref{intro_coeff} above. Section 4 is the
heart of this paper. In particuler, subsection 4.2 explains \ref{into_Hecke_alg}, while 
subsection 4.3 contains the construction of a Morita equivalence in the setting of an
unramified automorphic induction.
Some generalities about ``rings with many objects'' are postponed to the appendix.

\section{Finite linear groups}\label{sec:finite-linear-groups}

In this part, $\FM=\FM_{q}$ denotes a finite field of characteristic $p$, and $\o\FM$ an algebraic
closure of it. 

\subsection{Review of Lusztig theory}

Bold letters $\mathbf{G}$, $\mathbf{L}$, $\mathbf{P}$, etc. will
typically denote algebraic groups over $\o\FM$. A $\FM$-structure on
such a group gives rise to a ``Frobenius isogeny'' typically denoted
by $F$, such that $\FM$-rational points are given, e.g. by $G:=\mathbf{G}(\FM)=\mathbf{G}^{F}$.

\alin{Varieties and functors} 
Let $\mathbf{G}$ be a reductive group over $\o\FM$ with a
$\FM$-rational structure. Assume given a parabolic subgroup
$\mathbf{P}$ with unipotent radical $\mathbf{U}$, and assume that 
$\mathbf{P}$ contains an $F$-stable Levi subgroup $\mathbf{L}$.
The generalized Deligne-Lusztig variety associated to these data is
defined by 
$$Y_{\mathbf{P}}
:= \{ g\mathbf{U}\in 
\mathbf{G}/\mathbf{U}, g^{-1}F(g)\in \mathbf{U}\cdot {{F}(\mathbf{U}})\}.$$
This is a variety over $\o\FM$ with a left action of
$G=\mathbf{G}^{F}$ given by $(\gamma,g\mathbf{U})\mapsto \gamma
g\mathbf{U}$ and a right action of $L=\mathbf{L}^{F}$ given by  $(g\mathbf{U},\delta)\mapsto g\delta\mathbf{U}$.
The cohomology complex 
$R\Gamma_{c}(Y_{\mathbf{P}},\bZl)$ is thus a complex of $(\bZl[G],\bZl[L])$-bimodules and
induces two adjoint functors 
\begin{eqnarray*}
 & &\RC_{\mathbf{L}\subset \mathbf{P}}^{\mathbf{G}} = 
 R\Gamma_{c}(Y_{\mathbf{P}},\bZl)\otimes^{L}_{\bZl L} - 
:\,D^{b}(\bZl L) \To{} D^{b}(\bZl G) \\ &&
{^{*}\RC}_{\mathbf{L}\subset \mathbf{P}}^{\mathbf{G}}: R\Hom_{\bZl G}(R\Gamma_{c}(Y_{\mathbf{P}},\bZl),-):\,
D^{b}(\bZl G) \To{} D^{b}(\bZl L)
\end{eqnarray*}
called ``Lusztig's twisted induction'',
resp. ``restriction'', and where $D^{b}$ means ``bounded derived category''.
These functors satisfy some rather obvious transitivity properties,
and when $\mathbf{P}$ is also $F$-stable, they coincide with
Harish-Chandra's usual parabolic functors.

\alin{Duals and characters} 
Let us fix
  compatible systems of roots of unity $\iota:\,(\QM/\ZM)_{p'}\simto \o\FM_{q}^{\times}$ and
  $\jmath:\,(\QM/\ZM)_{p'}\injo \oZl^{\times}$ as in \ref{param-conjcl}.
If $\mathbf{T}$ is a torus defined over $\FM$ and $\mathbf{T}^{*}$
denotes the dual torus, also defined over $\FM$, there is a group isomorphism
$\mathbf{T}^{*F}\simto \Hom(\mathbf{T}^{F},\o\FM^{\times})$ associated to $\iota$, see
\cite[(8.14)]{CE}. 
Composing with $\jmath\circ\iota^{-1}$ gives  a
bijection $\mathbf{T}^{*F}\simto \Hom(T,\bZl^{\times})$. In other words,
characters of $T$ valued in $\oZl$ are parametrized by rational elements $s$ in the
dual torus $\mathbf{T}^{*}$.
More generally, let us denote by $\mathbf{G}^{*}$ the dual reductive group
of $\mathbf{G}$, which is also defined over $\FM$. Then any rational element $s$ 
in the maximal central torus of $\mathbf{G}^{*}$ determines a
character $\hat s :\, G\To{}\bZl^{\times}$.

\alin{Rational series} \label{defes}
Let $s$ be a semi-simple conjugacy class
in $G^{*}:=\mathbf{G}^{*F}$. An irreducible representation
$\pi\in\Irr{\bQl}{G}$ is said to \emph{belong to the rational series
  attached to $s$} if there is an $F$-stable torus $\mathbf{T}$ in
$\mathbf{G}$, an element $t\in \mathbf{T}^{*F}$ that belongs to $s$,
and a Borel subgroup $\mathbf{B}$ containing $\mathbf{T}$ such that
$\pi$ occurs with non-zero multiplicity in $[\RC_{\mathbf{T}\subset
  \mathbf{B}}^{\mathbf{G}}(\hat t)]$ (here brackets indicate that we
consider the image of the complex in the Grothendieck group). We'll denote by $\EC(G,s)$ the
set of such rational series and by $e_{s}^{G}=e_{s,\oQl}^{G}\in\bQl[G]$ the central idempotent
that cuts out these rational series.
We then have the following
remarkable properties :
\begin{enumerate}
\item 
  the subsets $\EC(G,s)$ form a partition of  $\Irr{\bQl}{G}$,
  and consequently we have the equality  $1=\sum_{s}e_{s}^{G}$ in
  $\bQl[G]$, see \cite[Thm 8.24]{CE}.
\item  if $s$ consists of $\ell'$-elements, then the idempotent
  $e_{s,\oZl}^{G}:=\sum_{s'\sim_{\ell}s}e_{s'}^{G}$, where
  $s'\sim_{\ell}s$ means that $s$ is the $\ell$-regular part of $s'$,
  belongs to $\bZl[G]$, see \cite[Thm. A' and Rem. 11.3]{BonRou}. 
\end{enumerate}

We will denote by $\EC_{\ell}(G,s)$ the set of
  simple modules of $\bZl[G]e_{s,\oZl}^{G}$.
The representations in $\EC(G,1)$ are also called \emph{unipotent}
series, and those in $\EC_{\ell}(G,1)$ are sometimes called
$\ell$-\emph{unipotent}.
 Accordingly, $\bZl[G]e_{1,\oZl}^{G}$ will be called the
\emph{$\ell$-unipotent} summand of $\bZl[G]$.

\alin{Compatibility with induction functors} \label{compatinduc}
Suppose $\mathbf{L}$ is an $F$-stable Levi subgroup contained in a
  parabolic subgroup $\mathbf{P}$ of $\mathbf{G}$.
Any semisimple conjugacy class $t$ in $L^{*}$ gives rise to a
semisimple class $s$ in $G^{*}$. The map $t\mapsto s$ is finite to one.

From transitivity of twisted induction, we see
that 
$$[R\Gamma_{c}(Y_{\mathbf{P}},\bQl)e_{t}^{L}]= 
[e_{s}^{G}R\Gamma_{c}(Y_{\mathbf{P}},\bQl)e_{t}^{L}].$$
For the same reason, we have 
$[e_{s}^{G}R\Gamma_{c}(Y_{\mathbf{P}},\bQl)e_{t'}^{L}]=0
$ if $t'$ is not contained in $s$, and
therefore, putting $e_{s}^{L}:= \sum_{t\mapsto s}e_{t}^{L}$, we get that
$$[R\Gamma_{c}(Y_{\mathbf{P}},\bQl)e_{s}^{L}]= 
[e_{s}^{G}R\Gamma_{c}(Y_{\mathbf{P}},\bQl)].$$
In the particular case that $\mathbf{P}$ itself is $F$-stable, and
denoting by $e_{U}$ the idempotent which averages along the group
$U=\mathbf{U}^{F}$ of rational points of the
radical $\mathbf{U}$ of $\mathbf{P}$, we get the equalities 
\ini\begin{equation}
 e_{s}^{G} e_{U} = e_{s}^{G} e_{U} e_{s}^{L} = e_{U} e_{s}^{L} =: e_{s}^{P},\label{compat2}
 \end{equation}
showing in particular that $e_{s}^{G} e_{U}$ is a central idempotent
in $\bQl[P]$.
In the same way, when $s$ consists of $\ell'$-elements, we have equalities
$$ e_{s,\oZl}^{G} e_{U} = e_{s,\oZl}^{G} e_{U} e_{s,\oZl}^{L} = e_{U} e_{s,\oZl}^{L}=:e_{s,\oZl}^{P}$$
which show that $e_{s,\oZl}^{G} e_{U}$ is a central idempotent in $\bZl[P]$.

\alin{Morita equivalences} \label{Morita}
As above, let $\mathbf{L}$ be an $F$-stable Levi subgroup contained in a
  parabolic subgroup $\mathbf{P}$ of $\mathbf{G}$. We choose an $F^{*}$-stable embedding 
$\mathbf{L}^{*}\injo \mathbf{G}^{*}$ dual to $\mathbf{L}\injo \mathbf{G}$.
Let $t$ be a  semisimple class in $L^{*}$ and $s$ its image in $G^{*}$. 
Let also $\Lambda$ denote either $\oQl$ or $\oZl$, and \emph{suppose that the order of an element
of $t$ is invertible in $\Lambda$}.
The following result is due to Lusztig \cite[Prop 6.6]{LusztigGreen} for $\Lambda=\oQl$
and to Bonnaf\'e and Rouquier \cite[Thm B']{BonRou} for $\Lambda=\oZl$.

\begin{fac}
Assume that $\mathbf{L}^{*}$ contains the centralizer
  $C_{\mathbf{G}^{*}}(t)$ of any element in $t$. Then the complex
  $R\Gamma_{c}(Y_{\mathbf{P}},\Lambda)e_{t,\Lambda}^{L}$ is  
 concentrated in degree $d={\rm dim}(Y_{\mathbf{P}})$ and 
the $(\Lambda[{G}]e_{s,\Lambda}^{G},\Lambda[{L}]e_{s,\Lambda}^{L}))$-bimodule
$H^{d}_{c}(Y_{\mathbf{P}},\Lambda)e_{t,\Lambda}^{L}$ induces a Morita equivalence, in the
sense  that the two adjoint functors
$$ \application{}{\Lambda[L]e_{t,\Lambda}^{L}\hbox{-\rm Mod}}
{\Lambda[{G}]e_{s,\Lambda}^{G}\hbox{-\rm Mod}}
{M}{H^{d}_{c}(Y_{\mathbf{P}},\Lambda)e_{t,\Lambda}^{L}
  \otimes_{\Lambda[L]} M}
\hbox{ and } $$
$$\application{}{\Lambda[{G}]e_{s,\Lambda}^{G}\hbox{-\rm Mod}}
{\Lambda[L]e_{t,\Lambda}^{L}\hbox{-\rm Mod}}
{N}{\Hom_{\Lambda[G]}\left(H^{d}_{c}(Y_{\mathbf{P}},\Lambda)e_{t,\Lambda}^{L}, N\right)}
$$
are equivalences of categories.
\end{fac}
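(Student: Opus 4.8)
The plan is to reduce this statement to the theorems of Lusztig (for $\Lambda=\oQl$) and Bonnaf\'e--Rouquier (for $\Lambda=\oZl$) cited just above, which assert exactly that $R\Gamma_c(Y_{\mathbf{P}},\Lambda)e^L_{t,\Lambda}$ is concentrated in one degree and that the resulting bimodule gives a derived, hence (by concentration) an honest, Morita equivalence. So the work is really to correctly match up the idempotents and verify that the quoted results apply verbatim in our formulation.

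First I would record the degree vanishing: by \cite[Thm B']{BonRou} (resp. \cite[Prop 6.6]{LusztigGreen} over $\oQl$), under the hypothesis $C_{\mathbf{G}^*}(t)\subset\mathbf{L}^*$ the complex $R\Gamma_c(Y_{\mathbf{P}},\Lambda)e^L_{t,\Lambda}$ has cohomology only in the middle degree $d=\dim(Y_{\mathbf{P}})$; hence $H^d_c(Y_{\mathbf{P}},\Lambda)e^L_{t,\Lambda}\simeq R\Gamma_c(Y_{\mathbf{P}},\Lambda)e^L_{t,\Lambda}[d]$ in $D^b$. Next I would check the compatibility with the idempotent $e^G_{s,\Lambda}$: by the discussion in \ref{compatinduc} (the identity $[R\Gamma_c(Y_{\mathbf{P}},\Lambda)e^L_{s,\Lambda}]=[e^G_{s,\Lambda}R\Gamma_c(Y_{\mathbf{P}},\Lambda)]$, which upgrades from Grothendieck groups to an actual identity of summands because $e^G_{s,\Lambda}$ and $e^L_{t,\Lambda}$ are central and twisted induction is $e$-equivariant), we get that $H^d_c(Y_{\mathbf{P}},\Lambda)e^L_{t,\Lambda}=e^G_{s,\Lambda}H^d_c(Y_{\mathbf{P}},\Lambda)e^L_{t,\Lambda}$, so the bimodule genuinely lives over $(\Lambda[G]e^G_{s,\Lambda},\Lambda[L]e^L_{t,\Lambda})$. (Note $e^L_{t,\Lambda}$ and $e^L_{s,\Lambda}$ differ — the latter sums over all $t'\mapsto s$ — but since $C_{\mathbf{G}^*}(t)\subset\mathbf{L}^*$ forces $t$ to be the unique class in $L^*$ over $s$, in fact $e^L_{t,\Lambda}=e^L_{s,\Lambda}$; this is the point at which the centralizer hypothesis is genuinely used, beyond the degree-vanishing input.)

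Then I would invoke the bimodule statement: \cite[Thm B']{BonRou} asserts precisely that $H^d_c(Y_{\mathbf{P}},\Lambda)e^L_{t,\Lambda}$ induces a Morita equivalence between $\Lambda[G]e^G_{s,\Lambda}$ and $\Lambda[L]e^L_{t,\Lambda}$, i.e. that tensoring with it and its $\Hom$-adjoint are mutually inverse equivalences. Since the complex is concentrated in a single degree, the adjoint $R\Hom_{\Lambda[G]}(R\Gamma_c(Y_{\mathbf{P}},\Lambda)e^L_{t,\Lambda},-)$ is computed by the plain functor $\Hom_{\Lambda[G]}(H^d_c(Y_{\mathbf{P}},\Lambda)e^L_{t,\Lambda},-)$ up to a shift, so the two explicit functors displayed in the statement are indeed the required pair of equivalences. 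The case $\Lambda=\oQl$ is handled identically using Lusztig's theorem in place of Bonnaf\'e--Rouquier. Finally one should note the harmless bookkeeping that $H^d_c(Y_{\mathbf{P}},\Lambda)e^L_{t,\Lambda}$ and $e^G_{s,\Lambda}H^d_c(Y_{\mathbf{P}},\Lambda)$ coincide as $(\Lambda[G],\Lambda[L])$-bimodules, which is again \ref{compatinduc}.

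The main obstacle — such as it is — is not any deep new argument but precisely this careful matching of idempotents: one must be sure that over $\oZl$ the idempotent $e^G_{s,\oZl}$ is the correct target (it is the $\oZl$-rational one from property (ii) of \ref{defes}, and Bonnaf\'e--Rouquier's theorem is stated in exactly these terms), and that the centralizer hypothesis, which is what makes $t$ the unique preimage of $s$ in $L^*$, is doing its job. Everything else is a direct citation plus the observation that a derived equivalence given by a complex concentrated in one degree is an abelian Morita equivalence. Since the statement is explicitly attributed to Lusztig and to Bonnaf\'e--Rouquier and the hypotheses have been set up to match theirs, the "proof" is essentially a reference with the idempotent-identification spelled out.
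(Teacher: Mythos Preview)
Your proposal matches the paper's treatment: this statement is presented as a \emph{Fact} with no proof, only the attributions to Lusztig and Bonnaf\'e--Rouquier, so ``citation plus idempotent bookkeeping'' is exactly what is called for.

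However, your parenthetical side-claim --- that $C_{\mathbf{G}^*}(t)\subset\mathbf{L}^*$ forces $t$ to be the \emph{unique} $L^*$-class over $s$, hence $e^L_{t,\Lambda}=e^L_{s,\Lambda}$ --- is wrong. Take $\mathbf{G}^*=\GL_3$, $\mathbf{L}^*=\GL_2\times\GL_1$ (block diagonal), and $t_0=\mathrm{diag}(a,b,c)$ with $a,b,c\in\FM^\times$ pairwise distinct. Then $C_{\mathbf{G}^*}(t_0)$ is the diagonal torus, contained in $\mathbf{L}^*$, yet $t_0'=\mathrm{diag}(a,c,b)$ lies in $L^*$, is $G^*$-conjugate to $t_0$, but is not $L^*$-conjugate to $t_0$. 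So there are several $L^*$-classes over $s$. You introduced this claim to reconcile the ``$e^L_{s,\Lambda}$'' appearing in the bimodule description with the ``$e^L_{t,\Lambda}$'' appearing in the displayed functors; the simpler and correct resolution is that the former is a typo for the latter (the functors and Bonnaf\'e--Rouquier's theorem are both stated for $e^L_{t,\Lambda}$). This error is harmless for the result itself, but you should drop the claim rather than present it as ``the point at which the centralizer hypothesis is genuinely used.''
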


In particular these functors induce bijections from $\EC(L,t)$ to $\EC(G,s)$, and from
$\EC_{\ell}(L,t)$ to $\EC_{\ell}(G,s)$ when appropriate.
Note that in the case $\Lambda=\oQl$, the Morita equivalence follows from this
bijection property, but of course things are much more complicated for $\Lambda=\oZl$.

Note also that in the case where $\mathbf{L}^{*}=C_{\mathbf{G}^{*}}(t)$, the
  element $t$ provides a character $\hat t$ of $L$, and twisting by
  $\hat t$ induces an isomorphism between $\bZl[L]e_{1,\oZl}^{L}$ and
  $\bZl[L]e_{t,\oZl}^{L}$, and a bijection $\EC(L,t)\mapsto \EC(L,1)$. 
Therefore, for groups $\mathbf{G}$ such that  all  centralizers of semisimple
elements in $\mathbf{G}^{*}$ are Levi subgroups, 
 these equivalences reduce the study of $\EC(G,s)$ and
$\bZl[G]e_{s,\oZl}^{G}$ to that of $\EC(C_{\mathbf{G}^{*}}(s)^{*},1)$
and $\bZl[C_{\mathbf{G}^{*}}(s)^{*}]e_{1,\oZl}^{G}$.

\alin{Dependence on the parabolic subgroup} In our later application to $p$-adic groups, it will be
important to know that the equivalences of \ref{Morita} are ``naturally independent'' of the
parabolic sugroup $\mathbf{P}$ that contains  $\mathbf{L}$ as a Levi factor. Note that this is far
from obvious since even the dimension of $Y_{\mathbf{P}}$ depends on $\mathbf{P}$.

We need here an apparently more general notion of ``Deligne-Lusztig varieties''. Namely, start with a collection
$\mathbf{P}_{1},\cdots, \mathbf{P}_{r}$ of parabolic subgroups with Levi component $\mathbf{L}$, and
define
$$Y_{\mathbf{P}_{1},\cdots, \mathbf{P}_{r}}
:= 
\left\{\begin{array}{c}
 (g_{1}\mathbf{U}_{1},\cdots,g_{r}\mathbf{U}_{r})\in 
\mathbf{G}/\mathbf{U}_{1}\times\cdots\times\mathbf{G}/\mathbf{U}_{r} \hbox{ such that}\\
g_{i}^{-1}g_{i+1}\in \mathbf{U}_{i}\cdot \mathbf{U}_{i+1} \hbox{ for } \,i=1,\cdots,r, \hbox{ and} 
 \\ 
g_{r}^{-1}F(g_{1})\in \mathbf{U}_{r}\cdot {{F}(\mathbf{U}_{1})}
\end{array}\right\}
$$ 
This is again a variety over $\o\FM$ with a left action of
$\mathbf{G}^{F}$ given by $(\gamma,(g_{1}\mathbf{U}_{1},\cdots, g_{r}\mathbf{U}_{r}))\mapsto 
(\gamma g_{1}\mathbf{U}_{1},\cdots, \gamma g_{r}\mathbf{U}_{r})$ 
and a right action of $\mathbf{L}^{F}$ given by  $((g_{1}\mathbf{U}_{1},\cdots,
g_{r}\mathbf{U}_{r}),\delta) \mapsto (g_{1}\delta\mathbf{U}_{1},\cdots, g_{r}\delta\mathbf{U}_{r})$.

Now let $\mathbf{P}$, $\mathbf{P'}$ be two parabolic subgroups with Levi component
$\mathbf{L}$. Consider the closed subvariety 
$$ \YC_{\mathbf{P},\mathbf{P'}}:=
\{(g\mathbf{U},g'\mathbf{U'})\in Y_{\mathbf{P},\mathbf{P'}},\,
g^{-1}F(g) \in \mathbf{U}\cdot {^{F}\mathbf{U}}\}.$$
We have a diagram

$$\xymatrix{ Y_{\mathbf{P},\mathbf{P'}}
 & 
\YC_{\mathbf{P},\mathbf{P'}}
\ar[r]^{\pi} \ar@{_{(}->}[l]_{\iota} 
& Y_{\mathbf{P}}
}$$
where
 $\pi$ is the first projection $(g\mathbf{U},g'\mathbf{U'})\mapsto g\mathbf{U}$.
It 
follows from \cite[Corollary 5.3]{cdvdl2}
that  $\pi$ is an affine bundle of relative dimension 
$\delta:=\frac{1}{2}(\dim(Y_{\mathbf{P},\mathbf{P'}}) -
\dim(Y_{\mathbf{P}}))$. Therefore the map $\pi_{!}$ induces an
isomorphism $R\Gamma_{c}(\YC_{\mathbf{P},\mathbf{P'}},\oZl)
\simto
R\Gamma_{c}(Y_{\mathbf{P}},\oZl)[-2\delta](-\delta)$. Composing
with $\iota_{!}$ we get in particular a morphism
\ini\begin{equation}
  \label{PP'} \Psi^{(2)}_{\mathbf{P},\mathbf{P'}}:\,
  R\Gamma_{c}^{\dim}\left(Y_{\mathbf{P},\mathbf{P'}},\oZl\right)
\To{}
  R\Gamma_{c}^{\dim}\left(Y_{\mathbf{P}},\oZl\right).
\end{equation}
where we use the notation 
$R\Gamma_{c}^{\dim}(Y,\oZl):= R\Gamma_{c}(Y,\oZl)[\dim(Y)](\frac 1 2\dim(Y))$
 for a smooth variety $Y$ of pure dimension. In the same way we get a morphism
$\Psi^{(2)}_{\mathbf{P'},F(\mathbf{P})}$.
Since the morphism of varieties
 $Y_{\mathbf{L}\subset \mathbf{P},\mathbf{P'}}^{\mathbf{G}} \To{} Y_{\mathbf{L}\subset \mathbf{P'},
   {{F}(\mathbf{P})}}^{\mathbf{G}}$ defined by
$(g\mathbf{U},g'\mathbf{U'})\mapsto (g'\mathbf{U'},F(g\mathbf{U}))$ is known to induce 
 an equivalence of \'etale sites,  we see that $\Psi^{(2)}_{\mathbf{P'},F(\mathbf{P})}$
provides a morphism
\ini\begin{equation}
  \label{P'P} \Psi^{(1)}_{\mathbf{P},\mathbf{P'}}:\,
  R\Gamma_{c}^{\dim}\left(Y_{\mathbf{P},\mathbf{P'}},\oZl\right)
\To{}
  R\Gamma_{c}^{\dim}\left(Y_{\mathbf{P'}},\oZl\right).
\end{equation}

Let us now choose a $F$-compatible embedding $\mathbf{L}^{*}\injo \mathbf{G}^{*}$. We can transfer
$\mathbf{P}$ and $\mathbf{P'}$ to  dual
parabolic subgroups $\mathbf{P}^{*}$ and $\mathbf{P'}^{*}$ with Levi component $\mathbf{L}^{*}$.
The following result follows from  
Theorem 6.2 of \cite{cdvdl2}\footnote{The reader will find in \cite{indep} a shorter account of this
  result, with notation closer to those of this article}.
\begin{fac}\label{change_par} Let $\Lambda$ denote either $\oZl$ or $\oQl$.
  Let $t$ be a semisimple element in $L^{*}$ with order invertible in $\Lambda$.
 Assume that $C_{\mathbf{G}^{*}}(t)\cap
  \mathbf{P}^{*}=C_{\mathbf{G}^{*}}(t)\cap \mathbf{P'}^{*}$. 
Then the morphisms $\Psi_{\mathbf{P},\mathbf{P'}}^{(2)}$ and $\Psi_{\mathbf{P},\mathbf{P'}}^{(1)}$
induce isomorphisms
$$ R\Gamma_{c}^{\rm dim}(Y_{\mathbf{P}},\Lambda)e_{t,\Lambda}^{L} 
\buildrel\hbox{\tiny{$\sim$}}\over\longleftarrow 
R\Gamma_{c}^{\rm dim}(Y_{\mathbf{P},\mathbf{P'}},\Lambda)e_{t,\Lambda}^{L}
\simto
R\Gamma_{c}^{\rm dim}(Y_{\mathbf{P'}},\Lambda)e_{t,\Lambda}^{L}.$$
\end{fac}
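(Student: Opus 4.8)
The statement to be proved (Fact \ref{change_par}) is essentially an application of Theorem 6.2 of \cite{cdvdl2}, so the plan is to set up the situation carefully so that the hypotheses of that theorem are met, and then track the idempotents through the two morphisms $\Psi^{(2)}_{\mathbf{P},\mathbf{P'}}$ and $\Psi^{(1)}_{\mathbf{P},\mathbf{P'}}$. First I would recall that $\Psi^{(2)}_{\mathbf{P},\mathbf{P'}}=\iota_!\circ(\pi_!)^{-1}$ factors through $R\Gamma_c^{\dim}(\YC_{\mathbf{P},\mathbf{P'}},\Lambda)$, where $\pi$ is the affine bundle of \cite[Cor.\ 5.3]{cdvdl2} and $\iota$ is the closed immersion $\YC_{\mathbf{P},\mathbf{P'}}\injo Y_{\mathbf{P},\mathbf{P'}}$. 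The key geometric input, supplied by \cite[Thm.\ 6.2]{cdvdl2}, is that \emph{after cutting by $e^L_{t,\Lambda}$}, the closed immersion $\iota_!$ becomes an isomorphism: intuitively, the ``extra'' strata of $Y_{\mathbf{P},\mathbf{P'}}$ not lying in $\YC_{\mathbf{P},\mathbf{P'}}$ contribute only to rational series other than that of $t$, precisely because $C_{\mathbf{G}^*}(t)\cap\mathbf{P}^*=C_{\mathbf{G}^*}(t)\cap\mathbf{P'}^*$ forces the relevant intermediate Levi subgroups to be ``transverse'' to $C_{\mathbf{G}^*}(t)$. Combining the isomorphism $\pi_!$ (valid before any truncation, being an affine bundle) with this $e^L_{t,\Lambda}$-isomorphism $\iota_!$ gives that $\Psi^{(2)}_{\mathbf{P},\mathbf{P'}}$ restricts to an isomorphism $R\Gamma_c^{\dim}(Y_{\mathbf{P},\mathbf{P'}},\Lambda)e^L_{t,\Lambda}\simto R\Gamma_c^{\dim}(Y_{\mathbf{P}},\Lambda)e^L_{t,\Lambda}$.

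For the second isomorphism I would observe that $\Psi^{(1)}_{\mathbf{P},\mathbf{P'}}$ was \emph{defined} as $\Psi^{(2)}_{\mathbf{P'},F(\mathbf{P})}$ transported through the equivalence of étale sites $Y_{\mathbf{L}\subset\mathbf{P},\mathbf{P'}}^{\mathbf{G}}\simto Y_{\mathbf{L}\subset\mathbf{P'},F(\mathbf{P})}^{\mathbf{G}}$, $(g\mathbf{U},g'\mathbf{U'})\mapsto(g'\mathbf{U'},F(g\mathbf{U}))$. Hence it suffices to apply the result of the previous paragraph with $(\mathbf{P},\mathbf{P'})$ replaced by $(\mathbf{P'},F(\mathbf{P}))$, once one checks the hypothesis $C_{\mathbf{G}^*}(t)\cap\mathbf{P'}^*=C_{\mathbf{G}^*}(t)\cap F^*(\mathbf{P}^*)$. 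But $t\in L^{*F^*}$, so $C_{\mathbf{G}^*}(t)$ is $F^*$-stable and $\mathbf{L}^*$ is $F^*$-stable; applying $F^*$ to the hypothesis $C_{\mathbf{G}^*}(t)\cap\mathbf{P}^*=C_{\mathbf{G}^*}(t)\cap\mathbf{P'}^*$ gives $C_{\mathbf{G}^*}(t)\cap F^*(\mathbf{P}^*)=C_{\mathbf{G}^*}(t)\cap F^*(\mathbf{P'}^*)$; combined with the original hypothesis this yields $C_{\mathbf{G}^*}(t)\cap\mathbf{P'}^*=C_{\mathbf{G}^*}(t)\cap\mathbf{P}^*=C_{\mathbf{G}^*}(t)\cap F^*(\mathbf{P}^*)$, exactly what is needed. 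One must also check that the étale-site equivalence is compatible with the $\mathbf{L}^{*F^*}$-actions used to define $e^L_{t,\Lambda}$, which is immediate since it is $L$-equivariant on the right and only permutes and twists by $F$ the factors.

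Finally I would note that the concentration in a single degree (so that the complexes $R\Gamma_c^{\dim}(Y_{\mathbf{P}},\Lambda)e^L_{t,\Lambda}$ etc.\ may be identified with their unique nonzero cohomology) is already part of Fact \ref{Morita} under the hypothesis $C_{\mathbf{G}^*}(t)\subset\mathbf{L}^*$; in the present more general situation where only the intersections with the parabolics agree, the relevant vanishing is again part of \cite[Thm.\ 6.2]{cdvdl2}, and I would simply cite it. The main obstacle in this argument is not any of these bookkeeping steps, which are routine, but rather the geometric heart hidden inside \cite[Thm.\ 6.2]{cdvdl2}: namely the proof that $\iota_!$ becomes an isomorphism after applying $e^L_{t,\Lambda}$, i.e.\ that the cohomology of the complement $Y_{\mathbf{P},\mathbf{P'}}\setminus\YC_{\mathbf{P},\mathbf{P'}}$ carries no $t$-rational constituent. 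Since that is proved in the cited reference, here the only real work is to verify the hypothesis-propagation under $F^*$ and the $F$-site equivalence, as above, and to assemble the two isomorphisms into the displayed chain.
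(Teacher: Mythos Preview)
The paper does not supply its own proof of this Fact: it is recorded as a direct consequence of Theorem~6.2 of \cite{cdvdl2} (with \cite{indep} cited for a more digestible exposition). Your proposal is therefore not being compared to an argument in the paper, but rather constitutes an attempt to unpack how the Fact follows from that external theorem. Your outline for $\Psi^{(2)}_{\mathbf{P},\mathbf{P'}}$ is accurate and matches the intended mechanism: the affine-bundle map $\pi$ is an isomorphism on cohomology unconditionally, and the cited theorem supplies precisely that $\iota_!$ becomes an isomorphism after applying $e^{L}_{t,\Lambda}$.

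There is, however, a genuine gap in your treatment of $\Psi^{(1)}_{\mathbf{P},\mathbf{P'}}$. You reduce to verifying the hypothesis for the pair $(\mathbf{P'},F(\mathbf{P}))$, namely $C_{\mathbf{G}^{*}}(t)\cap\mathbf{P'}^{*}=C_{\mathbf{G}^{*}}(t)\cap F^{*}(\mathbf{P}^{*})$, and you claim this follows from the original hypothesis together with its $F^{*}$-translate. But your chain of equalities
\[
C_{\mathbf{G}^{*}}(t)\cap\mathbf{P'}^{*}
\;=\;C_{\mathbf{G}^{*}}(t)\cap\mathbf{P}^{*}
\;=\;C_{\mathbf{G}^{*}}(t)\cap F^{*}(\mathbf{P}^{*})
\]
is unjustified at the second step: from $X\cap\mathbf{P}^{*}=X\cap\mathbf{P'}^{*}$ and $X\cap F^{*}(\mathbf{P}^{*})=X\cap F^{*}(\mathbf{P'}^{*})$ one cannot conclude $X\cap\mathbf{P}^{*}=X\cap F^{*}(\mathbf{P}^{*})$ without further input. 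In general the common parabolic $C_{\mathbf{G}^{*}}(t)\cap\mathbf{P}^{*}$ of $C_{\mathbf{G}^{*}}(t)$ need not be $F^{*}$-stable. The correct resolution is not to propagate the hypothesis through $F^{*}$ at all: Theorem~6.2 of \cite{cdvdl2} is formulated so that the single hypothesis $C_{\mathbf{G}^{*}}(t)\cap\mathbf{P}^{*}=C_{\mathbf{G}^{*}}(t)\cap\mathbf{P'}^{*}$ already yields both isomorphisms $\Psi^{(1)}$ and $\Psi^{(2)}$ simultaneously (the argument there works with the full filtered variety $Y_{\mathbf{P},\mathbf{P'}}$ and controls all strata at once, not just the one indexed by $\mathbf{P}$). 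So your reduction step is both flawed and unnecessary; you should simply invoke the cited theorem for both maps.
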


\alin{General linear groups} \label{gln} We assume here that $G$ is
a product of general linear groups over extensions of $\FM$.
Then we have the following additional properties :
\begin{enumerate}
\item 
\emph{$\EC(G,s)$ contains a cuspidal representation if and only if $s$ is
elliptic in $G^{*}$} (by which we mean 
that $s$ does not intersect any
rational parabolic subgroup). 
Moreover in this case, $\EC(G,s)$ consists of one
element, so that we get a bijection
\begin{center}
$\{$elliptic conjugacy classes in $G^{*}\} \leftrightarrow \{$classes
of cuspidal irreps of $G$ over $\bQl$\}.
\end{center}
{\small To see why these properties hold, note first that if $s$ is
  elliptic then for any rational parabolic subgroup $P$ we have
  $e_{s}^{G}e_{U_{P}}=0$ by 
  (\ref{compat2}), thus $\EC(G,s)$ consists
  of cuspidal representations. Picking up an irrep in each $\EC(G,s)$
  we thus get an injection from the set of elliptic conjugacy classes
  to that of cuspidal representations. But a counting argument based
  on Green's classification shows that we get all cuspidals this way.}

\item If $s$ is not elliptic, there is a rational parabolic
  subgroup in $\mathbf{G}$ whose dual intersects $s$. Choose 
 $\mathbf{P}$ minimal for this property, and let $\mathbf{L}$ be a Levi
 complement and $t\in \mathbf{L}^{*}\cap s$. Thus $t$ is elliptic in
 $L^{*}$ and corresponds to a cuspidal irrep $\sigma$ of $L$ by i)
 above. We get in this way a bijection $s\mapsto [L,\sigma]$
\begin{center}
$\{$s/simple conjugacy classes in $G^{*}\} \leftrightarrow
\{$$G$-conj. classes of cuspidal pairs $(L,\sigma)$ over $\bQl$\}.
\end{center}
Moreover, by the compatibility with induction of paragraph
\ref{compatinduc},  $\EC(G,s)$ contains all irreducible subquotients of
  $\RC_{\mathbf{L}\subset \mathbf{P}}^{\mathbf{G}}(\sigma)=
  \ind{P}{G}{\sigma}$.  
But then, comparing the partition of
  $\Irr{\bQl}{G}$ in rational series, and that given by the cuspidal
  support, we see that
  \begin{center}
    $\EC(G,s) = {\rm JH} ( \ind{P}{G}{\sigma} ).$ 
  \end{center}

\item Assume now that $s$ consists of $\ell'$-elements. Then
$\EC_{\ell}(G,s)$ contains a \emph{super}cuspidal representation if and only if $s$ is
elliptic in $G^{*}$. 
Moreover in this case, $\EC_{\ell}(G,s)$ consists of one
element, so that we get a bijection 
\begin{center}
$\{\ell$-regular elliptic conjugacy classes in $G^{*}\}\leftrightarrow
\{$supercuspidal $\oFl$-representations of $G\}$.
\end{center}
{\small To see these properties, observe first that since the
  centralizer of an elliptic element is a torus, the Morita
  equivalence of paragraph \ref{Morita} shows that $\EC_\ell(G,s)$ consists of
  one representation, which by the way is cuspidal. If it were not
  supercuspidal, it would occur as a subquotient of some induced
  representation hence would belong to some $\EC_\ell(G,s')$ with $s'$
  not elliptic, a contradiction. Thus we get an injective map from
  $\ell$-regular elliptic classes to supercuspidals. Again a counting
  argument, relying this time on James' classification, shows it is
  also surjective.
}

\item If $s$ is not elliptic, choose $\mathbf{P}$ minimal among  rational parabolic
  subgroups  whose dual intersects $s$, and choose $\mathbf{L}$  a
  rational Levi
 complement and $t\in \mathbf{L}^{*}\cap s$. Thus $t$, being elliptic
 and $\ell$-regular in
 $L^{*}$, corresponds to a supercuspidal irrep $\sigma$ of $L$ over $\oFl$. 
We get in this way a bijection 
\begin{center}
$\{$s/simple $\ell$-regular conjugacy classes in $G^{*}\}$ \\ $\leftrightarrow
\{$$G$-conj. classes of supercuspidal pairs $(L,\sigma)$ over $\oFl$\}.
\end{center}
Moreover, by the compatibility with induction of paragraph
\ref{compatinduc},  $\EC_\ell(G,s)$ contains all irreducible subquotients of
  $\RC_{\mathbf{L}\subset \mathbf{P}}^{\mathbf{G}}(\sigma)=
  \ind{P}{G}{\sigma}$.  
But then, comparing the partition of
  $\Irr{\oFl}{G}$ as a union of $\EC_\ell(G,s)$, and that given by the supercuspidal
  support, we see that
  \begin{center}
    $\EC_\ell(G,s) = {\rm JH} ( \ind{P}{G}{\sigma} ).$ 
  \end{center}
\end{enumerate}

\subsection{General linear groups in a coordinate-free setting}
\label{sec:gener-line-groups}

The setting here is the following.
Let $V$ be a vector space over $\FM$ of finite dimension $n$.
We put $G:=\Aut_{\FM}(V)$ and we also denote by $\mathbf{G}$ the
corresponding algebraic group over $\o\FM$ and $F$
the Frobenius associated with the $\FM$-structure, so that $G=\mathbf{G}^{F}$.
Further,  let $\FM'$ be a finite subfield of $\o\FM$ containing $\FM$, and
let $n', V', G', \mathbf{G}', F'$ be as
above, but relative to $\FM'$. 
We assume that $|\FM|^{n}=|\FM'|^{n'}$.
We may then see $G'$ as the rational points of some Levi subgroup in
$\mathbf{G}$, and thus use the theory of the previous section to
go from representations of $G'$ to representations of $G$. 

\alin{Varieties}\label{varietiesgln}
Fix an $\FM$-linear isomorphism  $\iota:\, V'\simto V$.
 This induces a group embedding $\iota_{*}:G'\injo G$, which
comes as the $F$-fixed part of an embedding
$\iota_{*}:\,{\rm Res}_{\FM'|\FM}\mathbf{G'}\injo \mathbf{G}$ of algebraic
groups over $\FM$.
The image of this embedding is a Levi subgroup $\mathbf{L}_{\iota}$ of
$\mathbf{G}$ such that $\mathbf{L}_{\iota}^{F}= \iota_{*}(G')$.
More precisely, on the $\o\FM$-vector space $\o
V:=\o\FM\otimes_{\FM}V$, we get from $\iota_{*}$ a $\FM$-linear
action of the field $\FM'$. Thanks to this action, we have 
a direct sum decomposition

$$\o V=\bigoplus_{i=0}^{e-1} \o V_{i}, \hbox{ with } \o V_{i}=\{v\in \o V, \forall a\in\FM',
 \iota_{*}(a)v=F^{i}(a)v\},$$ and this identifies
$\mathbf{L}_{\iota}$ with $\prod_{i}\Aut_{\o\FM}(\o V_{i}).$ 
We denote by $\mathbf{P}_{\iota}$ the parabolic subgroup with Levi
$\mathbf{L}_{\iota}$ associated to the flag 
$$\bar V\supset \bigoplus_{i\geq 1}\o V_{i} \supset \bigoplus_{i\geq
  2}\o V_{i} \supset\cdots\supset \{0\}.$$ Unlike $\mathbf{L}_{\iota}$, this parabolic subgroup is only
defined over $\FM'$, not over $\FM$. Let $\mathbf{U}_{\iota}$ be the unipotent
radical of $\mathbf{P}_{\iota}$ and consider the 
variety
$$ Y_{\iota,V'}^{V} := Y_{\mathbf{P}_{\iota}}.$$
It has a right action of
$G=\mathbf{G}^{F}$ given by $(\gamma,g\mathbf{U}_{\iota})\mapsto \gamma
g\mathbf{U}_{\iota}$ and a left action of $G'$ given by  $(g\mathbf{U}_{\iota},\gamma')\mapsto g\iota_{*}(\gamma')\mathbf{U}_{\iota}$.

\alin{Idempotents and  bimodules} \label{idempotentsgln}
As usual we denote by $\Lambda$ either $\oQl$ or $\oZl$.
We now fix a semisimple conjugacy class $s'$ in $\GL_{n'}(\FM')$ with invertible order in $\Lambda$. This determines a conjugacy
class of rational elements in $\mathbf{G}'$ and thus a central idempotent
$e_{s',\Lambda}^{G'}\in\Lambda[G']$. 
Moreover, choosing an $\FM$-basis of $\FM'$, we get a conjugacy class $s$ in $\GL_{n}(\FM)$, which
does not depend on the choice of this basis. Then, similarly we have an idempotent
$e_{s,\Lambda}^{G}\in\Lambda[G]$.
\begin{center}
  \emph{We assume that the centralizer of an element of $s'$ in $\GL_{n}(\FM)$ is
    contained in $\GL_{n'}(\FM')$}.
\end{center}
Then, as recalled in Fact \ref{Morita}, the $(\Lambda[G],\Lambda[G'])$-bimodule
$$ \PC_{\iota,V'}^{V,s'}=\PC_{\iota,V'}^{V,s', \Lambda} := H^{\dim}_{c}(Y_{\iota,V'}^{V},\Lambda)e_{s',\Lambda}^{G'}
$$
induces a Morita equivalence between $\Lambda[G']e_{s',\Lambda}^{G'}$ and
$\Lambda[G]e_{s,\Lambda}^{G}$.

\alin{Flags and parabolic induction} \label{parabolicgln}
Let $\VC'=(V'_{0}=V'\supset V'_{1}\supset \cdots \supset V'_{r}=0)$ be a flag of $\FM'$-subvector
spaces in $V'$, and let $\mathbf{P}'\subset \mathbf{G'}$ be the associated parabolic
subgroup, with radical $\mathbf{U'}$ and Levi quotient $\mathbf{M'}$, all $F'$-stables.
Denote by $\VC:=\iota_{*}\VC'$ its image as a flag in $V$, and let $\mathbf{P}\subset
\mathbf{G}$, $\mathbf{U}$, $\mathbf{M}$ be similarly defined. We note that, upon
identifying ${\rm Res}_{\FM'|\FM}(\mathbf{G'})$ to a Levi subgroup of $\mathbf{G}$ through
$\iota$, we have the equality $\mathbf{P}\cap {\rm Res}_{\FM'|\FM}(\mathbf{G'})={\rm
  Res}_{\FM'|\FM}(\mathbf{P'})$. Also, $\iota$ identifies ${\rm
  Res}_{\FM'|\FM}(\mathbf{M'})$ to an $F$-stable Levi subgroup of $\mathbf{M}$ and the
projection of $\mathbf{P}_{\iota}\cap \mathbf{P}$ to $\mathbf{M}$ provides a parabolic
subgroup $\mathbf{P^{M}_{\iota}}$ of $\mathbf{M}$ with Levi component ${\rm
  Res}_{\FM'|\FM}\mathbf{M'}$.
We then put 
$$ Y_{\iota,\VC'}^{V}:=Y_{\mathbf{P^{M}_{\iota}}} = \prod_{j} Y_{\iota,
  V'_{j}/V'_{j+1}}^{V_{j}/V_{j+1}},$$
which is a $M\times M'$-variety that we will also consider as a $P\times P'$-variety (since
$\mathbf{P}$ and ${\rm Res}(\mathbf{P}')$ are $F$-stable, we can use  the notation
$P=\mathbf{P}^{F}$ and $P'=\mathbf{P'}^{F'}$).
Recall our notation $e_{s'}^{P'}$ from (\ref{compat2}) and put 
$$ \PC_{\iota,\VC'}^{V,s'}= \PC_{\iota,\VC'}^{V,s',\Lambda}:=
H^{\dim}_{c}(Y_{\iota,\VC'}^{V},\Lambda)e_{s',\Lambda}^{P'}. 
$$
As above, it induces a Morita equivalence between $\Lambda[P']e_{s',\Lambda}^{P'}$ and
$\Lambda[P]e_{s',\Lambda}^{P}$ (note 
that everything boils down to the Levi quotients $M,M'$ where the situation is a product
of instances of the last paragraph).

Later, we will need the following functoriality property of this parabolic
construction. If  $\gamma'\in G'$ with image $\gamma:=\iota_{*}(\gamma')\in G$, then the
conjugation map by $\gamma$ in $\mathbf{G}$ 
$${\rm Int}_{\gamma}:\,Y_{\iota,\VC'}^{V}\To{}Y_{\iota,\gamma'(\VC')}^{V}, \;
m\mathbf{U^{M}_{\iota}}\mapsto  \gamma m\mathbf{U}^{M}_{\iota}\gamma^{-1}$$
 induces an isomorphism of
  varieties which is equivariant with respect
  to  ${\rm Int}_{\gamma}\times {\rm Int}_{\gamma'} :\, P\times P' \simto  {^{\gamma}P}\times {^{\gamma'}}P'$.
This induces in turn an isomorphism
\ini\begin{equation}
{\rm Int}_{\gamma,\VC'}:\,\PC_{\iota,\VC'}^{V,s'} \simto  \PC_{\iota, \gamma'(\VC')}^{V,s'}\label{foncto2}
\end{equation}
with the same equivariance property. Note that when $\gamma'\in P'$, this isomorphism is
given by the natural action of $(\gamma, {\gamma'}^{-1})\in P\times P'$ on $Y_{\iota,\VC'}^{V}$.

The following result is crucial for the main theorem of the paper.

\begin{prop} \label{prop_finite}
There exists a family of isomorphisms of $(\Lambda P_{2},\Lambda P'_{1})$-bimodules
\ini\begin{equation}
\phi_{\VC'_{1}}^{\VC'_{2}}:\,\Lambda P_{2}\otimes_{\Lambda P_{1}}{\PC}_{\iota,\VC'_{1}}^{V,{s'}}  \simto
{\PC}_{\iota,\VC'_{2}}^{V,{s'}}e_{U'_{1}}.
\label{parab1}
\end{equation}
indexed by pairs of flags $\VC_{1}',\VC_{2}'$ in $V'$ such that $\VC_{1}'$ refines
$\VC'_{2}$  (so that $P_{1}\subset P_{2}$ and $P'_{1}\subset
P'_{2}$)
and which satisfy the following properties.
\begin{enumerate}
\item (functoriality) For any $\gamma'\in G'$ with image $\gamma=\iota_{*}(\gamma')$, we
  have $\phi_{\gamma'\VC'_{1}}^{\gamma'\VC'_{2}} = {\rm
    Int}_{\gamma,\VC'_{2}}\circ\phi_{\VC'_{1}}^{\VC'_{2}}\circ {\rm
    Int}_{\gamma,\VC'_{1}}^{-1} $ 
\item (transitivity) If $\VC'_{3}$ is a coarser flag than $\VC'_{2}$, then 
the following diagram commutes :
$$
\xymatrixcolsep{2cm}\xymatrix{
\Lambda P_{3}\otimes_{\Lambda P_{1}} {\PC}_{\iota,\VC'_{1}}^{V,{s'}}  \ar[r]^{\phi_{\VC'_{1}}^{\VC'_{3}}}
\ar@{=}[d] &
{\PC}_{\iota,\VC'_{3}}^{V,{s'}}e_{U'_{1}} \\
\Lambda P_{3}\otimes_{\Lambda P_{2}}\Lambda P_{2}
\otimes_{\Lambda P_{1}}{\PC}_{\iota,\VC'_{1}}^{V,{s'}}  
\ar[r]^{\id\otimes\phi_{\VC'_{2}}^{\VC'_{3}}} &
\Lambda P_{3}\otimes_{\Lambda P_{2}}
{\PC}_{\iota,\VC'_{2}}^{V,{s'}}e_{U'_{1}} \ar[u]_{\phi_{\VC'_{2}}^{\VC'_{3}}}
}$$

\end{enumerate}
\end{prop}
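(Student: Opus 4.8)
The plan is to build the isomorphisms $\phi_{\VC'_1}^{\VC'_2}$ by working entirely on the Levi quotients, where everything decomposes into a product of the ``elliptic'' Morita bimodules of \ref{Morita}, and to reduce the general case to two elementary building blocks: (a) the case where $\VC'_2$ is the trivial flag $\{V'\supset 0\}$ (so $P'_2=G'$, $P_2=G$), and (b) the ``one-step coarsening'' case where $\VC'_1$ refines $\VC'_2$ by forgetting exactly one subspace. By transitivity (which we will impose as a requirement and verify), a general pair factors through a chain of such elementary coarsenings, so it suffices to construct $\phi$ in those two cases and check compatibility. First I would set up, for a fixed pair $\VC'_1\supset\VC'_2$ (i.e.\ $\VC'_1$ refining $\VC'_2$, hence $P_1\subset P_2$, $P'_1\subset P'_2$), the natural candidate: by definition $\PC_{\iota,\VC'_i}^{V,s'}=H^{\dim}_c(Y_{\iota,\VC'_i}^V,\Lambda)e_{s',\Lambda}^{P'_i}$, and the projection $\mathbf{P}^{\mathbf{M}}_{\iota}\cap\mathbf{P}\to\mathbf{M}$ machinery of \ref{parabolicgln} lets one compare $Y_{\iota,\VC'_1}^V$ and $Y_{\iota,\VC'_2}^V$ — over the Levi $M_2$ the variety $Y_{\iota,\VC'_1}^V$ is the Deligne--Lusztig variety for a parabolic $\mathbf{P}^{\mathbf{M_2}}_{\iota}$, and over $M_1$ it factors as a product over the blocks of $\VC'_1/\VC'_2$.

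**Key steps.** (1) Reduce to Levi quotients: since $\mathbf{P}$ and $\operatorname{Res}(\mathbf{P}')$ are $F$-stable and the varieties $Y_{\iota,\VC'_i}^V$ only involve the Levi $\mathbf{M}$ (indeed $Y_{\iota,\VC'}^V=Y_{\mathbf{P^M_\iota}}$, a product over the graded pieces), everything on the $P$-side is obtained from the $M$-side by inflation along $P\twoheadrightarrow M$, and the idempotent bookkeeping $e_{s'}^{P'}=e_{U'}e_{s'}^{M'}$ from \eqref{compat2} turns $e_{U'_1}$ into exactly the averaging operator that intertwines $\Lambda P_2\otimes_{\Lambda P_1}-$ with the induced module on $M_2$. (2) On the $M$-side, use Fact~\ref{Morita} applied within $\mathbf{M_2}$ to the $F$-stable Levi $\operatorname{Res}(\mathbf{M_1'})\subset \mathbf{M_2}$: the bimodule $\PC_{\iota,\VC'_1}^{V,s'}$ is a Morita bimodule realizing $\Lambda[M_1']e_{s',\Lambda}^{M_1'}\simeq\Lambda[M_2]e_{s,\Lambda}^{M_2}$ restricted appropriately, and $\PC_{\iota,\VC'_2}^{V,s'}e_{U'_1}$ is the ``parabolically induced'' version; the canonical comparison is the map coming from the open-closed decomposition and the affine-bundle projection of Fact~\ref{change_par} (this is where the change-of-parabolic invariance of \cite{cdvdl2} enters: the two parabolics $\mathbf{P^M_\iota}$ relevant to $\VC'_1$ and to $\VC'_2$ inside $\mathbf{M_2}$ have the same intersection with $C_{\mathbf{M_2^*}}(t)$ because $C_{\mathbf{G}^*}(s')\subset \GL_{n'}(\FM')$ forces the centralizer to sit inside the smallest Levi, hence inside all the parabolics in sight). (3) Verify functoriality (i): the isomorphism $\operatorname{Int}_{\gamma,\VC'}$ of \eqref{foncto2} is induced by the geometric conjugation $\operatorname{Int}_\gamma$ on the Deligne--Lusztig varieties, and our $\phi$ is built from maps ($\pi_!$, $\iota_!$, the étale-site equivalence) that are all manifestly conjugation-equivariant, so the square in (i) commutes at the level of varieties before passing to cohomology. (4) Verify transitivity (ii): this reduces, via step (1), to a transitivity statement for the affine-bundle/open-closed maps on Deligne--Lusztig varieties inside $\mathbf{M_3}$, for the chain of parabolics attached to $\VC'_1\supset\VC'_2\supset\VC'_3$; this is essentially the transitivity of the $\Psi^{(i)}_{\mathbf{P},\mathbf{P'}}$ morphisms, which follows from the transitivity of the geometric correspondences underlying \cite[Thm 6.2]{cdvdl2}, together with the standard transitivity of Harish-Chandra/twisted induction $\ind{P_2}{P_3}{\ind{P_1}{P_2}{-}}=\ind{P_1}{P_3}{-}$.

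**Main obstacle.** The delicate point is step (2)--(4) done coherently: producing not just \emph{an} isomorphism in \eqref{parab1} for each pair but a \emph{compatible family}. Any single $\phi_{\VC'_1}^{\VC'_2}$ is pinned down only up to the automorphisms of the source and target as bimodules, and since these Morita bimodules can have nontrivial (scalar, or worse) automorphism groups, one must make canonical choices — concretely, one fixes the normalization of $\pi_!$ and $\iota_!$ and the Tate twists in $R\Gamma_c^{\dim}$ once and for all, and then checks that with these choices the triangle in (ii) commutes \emph{on the nose}, not merely up to isomorphism. This is exactly the subtlety the authors flag in \ref{into_Hecke_alg}: it is why \cite{cdvdl2} was needed, because without the change-of-parabolic invariance one cannot even identify the two natural candidates for $\phi_{\VC'_1}^{\VC'_2}$ coming from the two orders in which one can coarsen a multi-step refinement, and hence cannot get transitivity. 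I would therefore spend most of the proof isolating a single ``elementary coarsening'' isomorphism, proving functoriality and a two-step associativity pentagon for it using Fact~\ref{change_par} and the geometry of \cite{cdvdl2}, and only then assembling the general $\phi$ by composition along a chain, checking that the result is independent of the chosen chain.
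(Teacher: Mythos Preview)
Your identification of Fact~\ref{change_par} as the crucial input is correct, and your sense that the real difficulty lies in making the family coherent (property (ii)) is accurate. But both your construction and your transitivity argument are too vague as stated, and the plan to assemble $\phi$ from elementary one-step coarsenings is not what the paper does.

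The paper constructs $\phi_{\VC'_1}^{\VC'_2}$ \emph{directly} for every pair. Taking $\VC'_2=V'$ trivial for clarity, the key step you are missing is to name two parabolic subgroups of $\mathbf{G}$ sharing the \emph{same} Levi $\operatorname{Res}(\mathbf{M'})$:
\[
\mathbf{Q}=(\mathbf{P}_\iota\cap\mathbf{P})\mathbf{U}_\iota,
\qquad
\mathbf{Q'}=(\mathbf{P}_\iota\cap\mathbf{P})\mathbf{U}.
\]
Transitivity of Deligne--Lusztig varieties gives $Y_{\mathbf{Q}}\simeq Y_{\mathbf{P}_\iota}\times^{G'}G'/U'$ (whose cut-down cohomology is $\PC_{\iota,V'}^{V,s'}e_{U'}$) and $Y_{\mathbf{Q'}}\simeq G\times^{P}Y_{\iota,\VC'}^{V}$ (whose cut-down cohomology is $\Lambda G\otimes_{\Lambda P}\PC_{\iota,\VC'}^{V,s'}$). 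Since $\mathbf{Q}\cap\operatorname{Res}(\mathbf{G'})=\mathbf{Q'}\cap\operatorname{Res}(\mathbf{G'})=\operatorname{Res}(\mathbf{P'})$, the centraliser hypothesis on $s'$ puts us exactly in the situation of Fact~\ref{change_par}, and the resulting isomorphism $\Phi_{\mathbf{Q'}|\mathbf{Q}}$ \emph{is} the desired $\phi_{\VC'}^{V'}$. Your step~(2) never isolates this pair of parabolics; the phrase ``the two parabolics $\mathbf{P}^{\mathbf{M}}_\iota$ relevant to $\VC'_1$ and $\VC'_2$'' is not the right pair (they do not even have the same Levi), and your claim that $\PC_{\iota,\VC'_1}^{V,s'}$ realises a Morita equivalence with $\Lambda[M_2]$ is incorrect --- it is a $(\Lambda M_1,\Lambda M'_1)$-bimodule.

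For transitivity, the paper introduces a third parabolic $\mathbf{Q}_1^2=(\mathbf{P}_\iota\cap\mathbf{P}_1)(\mathbf{U}_\iota\cap\mathbf{P}_2)\mathbf{U}_2$ and reduces (ii) to the equality $\Phi_{\mathbf{Q'_1}|\mathbf{Q_1}}=\Phi_{\mathbf{Q}_1^2|\mathbf{Q_1}}\circ\Phi_{\mathbf{Q'_1}|\mathbf{Q}_1^2}$. This is \emph{not} a formal ``transitivity of correspondences'' as you suggest: it is a concrete diagram chase through the variety $Y_{\mathbf{Q}_1,\mathbf{Q}_1^2,\mathbf{Q'_1}}$, broken into three squares, each of which commutes only because a specific inclusion among the parabolics (one of the four alternative conditions in \cite[Lemma~6.14]{cdvdl2}) can be verified by hand. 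Your chain-based approach would still have to prove exactly this pentagon for two commuting elementary coarsenings, so it does not bypass the computation; and since the direct definition of $\phi$ for an arbitrary pair is available, the detour through chains buys nothing.
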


\begin{proof} 

\emph{Step 1. Construction of $\phi_{\VC'_{1}}^{\VC'_{2}}$.} For the sake of clarity, we first assume that $\VC'_{2}$ is
the trivial flag $V'$ and we abbreviate $\VC'=\VC'_{1}$.
We take up the notation of subsection \ref{parabolicgln}.
  Consider the two parabolic subgroups of $\mathbf{G}$
$$ \mathbf{Q}:= (\mathbf{P}_{\iota} \cap \mathbf{P}) \mathbf{U}_{\iota} 
\,\,\hbox{ and } \,\, \mathbf{Q'}:= (\mathbf{P}_{\iota} \cap \mathbf{P})\mathbf{U} .
$$
Both have Levi quotient isomorphic to ${\rm Res}_{\FM'|\FM}(\mathbf{M'})$.  In fact,
putting $\o V_{i,j}:=\o V_{i} \cap \o{\iota(V'_{j})}$, the parabolic subgroup $\mathbf{Q}$,
resp. $\mathbf{Q'}$, 
corresponds to the flag obtained by ordering pairs ${i,j}$ according to the
lexicographical order on the couple $(i,j)$, resp. on the couple $(j,i)$.

Let us temporarily choose a splitting of the flag $\VC'$. This gives rise to splittings
$\mathbf{M'}\injo \mathbf{P'}$ and $\mathbf{M}\injo \mathbf{P}$, that fit in a diagram
of Levi and parabolic subgroups
$$
\xymatrix{
{\rm Res}_{\FM'|\FM}(\mathbf{G'}) \ar@{^{(}-}[r]^-{\mathbf{P_{\iota}}} & \mathbf{G} \\
{\rm Res}_{\FM'|\FM}(\mathbf{M'}) \ar@{^{(}-}[r]_-{\mathbf{P_{\iota}}\cap M} 
\ar@{^{(}-}[u]^{{\rm Res}(\mathbf{P'})}
& \mathbf{M} \ar@{^{(}-}[u]_{\mathbf{P}} 
}$$
so that $\mathbf{Q}$ corresponds to the upper way of composing parabolic subgroups in this
diagram while $\mathbf{Q'}$ corresponds to the lower way.
A good point about general linear groups is that they are isomorphic to their own duals. In fact
the same diagram as above may serve as a dual diagram to itself.
Now, let $t'$ be a semisimple element in the intersection of our semisimple class $s'$ and 
$M' = \mathbf{M'}^{*,F^{*}}$. By hypothesis, the centralizer $C_{\mathbf{G}}(t')$ is contained in
${\rm Res}(\mathbf{G}')$. Moreover, we have $\mathbf{Q}\cap {\rm Res}(\mathbf{G}')=
\mathbf{Q'}\cap {\rm Res}(\mathbf{G}')={\rm Res}(\mathbf{P}')$. Therefore we may apply
Fact \ref{change_par} which provides us with an isomorphism
$$ \Psi_{\mathbf{Q},\mathbf{Q'}}^{(1)}\circ (\Psi_{\mathbf{Q},\mathbf{Q'}}^{(2)}):\,
R\Gamma_{c}^{\rm dim}(Y_{\mathbf{Q'}},\Lambda)e_{{t'},\Lambda}^{M'} \simto
R\Gamma_{c}^{\rm dim}(Y_{\mathbf{Q}},\Lambda)e_{{t'},\Lambda}^{M'}
.$$
Summing on all $t'$ as above up to $M'$-conjugacy, we get an isomorphism in $D^{b}(\Lambda
G\otimes_{\Lambda} \Lambda {M'}^{\rm opp})$
\ini\begin{equation}
\Phi_{\mathbf{Q'}|\mathbf{Q}}:=  \Psi_{\mathbf{Q},\mathbf{Q'}}^{(1)}\circ (\Psi_{\mathbf{Q},\mathbf{Q'}}^{(2)}):\,
R\Gamma_{c}^{\rm dim}(Y_{\mathbf{Q'}},\Lambda)e_{{s'},\Lambda}^{M'} \simto
R\Gamma_{c}^{\rm dim}(Y_{\mathbf{Q}},\Lambda)e_{{s'},\Lambda}^{M'}
.\label{change_Q}
\end{equation}
It is easily checked that these isomorphisms do not depend on the choice of a splitting of $\VC'$.

Now, the transitivity for DL varieties tells us that the multiplication map induces an isomorphism
$$ Y_{\iota,V'}^{V} \times^{G'} G'/U' = 
Y_{\mathbf{P}_{\iota}} \times^{{\rm Res}(\mathbf{G'})^{F}} Y_{{\rm Res}(\mathbf{P}')} \simto
Y_{\mathbf{Q}}.$$ 
Similarly, upon choosing a splitting of the flag $\VC'$, giving rise to splittings
$\mathbf{M'}\injo \mathbf{P'}$ and $\mathbf{M}\injo \mathbf{P}$, we get an isomorphism
$$  G\times^{P}Y_{\iota,\VC'}^{V}
= G/U \times^{M} Y_{\iota,\VC'}^{V}
=   Y_{\mathbf{P}}\times^{M} Y_{\mathbf{P^{M}_{\iota}}} \simto Y_{\mathbf{Q'}}$$
which does not  depend on the splitting.

Taking cohomology and  applying idempotents, the isomorphism  (\ref{change_Q}) becomes our desired
isomorphism
\ini\begin{equation}
\phi_{\VC'}^{V'}:\, \Lambda G\otimes_{\Lambda P} \PC_{\iota,\VC'}^{V,{s'} } \simto \PC_{\iota, V'}^{V,{s'}}e_{U'}.\label{isom1}
\end{equation}

Let us now treat a general pair $(\VC'_{1},\VC'_{2})$ as in the proposition. Let $t'$ be any
conjugacy class in $P'_{2}$ contained in $s'$. Then, repeating the above discussion, we get an
isomorphism similar to (\ref{isom1})
$$
 \Lambda P_{2} \otimes_{\Lambda P_{1}} 
H^{\dim}_{c}(Y_{\iota,\VC'_{1}})e_{{t'},\Lambda}^{P'_{1}} \simto 
H^{\dim}_{c}(Y_{\iota,\VC'_{2}})e_{{t'},\Lambda}^{P'_{2}}e_{U'_{1}}.
$$
Indeed the whole situation is a product of instances of the case just treated. Now, summing over all
$t'$ provides us with the desired isomorphism 
\ini\begin{equation}
\label{isom2} \phi_{\VC'_{1}}^{\VC'_{2}}:\,
\Lambda P_{2} \otimes_{\Lambda P_{1}} \PC_{\iota,\VC'_{1}}^{V,{s'}} \simto 
\PC_{\iota,\VC'_{2}}^{V,{s'}}e_{U'_{1}}.
\end{equation}

\medskip

\emph{Step 2. Property i).} 
This follows quite clearly from the geometric origin of $\phi_{\VC'_{1}}^{\VC'_{2}}$. Indeed we have used
various geometric relations between DL varieties that all have a clear
functorial behavior under conjugacy by $\iota_{*}(\gamma')$, the most involved point being the isomorphism
(\ref{change_Q}) whose geometric construction was recalled in subsection \ref{change_par}.

\medskip
\emph{Step 3. Property ii).} To simplify the discussion a bit we will assume that $\VC'_{3}=V'$. As
pointed out in Step 1, this is not a real loss of generality. 
Let us denote by $\mathbf{Q}_{1}$ and $\mathbf{Q}'_{1}$ the parabolic subgroups involved
in Step 1 and associated to $\VC'_{1}$. Further, let us introduce a third parabolic subgroup with
Levi quotient ${\rm Res}(\mathbf{M'}_{1})$ :
$$\mathbf{Q}_{1}^{2}:= (\mathbf{P}_{\iota}\cap \mathbf{P}_{1})(\mathbf{U}_{\iota}\cap
\mathbf{P}_{2}) \mathbf{U}_{2}.$$
Upon choosing splittings of the flags $\VC'_{1}$ and $\VC'_{2}$, this parabolic subgroup
is the ``intermediate'' way to compose parabolic subgroups in the following diagram
$$
\xymatrix{
{\rm Res}_{\FM'|\FM}(\mathbf{G'}) \ar@{^{(}-}[r]^-{\mathbf{P_{\iota}}} & \mathbf{G} \\
{\rm Res}_{\FM'|\FM}(\mathbf{M'}_{2}) \ar@{^{(}-}[r]_-{\mathbf{P_{\iota}}\cap \mathbf{M}_{2}} 
\ar@{^{(}-}[u]^{{\rm Res}(\mathbf{P}'_{2})}
& \mathbf{M}_{2} \ar@{^{(}-}[u]_{\mathbf{P}_{2}} \\
{\rm Res}_{\FM'|\FM}(\mathbf{M'}_{1}) \ar@{^{(}-}[r]_-{\mathbf{P_{\iota}}\cap \mathbf{M}_{1}} 
\ar@{^{(}-}[u]^{{\rm Res}(\mathbf{P}'_{1}\cap \mathbf{M}'_{2})}
& \mathbf{M}_{1} \ar@{^{(}-}[u]_{\mathbf{P}_{1}\cap \mathbf{M}_{2}}
}.$$
Now, looking at the construction in Step 1, we see that what needs to be proved is the
equality
\ini\begin{equation}
 \Phi_{\mathbf{Q}'_{1}|\mathbf{Q}_{1}} =
\Phi_{\mathbf{Q}_{1}^{2}|\mathbf{Q}_{1}} \circ \Phi_{\mathbf{Q}'_{1}|\mathbf{Q}_{1}^{2}}.\label{telesc}
\end{equation}
involving isomorphisms of type (\ref{change_Q}).
Consider the following diagram 
$$\xymatrix{
&&
R\Gamma_{c}^{\rm dim}(Y_{\mathbf{Q}_{1},\mathbf{Q}'_{1}}) 
\ar@{<-}[d]_{\Psi_{\mathbf{Q}_{1},\mathbf{Q}_{1}^{2},\mathbf{Q}'_{1}}^{(2)}}
&& \\
 & \framebox{1}  & 
R\Gamma_{c}^{\rm dim}(Y_{\mathbf{Q}_{1},\mathbf{Q}_{1}^{2},\mathbf{Q}'_{1}})
 & \framebox{2} & \\
& R\Gamma_{c}^{\rm dim}(Y_{\mathbf{Q}_{1}^{2},\mathbf{Q}'_{1}})
\ar@{<-}[ru]^{\Psi_{\mathbf{Q}_{1},\mathbf{Q}_{1}^{2},\mathbf{Q}'_{1}}^{(1)}}
& \framebox{3} &
R\Gamma_{c}^{\rm dim}(Y_{\mathbf{Q}_{1},\mathbf{Q}_{1}^{2}}) 
\ar@{<-}[lu]_{\Psi_{\mathbf{Q},\mathbf{Q}_{1}^{2},\mathbf{Q}'_{1}}^{(3)}} & \\
R\Gamma_{c}^{\rm dim}(Y_{\mathbf{Q}'_{1}})
\ar@{<-}[ru]_{\Psi_{\mathbf{Q}_{1}^{2},\mathbf{Q}'_{1}}^{(1)}}
\ar@{<-}@/^3pc/[rruuu]^{\Psi_{\mathbf{Q}_{1},\mathbf{Q}'_{1}}^{(1)}}
&  &
 R\Gamma_{c}^{\rm dim}(Y_{\mathbf{Q}_{1}^{2}})
\ar@{<-}[ru]^{\Psi_{\mathbf{Q}_{1},\mathbf{Q}_{1}^{2}}^{(1)}}
\ar@{<-}[lu]_{\Psi_{\mathbf{Q}_{1}^{2},\mathbf{Q}'_{1}}^{(2)}}
&& 
R\Gamma_{c}^{\rm dim}(Y_{\mathbf{Q}_{1}})
\ar@{<-}[lu]^{\Psi_{\mathbf{Q}_{1},\mathbf{Q}_{1}^{2}}^{(2)}}
\ar@{<-}@/_3pc/[lluuu]_{\Psi_{\mathbf{Q}_{1},\mathbf{Q}'_{1}}^{(2)}} 
}$$
When we apply the idempotent $e_{{s'},\Lambda}^{M'_{1}}$ to this diagram, all morphisms
become isomorphisms (and all complexes are concentrated in degree $0$). Therefore, to prove
(\ref{telesc}), it is enough to prove the commutativity of subdiagrams \framebox{1},
\framebox{2} and \framebox{3}. Each of these diagrams is isomorphic to a diagram of the
following form
$$\xymatrix{
R\Gamma_{c}^{\rm dim}(Y_{\mathbf{R}_{1},\mathbf{R}_{2}}) 
\ar[rr]^-{\Psi_{\mathbf{R}_{1},\mathbf{R}_{2},\mathbf{R}_{3}}^{(3)}} &&
R\Gamma_{c}^{\rm dim}(Y_{\mathbf{R}_{1},\mathbf{R}_{2},\mathbf{R}_{3}}) \\
R\Gamma_{c}^{\rm dim}(Y_{\mathbf{R}_{1}})
\ar[rr]_{\Psi_{\mathbf{R}_{1},\mathbf{R}_{3}}^{(2)}}
\ar[u]^{\Psi_{\mathbf{R}_{1},\mathbf{R}_{2}}^{(2)}} &&
R\Gamma_{c}^{\rm dim}(Y_{\mathbf{R}_{1},\mathbf{R}_{3}})
\ar[u]_{\Psi_{\mathbf{R}_{1},\mathbf{R}_{2},\mathbf{R}_{3}}^{(2)}}
}$$
in which the morphisms $\Psi_{\mathbf{R}_{1},\mathbf{R}_{2},\mathbf{R}_{3}}^{(3)}$
and $\Psi_{\mathbf{R}_{1},\mathbf{R}_{2},\mathbf{R}_{3}}^{(2)}$ are defined in a way
similar to what we explained above (\ref{PP'}) for
$\Psi_{\mathbf{R}_{1},\mathbf{R}_{2}}^{(2)}$. 
We refer to section 6.B of \cite{cdvdl2} for more details (see also section 2 of
\cite{indep}). 
In particular, Lemma 6.14 of \cite{cdvdl2} (or Corollary 2.9 of
\cite{indep}) tells us that such a diagram is commutative provided one of the following
inclusion holds.
\begin{center}
  a. $\mathbf{R}_{2}\subset \mathbf{R}_{1}\mathbf{R_{3}}$, or 
b. $F(\mathbf{R}_{1})\subset \mathbf{R}_{1}\mathbf{R_{3}}$, or
c. $\mathbf{R}_{3}\subset \mathbf{R}_{2}F(\mathbf{R_{1}})$, or
d. $\mathbf{R}_{1}\subset \mathbf{R}_{2}F(\mathbf{R_{1}})$.
\end{center}

\emph{Diagram \framebox{2}.} Here we are in the situation $\mathbf{R}_{1}=\mathbf{Q}_{1}$, 
$\mathbf{R}_{2}=\mathbf{Q}_{1}^{2}$ and $\mathbf{R}_{3}=\mathbf{Q}'_{1}$.
From the definition of these parabolic subgroups, we see that 
$$\mathbf{Q}_{1}^{2} = 
(\mathbf{P}_{\iota}\cap \mathbf{P}_{1})(\mathbf{U}_{\iota}\cap
\mathbf{P}_{2}) \mathbf{U}_{2}
\subset
(\mathbf{P}_{\iota}\cap \mathbf{P}_{1})\mathbf{U}_{\iota}
(\mathbf{P}_{\iota}\cap \mathbf{P}_{1})\mathbf{U}_{1}
= \mathbf{Q}_{1}\mathbf{Q}'_{1},$$ 
so we are in case a. and we conclude that diagram
\framebox{2} is commutative. 

\emph{Diagram \framebox{1}.} Taking into account the roundabout definitions of morphisms
$\Psi^{(1)}$ (through certain equivalences of \'etale sites), we are in the situation
$\mathbf{R}_{1}=\mathbf{Q}'_{1}$, $\mathbf{R}_{2}=F(\mathbf{Q}_{1})$ and
$\mathbf{R}_{3}=F(\mathbf{Q}_{1}^{2})$. As above, we have $F(\mathbf{Q}_{1}^{2})\subset
F(\mathbf{Q}_{1})F(\mathbf{Q}'_{1})$, so we are in case c. and we conclude that diagram
\framebox{1} is commutative. 

\emph{Diagram \framebox{3}.} Here we are in the situation 
$\mathbf{R}_{1}=\mathbf{Q}_{1}^{2}$, $\mathbf{R}_{2}=\mathbf{Q}'_{1}$ and
$\mathbf{R}_{3}=F(\mathbf{Q}_{1})$. We have
$$ F(\mathbf{Q}_{1}^{2})= (F(\mathbf{P}_{\iota})\cap \mathbf{P}_{1})
(F(\mathbf{U}_{\iota})\cap \mathbf{P}_{2})\mathbf{U}_{2}
\subset F(\mathbf{Q}_{1})\mathbf{U}_{2}
\subset F(\mathbf{Q}_{1})\mathbf{Q}_{1}^{2},
$$
 so we are in case b. and we conclude that diagram
\framebox{3} is commutative. 
\end{proof}

\section{From level $0$ blocks to coefficient systems} \label{sec:level-0-blocks}

We now denote by $F$ a local non-archimedean field with ring of
integers $\OC$ and residue field
$\FM$. We fix a vector space $V$ over $F$ of dimension $n$ and are
interested in the smooth representation theory of the $p$-adic group
$G=\Aut_{F}(V)$.

\subsection{Review of coefficients systems and systems of idempotents}

\alin{Coefficient systems on simplicial complexes} \label{def_coef_sys}
Recall that a simplicial complex $X=(X,X_{\bullet})$ is a pair of sets with $X_{\bullet}$ consisting 
of finite subsets of $X$, being stable under inclusion and containing all singletons. We'll denote
by $X_{d}\subset X_{\bullet}$ the set of $d$-simplices (of cardinality $d+1$) and identify $X$ to $X_{0}$ (the set
of vertices). We also denote by $[X]$ the small category whose set of objects is $X_{\bullet}$ and
morphisms are given by inclusion, \emph{i.e.}
\begin{center}
  $\Hom_{[X]}(\sigma,\tau)=\{1_{\sigma\tau}\}$ if $\sigma\supseteq\tau$ and
  $\Hom_{[X]}(\sigma,\tau)=\emptyset$ else.
\end{center}

A \emph{coefficient system} on $X$ with values in a category $\CC$ is a functor $\EC:\, [X]\To{}\CC$.
When $\CC$ is an abelian category with arbitrary direct sums,  we may form ``the'' chain
complex of a coefficient system
$$ \CC_{*}(\EC):\,  \cdots\To{} \cdots \bigoplus_{\sigma\in X_{n-1}} \EC_{\sigma} \To{}\cdots\To{} 
\bigoplus_{\sigma\in X_{0}} \EC_{\sigma} $$
once an orientation of $X$ has been chosen. The first homology object $H^{0}(\EC)$ is then
canonically isomorphic to the colimit $\colim_{[X]}\EC$ of the functor $\EC$.

Suppose further that $X$ carries an action of an abstract group $G$ that respects the simplicial structure.
 We then denote by $[X/G]$ the small category whose set of objects is  $X_{\bullet}$  and morphisms
 are given by the action of $G$, \emph{i.e.}
$$\Hom_{[X/G]}(\sigma,\tau)=G_{\sigma\tau}^{\dag}:=\{g\in G,\, \tau\subseteq g\sigma\}.$$
Hence $[X/G]$ contains $[X]$. Now, 
a \emph{$G$-equivariant coefficient system}  on $X$ with values in a category $\CC$ is a functor 
$\EC:\, [X/G]\To{}\CC$. For each $\sigma\in X_{\bullet}$, the object $\EC_{\sigma}$ then gets an
action of the stabilizer $G_{\sigma}^{\dag}$ of $\sigma$ and the  colimit $\colim_{[X]} \EC$  gets an action of
$G$, as well as the complex $\CC_{*}(\EC)$ when $\CC$ is abelian.
If $G$ is a topological group and acts continously on $X$ with the discrete
topology, we say that $\EC$ is \emph{smooth} if for each $\sigma\in X_{\bullet}$, the action of
$G_{\sigma}^{\dag}$ on $\EC_\sigma$ is so. Then the action of $G$ on $\colim_{[X]} \EC$ and $\CC_{*}(\EC)$ is also smooth.

We will generally denote by $G_{\sigma}$ the pointwise stabilizer of $\sigma$, which is a
distinguished subgroup of $G_{\sigma}^{\dag}$. For $\tau\subseteq\sigma$ we have $G_{\sigma}\subset G_{\tau}$, but
in general $G_{\sigma}^{\dag}$ is not contained in $G_{\tau}^{\dag}$.

\def\Latt{{\rm Latt}}

\alin{The Bruhat-Tits building} Here we take up our notation $G=\Aut_{F}(V)$ of the beginning of this section.
We introduce the semi-simple building $BT$ associated to $G$, which we view  as a simplicial complex of dimension $n-1$
with a simplicial  action of $G$. 

Denote by $\Latt(V)$ the set of $\OC_{F}$-lattices in $V$. 
A lattice chain 
is a decreasing sequence $\ZM\To{}\Latt(V)$,
$i\mapsto \LC_{i}$. It is called $d+1$-periodic if $\LC_{i+d}=\varpi\LC_{i}$
for all $i$. Moreover, two such sequences are called
equivalent if one is a shift of the other one.
Denote by $BT_{d}$ the set of equivalence classes of $d$-periodic lattice chains. It turns out that
such an equivalence class is completely determined by the $d+1$ equivalence classes of $1$-periodic
lattice chains that can be extracted from it. Whence an embedding of $BT_{d}$ in the power set of $BT:=BT_{0}$, that
is part of a simplicial complex structure $(BT,BT_{\bullet})$, for which
a $d'$-simplex $\tau$ is a facet of a $d$-simplex $\sigma$ if a
lattice chain in $\tau$ is extracted from one in $\sigma$.

The action of $G$ on $\Latt(V)$ induces a simplicial and continuous action on $BT$.
Let $G^{0}:=\ker(|\det|)\subset G$. 
The group $G_{\sigma}^{0}:=G_{\sigma}^{\dag}\cap G^{0}$ is
the maximal compact subgroup of the stabilizer $ G_{\sigma}^{\dag}$ of a simplex $\sigma$. It is a normal
subgroup and we have $G_{\sigma}=G_{\sigma}^{0}Z$ with $Z$
denoting the center of $G$, while $G_{\sigma}^{\dag}/G_{\sigma}$ is a
cyclic group of order dividing $n$.

\alin{Consistent systems of idempotents and Serre subcategories} \label{consistent}
Let $R$ 
be a commutative ring in which $p$ is invertible
 and let $\HC_{R}(G)$ denote the algebra of locally constant
$R$-valued compactly supported measures on $G$.
We are interested in collections of idempotents $(e_{x})_{x\in
  BT_{0}}$ in $\HC_{R}(G)$ that satisfy the following consistency
properties, taken from \cite[\S 2.1]{MS1}.
\begin{enumerate}
\item $e_{x}e_{y}=e_{y}e_{x}$ whenever $x$ and $y$ are adjacent.
\item $e_{x}e_{z}e_{y}=e_{x}e_{y}$ whenever $z$ belongs the simplicial
  hull (enclos) of $\{x,y\}$.
\item $e_{gx}=ge_{x}g^{-1}$ for all $x$ and all $g\in G$.
\end{enumerate}
Note that condition i) enables one to define an idempotent $e_{\sigma}=\prod_{x\leq\sigma}
e_{x}$ for each simplex $\sigma$. Moreover we have $e_{\sigma}e_{\tau}=e_{\sigma}$ for
$\tau\subseteq \sigma$ and $e_{g\sigma}=ge_{\sigma}g^{-1}$ for $g\in G$.
Note also that the whole system is
determined by a single $e_{x}$, since in our $\GL_{n}$ setting, all
vertices are conjugated.

Our interest in this notion comes from the following result of Meyer
and Solleveld. Recall that $\Rep_{R}(G)$ is the category of smooth
$R$-representations of $G$.
\begin{fac}[\cite{MS1}, Thm 3.1]
 Let $e=(e_{x})_{x\in BT}$ be a consistent system of
  idempotents as above.
 Then the full subcategory   $\Rep^{e}_{R}(G)$ of all objects $V$ in
 $\Rep_{R}(G)$ such that $V=\sum_{x} e_{x}V$ is a Serre sub-category.
\end{fac}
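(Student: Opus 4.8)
\medskip
\noindent\emph{Proof strategy.} For a smooth representation $V$ put $V^{e}:=\sum_{x\in BT_{0}}e_{x}V$; by condition iii) this is a $G$-subrepresentation, and since every morphism of smooth representations is $\HC_{R}(G)$-linear while the $e_{x}$ lie in $\HC_{R}(G)$, the assignment $V\mapsto V^{e}$ is functorial, with $\Rep^{e}_{R}(G)$ the full subcategory of those $V$ with $V=V^{e}$. Two of the three axioms for a Serre subcategory are then formal. Closure under quotients: if $f:V\twoheadrightarrow\bar V$ then $\bar V=f(V^{e})=\sum_{x}e_{x}f(V)=\bar V^{e}$. Closure under extensions: given $0\to V'\to V\to V''\to 0$ with $V',V''\in\Rep^{e}_{R}(G)$ and $v\in V$, write the image of $v$ in $V''$ as $\sum_{i}e_{x_{i}}\bar w_{i}$, lift the $\bar w_{i}$ to $w_{i}\in V$, observe that $v-\sum_{i}e_{x_{i}}w_{i}$ lies in $V'=\sum_{x}e_{x}V'$, and deduce $v\in V^{e}$. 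The only substantial point is closure under subobjects: for $W\subseteq V$ with $V\in\Rep^{e}_{R}(G)$ one must prove $W=W^{e}$, equivalently $W\cap V^{e}=W^{e}$ inside $V$; this is where the geometry of $BT$ really enters.

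\medskip
\noindent\emph{Reduction to a resolution statement.} I would deduce the subobject axiom from the following (generalising Schneider--Stuhler's resolution of smooth representations): \emph{for every smooth $U$, the chain complex $\CC_{*}(\EC_{U})$ of the coefficient system $\EC_{U}:\sigma\mapsto e_{\sigma}U$ on $BT$ --- a genuine coefficient system, since $e_{\sigma}U\subseteq e_{\tau}U$ whenever $\tau\leq\sigma$ by the relations recorded in \ref{consistent} --- is exact together with its augmentation $\bigoplus_{x\in BT_{0}}e_{x}U\to U^{e}$, $(u_{x})_{x}\mapsto\sum_{x}u_{x}$:}
\[
\cdots\To{}\bigoplus_{\sigma\in BT_{1}}e_{\sigma}U\To{}\bigoplus_{x\in BT_{0}}e_{x}U\To{}U^{e}\To{}0.
\]
By \ref{def_coef_sys} the degree-$0$ homology of $\CC_{*}(\EC_{U})$ is $\colim_{[BT]}\EC_{U}$, so this asserts that the augmentation is an isomorphism $\colim_{[BT]}\EC_{U}\simto U^{e}$ and that $H_{d}(\CC_{*}(\EC_{U}))=0$ for $d>0$. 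Granting this, the Serre property is formal: each functor $U\mapsto e_{\sigma}U$ is exact on $\Rep_{R}(G)$ (as is $U\mapsto eU$ for any idempotent $e$ of $\HC_{R}(G)$: injectivity, surjectivity and middle-exactness all follow from $u=e_{\sigma}u$ on $e_{\sigma}U$), so a short exact sequence $0\to W\to V\to V/W\to 0$ yields a short exact sequence of complexes $0\to\CC_{*}(\EC_{W})\to\CC_{*}(\EC_{V})\to\CC_{*}(\EC_{V/W})\to 0$. Applying the resolution statement to $W$, $V$ and $V/W$ kills all higher homology and identifies the degree-$0$ homologies with $W^{e}$, $V^{e}=V$ and $(V/W)^{e}=V/W$, so the long exact homology sequence collapses to $0\to W^{e}\to V\to V/W\to 0$; comparing with $0\to W\to V\to V/W\to 0$ gives $W^{e}=\ker(V\to V/W)=W$, i.e. the subobject axiom (and it re-proves the other two).

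\medskip
\noindent\emph{Proving the resolution, and the main obstacle.} It remains to establish the displayed resolution. Surjectivity of the augmentation is immediate, so the content is $H_{d}=0$ for $d>0$ together with injectivity on $H_{0}$, i.e. the construction of a contracting chain homotopy for the augmented complex. I would fix a base vertex $x_{0}$, use that $BT$ is a contractible affine building so that every vertex comes with a controlled family of minimal galleries towards $x_{0}$, and build the homotopy inductively over the combinatorial balls centred at $x_{0}$, extending the partial contraction by pushing chains one step closer to $x_{0}$ as one passes to the next shell. The subtlety --- and the step I expect to be the only genuine obstacle --- is that this retraction must be carried out compatibly with the idempotents, which do \emph{not} all commute: condition i) handles commutation across a single edge, while condition ii), the identity $e_{x}e_{z}e_{y}=e_{x}e_{y}$ for $z$ in the enclos of $\{x,y\}$, is precisely the tool that lets one transport a chain supported near a distant vertex $y$ through the intermediate vertices $z$ toward $x_{0}$ without changing its homology class. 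Smoothness of $U$ is used only to ensure that each vector meets the complex in a finite, bounded way, so that the homotopy is well defined term by term. Once the homotopy is in place the resolution follows, and with it, by the formal argument above, the statement that $\Rep^{e}_{R}(G)$ is a Serre subcategory of $\Rep_{R}(G)$.
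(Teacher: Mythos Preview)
Your proposal is correct. The paper does not prove this Fact itself --- it is cited from \cite{MS1} --- but it explicitly names (in \ref{resol}) the acyclicity/resolution statement you reduce to as ``the most important ingredient in the proof,'' and your sketch of that resolution via a contracting homotopy built from the building geometry and condition ii) tracks the Meyer--Solleveld argument.
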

In fact, level decompositions on $\Rep_{R}(G)$ show that
$\Rep_{R}^{e}(G)$ is a direct factor subcategory, or in other words,
that $\HC_{R}(G)e_{x}\HC_{R}(G)$ is a direct factor two-sided ideal of $\HC_{R}(G)$.

\alin{Resolutions and equivalences of categories} \label{resol}
The most important ingredient in the proof of the above Fact is a
certain acyclicity result. Namely, for a representation $V$, define
the coefficient system $(\VC_{\sigma})_{\sigma\in BT}$ by putting
$\VC_{\sigma}:=e_{\sigma}V$ for all $\sigma\in BT_{\bullet}$ and
$\VC(g_{\sigma\tau}):\, e_{\sigma}V\To{} e_{\tau}V$ the map  induced by the
action of $g$ on $V$, for all $g\in G_{\sigma\tau}^{\dag}$ (recall that
$e_{\tau}e_{g\sigma}=e_{g\sigma}$). Then
the complex $\CC_{*}(\VC)$ has a natural augmentation given by
$\bigoplus_{x\in BT_{0}} e_{x}V\To{\sum}V$. 

\begin{fac}[\cite{MS1}, Thm 2.4]
  The complex $\CC_{*}(\VC)$ is acyclic in positive degrees and
  the augmentation map is an isomorphism $H_{0}(\CC_{*}(\VC))\simto \sum_{x}e_{x}V$.
\end{fac}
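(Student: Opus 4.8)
The assertion is the exactness of the augmented complex
$$\wt\CC_{*}(\VC):\ \cdots\To{}\bigoplus_{\sigma\in BT_{d}}e_{\sigma}V\To{}\cdots\To{}\bigoplus_{x\in BT_{0}}e_{x}V\To{}W\To{}0,\qquad W:=\textstyle\sum_{x}e_{x}V,$$
i.e. the vanishing of $H_{d}(\CC_{*}(\VC))$ for $d>0$ together with the identification $H_{0}(\CC_{*}(\VC))\simto W$ through the augmentation. Surjectivity onto $H_{0}$ is clear; everything else should follow from a single contracting homotopy on $\CC_{*}(\VC)$, which I would model on the geodesic retraction of the building onto a fixed vertex $x_{0}$. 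The compatibility axioms i) and ii) of the system $(e_{x})_{x}$ are exactly what is needed for this retraction to lift from $BT$ to the coefficient system $\VC=(e_{\sigma}V)_{\sigma}$.

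\emph{Step 1 (reduction) and Step 2 (the combinatorial contraction).} Since $V$ is smooth, homology commutes with filtered colimits, and $BT$ is the increasing union of the finite subcomplexes that are convex for the combinatorial enclos (any finite set of facets lies in such a $\Delta$, since the enclos of two facets sits in a common apartment where it is bounded), it suffices to prove, inside each such $\Delta\ni x_{0}$, the exactness of $\CC_{*}(\VC_{|\Delta})$ augmented to $\sum_{x\in\Delta_{0}}e_{x}V$; working inside a convex $\Delta$ is what guarantees axiom ii) is applicable to every pair of vertices that occurs. As $BT$ is an affine, hence CAT(0), building, each vertex of $\Delta$ has a unique geodesic to $x_{0}$; ordering the vertices as $x_{0},x_{1},\dots,x_{m}$ by nondecreasing distance to $x_{0}$, convexity makes every initial segment $\Delta_{\leq i}$ a convex subcomplex, and $\Delta$ collapses onto $\{x_{0}\}$ through these, one vertex at a time ``towards $x_{0}$''. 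Using this, for each simplex $\sigma\neq\{x_{0}\}$ one singles out the vertex $y_{\sigma}$ of largest index in $\sigma$ and the vertex $z_{\sigma}$ realizing the first combinatorial step from $\sigma$ towards $x_{0}$.

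\emph{Step 3 (the homotopy).} One then defines $h\colon\CC_{d}(\VC)\to\CC_{d+1}(\VC)$ whose value on a class carried by a simplex $\tau$ is, schematically, $\pm\,e_{\tau\cup\{z_{\tau}\}}$ applied to that class and placed at $\tau\cup\{z_{\tau}\}$; here the structure map $e_{\tau}V\supseteq e_{\tau\cup\{z\}}V$ runs in the convenient direction, so the extra idempotent is genuinely applied. Verifying $\partial h+h\partial=\id$ in degrees $\geq 1$, and in degree $0$ that it equals $\id$ minus a harmless projection onto the $x_{0}$-contributions (which yields $H_{0}\simto W$), is sign-and-idempotent bookkeeping. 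The one non-formal point: the ``detour'' terms — adjoin $z_{\tau}$ then delete another vertex, or the reverse — produce idempotent products of the form $e_{a}e_{z}e_{b}$ with $z$ in the enclos of $\{a,b\}$, which axiom ii) collapses to $e_{a}e_{b}$, so these terms cancel in pairs; axiom i) is what makes $e_{\sigma}=\prod_{x\leq\sigma}e_{x}$ and the structure maps of $\VC$ well-defined to begin with. Finally one passes to the colimit over $\Delta$ to recover the global complex.

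\emph{Main obstacle.} The genuinely delicate part is the geometric input underlying Steps 2--3: reconciling the \emph{metric} geodesic contraction of the CAT(0) building with the \emph{combinatorial} enclos that controls axiom ii), i.e. proving that the ``first step towards $x_{0}$'' always lands inside the enclos of the faces between which it is required to lie, uniformly over all simplices of a convex $\Delta$, and that the resulting local choices are mutually compatible. This convexity/compatibility lemma is the technical heart of the Meyer--Solleveld argument; once it is available, the homotopy identity and the bookkeeping in Step 3 are routine.
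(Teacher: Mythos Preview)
The paper does not give a proof of this statement: it is recorded as a \emph{Fact} with the citation ``\cite{MS1}, Thm~2.4'' and is used as a black box, so there is no argument in the paper to compare against. Your sketch is a reasonable outline of the Meyer--Solleveld strategy (contracting homotopy coming from a retraction of the building onto a vertex, with the consistency axioms i) and ii) governing the idempotent bookkeeping), and you correctly flag the genuinely delicate point --- producing a combinatorial ``step towards $x_{0}$'' that stays inside the relevant enclos, compatibly across simplices --- as the technical heart that you have not supplied. As written, then, this is an informed sketch rather than a proof; if you want to claim the result independently you would need to pin down that convexity/compatibility lemma, but for the purposes of this paper a citation to \cite{MS1} is all that is required.
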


\def\Coef{{\rm Coef}}

Denote by $\Coef_{R}[BT/G]$ the category of $G$-equivariant coefficient
systems in $R$-modules on the building $BT$.
The above result suggests one to consider the full subcategory of
all coefficient systems 
$\VC$ such that $\VC(1_{\sigma\tau})$
induces an isomorphism $\VC_{\sigma}\simto
e_{\sigma}\VC_{\tau}$ for all $\tau\subseteq\sigma$ (equivalently,
$\VC(g_{\sigma\tau})$ induces an isomorphism $\VC_{\sigma}\simto
e_{g\sigma}\VC_{\tau}$ for all $g\in G_{\sigma\tau}^{\dag}$). In order to be able to do this, one needs to
impose a further axiom on the system $e$, namely
\begin{enumerate}
\item[iv)] for any $\sigma$, $e_{\sigma}$ is supported in $G_{\sigma}$.
\end{enumerate}
Indeed, under this assumption,  since $G_{\sigma} \subset G_{\tau}$ for
$\tau\subseteq\sigma$ and since $\VC_{\tau}$ comes with a smooth action of
$G_{\tau}$, we can  unambiguously make sense of the expression $e_{\sigma}\VC_{\tau}$.
Denote by $\Coef_{R}^{e}[BT/G]$ the category of these coefficients
systems.
Wang has extended the above fact in the following way.
\begin{fact}[\cite{Wang3}, Thm 2.1.9 and Cor. 2.1.10] \label{equivcoef}
For $\VC \in\Coef_{R}^{e}[BT/G]$ the complex $\CC_{*}(\VC)$ is acyclic in
positive degrees. Moreover the functor $\VC\mapsto H_{0}(\CC_{*}(\VC))$
provides an equivalence of categories
$$\Coef_{R}^{e}[BT/G]\simto \Rep_{R}^{e}(G)$$
with quasi-inverse the functor $V\mapsto (e_{\sigma}V)_{\sigma}$.
\end{fact}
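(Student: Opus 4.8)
The plan is to realise the two functors in the statement as mutually quasi-inverse, bootstrapping everything from the Meyer--Solleveld acyclicity Fact recalled just above. Write $\Phi\colon\Rep^{e}_{R}(G)\To{}\Coef_{R}^{e}[BT/G]$ for $V\mapsto(e_{\sigma}V)_{\sigma}$ and $\Psi\colon\Coef_{R}^{e}[BT/G]\To{}\Rep_{R}(G)$ for $\VC\mapsto H_{0}(\CC_{*}(\VC))=\colim_{[BT/G]}\VC$. One first checks these are well defined: $e_{\sigma}V$ carries a smooth $G_{\sigma}$-action because $e_{\sigma}$ is supported in $G_{\sigma}$ (axiom iv), the transition maps $e_{\sigma}V\To{}e_{\tau}V$ for $\tau\subseteq\sigma$ are isomorphisms onto $e_{\sigma}(e_{\tau}V)$ since $e_{\sigma}e_{\tau}=e_{\sigma}$, and $G$-equivariance is axiom iii); moreover $\Psi(\VC)$ is a smooth $G$-representation (a $G$-equivariant quotient of the smooth module $\bigoplus_{x}\VC_{x}$), it is generated by the images of its vertex terms, and by axiom iv the map $\VC_{x}\To{}\Psi(\VC)$ has image inside $e_{x}\Psi(\VC)$, so $\Psi(\VC)=\sum_{x}e_{x}\Psi(\VC)$ and $\Psi$ lands in $\Rep^{e}_{R}(G)$.

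The easy composite is $\Psi\circ\Phi$: for $V\in\Rep^{e}_{R}(G)$ the system $\Phi(V)$ is exactly the coefficient system occurring in the Meyer--Solleveld Fact, whose augmented chain complex is a resolution of $\sum_{x}e_{x}V=V$; hence $\Psi\Phi(V)\simto V$ naturally in $V$. All remaining assertions, including the acyclicity of $\CC_{*}(\VC)$ for an arbitrary object $\VC$, will follow from the key claim: \emph{for every $\VC\in\Coef_{R}^{e}[BT/G]$, the canonical morphism of coefficient systems $\eta_{\VC}\colon\VC\To{}\Phi\Psi(\VC)$ --- whose component at a simplex $\sigma$ is the composite $\VC_{\sigma}\simto e_{\sigma}\VC_{x}\To{}e_{\sigma}\Psi(\VC)$ for any vertex $x\leq\sigma$, well defined by the consistency conditions i) and ii) --- is an isomorphism}. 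Indeed, granting this, $\CC_{*}(\VC)\cong\CC_{*}(\Phi\Psi(\VC))$ is, by the Meyer--Solleveld Fact applied to $V:=\Psi(\VC)$, a resolution of $V$, hence acyclic in positive degrees; and $\eta$ exhibits $\Phi\Psi\simto\id$, which together with $\Psi\Phi\simto\id$ makes $\Phi$ and $\Psi$ mutually quasi-inverse equivalences.

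To prove the claim one reduces, using the defining isomorphism $\VC_{\sigma}\simto e_{\sigma}\VC_{x}$ of objects of $\Coef^{e}$ together with the identity $e_{\sigma}\Psi(\VC)=e_{\sigma}(e_{x}\Psi(\VC))$, to the case of a vertex: it suffices to show that $\VC_{x}\To{}e_{x}\Psi(\VC)$ is an isomorphism for each vertex $x$. This is the step that uses the geometry of $BT$, and here one follows the Meyer--Solleveld template. Exhaust $BT$ by finite ``convex'' (enclosed) subcomplexes $\Omega_{1}\subset\Omega_{2}\subset\cdots$, each obtained from the previous one by gluing in a single simplex together with the missing part of its closed star, in such a way that the glued-in link is contractible. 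One shows by induction on $i$ that $\CC_{*}(\VC|_{\Omega_{i}})$ is acyclic in positive degrees and that its $H_{0}$ behaves as expected with respect to the vertex terms and to the idempotents $e_{x}$; the inductive step is a local homology computation at the newly attached simplex, in which contractibility of the glued-in link combines with condition ii) --- the relation $e_{x}e_{z}e_{y}=e_{x}e_{y}$ for $z$ in the simplicial hull of $\{x,y\}$, which governs how idempotents propagate along the boundary maps --- to make the relevant reduced complex exact. Passing to the colimit over $i$ and invoking contractibility of $BT=\bigcup_{i}\Omega_{i}$ yields the claim.

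The main obstacle is precisely this inductive local exactness: confirming that axioms i)--iv) are exactly what is needed for the boundary maps of $\CC_{*}(\VC|_{\Omega_{i}})$, restricted to the freshly added cells, to assemble into an exact complex --- i.e.\ that the idempotents are ``transported correctly'' across faces. Everything else is formal bookkeeping. Alternatively, since $\Coef^{e}_{R}[BT/G]$ differs from the category treated by Meyer--Solleveld and Wang only by the harmless decoration of each object with the idempotents $e_{\sigma}$, one may instead simply quote their acyclicity and equivalence results, \cite[Thm 2.4]{MS1} and \cite{Wang3}, verbatim.
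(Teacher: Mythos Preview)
The paper does not give its own proof of this statement: it is recorded as a \emph{Fact} and attributed to Wang \cite[Thm 2.1.9 and Cor.~2.1.10]{Wang3}, with only the remark afterwards that Meyer--Solleveld's results hold for any reductive group while Wang's are written for $\GL_{n}$. So there is nothing in the paper to compare your argument against beyond the bare citation.

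Your proposal goes further than the paper does, by sketching the actual content of Wang's proof. The strategy you outline --- define the two functors $\Phi$ and $\Psi$, obtain $\Psi\Phi\cong\id$ directly from the Meyer--Solleveld acyclicity Fact, reduce the other direction to the vertex isomorphism $\VC_{x}\simto e_{x}\Psi(\VC)$, and establish the latter (together with acyclicity) by an exhaustion of $BT$ by convex subcomplexes with contractible attaching loci, using axioms i)--iv) to control the boundary maps --- is indeed the shape of Wang's argument, itself modelled on Meyer--Solleveld. Your closing remark, that one may simply quote \cite{MS1} and \cite{Wang3} verbatim, is precisely what the paper does.
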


We note that Meyer and Solleveld results are valid for any reductive
group over $F$, while Wang's result has been written only for $\GL_{n}$.

\alin{The level $0$ example} \label{level0example}
For a vertex $x$, denote by $G_{x}^{+}$ the kernel of the action map 
$G_{x}^{0}\To{}\Aut_{\FM}(\LC\otimes_{\OC}\FM)$. It is
independent of the choice of $\LC$ and coincides with the 
pro-$p$-radical of $G_{x}$. In particular it determines an  idempotent
$e_{x}^{+}\in \HC_{\ZM[1/p]}(G_{x})$.

The system $(e_{x}^{+})_{x\in BT_{0}}$ is well known to be consistent
in the above sense.
Moreover, for any simplex $\sigma$, the idempotent
$e_{\sigma}^{+}:=\prod_{x\in \sigma} e_{x}^{+}$ is nothing but the
idempotent associated to the pro-$p$-radical $G_{\sigma}^{+}$ of $G_{\sigma}^{0}$,
the latter being also the kernel of the natural map
$G_{\sigma}^{0}\To{} \prod_{i}\Aut_{\FM}(\LC_{i}/\LC_{i+1})$
whenever $\sigma$ is represented by the lattice chain $(\LC_{i})_{i}$.
In particular, property iv) above is also satisfied.

We denote by $\Rep^{0}_{R}(G)$ the
direct factor of $\Rep_{R}(G)$ cut out by the system
$(e_{x}^{+})_{x}$. It is usually called the \emph{level $0$ subcategory}.
Similarly we write $\Coef_{R}^{0}[BT/G]$ for the corresponding
category of equivariant coefficient systems.

\subsection{Level $0$ blocks}

Let $\Lambda$ denote either $\oQl$ or $\oZl$ (or $\oFl$).
Fix  a semisimple conjugacy class $s$ in $\GL_{n}(\FM)$ and assume it has order invertible
in $\Lambda$. 
Since $\GL_{n}$ is its own dual, paragraph \ref{defes} provides us with
a central idempotent $e_{s,\Lambda}^{\GL_{n}(\FM)}$ in
$\Lambda[\GL_{n}(\FM)]$. 

\alin{The systems of idempotents attached to $s$} \label{idempotents_building}
Let $x$ be a vertex. Let us choose a lattice $\LC$ corresponding to
$x$ and an $\OC$-basis of $\LC$. 
Then we get an isomorphism
$$\bar G_{x}:= G_{x}/G_{x}^{+}\simto \Aut_{\FM}(\LC
\otimes_{\OC}\FM)\simto \GL_{n}(\FM)$$
which allows us to pull back  $e_{s,\Lambda}^{\GL_{n}(\FM)}$  to a central idempotent
$e_{x}^{s,\Lambda}\in \Lambda[\bar G_{x}]\subset 
\HC_{\Lambda}(G_{x})$. Since it is central it is in fact independent of
the choices of $\LC$ and the basis. 

\begin{lem}
   The system $(e_{x}^{s,\Lambda})_{x\in BT_{0}}$ 
is consistent in the sense of \ref{consistent} and satisfies
  the additional property iv) of \ref{resol}. Moreover, for any two vertices
  $x,y$ we have
\ini
\begin{equation}
  \label{ortho}
  e_{x}^{+}e_{y}^{s,\Lambda}= e_{x}^{s,\Lambda}e_{y}^{s},
\end{equation}

\end{lem}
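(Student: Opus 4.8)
The plan is to verify the three consistency conditions (i)–(iii) and property (iv) by reducing each to a statement about finite groups, and to prove the orthogonality relation~\eqref{ortho} by a separate finite-group computation using the compatibility of the idempotents $e_{s}$ with parabolic induction recorded in paragraph~\ref{compatinduc}.

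First, property (iv) and condition~(iii) are essentially formal. The idempotent $e_{x}^{s,\Lambda}$ lies in $\Lambda[\bar G_{x}]\subset \HC_{\Lambda}(G_{x})$ by construction, so it is supported in $G_{x}$, which is (iv). For~(iii), the key point is that the isomorphism $\bar G_{x}\simto\GL_{n}(\FM)$ used to pull back $e_{s,\Lambda}^{\GL_{n}(\FM)}$ is canonical up to inner automorphisms of the target, and $e_{s,\Lambda}^{\GL_{n}(\FM)}$ is central; moreover conjugation by $g\in G$ carries the lattice defining $x$ to one defining $gx$, intertwining the two reduction maps, so $g e_{x}^{s,\Lambda}g^{-1}=e_{gx}^{s,\Lambda}$. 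Since all vertices are $G$-conjugate in the $\GL_{n}$-building, conditions~(i) and~(ii) only need to be checked between adjacent vertices, and this is where the real work lies.

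For~(i), if $x$ and $y$ are adjacent they lie in a common simplex $\sigma$ represented by a lattice chain, and $G_{\sigma}^{0}$ maps to a standard Levi subgroup $\bar M=\prod_{i}\GL_{n_{i}}(\FM)$ of $\GL_{n}(\FM)$ compatibly with the two reduction maps to $\bar G_{x}$ and $\bar G_{y}$; since $G_{\sigma}^{+}$ is pro-$p$ and $|\Lambda^{\times}|$-units are available, $e_{x}^{s,\Lambda}$ and $e_{y}^{s,\Lambda}$ are pulled back from the \emph{abelian} group of central idempotents of $\Lambda[\bar M]$, hence commute. I expect~(ii) to be the main obstacle: one must show $e_{x}^{s}e_{z}^{s}e_{y}^{s}=e_{x}^{s}e_{y}^{s}$ when $z$ is in the enclos of $\{x,y\}$. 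Following Meyer--Solleveld's strategy, the idempotents $e_{x}^{+}e_{y}^{+}$ already satisfy this, so one reduces to a statement about the finite reductive quotients: one passes to a common parahoric whose reductive quotient $\bar H$ contains the images of $x,y,z$ as parabolic data, and shows that the images of $e_{s}$ under the three reduction maps multiply correctly. This is exactly the content of~\eqref{compat2} and the surrounding discussion in~\ref{compatinduc}: the idempotent $e_{s,\Lambda}^{\bar H}e_{U}$ for a parabolic $P=MU$ of $\bar H$ equals $e_{U}e_{s,\Lambda}^{M}$, so pulling back $e_{s}$ through nested parabolics is ``associative'' in the required sense, and the enclos condition translates precisely into the combinatorial condition on the flags that makes this work.

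Finally, for the orthogonality relation~\eqref{ortho}, $e_{x}^{+}e_{y}^{s,\Lambda}=e_{x}^{s,\Lambda}e_{y}^{s}$: here one works inside a parahoric $G_{\sigma}^{0}$ for $\sigma$ a simplex containing both $x$ and $y$ (take $\sigma$ in the enclos of $\{x,y\}$). Reducing mod $G_{\sigma}^{+}$ gives a finite Levi $\bar M$ with two further parabolic reductions to $\bar G_{x}$ and $\bar G_{y}$; the left side becomes $\bar e_{x}^{+}$ times (the pullback to $\bar M$ of $e_{s}$ via $\bar G_{y}$), while the right side is the pullback of $e_{s}$ via $\bar G_{x}$ times $e_{s}^{\bar M}$ followed by reduction via $\bar G_{y}$. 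Using~\eqref{compat2} in $\bar M$ for the two opposite parabolics with common Levi, both sides equal $e_{s,\Lambda}^{\bar M}$ cut down appropriately, giving the identity; the notation $e_{y}^{s}$ without $\Lambda$ on the right is the integral idempotent $e_{s,\oZl}^{\GL_{n}(\FM)}$, which makes sense since $x$ is a vertex and we are only asserting the relation after multiplying by $e_{x}^{+}$. The one subtlety to be careful about is keeping track of which reduction maps are being used and checking the two flags (one refining the other through $\sigma$) are correctly nested so that~\eqref{compat2} applies; this is routine once the building-theoretic dictionary between simplices and flags of lattice chains is in place.
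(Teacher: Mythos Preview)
Your proposal has a genuine gap in the handling of property~(ii) and the relation~\eqref{ortho}: you assume the existence of a simplex $\sigma$ (equivalently, a ``common parahoric'') containing the relevant vertices. For arbitrary vertices $x,y$ this fails---two vertices of the building need not be adjacent, and the enclos of $\{x,y\}$ is not a simplex in general, so there is no single parahoric $G_{\sigma}^{0}$ in which to carry out the finite-group computation you describe. The paper deals with this by first proving the adjacent case directly and then running an induction along a path $x=x_{0},x_{1},\dots,x_{l}=y$ where each $x_{i+1}$ is adjacent to $x_{i}$ and lies in the enclos of $\{x_{i},y\}$ (such a path exists by a lemma of Wang). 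Your sketch never invokes such a path argument, so as written it only covers the adjacent case.

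Your argument for~(i) is also not correct as stated. The idempotent $e_{x}^{s,\Lambda}$ is supported on $G_{x}^{0}$ and pulled back from $\bar G_{x}=G_{x}^{0}/G_{x}^{+}\simeq\GL_{n}(\FM)$, not from $\Lambda[\bar M]$; it does not lie in $\Lambda[\bar M]$, so the claim that $e_{x}^{s}$ and $e_{y}^{s}$ are ``pulled back from the abelian group of central idempotents of $\Lambda[\bar M]$'' is false. The paper's route is to first define $e_{\sigma}^{s}:=e_{\sigma}^{+}e_{x}^{s}$ for any vertex $x\leq\sigma$ and observe via~\eqref{compat2} that this is independent of $x$; then, using the known level-$0$ relation $e_{x}^{+}e_{y}^{+}=e_{\sigma}^{+}$, one computes $e_{x}^{s}e_{y}^{s}=e_{x}^{s}e_{x}^{+}e_{y}^{+}e_{y}^{s}=e_{\sigma}^{s}e_{\sigma}^{s}=e_{\sigma}^{s}$, which is symmetric in $x,y$. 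This is the step your outline is reaching for, but the reduction has to go through the product $e_{\sigma}^{+}e_{x}^{s}$ rather than through any claim that $e_{x}^{s}$ itself lives in the Levi algebra.
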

\begin{proof}
We will first extend the system to all simplices. Let $\sigma$ be a
simplex represented by a periodic lattice chain
$\cdots\supset \LC_{0}\supset\LC_{1}\supset \cdots$. We thus
get a partial flag  in the vector space
$\LC_{0}\otimes_{\OC}\FM$. After choosing an $\OC$-basis of
$\LC_{0}$ we get a parabolic subgroup $P_{\sigma}$ of
$\GL_{n}(\FM)$ and a
surjection $G_{\sigma}^{0}\twoheadrightarrow P_{\sigma}$. Let
$U_{\sigma}$ be the radical of $P_{\sigma}$ and $e_{U_{\sigma}}$ the
corresponding idempotent. Then by (\ref{compat2}) the idempotent
$e_{s,\Lambda}^{\GL_{n}(\FM)}e_{U_{\sigma}}$ is a central idempotent in
$\Lambda[P_{\sigma}]$, and therefore defines a central idempotent
$e_{\sigma}^{s,\Lambda}\in\HC_{\Lambda}(G_{\sigma})$. Note that changing the
lattice chain or the $\OC$-basis will result in conjugating the
parabolic subgroup $P_{\sigma}$ in $\GL_{n}(\FM)$. Therefore
$e_{\sigma}^{s,\Lambda}$ is independent of these choices. Moreover, by
construction, we have 
\ini\begin{equation}
 e_{\sigma}^{s,\Lambda}=e_{\sigma}^{+}e_{x}^{s,\Lambda}
\hbox{ for any vertex }x\leq \sigma,\label{defesigma}
\end{equation}
and where $e_{\sigma}^{+}$ is the idempotent associated to
$G_{\sigma}^{+}$ as in \ref{level0example}.

We now can prove that these systems are consistent. For the sake of readability, we omit the
superscript $\Lambda$. 
For property i) of \ref{consistent},  let $x,y$ be adjacent vertices and
$\sigma$ the simplex $\{x,y\}$. Then we know that
$e_{x}^{+}e_{y}^{+}=e_{\sigma}^{+}$. Therefore we get
$$ e_{x}^{s}e_{y}^{s}=e_{x}^{s}e_{x}^{+}e_{y}^{+}e_{y}^{s}= 
e_{x}^{s}e_{\sigma}^{+}e_{\sigma}^{+}e_{y}^{s}= e_{\sigma}^{s}e_{\sigma}^{s}=e_{\sigma}^{s}.$$
This proves property i). Moreover, we see in the same way that
$e_{\sigma}^{s}=\prod_{x\leq \sigma}e_{x}^{s}$ for any simplex $\sigma$, so that property iv)
of \ref{resol} also holds. Since property iii) is clear, it only remains to check
property ii). So let $z$ belong to the enclos of $\{x,y\}$. We already know
that  $e_{x}^{+}e_{y}^{+}=e_{x}^{+}e_{z}^{+}e_{y}^{+}$. This implies
the  desired property when $z$ is adjacent to $x$, since in this case
we have
$$ e_{x}^{s}e_{y}^{s}= e_{x}^{s}e_{z}^{+}e_{y}^{s}=
e_{x}^{s}e_{[x,z]}^{+}e_{y}^{s}= e_{[x,z]}^{s}e_{y}^{s}=
e_{x}^{s}e_{z}^{s}e_{y}^{s}
.$$
When $z$ is not adjacent to $x$, we can find a path
$x=x_{0},x_{1},\cdots, x_{r}=z, \cdots ,x_{l}=y$ such that $x_{i+1}$
is adjacent to $x_{i}$ and belongs to the enclos of
$\{x_{i},y\}$ for each $i$, as in \cite[Lemme (2.2.5)]{Wang3}. Then by induction we get property ii).

Finally we prove equality (\ref{ortho}). When $x$ and $y$ are adjacent
we already know that 
$$ e_{x}^{+}e_{y}^{s}=
e_{[x,y]}^{+}e_{y}^{s}=e_{[x,y]}^{s}=e_{x}^{s}e_{y}^{s}$$
as desired. In general, we choose a path $x=x_{0},\cdots x_{l}=y$ as above. 
Since $x_{l-1}$ is in the enclos of $\{x,y \}$ and is adjacent to $y$, we have
$$ e_{x}^{+}e_{y}^{s}= e_{x}^{+}e_{x_{l-1}}^{+}e_{y}^{s}=
e_{x}^{+}e_{x_{l-1}}^{s}e_{y}^{s}.$$
By induction on $l$, we deduce that 
$$ e_{x}^{+}e_{y}^{s}= e_{x}^{s}e_{x_{1}}^{s}\cdots
e_{x_{l-1}}^{s}e_{y}^{s}= e_{x}^{s}e_{y}^{s}$$
as claimed.
\end{proof}

We will denote by $\Rep^{s}_{\Lambda}(G)$ the Serre subcategory of
$\Rep_{\Lambda}(G)$ cut out by the system of idempotents
$(e_{x}^{s,\Lambda})_{x}$. It is pro-generated by the induced object
$\cind{G_{x}^{0}}{G}{e_{x}^{s,\Lambda}\HC_{\Lambda}(G_{x})}$ for any vertex $x$,
and is clearly contained in the level $0$
subcategory $\Rep^{0}_{\Lambda}(G)$ of $\Rep_{\Lambda}(G)$.

\begin{prop}
 The level $0$ subcategory $\Rep^{0}_{\Lambda}(G)$ splits as a direct product
$$ \Rep^{0}_{\Lambda}(G) = \prod_{s\in \GL_{n}(\FM)^{\rm ss,\Lambda}/{\rm
    conj}} \Rep^{s}_{\Lambda}(G)$$ 
where $s$ runs over semisimple conjugacy classes in
$\GL_{n}(\FM)$ which have invertible order in $\Lambda$. 
\end{prop}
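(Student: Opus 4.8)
The plan is to deduce the decomposition from a single ``local'' input at the level of the finite group $\GL_{n}(\FM)$: the orthogonal idempotent decomposition $1=\sum_{s}e_{s,\Lambda}^{\GL_{n}(\FM)}$ in $\Lambda[\GL_{n}(\FM)]$, the sum running over the (finitely many, as $\GL_{n}(\FM)$ is finite) semisimple classes of invertible order in $\Lambda$. For $\Lambda=\oQl$ this is \ref{defes}(i); for $\Lambda=\oZl$ one groups the $e_{s'}^{\oQl}$ by $\ell$-regular part, each packet summing to $e_{s,\oZl}^{\GL_{n}(\FM)}\in\oZl[\GL_{n}(\FM)]$ by \ref{defes}(ii), and $\Lambda=\oFl$ follows by reduction mod $\ell$. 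Pulling this back along $\bar G_{x}\simto\GL_{n}(\FM)$ at a vertex $x$, and using that $e_{x}^{+}$ is the unit of $\Lambda[\bar G_{x}]$, one gets
$$ e_{x}^{+}=\sum_{s}e_{x}^{s,\Lambda}, $$
a sum of pairwise orthogonal central idempotents. By the Lemma just proved and the results of Meyer--Solleveld recalled in \ref{consistent}, each $\Rep^{s}_{\Lambda}(G)$ is a direct factor subcategory of $\Rep_{\Lambda}(G)$, contained in $\Rep^{0}_{\Lambda}(G)$; I will write $\Rep^{s,\perp}_{\Lambda}(G)$ for its complementary factor, which — since all vertices of $BT$ are $G$-conjugate — is precisely the full subcategory of objects $W$ with $e_{x}^{s,\Lambda}W=0$ for one (equivalently any) vertex $x$.

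The first genuine step is to prove that these direct factors are pairwise orthogonal. I claim that for $s\neq s'$ one has $e_{x}^{s,\Lambda}\,e_{y}^{s',\Lambda}=0$ for \emph{every} pair of vertices $x,y$, not just for $x=y$. Indeed $e_{x}^{s,\Lambda}=e_{x}^{s,\Lambda}e_{x}^{+}$, and the relation (\ref{ortho}) gives $e_{x}^{+}e_{y}^{s',\Lambda}=e_{x}^{s',\Lambda}e_{y}^{s',\Lambda}$, so $e_{x}^{s,\Lambda}e_{y}^{s',\Lambda}=\bigl(e_{x}^{s,\Lambda}e_{x}^{s',\Lambda}\bigr)e_{y}^{s',\Lambda}=0$, the vanishing $e_{x}^{s,\Lambda}e_{x}^{s',\Lambda}=0$ coming from orthogonality in $\Lambda[\bar G_{x}]$. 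Hence, if $W\in\Rep^{s'}_{\Lambda}(G)$, then $e_{x}^{s,\Lambda}W=\sum_{y}e_{x}^{s,\Lambda}e_{y}^{s',\Lambda}W=0$, so $\Rep^{s'}_{\Lambda}(G)\subseteq\Rep^{s,\perp}_{\Lambda}(G)$; in particular $\Hom$-spaces between objects of $\Rep^{s}_{\Lambda}(G)$ and $\Rep^{s'}_{\Lambda}(G)$ vanish and $\Rep^{s}_{\Lambda}(G)\cap\Rep^{s'}_{\Lambda}(G)=0$.

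Then I would assemble the decomposition. For $V\in\Rep^{0}_{\Lambda}(G)$ set $V_{s}:=\sum_{x}e_{x}^{s,\Lambda}V$, a subrepresentation which lies in $\Rep^{s}_{\Lambda}(G)$ (it is $G$-stable, and one checks the standard identity $\sum_{x}e_{x}^{s,\Lambda}V_{s}=V_{s}$). From $V=\sum_{x}e_{x}^{+}V$ and $e_{x}^{+}=\sum_{s}e_{x}^{s,\Lambda}$ we get $V=\sum_{s}V_{s}$, and the sum is direct because $V_{s}\cap\sum_{s'\neq s}V_{s'}$ is simultaneously a subobject of $V_{s}\in\Rep^{s}_{\Lambda}(G)$ and of $\sum_{s'\neq s}V_{s'}\in\Rep^{s,\perp}_{\Lambda}(G)$ (both Serre subcategories), hence zero. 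Conversely, for $W_{s}\in\Rep^{s}_{\Lambda}(G)$ the finite sum $\bigoplus_{s}W_{s}$ lies in $\Rep^{0}_{\Lambda}(G)$ and, by the orthogonality of the previous paragraph, has $s$-component equal to $W_{s}$. Therefore $V\mapsto(V_{s})_{s}$ is an equivalence $\Rep^{0}_{\Lambda}(G)\simto\prod_{s}\Rep^{s}_{\Lambda}(G)$ with quasi-inverse $(W_{s})_{s}\mapsto\bigoplus_{s}W_{s}$, full faithfulness being immediate from the vanishing of cross-$\Hom$'s.

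The one non-formal point, and the main obstacle, is the orthogonality identity $e_{x}^{s,\Lambda}e_{y}^{s',\Lambda}=0$ at \emph{different} vertices: this genuinely uses the consistency relation (\ref{ortho}), whose proof in the Lemma rests on the Bonnafé--Rouquier integrality theorem \ref{defes}(ii) together with the compatibility (\ref{compat2}) of the idempotents $e_{s}$ with parabolic induction. Once (\ref{ortho}) and the local decomposition $e_{x}^{+}=\sum_{s}e_{x}^{s,\Lambda}$ are in hand, everything else is a soft consequence of the Meyer--Solleveld direct-factor property and the finiteness of $\GL_{n}(\FM)$.
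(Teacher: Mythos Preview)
Your proof is correct and follows essentially the same route as the paper: both hinge on the decomposition $e_{x}^{+}=\sum_{s}e_{x}^{s,\Lambda}$ and the key orthogonality $e_{x}^{s,\Lambda}e_{y}^{s',\Lambda}=0$ for $s\neq s'$, deduced from~(\ref{ortho}) exactly as you do. The only cosmetic difference is that the paper phrases exhaustiveness as ``a level~$0$ object orthogonal to every $\Rep^{s}_{\Lambda}(G)$ is zero'', whereas you build the components $V_{s}=\sum_{x}e_{x}^{s,\Lambda}V$ and check $V=\bigoplus_{s}V_{s}$ directly; these are equivalent formulations of the same argument.
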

\begin{proof} Let $s,s'$ be two distinct conjugacy classes with invertible order in $\Lambda$. 
For simplicity we simply write $e_{x}^{s}=e_{x}^{s,\Lambda}$ and $e_{x}^{s'}=e_{x}^{s',\Lambda}$.
We first
  show that $\Rep^{s}_{\Lambda}(G)$ and $\Rep^{s'}_{\Lambda}(G)$ are orthogonal.
Indeed, let $V$ be an object in $\Rep^{s}_{\Lambda}(G)$. We have to prove
that $e_{x}^{s'}V=0$ for any vertex $x$. We know that $V=\sum_{y} e_{y}^{s}V$, so that
$e_{x}^{s'}V=\sum_{y} e_{x}^{s'}e_{y}^{s}V$. But for all $x,y$, using
(\ref{ortho}) we get
$$
e_{x}^{s'}e_{y}^{s}=e_{x}^{s'}e_{x}^{+}e_{y}^{s}=e_{x}^{s'}e_{x}^{s}e_{y}^{s}=0$$
since $e_{x}^{s'}e_{x}^{s}=0$.

Now, let $V$ be a level $0$ object that is orthogonal to all
subcategories $\Rep^{s}_{\Lambda}(G)$.
For a fixed vertex $x$, we thus have $e_{x}^{s}$ for all $s$. But by
\ref{defes} i) we know that $e_{x}^{+}=\sum_{s} e_{x}^{s}$, so $e_{x}^{+}V=0$ hence
$V=0$. 

This finishes the proof of the claimed decomposition. Note that,
concretely, the projection of a level $0$ object $V$ on
$\Rep^{s}_{\Lambda}(G)$ is canonically isomorphic to the subrepresentation generated by
$e_{x}^{s,\Lambda}V$.
\end{proof}

The link between the decompositions over  $\oZl$ and $\oQl$  is
quite clear : for an $\ell$-regular semisimple class $s$ we have
$$ \Rep^{s}_{\bZl}(G)\cap \Rep_{\bQl}(G) = \prod_{s'\sim_{\ell}s}\Rep^{s'}_{\bQl}(G).$$

\alin{Relation to Bernstein blocks} \label{relation_to_B_blocks}
Here we consider the case $\Lambda=\oQl$. Fix a conjugacy class $s$ as above.
There is unique partition $(n_{1}\geq n_{2}\geq \cdots \geq n_{r})$ of $n$ such
that $s$ contains an \emph{elliptic} element of the diagonal Levi subgroup
$M(\FM)=\GL_{n_{1}}(\FM)\times\cdots\times\GL_{n_{r}}(\FM)$. As in
\ref{gln} i), we thus get from
$s$ a supercuspidal representation $\bar\sigma_{1}\otimes\cdots
\otimes\bar\sigma_{r}$ of this Levi subgroup. After inflating to
$M(\OC)=\prod_{i}\GL_{n_{i}}(\OC)$, extending to $\prod_{i}
F^{\times}\GL_{n_{i}}(\OC)$,  and inducing to $M(F)=\GL_{n_{1}}(F)\times\cdots\times\GL_{n_{r}}(F)$,
we get a supercuspidal representation
$\sigma=\sigma_{1}\otimes\cdots\otimes\sigma_{r}$ of the $p$-adic group
$M(F)$. Choosing further an
$F$-basis of $V$ we get a cuspidal pair $(M,\sigma)$ in $G$ whose
inertial class does not depend on any choice (basis and extension).
In view of the well known construction of level $0$ supercuspidal
representations, this process  sets up a bijection
\begin{center}
  $\{$semisimple conj. classes in $\GL_{n}(\FM)\} \leftrightarrow
  \{$inertial classes of level $0$ supercuspidal pairs in $G\}$  
\end{center}

Now, attached to $[M,\sigma]$ is the Bernstein block
$\Rep^{[M,\sigma]}_{\bQl}(G)$ of $\Rep_{\bQl}(G)$ which consists of
all objects, all irreducible subquotients of which have supercuspidal
support inertially equivalent to  $[M,\sigma]$. Bernstein's
decomposition, when restricted to  the level $0$ subcategory, then reads
$$\Rep^{0}_{\bQl}(G) = \prod_{[M,\sigma]}\Rep^{[M,\sigma]}_{\bQl}(G).$$

\begin{pro}
We have   $\Rep^{s}_{\bQl}(G)=\Rep^{[M,\sigma]}_{\bQl}(G)$. In
particular $\Rep^{s}_{\bQl}(G)$ is  a block. 
\end{pro}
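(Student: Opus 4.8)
The plan is to play off against each other the two product decompositions of $\Rep^{0}_{\bQl}(G)$ now at hand: the decomposition $\Rep^{0}_{\bQl}(G)=\prod_{s}\Rep^{s}_{\bQl}(G)$ just established, and Bernstein's decomposition $\Rep^{0}_{\bQl}(G)=\prod_{[M,\sigma]}\Rep^{[M,\sigma]}_{\bQl}(G)$ recalled in \ref{relation_to_B_blocks}, together with the bijection $s\leftrightarrow[M,\sigma]$ given there. A soft argument reduces the statement to proving, for each $s$, merely the one inclusion $\Rep^{s}_{\bQl}(G)\subseteq\Rep^{[M,\sigma]}_{\bQl}(G)$: granting it, any $V\in\Rep^{[M,\sigma]}_{\bQl}(G)$ decomposes as $V=\bigoplus_{s'}V_{s'}$ along the first product, with $V_{s'}\in\Rep^{s'}_{\bQl}(G)\subseteq\Rep^{[M',\sigma']}_{\bQl}(G)$ for the pair $[M',\sigma']$ attached to $s'$; but $V_{s'}$, being a direct summand of $V$, also lies in $\Rep^{[M,\sigma]}_{\bQl}(G)$, so it vanishes unless $[M',\sigma']=[M,\sigma]$, whence $V=V_{s}\in\Rep^{s}_{\bQl}(G)$, which is the reverse inclusion. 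The closing assertion of the Proposition is then automatic, Bernstein components being blocks.

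To prove the inclusion I would fix $s$, let $(n_{1}\geq\cdots\geq n_{r})$ and the cuspidal representation $\bar\sigma=\bar\sigma_{1}\otimes\cdots\otimes\bar\sigma_{r}$ of the standard Levi $M(\FM)=\GL_{n_{1}}(\FM)\times\cdots\times\GL_{n_{r}}(\FM)$ be as in \ref{relation_to_B_blocks}, so that by \ref{gln} ii) one has $\EC(\GL_{n}(\FM),s)={\rm JH}(\cind{P(\FM)}{\GL_{n}(\FM)}{\bar\sigma})$ with $P(\FM)$ the standard parabolic of Levi $M(\FM)$, and let $[M,\sigma]$ be the associated level $0$ supercuspidal pair. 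Given $V\in\Rep^{s}_{\bQl}(G)$ and a vertex $x$, the remark at the end of the proof of the preceding Proposition says that $V$ is the $G$-subrepresentation generated by $e_{x}^{s}V\subseteq V^{G_{x}^{+}}$; hence, by Frobenius reciprocity for $\cind{G_{x}^{0}}{G}{-}$, $V$ is a quotient of $\cind{G_{x}^{0}}{G}{e_{x}^{s}V}$. Through the isomorphism $G_{x}^{0}/G_{x}^{+}\simto\GL_{n}(\FM)$, the space $e_{x}^{s}V$ is a representation of $\GL_{n}(\FM)$ --- semisimple, since $\bQl$ has characteristic zero --- all of whose irreducible constituents lie in $\EC(\GL_{n}(\FM),s)$ and therefore embed into $\cind{P(\FM)}{\GL_{n}(\FM)}{\bar\sigma}$; so $e_{x}^{s}V$ embeds into a direct sum of copies of $\cind{P(\FM)}{\GL_{n}(\FM)}{\bar\sigma}$. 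Next, let $\delta$ be the facet with $x\leq\delta$ whose reduction map $G_{\delta}^{0}\twoheadrightarrow M(\FM)$ realises $M(\FM)$ as the Levi of $P(\FM)$, put $J:=G_{\delta}^{0}$, and let $\wt{\bar\sigma}$ be $\bar\sigma$ inflated to $J$. Inflating the above embedding from $G_{x}^{0}/G_{x}^{+}$ to $G_{x}^{0}$, then applying the exact functor $\cind{G_{x}^{0}}{G}{-}$ together with transitivity of compact induction, I obtain that $\cind{G_{x}^{0}}{G}{e_{x}^{s}V}$ embeds into a direct sum of copies of $\cind{J}{G}{\wt{\bar\sigma}}$. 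Since $\Rep^{[M,\sigma]}_{\bQl}(G)$ is a direct factor subcategory of $\Rep_{\bQl}(G)$, closed under subquotients and arbitrary direct sums, it remains only to check that $\cind{J}{G}{\wt{\bar\sigma}}\in\Rep^{[M,\sigma]}_{\bQl}(G)$.

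This last point is precisely the input of level $0$ type theory: the pair $(J,\wt{\bar\sigma})$ --- a cuspidal representation of a standard Levi of $\GL_{n}(\FM)$, inflated to a parahoric subgroup --- is the depth-zero type attached to the inertial class $[M,\sigma]$ (Bushnell--Kutzko for $\GL_{n}$, Morris in general), so $\cind{J}{G}{\wt{\bar\sigma}}$ is a progenerator of the Bernstein block $\Rep^{[M,\sigma]}_{\bQl}(G)$ and in particular belongs to it; this is, concretely, the ``well known construction of level $0$ supercuspidal representations'' already invoked in \ref{relation_to_B_blocks}. I would flag this appeal to type theory as the main ``obstacle'', since everything else is routine bookkeeping with the idempotents $e^{s}_{x}$, the building and Frobenius reciprocity --- although the obstacle is entirely standard. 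Alternatively one can avoid types by identifying $\cind{J}{G}{\wt{\bar\sigma}}$ with the parabolic induction from $M$ of $\cind{M(\OC)}{M}{\wt{\bar\sigma}}$ and reading the supercuspidal support off the level $0$ classification over $M$, but that amounts to re-deriving the same input.
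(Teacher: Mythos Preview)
Your proof is correct, but it runs in the opposite direction from the paper's. Both arguments begin with the same formal reduction: given the two product decompositions of $\Rep^{0}_{\bQl}(G)$ indexed by the same set via $s\leftrightarrow[M,\sigma]$, one inclusion for every $s$ forces equality. You prove $\Rep^{s}_{\bQl}(G)\subseteq\Rep^{[M,\sigma]}_{\bQl}(G)$ by writing any $V\in\Rep^{s}_{\bQl}(G)$ as a quotient of $\cind{G_{x}^{0}}{G}{e_{x}^{s}V}$, embedding the latter into copies of $\cind{J}{G}{\wt{\bar\sigma}}$, and then invoking level~$0$ type theory to place this compactly induced module in the Bernstein block. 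The paper instead proves $\Rep^{[M,\sigma]}_{\bQl}(G)\subseteq\Rep^{s}_{\bQl}(G)$: it takes an irreducible subquotient $\pi$ of $\ip{P}{G}{\sigma_{1}\otimes\cdots\otimes\sigma_{r}}$ and shows $e_{x}^{s}\pi\neq 0$ via the Mackey formula, which identifies $\ip{P}{G}{\sigma}^{G_{x}^{+}}$ with $\ip{\bar P_{x}}{\bar G_{x}}{\bar\sigma}$, together with \ref{gln}~ii).

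The trade-off: the paper's route is entirely self-contained within the framework already set up (idempotents, the Mackey decomposition, and the finite-group fact \ref{gln}~ii)), whereas your route imports an external ingredient --- that $(J,\wt{\bar\sigma})$ is a Bushnell--Kutzko type for $[M,\sigma]$ --- which you rightly flag. Your alternative suggestion at the end, identifying $\cind{J}{G}{\wt{\bar\sigma}}$ with $\ip{P}{G}{\cind{M(\OC)}{M}{\bar\sigma}}$, is in effect the inverse of the paper's Mackey computation and would bring the two arguments closer together.
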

\begin{proof}
In view of the two above decompositions of $\Rep^{0}_{\bQl}(G)$ and the bijection $s
\leftrightarrow [M,\sigma]$, it will be sufficient to prove the
inclusion $\Rep^{s}_{\bQl}(G)\supset\Rep^{[M,\sigma]}_{\bQl}(G)$ for
$s$ corresponding to $[M,\sigma]$.

For this it suffices to prove that any irreducible $\pi$ which occurs
as a subquotient of an induced representation
$\ip{P}{G}{\sigma_{1}\otimes\cdots\otimes\sigma_{r}}$ as above (where
$P$ is  a parabolic
  subgroup with Levi $M$) satisifies $e_{x}^{s}\pi\neq 0$ for some
  vertex $x$.

For simplicity, let us choose an $F$ basis of $V$ such that the
identification $G=\GL_{n}(F)$ takes $M$ to the diagonal Levi subgroup
$\prod_{i}\GL_{n_{i}}(F)$ and $P$ to the corresponding
upper-triangular group. Then consider the vertex $x$
associated to the lattice spanned by this basis, so that
$G_{x}^{0}=\GL_{n}(\OC)$. Finally denote by $\bar P_{x}$ the image
of $P\cap G_{x}^{0}$ in $\bar G_{x}$. Then the Mackey formula shows that,
as a representation of $\bar G_{x}$ we have
$$
\ip{P}{G}{\sigma_{1}\otimes\cdots\otimes\sigma_{r}}^{G_{x}^{+}}\simeq 
\ip{\bar P_{x}}{\bar G_{x}}{\bar \sigma_{1}\otimes\cdots\otimes\bar \sigma_{r}}.
$$
where $\Ip{P}{G}$ denotes parabolic induction.
Therefore $\pi^{G_{x}^{+}}$ is a subquotient of $\ip{\bar P_{x}}{\bar
  G_{x}}{\bar \sigma_{1}\otimes\cdots\otimes\bar \sigma_{r}}$ and by
\ref{gln} ii) we get that $e_{x}^{s}\pi\neq 0$.
\end{proof}

\alin{Compatibility with Langlands' correspondence}\label{compat_LLC}
As in \ref{dual_param_blocks}, the local Langlands correspondence gives a parametrization of blocks
of $\Rep_{\oQl}(\GL_{n}(F))$  by the set $\Phi_{\rm inert}(\GL_{n},\oQl)$ of equivalence classes of
semi-simple representations of $I_{F}$ that extend to $W_{F}$. Moreover, level $0$ blocks
correspond to tame parameters through this parametrization. 
Sofar, we have defined three maps :
$$\xymatrix{
&    \hbox{$\{$inertial classes of level $0$ supercuspidal pairs in $G\}$}
     \\
\Phi_{\rm inert}^{\rm tame}(\GL_{n},\oQl) \ar[r]^-{\phi\mapsto s} \ar@{<->}[ru]^{LLC} & 
\hbox{$\{$semisimple conj. classes in $\GL_{n}(\FM)\}$} \ar[u]_{s\mapsto [M,\sigma]}
}$$
The vertical map was defined in \ref{relation_to_B_blocks}, while the horizontal one was
defined in \ref{param-conjcl}.

\begin{pro}
  This diagram commutes. In particular we have $\Rep_{\phi}(G)=\Rep^{s}_{\oQl}(G)$.
\end{pro}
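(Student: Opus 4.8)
The plan is to reduce the assertion to the case of an elliptic class $s$ and then to invoke the explicit description of the local Langlands correspondence on depth-zero supercuspidal representations. Since $\Rep_{\phi}(G)$ is by definition the block singled out from $\phi$ by the Langlands correspondence, and $\Rep^{s}_{\oQl}(G)=\Rep^{[M,\sigma]}_{\oQl}(G)$ is the Bernstein block of $[M,\sigma]$ by the Proposition of \ref{relation_to_B_blocks}, the commutativity of the diagram is equivalent to the single statement that the inertial supercuspidal support of the $\oQl$-representations attached to $\phi$ is $[M,\sigma]$, and the last assertion of the Proposition then follows formally.

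First I would unwind the three maps through the characteristic polynomial of $s$. Write $P_{\phi}(X)=\prod_{j}P_{j}(X)^{m_{j}}$ with the $P_{j}\in\FM[X]$ distinct, irreducible of degree $d_{j}$. By \ref{param-conjcl}, $s$ is the semisimple class with characteristic polynomial $P_{\phi}$, and an element of $\GL_{n}(\FM)$ is elliptic in the sense of \ref{gln} precisely when its characteristic polynomial is irreducible over $\FM$; hence the partition of \ref{relation_to_B_blocks} attached to $s$ consists of each $d_{j}$ repeated $m_{j}$ times, and $\bar\sigma=\bigotimes_{j}\bar\sigma_{j}^{\otimes m_{j}}$ where $\bar\sigma_{j}$ is the cuspidal representation of $\GL_{d_{j}}(\FM)$ attached via \ref{gln} i) to the elliptic class $s_{j}$ with characteristic polynomial $P_{j}$. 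On the Galois side, transferring the roots of $P_{j}$ to characters of $I_{F}/P_{F}$ by means of $\iota$ and collecting Frobenius orbits yields $\phi\cong\bigoplus_{j}m_{j}\,\mathrm{Ind}_{W_{F_{d_{j}}}}^{W_{F}}(\tilde\chi_{j})$, where $F_{d}$ denotes the unramified extension of $F$ of degree $d$ and $\tilde\chi_{j}$ is any extension to $W_{F_{d_{j}}}$ of the character of $I_{F}$ which, under $\iota$, corresponds to the character $\theta_{j}$ of $\FM_{q^{d_{j}}}^{\times}$ attached to $s_{j}$ in \ref{param-conjcl}. Now the local Langlands correspondence for $\GL_{n}$ sends induction of parameters to normalized parabolic induction and matches direct sums with unordered cuspidal supports, and the level-$0$ supercuspidal pair attached to $s$ in \ref{relation_to_B_blocks} is assembled in the exactly parallel way out of the pairs attached to the $s_{j}$; therefore it suffices to prove the commutativity for each irreducible constituent $\mathrm{Ind}_{W_{F_{d_{j}}}}^{W_{F}}(\tilde\chi_{j})$, i.e. in the case where $s$ is elliptic in $\GL_{n}(\FM)$ and $\phi$ is an irreducible tame parameter of $\GL_{n}(F)$.

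It then remains to treat this supercuspidal case. There $s$ corresponds via Green's classification to the unique element $\bar\sigma\in\EC(\GL_{n}(\FM),s)$, namely $\pm\RC_{\mathbf{T}_{c}}^{\mathbf{G}}(\theta)$ for the non-split Coxeter maximal torus $\mathbf{T}_{c}$ with $\mathbf{T}_{c}^{F}\simeq\FM_{q^{n}}^{\times}$ and $\theta$ the regular character of $\FM_{q^{n}}^{\times}$ cut out by $s$, and $\sigma$ is the depth-zero supercuspidal of $\GL_{n}(F)$ obtained from $\bar\sigma$ by compact induction; on the other side $\phi=\mathrm{Ind}_{W_{F_{n}}}^{W_{F}}(\tilde\chi)$ with $\tilde\chi|_{I_{F}}$ the $\iota$-transfer of $\theta$. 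The content is then exactly the classical computation of the Langlands correspondence at level $0$: the representation $\sigma$ is the automorphic induction to $\GL_{n}(F)$ of any character $\xi$ of $F_{n}^{\times}$ whose restriction to $\OC_{F_{n}}^{\times}$ is inflated from $\theta$, and the correspondence takes it to $\mathrm{Ind}_{W_{F_{n}}}^{W_{F}}(\mathrm{rec}_{F_{n}}(\xi))$, whose restriction to $I_{F}=I_{F_{n}}$ is again the $\iota$-transfer of $\theta$ because local class field theory is compatible with $\iota$; this identifies it, on $I_{F}$, with $\phi$, as required. For this one may invoke either the theorem that cyclic automorphic induction corresponds to induction of Weil-Deligne parameters, together with the standard construction of level-$0$ supercuspidals, or, for a more self-contained route, the compatibility of the correspondence with unramified base change -- after restriction to $W_{F_{n}}$ both sides become the same sum of $\iota$-transferred characters -- combined with the fact that a depth-zero supercuspidal of $\GL_{n}(F)$ is determined by its restriction to $\GL_{n}(\OC)$. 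The hard part of the whole argument is this supercuspidal case, and within it the only genuinely delicate point is a normalization check: one must make sure that the choice of $\iota$, the arithmetic-versus-geometric normalization of the reciprocity map implicit in ``the local Langlands correspondence'', and the sign and inversion conventions in Green's parametrization of $\bar\sigma$ are mutually compatible, so that the character $\theta$ read off from $s$ via \ref{param-conjcl} is literally the one produced from $\mathrm{rec}_{F_{n}}(\xi)|_{I_{F}}$ through $\iota$; everything else reduces to the combinatorics of Frobenius orbits and the formal compatibility of the correspondence with parabolic induction.
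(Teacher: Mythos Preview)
Your proposal is correct and follows essentially the same strategy as the paper: reduce to the elliptic case by decomposing $s$ (you phrase this via the irreducible factorization of the characteristic polynomial, the paper via the elliptic block decomposition $s=s_{1}\times\cdots\times s_{r}$ in a Levi --- these are the same thing), then appeal to the known compatibility of the local Langlands correspondence with the Green--Deligne--Lusztig construction of depth-zero supercuspidals. The paper's own proof is considerably more terse in the elliptic case, simply invoking this compatibility ``up to some \emph{unramified} rectifying character'' as a black box, whereas you spell out the mechanism via automorphic induction from $F_{n}^{\times}$ and correctly flag the normalization issue; since the rectifying character is unramified, it is invisible on $I_{F}$ and the argument goes through.
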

\begin{proof}
  Let us start with a semisimple element $s\in\GL_{n}(\FM)$, let us denote by $[M,\sigma]$
  the corresponding supercuspidal pair, by $\phi$ the inertial parameter of $[M,\sigma]$,
  and by $s(\phi)$ the conjugacy class associated to $\phi$. We want to prove that $s\in s(\phi)$.

We first reduce to the case where $s$ is elliptic. Indeed, after maybe conjugating we
may assume that $s=s_{1}\times\cdots\times s_{r}$ is elliptic in the Levi subgroup
  $M(\FM)=\GL_{n_{1}}(\FM)\times\cdots\times\GL_{n_{r}}(\FM)$. Then we have $M=\GL_{n_{1}}\times\cdots\times\GL_{n_{r}}$
and  $\sigma=\sigma_{1}\otimes\cdots\otimes\sigma_{r}$ with each $\sigma_{i}$
corresponding to $s_{i}$. Next, putting
  $\phi_{i}:=(\varphi_{\sigma_{i}})_{|I_{F}}$, we have  $\phi=\phi_{1}\oplus\cdots\oplus\phi_{r}$, and the conjugacy class
  $s(\phi)$ is by definition that of a product
  $s(\phi_{1})\times\cdots\times s(\phi_{r})$ in $M(\FM)\subset \GL_{n}(\FM)$. 
Therefore we are reduced to check $s_{i}\in s(\phi_{i})$ for each $i$.

Now assume $s$ is elliptic, so that $M=G$. This case boils down to the well-known compatibility between the
local Langlands correspondence and the Green-Deligne-Lusztig construction (up to some
\emph{unramified} rectifying character). 
\end{proof}

\alin{Relation to Vign\'eras-Helm blocks} Here we consider the case $\Lambda=\oZl$ and we suppose that $s$ is an
$\ell$-regular semisimple conjugacy class of $\GL_{n}(\FM)$. Using the
notation of paragraph \ref{relation_to_B_blocks}, we now get from $s$ a supercuspidal
$\oFl$-representation of $M(\FM)$, thanks to \ref{gln} iii).
Again, after inflating, extending, and inducing, we get an inertial
class $[M,\sigma]$ of supercuspidal pairs over $\oFl$ in $G$, and this
process induces a bijection
\begin{center}
  $\{$ $\ell$-regular semisimple conj. classes in $\GL_{n}(\FM)\}$ \\ $\leftrightarrow
  \{$inertial classes of level $0$ $\oFl$-supercuspidal pairs in $G\}$  
\end{center}

Now, attached to $[M,\sigma]$ is the Vign\'eras-Helm block
$\Rep^{[M,\sigma]}_{\oZl}(G)$ of $\Rep_{\oZl}(G)$ which consists of
all objects, all simple subquotients of which have ``mod $\ell$
inertial supercuspidal support''  equal to  $[M,\sigma]$. The level
$0$ subcategory then decomposes as
$$\Rep^{0}_{\oZl}(G) = \prod_{[M,\sigma]}\Rep^{[M,\sigma]}_{\oZl}(G),$$
the product being over  $[M,\sigma]$'s with level $0$.

\begin{pro}
We have   $\Rep^{s}_{\bZl}(G)=\Rep^{[M,\sigma]}_{\bZl}(G)$. In
particular $\Rep^{s}_{\bZl}(G)$ is  a block.
\end{pro}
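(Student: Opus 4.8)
The plan is to transcribe the proof of the corresponding statement over $\oQl$ given in \ref{relation_to_B_blocks}, replacing the character-theoretic inputs \ref{gln} i)--ii) by their $\ell$-modular counterparts \ref{gln} iii)--iv). We have at our disposal two product decompositions of the level $0$ subcategory, namely $\Rep^{0}_{\bZl}(G)=\prod_{s}\Rep^{s}_{\bZl}(G)$ (from the Proposition proved above, $s$ running over $\ell$-regular semisimple classes) and $\Rep^{0}_{\bZl}(G)=\prod_{[M,\sigma]}\Rep^{[M,\sigma]}_{\bZl}(G)$ (the level $0$ part of the Vign\'eras--Helm decomposition), together with the bijection $s\leftrightarrow[M,\sigma]$ recalled just above. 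Hence it suffices to prove the inclusion $\Rep^{[M,\sigma]}_{\bZl}(G)\subseteq\Rep^{s}_{\bZl}(G)$ for matching $s$ and $[M,\sigma]$; equality for all pairs, and therefore the fact that each $\Rep^{s}_{\bZl}(G)$ is a block, follows formally by comparing the two decompositions. Since $\Rep^{[M,\sigma]}_{\bZl}(G)$ is a block, hence an indecomposable direct factor, it is contained in a single factor $\Rep^{s_{0}}_{\bZl}(G)$, and as the $\Rep^{s'}_{\bZl}(G)$ are pairwise orthogonal it is enough to exhibit one simple object $\pi$ of $\Rep^{[M,\sigma]}_{\bZl}(G)$ and one vertex $x$ with $e_{x}^{s,\bZl}\pi\neq 0$, which forces $s_{0}=s$.

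So let $\pi$ be a simple object of $\Rep^{[M,\sigma]}_{\bZl}(G)$, i.e. an irreducible $\oFl$-representation of $G$ with mod $\ell$ inertial supercuspidal support $[M,\sigma]$; by the Vign\'eras--Helm description of level $0$ supercuspidal supports, $\pi$ occurs as a Jordan--H\"older constituent of $\ip{P}{G}{\sigma'}$ for some unramified twist $\sigma'$ of $\sigma=\sigma_{1}\otimes\cdots\otimes\sigma_{r}$, where $P$ has Levi $M=\GL_{n_{1}}(F)\times\cdots\times\GL_{n_{r}}(F)$ and $\sigma$ is the level $0$ supercuspidal $\oFl$-representation of $M$ attached as in \ref{gln} iii) to the decomposition $s=s_{1}\times\cdots\times s_{r}$ into elliptic classes. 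Choose an $F$-basis of $V$ so that $G=\GL_{n}(F)$, $M$ is the diagonal Levi subgroup and $P$ the corresponding upper-triangular parabolic, and let $x$ be the vertex of the lattice spanned by this basis, so $G_{x}^{0}=\GL_{n}(\OC)$ and $\bar G_{x}=\GL_{n}(\FM)$. Since all vertices of $BT$ are $G$-conjugate and $\pi$ has level $0$, we have $\pi^{G_{x}^{+}}\neq 0$. Exactly as in the $\oQl$ case, the Mackey formula gives an isomorphism of $\bar G_{x}$-representations $\ip{P}{G}{\sigma'}^{G_{x}^{+}}\simeq\ip{\bar P_{x}}{\bar G_{x}}{\bar\sigma_{1}\otimes\cdots\otimes\bar\sigma_{r}}$, with $\bar P_{x}$ the image of $P\cap G_{x}^{0}$ in $\bar G_{x}$ (the unramified twist disappearing upon restriction to $G_{x}^{0}$); since $G_{x}^{+}$-invariants is an exact functor, $\pi^{G_{x}^{+}}$ is therefore a nonzero subquotient of $\ip{\bar P_{x}}{\bar G_{x}}{\bar\sigma_{1}\otimes\cdots\otimes\bar\sigma_{r}}$. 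By \ref{gln} iii) and iv), every constituent of this finite-group induced representation lies in $\EC_{\ell}(\GL_{n}(\FM),s)$, i.e. is a module over $\oFl[\GL_{n}(\FM)]e_{s,\oFl}^{\GL_{n}(\FM)}$; hence $e_{s,\oFl}^{\GL_{n}(\FM)}$ acts as the identity on the nonzero space $\pi^{G_{x}^{+}}$, so $e_{x}^{s,\bZl}\pi=\pi^{G_{x}^{+}}\neq 0$, which is what we wanted.

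Conceptually, the proof differs from the $\oQl$ one only through the replacement of \ref{gln} i)--ii) by the Bonnaf\'e--Rouquier theory underlying \ref{gln} iii)--iv); the reduction to an inclusion, the passage to simple objects, and the Mackey computation are verbatim the same. The point that requires the most care — and the main place where one genuinely relies on mod $\ell$ input outside of section \ref{sec:finite-linear-groups} — is the very first sentence of the second paragraph: that the mod $\ell$ inertial supercuspidal support $[M,\sigma]$ of the simple $\oFl$-representation $\pi$ is indeed realized, up to unramified twist, by an induced representation $\ip{P}{G}{\sigma_{1}\otimes\cdots\otimes\sigma_{r}}$ with the $\sigma_{i}$ the level $0$ $\oFl$-supercuspidals described above, so that the finite-group computation of \ref{gln} iii)--iv) applies. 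Once this is granted, everything else is routine bookkeeping parallel to the $\oQl$ argument.
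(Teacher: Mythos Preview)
Your proof is correct and follows essentially the same route as the paper's: reduce to the inclusion $\Rep^{[M,\sigma]}_{\bZl}(G)\subseteq\Rep^{s}_{\bZl}(G)$ via the two decompositions and the bijection, then verify it on an irreducible $\oFl$-constituent of a parabolically induced representation using the Mackey formula and \ref{gln} iv). Your explicit handling of the unramified twist and your phrasing of the reduction (the block, being indecomposable, must land in a single $\Rep^{s_{0}}_{\bZl}(G)$) are slightly more detailed than the paper's, but the arguments are the same in substance.
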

\begin{proof}
As in the proof of Proposition \ref{relation_to_B_blocks}, it suffices to prove the
inclusion $\Rep^{s}_{\bZl}(G)\supset\Rep^{[M,\sigma]}_{\bZl}(G)$.
On the other hand, since the LHS is a Serre subcategory and the RHS is
a block, it suffices to find one object in the RHS that belongs to the
LHS. 
Therefore it suffices to prove that any irreducible
$\oFl$-representation $\pi$ in $\Rep^{[M,\sigma]}_{\oFl}(G)$ satisfies
$e_{x}^{s,\oZl}\pi\neq 0$. By definition $\pi$ occurs as a subquotient
of some 
$\ip{P}{G}{\sigma_{1}\otimes\cdots\otimes\sigma_{r}}$, so we may argue
exactly as in the previous proposition, using \ref{gln} iv) instead of ii).
\end{proof}

Let now $\phi\in\Phi_{\ell-\rm inert}(\GL_{n},\oQl)$ be the
$\ell'$-inertia parameter which corresponds to $[M,\sigma]$ as in \ref{dual_param_blocks}, and let $s(\phi)$
be the conjugacy class associated to $\phi$ as in \ref{param-conjcl}.

\begin{pro}
  We have $s=s(\phi)$. In particular, we have $\Rep^{s}_{\oZl}(G)=\Rep_{\phi}(G)$.
\end{pro}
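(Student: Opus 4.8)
The plan is to treat the two assertions separately. The ``in particular'' is formal: by the definition given in \ref{dual_param_blocks}, the block $\Rep_{\phi}(G)$ is the Vign\'eras--Helm block $\Rep^{[M,\sigma]}_{\oZl}(G)$ attached to the inertial class $[M,\sigma]$ to which $\phi$ corresponds, and the preceding proposition identifies $\Rep^{s}_{\oZl}(G)$ with that same block. So everything reduces to the equality of conjugacy classes $s=s(\phi)$.

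To prove $s=s(\phi)$ I would first reduce to the case where $s$ is elliptic, exactly as in the proof of the proposition of \ref{compat_LLC}. After conjugating we may write $s=s_{1}\times\cdots\times s_{r}$ with $s_{i}$ elliptic and $\ell$-regular in a standard block $\GL_{n_{i}}(\FM)$ of a Levi $M(\FM)$, so that $M=\prod_{i}\GL_{n_{i}}$, $\sigma=\sigma_{1}\otimes\cdots\otimes\sigma_{r}$ with $\sigma_{i}$ the level $0$ $\oFl$-supercuspidal built from $s_{i}$, and $\phi=\phi_{1}\oplus\cdots\oplus\phi_{r}$ with $\phi_{i}=(\varphi_{\sigma_{i}})_{|I_{F}^{\ell}}$. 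Since the recipe of \ref{param-conjcl} only involves the characteristic polynomial of the parameter, $s(\phi)$ is then the class of the block-diagonal element $s(\phi_{1})\times\cdots\times s(\phi_{r})$ inside $M(\FM)\subset\GL_{n}(\FM)$, so it suffices to prove $s_{i}=s(\phi_{i})$ for each $i$. We may thus assume $s$ elliptic and $M=G$.

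In the elliptic case $s$ has order prime to both $p$ and $\ell$, hence may equally be viewed as an elliptic class over $\oQl$, and I would lift the whole picture to $\oQl$. Let $\tilde\sigma\in\Irr{\oQl}{\GL_{n}(\FM)}$ be the cuspidal representation attached to $s$ by \ref{gln} i) and $\bar\sigma$ the supercuspidal $\oFl$-representation attached to $s$ by \ref{gln} iii). The centralizer of an elliptic element of $\GL_{n}(\FM)^{*}$ is a torus, so Fact \ref{Morita} applies with $\Lambda=\oZl$ and realizes $\oZl[\GL_{n}(\FM)]e_{s,\oZl}^{\GL_{n}(\FM)}$ as Morita equivalent to an $\oZl$-order with unique simple module $\tilde\sigma$ over $\oQl$ and $\bar\sigma$ over $\oFl$; in particular $\bar\sigma$ is the reduction modulo $\ell$ of $\tilde\sigma$. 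Inflating to $\GL_{n}(\OC)$, extending to $F^{\times}\GL_{n}(\OC)$ by letting a uniformizer act through a scalar over $\oZl$, and compactly inducing, one obtains a level $0$ supercuspidal $\oZl$-lattice whose $\oQl$-span is the $\oQl$-supercuspidal $\tilde\Sigma$ attached to $\tilde\sigma$ and whose reduction modulo $\ell$ is the $\oFl$-supercuspidal $\Sigma$ attached to $\bar\sigma$. By compatibility of Vign\'eras' semisimple mod-$\ell$ Langlands correspondence with reduction modulo $\ell$ we then have $\varphi_{\Sigma}\simeq\overline{\varphi_{\tilde\Sigma}}$, so that $\phi=(\varphi_{\Sigma})_{|I_{F}^{\ell}}$ is the reduction modulo $\ell$ of the tame parameter $(\varphi_{\tilde\Sigma})_{|I_{F}^{\ell}}$. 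By the proposition of \ref{compat_LLC}, applied over $\oQl$ to the elliptic class $s$ and the cuspidal pair $(G,\tilde\Sigma)$, the characteristic polynomial of $(\varphi_{\tilde\Sigma})_{|I_{F}}$ transfers through $\iota\circ\jmath^{-1}$ to that of $s$; running $(\varphi_{\tilde\Sigma})_{|I_{F}^{\ell}}$ through the recipe of \ref{param-conjcl} therefore gives $s(\phi)=s$, the $\ell'$-truncation in that recipe being vacuous since $s$ is already $\ell$-regular.

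The delicate point is the bookkeeping of the last paragraph rather than any new idea: one must be sure the choices of extension to $F^{\times}\GL_{n}(\OC)$ can be made compatibly over $\oZl$, so that the $\oQl$- and $\oFl$-level-$0$ types are genuinely linked by reduction modulo $\ell$, and one must quote the reduction-compatibility of Vign\'eras' correspondence precisely enough that the ``unramified rectifying character'' already present in the $\oQl$ case — which is harmless on $I_{F}$ — does not sneak back in through the normalization conventions. An alternative that avoids the detour through $\oQl$ would be to invoke directly the compatibility of Vign\'eras' mod-$\ell$ correspondence for level $0$ supercuspidals with James' modular Deligne--Lusztig--Green parametrization; I would nonetheless keep the $\oQl$ bootstrap as the main route, since that case is exactly what the proposition of \ref{compat_LLC} provides.
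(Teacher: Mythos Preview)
Your proof is correct and follows essentially the same route as the paper: reduce to the elliptic case as in \ref{compat_LLC}, lift to $\oQl$ via the cuspidal $\tilde\sigma$ attached to $s$, use that $\sigma$ is the mod-$\ell$ reduction of $\tilde\sigma$ together with the compatibility of the Langlands and Vign\'eras correspondences with reduction mod $\ell$, and then invoke \ref{compat_LLC} to conclude $s(\phi)=s$ since $s$ is already $\ell$-regular. Your version simply spells out a few steps (the Morita argument for why $\bar\sigma$ is the reduction of $\tilde\sigma$, the explicit $\oZl$-lattice) that the paper leaves implicit.
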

\begin{proof}
  As in the proof of Proposition \ref{compat_LLC}, we reduce to the case where $s$ is elliptic. Then we
  have $M=G$ and   $\sigma$ is the reduction mod $\ell$ of the supercuspidal
  $\oQl$-representation $\wt\sigma$ attached to $s$. Moreover, by compatibility between
  Langlands and Vign\'eras correspondences for supercuspidal representations with respect
  to reduction mod $\ell$, the parameter $\phi$ is the
  restriction to $I_{F}^{\ell}$ of the inertial parameter $\wt\phi$ associated with $\wt\sigma$.   
Proposition \ref{compat_LLC} tells us that $s(\wt\phi)=s$, and \ref{param-conjcl} tells us that $s(\phi)$ is
the $\ell'$-part of $s$, which is $s$ by assumption.
\end{proof}

\section{From Coefficient systems to Hecke ``modules''}

We take up the setting of the last section. So $G={\rm Aut}_{F}(V)$ acts on its building
$BT$, and $\Lambda=\oZl$ or $\oQl$. 
To any semisimple conjugacy class $s\in\GL_{n}(\FM)$ with order invertible in
$\Lambda$, we have associated a system of idempotents $(e^{s,\Lambda}_{x})_{x\in BT}$,
heading to a block $\Rep^{s}_{\Lambda}(G)$ that is equivalent to a certain 
category ${\rm Coef}^{s}_{\oZl}[BT/G]$ of equivariant coefficient systems on $BT$.

In this section we show how ${\rm Coef}^{s}_{\Lambda}[BT/G]$ is equivalent to a category of
``modules'' over a certain Hecke ``$\Lambda$-algebra'', whose definition only involves a
single appartment $A$ and the collections of reductive quotients $\o G_{\sigma}$, $\sigma\in A_{\bullet}$.
In this way, we obtain objects that are closer to the usual modules over Hecke algebras,
except that  our Hecke ``algebras'' will actually be ``rings with many objects''.

\subsection{The enriched Coxeter complex}

\alin{Level $0$ coefficient systems} 
Let $\CC$ be any category. We say that a $\CC$-valued  $G$-equivariant coefficient system $\EC:\,[BT/G]\To{}\CC$ has
\emph{level $0$} if for all $\sigma\in BT_{\bullet}$, the action of $G_{\sigma}^{+}$ on
$\EC_{\sigma}$ is trivial.
Let us introduce a new category $[BT/\bar G]$ whose set of objects is still $BT_{\bullet}$, and
morphisms are given by 
$$\bar G_{\sigma\tau}^{\dag}:=  G_{\tau}^{+} \ba G^{\dag}_{\sigma\tau} /
G_{\sigma}^{+} = G^{\dag}_{\sigma\tau} / G_{\sigma}^{+}.$$
Note that the composition maps  $G_{\sigma\tau}^{\dag}\times G_{\tau\nu}^{\dag}\To{} G_{\sigma\nu}^{\dag}$
induce associative composition maps on quotients 
$\bar G_{\sigma\tau}^{\dag}\times \bar G_{\tau\nu}^{\dag}\To{} \bar G_{\sigma\nu}^{\dag}$
so we indeed get a category $[BT/\bar G]$, which is a quotient category of $[BT/G]$.
 It is then clear that a $G$-equivariant coefficient system $\EC$ has level $0$ if and
 only if it factors through a functor  $\EC:\,[BT/\bar G]\To{}\CC$.

 \begin{defn} \label{def_cartesian_coeff}
   We say that $\EC:\,[BT/\bar G]\To{}\CC$  is \emph{cartesian} if the transition map
   $\EC(\o{1}_{\sigma\tau})$ is an isomorphism $\EC_{\sigma}\simto 
   \EC_{\tau}^{G_{\sigma}^{+}}$ for all $\tau\subseteq\sigma$.
 \end{defn}

\alin{The enriched Coxeter complex  $[\AM/\bar\GM]$} \label{abstract_appart}
Put $I:=\{1,\cdots, n\}$ and $\AM:=\ZM^{I}/\ZM_{\rm diag}$. Two
elements $x$, $y$ in $\AM$ are called \emph{adjacent} if we can find representatives 
$\tilde x$, $\tilde y$ in $\ZM^{I}$ such that $\tilde x(i)\leq \tilde y(i)\leq
\tilde x(i)+1$ for all $i\in I$. 
A subset $\sigma$ of $\AM$ is called a simplex if all its pairs of elements are adjacent. We thus get
a simplicial complex $\AM_{\bullet}$, namely the Coxeter complex of type $\wt A_{n-1}$. The
natural action of the extended affine Weyl group  $W:=\ZM^{I}\rtimes \SG_{n}$ on $\AM$ is simplicial.
We will also denote by  $W^{0}\subset W$  the (non-extended) affine Weyl group, given
by 
$W^{0}=(\ZM^{I})^{0}\rtimes\SG_{n}\subset W$ where $(\ZM^{I})^{0}=\{(x_{i})_{i\in I},\, \sum_{i}x_{i}=0\}$.
For any simplex $\sigma\in\AM$ we put $W_{\sigma}^{0}:= W_{\sigma}^{\dag}\cap
W^{0}$. Then the fixator of $\sigma$ in $W$ decomposes as
$W_{\sigma}=W_{\sigma}^{0}\times \ZM_{\rm diag}$ and the quotient
$W_{\sigma}^{\dag}/W_{\sigma}$ embeds in $W/(W^{0}\ZM_{\rm diag})$ hence is cyclic of order dividing $n$.

Let $\sigma$ be a simplex in $\AM$. If we fix a vertex  $x_{0}\in\sigma$ and a representative $\tilde x_{0}$ of $x_{0}$,
there is a unique ordering $\sigma=\{ x_{0},x_{1},\cdots x_{d}\}$ such that
 we can find representatives $\tilde
x_{i} : I\To{} \ZM$ of $x_{i}$ verifying $\tilde x_{0} \leq \cdots \leq \tilde x_{d}\leq \tilde x_{0}+1$.
We get a  partition  $I=I_{\sigma,0}\sqcup I_{\sigma,1} \sqcup
\cdots \sqcup I_{\sigma,d}$ of $I$ 
defined by $I_{\sigma,k}:=\{i\in I, \tilde x_{k+1}- \tilde x_{k} =1\}$. Up to cycling the indices $k\mapsto
k+1 ({\rm mod }\, d)$, this partition does not depend on the choice of $x_{0}$ and its
representative.
In particular,  the  subset $P_{\sigma}=\{I_{\sigma,0},\cdots, I_{\sigma,d}\}$ of the
power set $\PC(I)$ of $I$ is canonically attached to $\sigma$.
We then  define a finite group of Lie type 
$$\bar\GM_{\sigma}:= \prod_{J\in P_{\sigma}}\Aut_{\FM}(\FM^{J})\subset\Aut_{\FM}(\FM^{I}).$$
If $\tau\subseteq \sigma$ is another simplex, then the partition $P_{\sigma}$ refines $P_{\tau}$, so
that $\bar\GM_{\sigma}$ canonically embeds as a (semi-standard) Levi subgroup of $\bar
\GM_{\tau}$. 
Moreover, if we choose $x_{0}\in \tau$, the associated ordering on $P_{\sigma}$ provides a flag 
$\FM^{I}\supset \FM^{I\setminus I_{\sigma,0}}\supset \cdots \supset \FM^{I_{\sigma,d}}\supset 0$
in $\FM^{I}$ whose intersection with each $\FM^{I_{\tau,k}}$ does not depend on the choice of
$x_{0}\in\tau$.
We thus get a ``canonical'' parabolic subgroup $\bar\PM_{\sigma,\tau}$ of $\bar \GM_{\tau}$ with
Levi component $\bar \GM_{\sigma}$,
and whose unipotent radical we denote by $\bar\UM_{\sigma,\tau}$.

Now, if $w\in W$, its image  $\bar w\in\SG_{n}$ under the quotient map $W\twoheadrightarrow \SG_{n}$ 
 takes $P_{\sigma}$ to $P_{w\sigma}$, hence the permutation matrix $[\bar w]\in
 {\rm Aut}_{\FM}(\FM^{I})$ conjugates $\bar\GM_{\sigma}$ to $\bar\GM_{w\sigma}$. We denote by 
$\bar w_{\sigma}:\,\bar\GM_{\sigma}\simto  \bar\GM_{w\sigma}$ the isomorphism thus obtained. When $w\in W_{\sigma}$, 
the permutation $\bar w$ preserves each subset $J\in P_{\sigma}$ of $I$, 
hence $[\bar w]\in\bar\GM_{\sigma}$.
Moreover, the map $w\mapsto [\bar w]$ is a group
embedding $W_{\sigma}^{0}\injo \bar\GM_{\sigma}$. Finally, if $\tau\subseteq w\sigma$ (\emph{i.e.} $w\in
W^{\dag}_{\sigma\tau}$), we put 
$$\bar w_{\sigma\tau}:\, \bar\GM_{\sigma}\To{\bar w_{\sigma}} \bar\GM_{w\sigma}\injo \bar\GM_{\tau}$$ the composition of the
maps just defined. 
The image $\bar w_{\sigma\tau}(\bar\GM_{\sigma})$ is a Levi subgroup of $\bar\GM_{\tau}$ which is contained in the
parabolic subgroup 
$\bar\PM_{w\sigma,\tau}= \bar w_{\sigma\tau}(\bar\GM_{\sigma}) \bar\UM_{w\sigma,\tau}$.
If $v\in W^{\dag}_{\tau\nu}$ for a third simplex $\nu$, it is easily checked that 
$\bar v_{\tau\nu}\circ \bar w_{\sigma\tau}=\bar{vw}_{\sigma\nu}$, and that
$\bar\UM_{vw\sigma,\nu}=\bar\UM_{v\tau,\nu}.\bar v_{\tau\nu}(\UM_{w\sigma,\tau})$.

For $\sigma,\tau\in \AM_{\bullet}$ we now define 
$$ \bar\GM^{\dag}_{\sigma\tau} :=  (\bar\GM_{\tau} \times W^{\dag}_{\sigma\tau})_{/\sim},$$ 
 where 
$$ (\bar g ,w)\sim (\bar g' ,w') \hbox{ iff } \exists (\bar u,v)\in
\bar\UM_{w\sigma,\tau}\times W_{\tau}^{0}, \, \bar g' =\bar g \bar u v \hbox{ and } w'= v^{-1}w. $$
One checks that the composition rule 
$(\bar\GM_{\nu} \times W^{\dag}_{\tau\nu})\times(\bar\GM_{\tau} \times W^{\dag}_{\sigma\tau})\To{}
\bar\GM_{\nu} \times W^{\dag}_{\sigma\nu}$ given
by $(\bar g_{\nu},v)\circ (\bar g_{\tau},w)=(\bar g_{\nu} \bar
v_{\tau\nu}(\bar g_{\tau}), vw)$
descends to a map 
$$ \bar\GM^{\dag}_{\tau\nu}\times\bar\GM^{\dag}_{\sigma\tau} \To{} \bar\GM^{\dag}_{\sigma\nu}$$
that fulfills the necessary associativity property to justify the following definition.
\begin{defn}
  We denote by $[\AM/\bar\GM]$ the small category whose set of objects is $\AM_{\bullet}$
  and the hom-sets are given by $\Hom(\sigma,\tau)=\bar \GM_{\sigma\tau}^{\dag}$.
\end{defn}

\alin{A functor from $[\AM/\bar \GM]$ to $[BT/\bar G]$}
Let us choose a basis of the $F$-vector space $V$. This allows us to identify $V$ to
$F^{I}$ and $G$ to $\Aut_{F}(F^{I})$.
We define a map $\AM\To{\iota} BT$ by sending $\tilde x\in \ZM^{I}$ to the lattice $\LC(\tilde x):=\bigoplus
\varpi^{\tilde x(i)}\OC_{F}. [i]$ where $\{[i],i\in I\}$ is the canonical basis of $F^I$. This defines
a simplicial isomorphism from $\AM_{\bullet}$ 
to the appartment $A$ associated to the diagonal torus of $G$.
Moreover for any simplex $\sigma$ in $\AM$ there is a canonical identification 
$\bar \GM_{\sigma}=\bar G_{\iota\sigma}^{0}:=G_{\iota\sigma}^{0}/G_{\iota\sigma}^{+}$, and if
$\tau\subseteq\sigma$ the identification 
$\bar\GM_{\tau}= \bar G_{\iota\tau}^{0}$ carries $\bar\PM_{\sigma,\tau}$ to
$G_{\iota\sigma}^{0}/ G_{\iota\tau}^{+}$ and $\bar\UM_{\sigma,\tau}$ to
$ G_{\iota\sigma}^{+}/ G_{\iota\tau}^{+}$.

We also embed $W$ in $G$ by mapping $w=(t,w_{0})$ to the  matrix $[t][w_{0}]$ where 
$[w_{0}]$ is the permutation matrix of $G$ attached to $w_{0}$ and
$[t]={\rm diag}(\varpi^{t_{1}}\cdots \varpi^{t_{n}})$ if $t=(t_{1},\cdots, t_{n})$.
The map $\iota$ is then $W$-equivariant and the map $W_{\iota\sigma}^{0}\To{}\bar G_{\iota\sigma}^{0}$
identifies with the map $W_{\sigma}^{0}\To{}\bar \GM_{\sigma}^{0}$ of the previous paragraph.

\begin{theo}
The multiplication map $\bar\GM_{\tau} \times W^{\dag}_{\sigma\tau} = \bar G_{\iota\tau}^{0} \times
W^{\dag}_{\iota\sigma,\iota\tau} \To{}  \bar G^{\dag}_{\iota\sigma,\iota\tau}$ induces a bijection $\bar \GM^{\dag}_{\sigma\tau}
\simto \bar G_{\iota\sigma,\iota\tau}^{\dag}$. Moreover, the functor 
$$[\AM/\bar \GM]\To{}[BT/\bar G]$$ thus obtained
is an equivalence of categories.
\end{theo}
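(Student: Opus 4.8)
The plan is to prove the two assertions of the theorem in turn: first that the multiplication map $\bar\GM_\tau\times W^\dag_{\sigma\tau}\to\bar G^\dag_{\iota\sigma,\iota\tau}$ descends to a bijection on the quotient $\bar\GM^\dag_{\sigma\tau}$, and then that the resulting assignment is functorial and an equivalence of categories. For the equivalence part, since the object sets $\AM_\bullet$ and $BT_\bullet$ do \emph{not} match (the latter is much bigger — infinitely many apartments), the functor cannot be an isomorphism of categories; I would instead verify it is fully faithful (which is exactly the bijection on hom-sets) and essentially surjective. Essential surjectivity is the cheap part: every simplex of $BT$ is $G$-conjugate to a simplex in the standard apartment $A=\iota(\AM)$, and in the quotient category $[BT/\bar G]$ conjugate objects are isomorphic (an element $g\in G$ with $g\iota\sigma'=\iota\sigma$ gives an isomorphism $\iota\sigma'\to\iota\sigma$ in $[BT/\bar G]$, because $\bar G^\dag_{\iota\sigma',\iota\sigma}$ contains the class of such a $g$, invertible with inverse the class of $g^{-1}$). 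So the real content is fully faithfulness, i.e. the bijection $\bar\GM^\dag_{\sigma\tau}\simto\bar G^\dag_{\iota\sigma,\iota\tau}$, together with compatibility of composition.

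\textbf{The bijection on hom-sets.} Fix simplices $\sigma,\tau\in\AM_\bullet$. On the $p$-adic side, recall $\bar G^\dag_{\iota\sigma,\iota\tau}=G^\dag_{\iota\sigma,\iota\tau}/G^+_{\iota\sigma}$ where $G^\dag_{\iota\sigma,\iota\tau}=\{g\in G:\iota\tau\subseteq g\,\iota\sigma\}$. The first step is an Iwasawa/Bruhat-type decomposition of this set. I would argue that every $g\in G^\dag_{\iota\sigma,\iota\tau}$ can be written $g=h\cdot[w]$ with $[w]$ a lift of some $w\in W^\dag_{\iota\sigma,\iota\tau}=W^\dag_{\sigma\tau}$ and $h\in G^0_{\iota\tau}$: indeed $g^{-1}$ carries the lattice chain of $\iota\tau$ into that of $g^{-1}\iota\tau\supseteq\iota\sigma$, and by the theory of the building for $\GL_n$ (Bruhat–Tits), $G=G^0_{\iota\tau}\,N(T)\,G^0_{\iota\sigma}$ relative to the torus $T$ whose apartment is $A$, and one checks the index-condition $\iota\tau\subseteq g\iota\sigma$ forces the $N(T)$-part to land in $W^\dag_{\sigma\tau}$. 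Thus the multiplication map $\bar G^0_{\iota\tau}\times W^\dag_{\sigma\tau}\to\bar G^\dag_{\iota\sigma,\iota\tau}$ (well defined modulo $G^+_{\iota\sigma}$ because $[w]G^+_{\iota\sigma}[w]^{-1}\subseteq G^+_{w\iota\sigma}\subseteq G^0_{\iota\tau}$, using $\iota\tau\le w\iota\sigma$) is \emph{surjective}. Via the canonical identification $\bar\GM_\tau=\bar G^0_{\iota\tau}$ of the preceding paragraph, this is the map in the statement. For injectivity, I would compute the ambiguity precisely: if $h[w]$ and $h'[w']$ are congruent modulo $G^+_{\iota\sigma}$, then $w^{-1}w'$ fixes $\iota\sigma$, i.e. $v:=w'^{-1}w\in W^0_\tau$ (the affine-Weyl part, since the determinant/central part is already matched), and feeding this back gives $\bar g'=\bar g\bar u v$ with $\bar u$ in the image of $G^+_{w\iota\sigma}/G^+_{\iota\tau}=\bar\UM_{w\sigma,\tau}$ — which is precisely the relation $\sim$ defining $\bar\GM^\dag_{\sigma\tau}$. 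Hence the induced map $\bar\GM^\dag_{\sigma\tau}\to\bar G^\dag_{\iota\sigma,\iota\tau}$ is a bijection. The key inputs here are the standard facts recalled in \ref{level0example} ($G^+_\sigma=\prod_i\ker(G^0_\sigma\to\Aut_\FM(\LC_i/\LC_{i+1}))$, so $G^+_{w\iota\sigma}/G^+_{\iota\tau}$ identifies with the unipotent radical $\bar\UM_{w\sigma,\tau}$ inside $\bar\GM_\tau$) and the concrete Bruhat-Tits description of $\GL_n(F)$ in terms of lattice chains.

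\textbf{Functoriality and conclusion.} It remains to check that under these bijections the composition rule defined on $[\AM/\bar\GM]$ — namely $(\bar g_\nu,v)\circ(\bar g_\tau,w)=(\bar g_\nu\,\bar v_{\tau\nu}(\bar g_\tau),vw)$ — matches ordinary multiplication in $G$ modulo the pro-$p$ radicals. Unwinding: given lifts $h_\tau[w]$ and $h_\nu[v]$ in $G$, their product is $h_\nu\,[v]h_\tau[v]^{-1}\,[vw]$, and $[v]h_\tau[v]^{-1}$ modulo $G^+$ is exactly $\bar v_{\tau\nu}(\bar g_\tau)$ by the definition of $\bar v_{\tau\nu}$ via the permutation matrix $[\bar v]$ (the diagonal part of $[v]$ commutes with $\bar G^0$ appropriately and disappears in the reductive quotient). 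This is a direct unwinding of the definitions in \ref{abstract_appart}, using the already-noted identities $\bar v_{\tau\nu}\circ\bar w_{\sigma\tau}=\bar{vw}_{\sigma\nu}$ and $\bar\UM_{vw\sigma,\nu}=\bar\UM_{v\tau,\nu}\cdot\bar v_{\tau\nu}(\bar\UM_{w\sigma,\tau})$, which guarantee the composition is compatible with the equivalence relations $\sim$ on both sides. Identity morphisms clearly correspond. Combined with essential surjectivity from the first paragraph, the functor $[\AM/\bar\GM]\to[BT/\bar G]$ is a fully faithful, essentially surjective functor, hence an equivalence of categories.

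\textbf{Main obstacle.} The delicate point is the injectivity half of the hom-set bijection — precisely pinning down that the only ambiguity in writing $g=h[w]$ modulo $G^+_{\iota\sigma}$ is the relation $\sim$, with the unipotent ambiguity being \emph{exactly} $\bar\UM_{w\sigma,\tau}$ and not something larger. This requires knowing that $[w]^{-1}G^+_{\iota\tau}[w]\cap G^0_{\iota\sigma}$ maps onto $\bar\UM_{w\sigma,\tau}$ under $G^0_{\iota\sigma}\to\bar\GM_{w\sigma}\hookrightarrow\bar\GM_\tau$ — essentially a compatibility between the filtration of the parahoric by congruence subgroups and the parabolic/Levi structure of the reductive quotient. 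Everything else is bookkeeping with the explicit lattice-chain model and the definitions of \ref{abstract_appart}.
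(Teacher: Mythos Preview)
Your proposal is correct and follows essentially the same approach as the paper: prove the hom-set bijection (fully faithful) plus essential surjectivity via $G$-conjugacy of simplices into the standard apartment. The one notable difference is in the injectivity step: rather than computing the fiber of the multiplication map and matching it to $\sim$, the paper simplifies the equivalence relation directly—using $W_\tau^0=G_\tau^0\cap W$ and $G_\tau^+\subset G_{w\sigma}^+=wG_\sigma^+w^{-1}$ to reduce $\sim$ to ``$g'w'=gwu$ for some $u\in G_\sigma^+$'', so that the quotient is manifestly $(G_\tau^0 W_{\sigma\tau}^\dag)/G_\sigma^+$ and injectivity is automatic; this bypasses the ``main obstacle'' you flag. (Minor slip: with the paper's convention $w'=v^{-1}w$, the element is $v=ww'^{-1}\in W_\tau^0$, not $w'^{-1}w$.)
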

\begin{proof}
In order to simplify the notation, we  write $\sigma$ for $\iota\sigma$ and $\tau$ for
$\iota\tau$. In view of the definition of $\bar\GM_{\sigma\tau}^{\dag}$ and the foregoing discussion, 
what need to be proved is that the multiplication map of the theorem induces a
bijection
$$ (G_{\tau}^{0} \times W^{\dag}_{\sigma\tau})_{/\sim} \simto \bar G^{\dag}_{\sigma\tau}$$
where the equivalence relation is given by 
$$ ( g ,w)\sim ( g' ,w') \hbox{ iff } \exists (h, u,v)\in G_{\tau}^{+}\times
G_{w\sigma}^{+}\times W_{\tau}^{0}, \,  g' = h  g  u v \hbox{ and } w'= v^{-1}w. $$
Since $W_{\tau}^{0}=G_{\tau}^{0}\cap W$, we may simplify this in
$$ ( g ,w)\sim ( g' ,w') \hbox{ iff } \exists (h, u)\in G_{\tau}^{+}\times
G_{w\sigma}^{+}, \,  g'w' = h  g  u w . $$
But since $G_{\tau}^{+}\subset G_{w\sigma}^{+}$ and $G_{w\sigma}^{+}= wG_{\sigma}^{+}w^{-1}$, we also have
$$ ( g ,w)\sim ( g' ,w') \hbox{ iff } \exists u \in  G_{\sigma}^{+}, \,  g'w' =   gw u. $$
Therefore the multiplication map descends to
$$ (G_{\tau}^{0} \times W^{\dag}_{\sigma\tau})_{/\sim} \simto  (G_{\tau}^{0}W^{\dag}_{\sigma\tau})/G_{\sigma}^{+}\subset
\bar G^{\dag}_{\sigma\tau}$$
and we are left to prove that the inclusion $G_{\tau}^{0}W^{\dag}_{\sigma\tau}\subset
G^{\dag}_{\sigma\tau}$ is an equality. So let $g$ be an element in $G^{\dag}_{\sigma\tau}$, that is
$g\sigma\supset \tau$. Since $\tau\in A_{\bullet}$ we can find $g_{\tau}\in G_{\tau}^{0}$ such that
$g_{\tau}g\sigma\in A_{\bullet}$. Then, since $\sigma\in A_{\bullet}$ too,  there is $w\in W$ such
that $w\sigma=g_{\tau}g\sigma$, so that we have $g^{-1}g_{\tau}^{-1}w\in G_{\sigma}^{\dag}$.
 Using the equality $G^{\dag}_{\sigma}=G_{\sigma}^{0}W^{\dag}_{\sigma}$, we finally write $g$ in the
 form $g_{\tau}^{-1}w g_{\sigma}^{-1}w_{\sigma}^{-1 }$ which belongs to
 $G_{\tau}^{0}W^{\dag}_{\sigma\tau}$ since $wG_{\sigma}^{0}w^{-1}\subset G_{\tau}^{0}$.

We thus have obtained a fully faithful functor $[\AM/\bar\GM]\To{}[BT/\bar G]$. But 
since any simplex in $BT$ is conjugate to some simplex in $A$ under $G$, and therefore isomorphic to
this simplex in $[BT/\bar G]$, this functor is also essentially surjective.
\end{proof}

\begin{coro} \label{equivcoeffmod} Let $\CC$ be any category.
  The restriction functor $\EC\mapsto \EC\circ\iota$ is an equivalence of categories
  between the category of  $\CC$-valued  level $0$ $G$-equivariant coefficient systems on $BT$ and the
  category of functors $[\AM/\bar\GM]\To{}\CC$.
\end{coro}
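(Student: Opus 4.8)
The plan is to reduce the statement to the Theorem just proved, together with one elementary observation already recorded in the text. Recall from the paragraph preceding Definition~\ref{def_cartesian_coeff} that a $\CC$-valued $G$-equivariant coefficient system on $BT$ has level $0$ if and only if it factors uniquely through the quotient category $[BT/\bar G]$. Consequently the category of $\CC$-valued level $0$ $G$-equivariant coefficient systems on $BT$ is, tautologically, the functor category $\mathrm{Fun}([BT/\bar G],\CC)$, and the target of the restriction functor in the statement is $\mathrm{Fun}([\AM/\bar\GM],\CC)$; both $[BT/\bar G]$ and $[\AM/\bar\GM]$ are small, so there is no set-theoretic difficulty in forming these functor categories.

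First I would check that, under these two identifications, the functor $\EC\mapsto\EC\circ\iota$ is exactly precomposition with the functor $[\AM/\bar\GM]\To{}[BT/\bar G]$ built in the Theorem. This is pure bookkeeping: on simplices $\iota$ is the simplicial isomorphism $\AM_{\bullet}\simto A$, it is $W$-equivariant, and on the enriching data it carries $\bar\GM_{\sigma}$ to $\bar G^{0}_{\iota\sigma}$ and $W^{\dag}_{\sigma\tau}$ into $G^{\dag}_{\iota\sigma,\iota\tau}$ compatibly with composition --- and these are precisely the ingredients assembled into the functor of the Theorem. Hence restricting a level $0$ coefficient system along $\iota$ agrees with precomposing the corresponding functor on $[BT/\bar G]$ with that functor.

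It then remains to invoke the formal fact that precomposition with an equivalence of small categories $E\colon\mathcal A\To{}\mathcal B$ induces an equivalence $\mathrm{Fun}(\mathcal B,\CC)\To{}\mathrm{Fun}(\mathcal A,\CC)$ for any $\CC$: if $E'$ is a quasi-inverse of $E$ with natural isomorphisms $E'E\cong\id_{\mathcal A}$ and $EE'\cong\id_{\mathcal B}$, then precomposition with $E'$ is a quasi-inverse, the required natural isomorphisms of functor categories being obtained by horizontal composition with these. Applying this with $E$ the functor of the Theorem gives the corollary. I do not expect a genuine obstacle here --- all the real content is in the Theorem --- the only point needing a little care being the compatibility of $\iota$ with the $\bar\GM_{\sigma}$-actions asserted in the second paragraph, but that compatibility is built into the very construction of $\iota$ carried out just before the Theorem.
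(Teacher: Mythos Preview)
Your proposal is correct and is precisely the argument the paper has in mind: the corollary is stated without proof because it follows at once from the preceding Theorem together with the observation (already made in the text) that level $0$ coefficient systems are exactly the functors on $[BT/\bar G]$, via the general fact that precomposition with an equivalence of small categories induces an equivalence of functor categories. There is nothing to add.
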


As in Definition \ref{def_cartesian_coeff}, 
let us say that a functor $\EC:[\AM/\bar\GM]\To{}\CC$ is \emph{cartesian} if
$\EC(\bar 1_{\sigma\tau})$ is an isomorphism $\EC_{\sigma}\simto
\EC_{\tau}^{\UM_{\sigma\tau}}$ for all $\sigma\supset \tau$. 
The above equivalence clearly
preserves cartesian objects on both sides. Using now \ref{equivcoef} and
\ref{level0example}, we get :

\begin{coro} \label{equivlevel0AG}
Let $R$ be any commutative ring with $p\in R^{\times}$.
The functor $V\mapsto (V^{G_{\iota\sigma}^{+}})_{\sigma\in\AM}$ is an equivalence of
categories between $\Rep^{0}_{R}(G)$ and the category of cartesian functors
$[\AM/\bar\GM]\To{} R-\rm Mod$ (with natural transforms as morphisms).
\end{coro}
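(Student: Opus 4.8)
The plan is to produce the asserted functor as a composite of two equivalences already available above, using as an intermediate the category $\Coef^0_R[BT/G]$ of $R$-module valued $G$-equivariant coefficient systems on $BT$ that are cartesian for the system of idempotents $(e_x^+)_{x\in BT_0}$ of \ref{level0example}.

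First I would apply Fact~\ref{equivcoef} (Wang) to $e=(e_x^+)_x$. This system is consistent and satisfies axiom~(iv) of \ref{resol}, as recalled in \ref{level0example}, so the cited fact gives an equivalence $\Coef^0_R[BT/G]\simto\Rep^0_R(G)$ with quasi-inverse $V\mapsto(e_\sigma^+V)_\sigma$; in particular $V\mapsto(e_\sigma^+V)_\sigma$ is itself an equivalence $\Rep^0_R(G)\simto\Coef^0_R[BT/G]$. Two routine points are worth recording. Since $G_\sigma^+$ is a compact pro-$p$ group and $p$ is invertible in $R$, the idempotent $e_\sigma^+$ acts on any smooth $R[G_\sigma]$-module as the averaging projector onto $G_\sigma^+$-invariants, so $e_\sigma^+V=V^{G_\sigma^+}$; and every $\VC\in\Coef^0_R[BT/G]$ is automatically of level $0$, since taking $\tau=\sigma$ in the defining condition $\VC(1_{\sigma\tau}):\VC_\sigma\simto e_\sigma^+\VC_\tau$ forces $G_\sigma^+$ to act trivially on $\VC_\sigma$. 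Thus $\Coef^0_R[BT/G]$ is precisely the full subcategory of level $0$ $G$-equivariant coefficient systems in $R$-modules that are cartesian in the sense of Definition~\ref{def_cartesian_coeff}.

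Next I would invoke Corollary~\ref{equivcoeffmod} with $\CC=R\hbox{-Mod}$: restriction along $\iota$ is an equivalence from level $0$ $G$-equivariant coefficient systems in $R$-modules to functors $[\AM/\bar\GM]\To{}R\hbox{-Mod}$, and, as observed in the text immediately preceding the statement to be proved, it matches cartesian objects on the two sides. Concretely, under the identification $\bar\GM_\tau=\bar G_{\iota\tau}^0$ set up in the construction of the functor $[\AM/\bar\GM]\To{}[BT/\bar G]$, the subgroup $\bar\UM_{\sigma,\tau}$ corresponds to $G_{\iota\sigma}^+/G_{\iota\tau}^+$, so that for a coefficient system $\VC$ the cartesianness condition $\VC_{\iota\sigma}\simto\VC_{\iota\tau}^{G_{\iota\sigma}^+}$ translates exactly into the condition defining a cartesian functor on $[\AM/\bar\GM]$. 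Restricting the equivalence of Corollary~\ref{equivcoeffmod} to cartesian objects therefore yields an equivalence between $\Coef^0_R[BT/G]$ and the category of cartesian functors $[\AM/\bar\GM]\To{}R\hbox{-Mod}$.

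Composing the two equivalences, $V\in\Rep^0_R(G)$ is sent first to the coefficient system $\sigma\mapsto e_\sigma^+V=V^{G_\sigma^+}$ and then, by restriction along $\iota$, to the cartesian functor $\sigma\mapsto V^{G_{\iota\sigma}^+}$ on $[\AM/\bar\GM]$ --- which is exactly the functor in the statement, and it is an equivalence as a composite of equivalences. I do not expect a genuine obstacle here: everything reduces to the two previously proved equivalences. The only slightly delicate point is the bookkeeping of the previous paragraph, namely that the equivalence of Corollary~\ref{equivcoeffmod} carries the $G_{\iota\sigma}^+$-invariance condition on coefficient systems to the $\bar\UM_{\sigma,\tau}$-invariance condition on $[\AM/\bar\GM]$-functors; but this follows at once from the identification $\bar\UM_{\sigma,\tau}=G_{\iota\sigma}^+/G_{\iota\tau}^+$ together with the already-quoted fact that this equivalence preserves cartesian objects.
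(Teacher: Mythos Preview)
Your proposal is correct and follows precisely the route the paper takes: compose Wang's equivalence $\Rep^0_R(G)\simto\Coef^0_R[BT/G]$ from Fact~\ref{equivcoef} (applied to the system of \ref{level0example}) with the restriction equivalence of Corollary~\ref{equivcoeffmod}, noting that both preserve cartesian objects. The paper compresses this into a single sentence before the corollary; you have simply unpacked the bookkeeping (notably $e_\sigma^+V=V^{G_\sigma^+}$ and $\bar\UM_{\sigma,\tau}=G_{\iota\sigma}^+/G_{\iota\tau}^+$) that the paper leaves implicit.
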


Note that this implies in particular that the level $0$ subcategory
$\Rep^{0}_{R}(\GL_{n}(F))$ only depends on the residue field $\FM$ of $F$ (and $n$), up to
equivalence. Indeed, the definition of $[\AM/\bar\GM]$ only involves the Coxeter complex
associated to the affine Weyl group $W$, and the residual reductive groups $\bar\GM_{\sigma}$,
$\sigma\in \AM$.
In order to refine the above corollary for summands $\Rep^{s}_{\oZl}(G)$, we enhance
the setting a bit.

\subsection{Hecke ``algebras'' and ``modules''}

We refer to the appendix for the language of ``rings'' (also called ringoids) and
``modules'' that we will
use from now on. As above, $R$ denotes a commutative ring with $p\in R^{\times}$.

\alin{The Hecke ``algebra'' of level $0$} \label{hecke_level_0}
Let us apply the construction of the $R$-algebra of a small category $\AC$ as in \ref{rings}
to the case $\AC=[\AM/\bar\GM]$. We get an ``$R$-algebra''
$R[\AM/\bar\GM]$ with set of objects $\AM_{\bullet}$ and hom-sets 
defined by $R[\AM/\bar\GM](\sigma,\tau)=R[\bar\GM_{\sigma\tau}^{\dag}]$.
We wish to use Corollary \ref{equivlevel0AG} in order to see $\Rep^{0}_{R}(G)$ as a
category of left modules over $R[\AM/\bar\GM]$. Indeed, this corollary provides us with a
full and faithful functor 
$$ V\mapsto \VC,\,\, \Rep^{0}_{R}(G) \To{} R[\AM/\bar\GM]-\rm Mod,$$
with $\VC_{\sigma}:= V^{G_{\iota\sigma}^{+}}$ and the action 
$\VC_{\sigma}\otimes_{R}R[\bar\GM_{\sigma\tau}^{\dag}]\To{}\VC_{\tau}$ obtained by
$R$-linear extension of the action of $\bar\GM_{\sigma\tau}^{\dag}$ induced by that of $G$
on $V$ via the isomorphisms $\bar\GM_{\sigma\tau}^{\dag}\simto \bar G^{\dag}_{\sigma\tau}$.
The image of this functor is formed by ``$R[\AM/\bar\GM]$-modules'' which are cartesian as
functors $[\AM/\bar\GM]\To{} R-\rm Mod$ (as defined above Corollary \ref{equivlevel0AG}).
So we need to understand what this property means in terms of ``$R[\AM/\bar\GM]$-modules''.

Define $ W_{\sigma\tau}^{0}:= W_{\sigma\tau}^{\dag}\cap W^{0}
\hbox{ and }
\bar\GM_{\sigma\tau}^{0}:= (\bar\GM_{\tau} \times W_{\sigma\tau}^{0})/\sim$
where the equivalence relation is the same as in the definition of
$\bar\GM_{\sigma\tau}^{\dag}$ (so, in particular we have
$\bar\GM_{\sigma}^{0}=\bar\GM_{\sigma}$). 
We get a category $[\AM/\bar\GM]^{0}$ and 
an ``$R$-algebra'' $R[\AM/\bar\GM]^{0}$ which is naturally a ``sub-$R$-algebra'' of
$R[\AM/\bar\GM]$. Indeed any ``module'' over $R[\AM/\bar\GM]$ restricts canonically to a
``module'' over $R[\AM/\bar\GM]^{0}$. We refer to Definition \ref{cart_modules} for the
notion of Cartesian ``module''.

\begin{thm}
  The functor $V\mapsto \VC$ above is an equivalence of categories from $\Rep^{0}_{R}(G)$
  to the category of left $R[\AM/\bar\GM]$-modules that are cartesian
 as $R[\AM/\bar\GM]^{0}$-modules.
\end{thm}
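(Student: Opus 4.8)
The plan is to leverage Corollary~\ref{equivlevel0AG}, which already identifies $\Rep^{0}_{R}(G)$ with the category of cartesian functors $[\AM/\bar\GM]\To{} R\hbox{-Mod}$. So the whole statement reduces to a purely formal comparison: a functor $\EC:[\AM/\bar\GM]\To{} R\hbox{-Mod}$ is the ``same thing'' as a left $R[\AM/\bar\GM]$-module (this should be one of the standard facts recollected in the appendix, see \ref{rings}), under which natural transformations correspond to module morphisms. Under this dictionary, I must check that the cartesian condition of the corollary — $\EC(\bar 1_{\sigma\tau})$ induces an isomorphism $\EC_{\sigma}\simto\EC_{\tau}^{\UM_{\sigma\tau}}$ for $\tau\subseteq\sigma$ — translates precisely into the condition ``cartesian as an $R[\AM/\bar\GM]^{0}$-module'' in the sense of Definition~\ref{cart_modules}. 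First I would spell out what a cartesian $R[\AM/\bar\GM]^{0}$-module means concretely: for $\tau\subseteq\sigma$, the morphism $\bar 1_{\sigma\tau}\in\bar\GM^{0}_{\sigma\tau}$ should induce an isomorphism from $\VC_{\sigma}$ onto the image of the idempotent $e_{\UM_{\sigma\tau}}$ acting on $\VC_{\tau}$ (here one uses that $\bar\GM_{\sigma}$ sits inside $\bar\GM_{\tau}$ as a Levi with unipotent radical $\bar\UM_{\sigma,\tau}$, so $\bar 1_{\tau\sigma}\circ \bar 1_{\sigma\tau}$ acts on $\VC_\tau$ as averaging over $\bar\UM_{\sigma,\tau}$, i.e.\ as $e_{\UM_{\sigma\tau}}$). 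This is literally the condition that $\VC(\bar 1_{\sigma\tau}):\VC_\sigma\to \VC_\tau^{\UM_{\sigma\tau}}$ is an isomorphism, so the two cartesian notions coincide.

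The one genuinely substantive point is the restriction of scalars step: I need to know that \emph{any} left $R[\AM/\bar\GM]$-module which is cartesian over the subalgebra $R[\AM/\bar\GM]^{0}$ lies in the essential image of $V\mapsto\VC$, not merely those that are cartesian as functors on all of $[\AM/\bar\GM]$. The subtlety is that $[\AM/\bar\GM]$ has more morphisms than $[\AM/\bar\GM]^{0}$ — namely the ``extended'' part coming from $W^{\dag}_{\sigma\tau}$ versus $W^{0}_{\sigma\tau}$, with $W^{\dag}_{\sigma\tau}/W^{0}_{\sigma\tau}$ cyclic of order dividing $n$, reflecting the action of $G$ versus $G^{0}$, equivalently the non-triviality of $G_\sigma^\dag/G_\sigma$. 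So I would argue that the extra automorphisms of a simplex $\sigma$ coming from $W_\sigma^\dag/W_\sigma$ (which on $BT$ come from elements of $G^\dag_\sigma$ not in $G^0_\sigma$) automatically permute the $\bar\GM$-structure compatibly, so that cartesianness for $\bar 1_{\sigma\tau}$ with $\tau\subseteq\sigma$ is preserved under composition with these extra morphisms. Concretely: since every morphism $\sigma\to\tau$ in $[\AM/\bar\GM]$ factors (by the equivalence $[\AM/\bar\GM]\simto[BT/\bar G]$ of the Theorem, using $G^\dag_{\sigma\tau}=G^0_\tau W^\dag_{\sigma\tau}$) as an isomorphism $\sigma\simto w\sigma$ composed with $\bar 1_{w\sigma,\tau}$, requiring the transition maps for all $\bar 1_{\sigma\tau}$ ($\tau\subseteq\sigma$) to be isomorphisms onto the relevant invariants is enough to force cartesianness as a functor on $[\AM/\bar\GM]$; and the data of a left $R[\AM/\bar\GM]^{0}$-module structure that is already a restriction of an $R[\AM/\bar\GM]$-module carries exactly the isomorphisms $\sigma\simto w\sigma$ needed.

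Assembling: given a cartesian (over $R[\AM/\bar\GM]^{0}$) left $R[\AM/\bar\GM]$-module $\VC$, the underlying functor $[\AM/\bar\GM]\To{} R\hbox{-Mod}$ is cartesian in the sense preceding Corollary~\ref{equivlevel0AG} by the previous paragraph; hence by that corollary there is a unique (up to iso) $V\in\Rep^{0}_R(G)$ with $\VC\cong (V^{G^{+}_{\iota\sigma}})_\sigma$ as functors, and tracing through the identifications $\bar\GM^{\dag}_{\sigma\tau}\simto\bar G^{\dag}_{\iota\sigma,\iota\tau}$ this iso is one of $R[\AM/\bar\GM]$-modules. Conversely, for $V\in\Rep^0_R(G)$, the module $\VC$ is cartesian over $R[\AM/\bar\GM]^{0}$ because $V^{G^+_{\iota\sigma}}\to V^{G^+_{\iota\tau}}$ followed by the idempotent $e_{\UM_{\sigma\tau}}$ (which on $V^{G^+_{\iota\tau}}$ is averaging over $G^0_{\iota\sigma}/G^+_{\iota\tau}$'s unipotent radical, i.e.\ over $G^+_{\iota\sigma}/G^+_{\iota\tau}$) recovers $V^{G^+_{\iota\sigma}}$ by smoothness. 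Fully faithfulness is inherited from Corollary~\ref{equivlevel0AG} since module morphisms between such $\VC$'s are exactly natural transformations of the underlying functors. The main obstacle I anticipate is purely bookkeeping: making the appendix's notion of ``cartesian module'' (Definition~\ref{cart_modules}) line up on the nose with the functorial cartesian condition, in particular checking that the idempotent appearing in \ref{cart_modules} for the pair $(\sigma,\tau)$ is exactly the averaging idempotent over $\bar\UM_{\sigma,\tau}$, which is the content of the compatibilities $\bar\UM_{vw\sigma,\nu}=\bar\UM_{v\tau,\nu}\,\bar v_{\tau\nu}(\UM_{w\sigma,\tau})$ established in \ref{abstract_appart}.
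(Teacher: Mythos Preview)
Your overall strategy matches the paper's: reduce to Corollary~\ref{equivlevel0AG} and then check that the two ``cartesian'' notions coincide. However, there are two concrete problems.

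First, the composition $\bar 1_{\tau\sigma}\circ\bar 1_{\sigma\tau}$ does not exist: in $[\AM/\bar\GM]$, morphisms go from larger simplices to smaller ones, so for $\tau\subsetneq\sigma$ there is no morphism $\tau\to\sigma$ with underlying $w=1$. Hence your justification that Definition~\ref{cart_modules} translates into ``iso onto the image of $e_{\UM_{\sigma\tau}}$'' is broken. What actually needs to be computed is the coinduction target $\Hom_{R[\bar\GM_{\tau}]}(R[\bar\GM^{0}_{\sigma\tau}],\MC_{\tau})$, and for this you are missing the key structural input: when $W^{0}_{\sigma\tau}\neq\emptyset$ it is a \emph{single} $W^{0}_{\tau}$-orbit (this is the content of the accompanying Lemma in the paper, using that $W^{0}$ is simply transitive on chambers). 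Picking $w\in W^{0}_{\sigma\tau}$ then gives a $(\bar\GM_{\tau},\bar\GM_{\sigma})$-equivariant identification $\bar\GM^{0}_{\sigma\tau}\simeq\bar\GM_{\tau}/\bar\UM_{w\sigma,\tau}$, whence $\Hom_{R[\bar\GM_{\tau}]}(R[\bar\GM^{0}_{\sigma\tau}],\MC_{\tau})\simeq\MC_{\tau}^{\bar\UM_{w\sigma,\tau}}$, and the map $a^{*}_{\sigma\tau}$ becomes the composite $\MC_{\sigma}\simto\MC_{w\sigma}\To{\MC(\bar 1_{w\sigma,\tau})}\MC_{\tau}^{\bar\UM_{w\sigma,\tau}}$. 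This is precisely what makes the module-cartesian condition at $(\sigma,\tau)$ equivalent to the functor-cartesian condition at $(w\sigma,\tau)$.

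Second, your ``substantive point'' in the second paragraph is a red herring. The functor-cartesian condition preceding Corollary~\ref{equivlevel0AG} only involves the morphisms $\bar 1_{\sigma\tau}$ for $\tau\subseteq\sigma$, and these lie already in $[\AM/\bar\GM]^{0}$ (indeed in $[\AM]$). So there is no issue of ``extra'' morphisms in $[\AM/\bar\GM]$ versus $[\AM/\bar\GM]^{0}$ to worry about; the whole content of the theorem is the translation between the coinduction formulation of~\ref{cart_modules} and the unipotent-invariants formulation, and that translation is exactly the missing lemma above.
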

\begin{proof}
Let $\sigma,\tau$ be two facets such that $W_{\sigma\tau}^{0}$ is non empty, and fix
$w\in W_{\sigma\tau}^{0}$. The next lemma tells us that
$W_{\sigma\tau}^{0}=W_{\tau}^{0}w$, and this implies in turn an identification of
$\bar\GM_{\tau}\times\bar\GM_{\sigma}^{\rm opp}$-sets
$$ \bar\GM_{\sigma\tau}^{0} = \bar\GM_{\tau}/\bar\UM_{w\sigma,\tau}$$ 
where $g_{\sigma}\in\bar\GM_{\sigma}$ acts on the right by multiplication by $\bar
w_{\sigma}(g_{\sigma})\in\bar\GM_{w\sigma}$. For any left ``$R[\AM/\GM]^{0}$-module''
$\MC$, this implies an isomorphism
$$ \Hom_{R[\bar\GM_{\tau}^{0}]}(R[\bar\GM_{\sigma\tau}^{0}],\MC_{\tau})\simto
\MC_{\tau}^{\bar\UM_{w\sigma,\tau}}$$
which identifies the map $a_{\sigma\tau}^{*}$ of \ref{cart_modules} with the map 
$$\MC_{\sigma}\simto \MC_{w\sigma}\To{} \MC_{\tau}^{\bar\UM_{w\sigma,\tau}}$$
where the first map is $\MC(\bar w_{\sigma})$ and the second one is $\MC(\bar
1_{w\sigma,\tau})$ (the image by the functor $\MC$ of the inclusion
$1_{w\sigma,\tau}:w\sigma\subseteq \tau$).
In particular, we see that $\MC$ is cartesian as a functor $[\AM/\bar\GM]^{0}\To{} R-\rm Mod$
if and only if it is cartesian as a ``$R[\AM/\bar\GM]^{0}$-module''.
\end{proof}

\begin{lem}
 Given two chambers $\delta\supset \sigma$ and $\gamma\supset \tau$, the multiplication in
 $W$ induces two bijections
$W_{\tau}^{0}\times (W_{\delta\gamma}^{\dag}\cap W_{\sigma\tau}^{\dag})\simto
W_{\sigma\tau}^{\dag}$ and
$W_{\tau}^{0}\times (W_{\delta\gamma}^{0}\cap W_{\sigma\tau}^{0})\simto
W_{\sigma\tau}^{0}$. Moreover, $W_{\delta\gamma}^{0}$ is a singleton.
\end{lem}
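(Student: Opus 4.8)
The plan is to work in the Coxeter complex $\AM$ directly, using the structure of $W=\ZM^{I}\rtimes\SG_{n}$ and the geometry of facets. First I would recall the basic dictionary: a chamber $\delta\supset\sigma$ is a maximal simplex, its pointwise fixator $W_{\delta}^{0}$ in $W^{0}$ is trivial (chambers are fundamental domains for the affine Weyl group action, and $W^{0}$ acts simply transitively on chambers), and $W_{\sigma}^{0}$ is the parabolic subgroup of $W^{0}$ generated by the reflections fixing $\sigma$. The set $W_{\sigma\tau}^{\dag}=\{w\in W:\tau\subseteq w\sigma\}$ is, after picking any fixed $w_{0}$ with $\tau\subseteq w_{0}\sigma$, a left coset space: more precisely $W_{\sigma\tau}^{\dag}=\{w:w\sigma=w_{0}\sigma\}\cdot w_{0}$ is not quite right since $\tau\subseteq w\sigma$ allows $w\sigma$ to vary among chambers containing $\tau$ — so the correct description is $W_{\sigma\tau}^{\dag}=\bigsqcup_{w\sigma\supseteq\tau}W_{w\sigma}^{\dag}w$, and the key is to reorganize this as a product with $W_{\tau}^{0}$ on the left.

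The main steps: (1) Show $W_{\delta\gamma}^{0}$ is a singleton. Since $\delta,\gamma$ are chambers, $W_{\delta\gamma}^{0}=\{w\in W^{0}:\gamma= w\delta\}$ (equality of chambers, not just inclusion, because both are maximal) — wait, one must be careful: $\gamma\subseteq w\delta$ with both chambers forces $\gamma=w\delta$. Then since $W^{0}$ acts simply transitively on chambers of the Coxeter complex of type $\widetilde A_{n-1}$, there is exactly one such $w$, giving the singleton claim. (2) For the first bijection, take $w\in W_{\sigma\tau}^{\dag}$, so $\tau\subseteq w\sigma$. Since $\tau\subseteq\gamma$ and $\tau\subseteq w\sigma$, and $w\delta$ is a chamber containing $w\sigma\supseteq\tau$, both $\gamma$ and $w\delta$ are chambers of the residue (link) of $\tau$; I would use the fact that $W_{\tau}^{0}$ acts transitively on the chambers of this residue (the residue is itself a Coxeter complex for $W_{\tau}^{0}$) to find $v\in W_{\tau}^{0}$ with $vw\delta=\gamma$. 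Then $vw\in W_{\delta\gamma}^{\dag}$, and since $vw\sigma\supseteq\tau$ (as $v\in W_{\tau}^{0}$ fixes $\tau$ and $w\sigma\supseteq\tau$... actually $vw\sigma\supseteq v\tau=\tau$), we get $vw\in W_{\delta\gamma}^{\dag}\cap W_{\sigma\tau}^{\dag}$, hence $w=v^{-1}(vw)\in W_{\tau}^{0}\cdot(W_{\delta\gamma}^{\dag}\cap W_{\sigma\tau}^{\dag})$. For injectivity of the multiplication map, suppose $v_{1}x_{1}=v_{2}x_{2}$ with $v_{i}\in W_{\tau}^{0}$, $x_{i}\in W_{\delta\gamma}^{\dag}$; then $v_{2}^{-1}v_{1}=x_{2}x_{1}^{-1}$ maps $\delta$ to itself while lying in $W^{0}$ only up to the $\ZM_{\rm diag}$-ambiguity — here I'd note $W_{\tau}^{0}\subseteq W^{0}$, and $x_{2}x_{1}^{-1}$ sends $x_{1}\delta\mapsto x_{2}\delta$, both equal to $\gamma$ (by step 1 applied with $x_1\delta, x_2\delta \supseteq \gamma$ forcing equality), so $x_{2}x_{1}^{-1}\in W_{\delta\delta'}$... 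I would instead argue: $v_2^{-1}v_1 \in W_\tau^0$ fixes $\gamma$ pointwise? No — rather $x_1\delta$ and $x_2\delta$ are chambers containing $\gamma$, and $v_2^{-1}v_1$ carries one to the other; if additionally $x_1\delta = x_2\delta$ (which I'd need to establish, perhaps by refining the choice of the $x_i$'s or by observing the decomposition is of chambers in the residue where $W_\tau^0$ acts simply transitively), then $v_2^{-1}v_1$ fixes $x_1\delta$, hence lies in $W_{x_1\delta}^0 = 1$. (3) The second bijection, involving $W^{0}$ throughout, follows the same argument verbatim, just intersecting everything with $W^{0}$ — and here the ``$0$''-superscript is automatically respected since $W_{\tau}^{0}\subseteq W^{0}$ and $W_{\delta\gamma}^{0}\subseteq W^{0}$.

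The hard part will be getting the injectivity/well-definedness cleanly: one must pin down that in the decomposition $w=v\cdot x$ with $x\in W_{\delta\gamma}^{\dag}$, the chamber $x\delta$ is \emph{forced} to equal a fixed chamber (so that the $W_{\tau}^{0}$-factor is genuinely acting simply transitively on a fiber). I expect this comes from the observation that $W_{\delta\gamma}^{\dag}=W_{\delta\gamma}^{0}\cdot(W_{\delta}^{\dag})$ combined with $|W_{\delta\gamma}^{0}|=1$, so that all elements of $W_{\delta\gamma}^{\dag}$ send $\delta$ to the single chamber $\gamma$ up to the cyclic ambiguity $W_{\delta}^{\dag}/W_{\delta}$; chasing this cyclic ambiguity through (it has order dividing $n$ and is detected on $W/(W^{0}\ZM_{\rm diag})$, as recalled in \ref{abstract_appart}) and checking it does not disturb the bijection is the one genuinely fiddly point. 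Everything else is a routine application of the standard fact that links of simplices in a Coxeter complex are again Coxeter complexes with simply transitive Weyl-group action on chambers.
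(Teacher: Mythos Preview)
Your approach is correct and rests on the same key fact as the paper: $W_{\tau}^{0}$ acts simply transitively on the chambers containing $\tau$, and $W^{0}$ acts simply transitively on all chambers. The paper's organization is slightly cleaner than yours: rather than working directly with $W_{\sigma\tau}^{\dag}$, it first establishes the bijection $W_{\tau}^{0}\times W_{\delta\gamma}^{\dag}\simto W_{\delta\tau}^{\dag}$ (which is immediate, since $w\in W_{\delta\tau}^{\dag}$ iff $w\delta$ is a chamber containing $\tau$, and $W_{\tau}^{0}$ acts simply transitively on these), and then simply restricts to the subset $W_{\sigma\tau}^{\dag}\subset W_{\delta\tau}^{\dag}$. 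This sidesteps your element-by-element surjectivity and injectivity checks entirely.

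Your injectivity argument, while ultimately salvageable, creates a phantom difficulty. You write that elements of $W_{\delta\gamma}^{\dag}$ send $\delta$ to $\gamma$ ``up to the cyclic ambiguity $W_{\delta}^{\dag}/W_{\delta}$'', but this is not so: since $\delta$ and $\gamma$ are both chambers, $\gamma\subseteq x_{i}\delta$ forces $x_{i}\delta=\gamma$ on the nose, for every $x_{i}\in W_{\delta\gamma}^{\dag}$. Hence $x_{2}x_{1}^{-1}$ stabilizes $\gamma$ (not $\delta$, as you first wrote), and since it equals $v_{2}^{-1}v_{1}\in W_{\tau}^{0}\subset W^{0}$, it lies in $W_{\gamma}^{\dag}\cap W^{0}=W_{\gamma}^{0}=\{1\}$. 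No chasing of cyclic quotients is needed; the ``genuinely fiddly point'' you anticipate does not arise.
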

\begin{proof}
Since $W_{\tau}^{0}$ acts simply transitively on the set of chambers that contain
$\tau$, the multiplication map gives a bijection $W_{\tau}^{0}\times
W_{\delta\gamma}^{\dag}\simto W_{\delta\tau}^{\dag}$. But we have 
$W_{\sigma\tau}^{\dag}\subset W_{\delta\tau}^{\dag}$, and this bijection thus restricts to
a bijection
$W_{\tau}^{0}\times (W_{\delta\gamma}^{\dag}\cap W_{\sigma\tau}^{\dag})\simto
W_{\sigma\tau}^{\dag}$ and to a similar bijection with $\circ$ instead of $\dag$.
Finally,  $W^{0}$ acts simply transitively on the set of
all chambers, so $W_{\delta\gamma}^{0}$ is a singleton.
\end{proof}

\alin{Hecke ``algebras'' attached to semisimple conjugacy classes} \label{hecke_algebra_s}
Now, let $s\in\GL_{n}(\FM)$ be a semisimple conjugacy class of order invertible in $R$,
and assume $R$ is either $\oZl$ or $\oQl$. For each $\sigma\in \AM_{\bullet}$ we get a central
  idempotent $e_{\sigma}^{s}\in R[\bar\GM_{\sigma}]$. Namely, with the
  notation of \ref{defes}  we put
$$e_{\sigma}^{s}:=\sum_{t}e_{t}^{\bar\GM_{\sigma}}$$ where $t$ runs on conjugacy classes in $s\cap 
\prod_{J\in P_{\sigma}}{\rm Aut}_{\FM}(\FM^{J})$ inside ${\rm Aut}_{\FM}(\FM^{I})=\GL_{n}(\FM)$.

\begin{lem}
The idempotent $e_{\sigma}^{s}$ is central in $R[\bar\GM_{\sigma}^{\dag}]$. Moreover, for
any other  $\tau\in\AM_{\bullet}$ we have $e_{\tau}^{s} R[\bar\GM_{\sigma\tau}^{\dag}]
= e_{\tau}^{s}R[\bar\GM_{\sigma\tau}^{\dag}] e_{\sigma}^{s} = R[\bar\GM_{\sigma\tau}^{\dag}] e_{\sigma}^{s}$. 
\end{lem}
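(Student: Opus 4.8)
The plan is to extract one equivariance statement about the idempotents $e_\rho^s$ and to deduce both assertions from it. \emph{Key point.} View each $\bar\GM_\rho$, $\rho\in\AM_\bullet$, as a standard Levi subgroup of $\GL_n(\FM)=\Aut_\FM(\FM^I)$. I claim that if $\sigma,\sigma'\in\AM_\bullet$ and $g\in\GL_n(\FM)$ satisfy $g\bar\GM_\sigma g^{-1}=\bar\GM_{\sigma'}$, then ${\rm Int}_g(e_\sigma^s)=e_{\sigma'}^s$. The isomorphism ${\rm Int}_g\colon\bar\GM_\sigma\simto\bar\GM_{\sigma'}$ is defined over $\FM$, hence compatible with Deligne--Lusztig twisted induction from rational maximal tori; since both groups are products of general linear groups (so self-dual in a canonical way), one checks that ${\rm Int}_g$ carries the rational series $\EC(\bar\GM_\sigma,t)$ onto $\EC(\bar\GM_{\sigma'},gtg^{-1})$, and therefore $e_{t,\oQl}^{\bar\GM_\sigma}$ onto $e_{gtg^{-1},\oQl}^{\bar\GM_{\sigma'}}$; the same holds over $\oZl$ because the integral idempotents of Bonnaf\'e--Rouquier recalled in \ref{defes} are sums of the rational ones. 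Finally, as $t$ runs through the semisimple $\bar\GM_\sigma$-classes contained in $s$, the classes $gtg^{-1}$ run exactly through the semisimple $\bar\GM_{\sigma'}$-classes contained in $s$ (because $g\in\GL_n(\FM)$ and $s$ is a $\GL_n(\FM)$-class), and summing yields the claim.

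\emph{Centrality.} Since $\bar\GM_\sigma=\bar\GM_\sigma^0$ is normal in $\bar\GM_\sigma^\dag$ with cyclic quotient, and $e_\sigma^s$ is already central in $R[\bar\GM_\sigma]$, it suffices to show that conjugation by one lift $n$ of a generator of $\bar\GM_\sigma^\dag/\bar\GM_\sigma$ fixes $e_\sigma^s$. Using the equivalence $[\AM/\bar\GM]\simto[BT/\bar G]$ established above, $n$ comes from an element $\tilde n\in G_{\iota\sigma}^\dag$ that shifts cyclically the lattice chain representing $\iota\sigma$; being $F$-linear, $\tilde n$ induces $\FM$-linear isomorphisms between the graded pieces $\LC_k/\LC_{k+1}$, so after choosing $\FM$-bases the resulting automorphism of $\bar\GM_\sigma=\prod_k\Aut_\FM(\LC_k/\LC_{k+1})$ is a size-preserving cyclic permutation of the factors followed by an inner automorphism, i.e. equals ${\rm Int}_{g_0}$ for some $g_0\in N_{\GL_n(\FM)}(\bar\GM_\sigma)$. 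By the key point (with $\sigma'=\sigma$), ${\rm Int}_{g_0}(e_\sigma^s)=e_\sigma^s$; hence $ne_\sigma^sn^{-1}=e_\sigma^s$, and together with the centrality of $e_\sigma^s$ in $R[\bar\GM_\sigma]$ this shows $e_\sigma^s$ is central in $R[\bar\GM_\sigma^\dag]$.

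\emph{The bimodule identities.} If $W_{\sigma\tau}^\dag=\emptyset$ then $R[\bar\GM_{\sigma\tau}^\dag]=0$ and there is nothing to prove. Otherwise it is enough to check $e_\tau^s\cdot x=x\cdot e_\sigma^s$ for $x$ ranging over the classes $(\bar g,w)$ with $\bar g\in\bar\GM_\tau$ and $w\in W_{\sigma\tau}^\dag$, which $R$-span $R[\bar\GM_{\sigma\tau}^\dag]$: this identity gives $e_\tau^sR[\bar\GM_{\sigma\tau}^\dag]=R[\bar\GM_{\sigma\tau}^\dag]e_\sigma^s$, and multiplying it by $e_\sigma^s$ on the right and by $e_\tau^s$ on the left and using idempotency identifies all three objects in the statement with $e_\tau^sR[\bar\GM_{\sigma\tau}^\dag]e_\sigma^s$. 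Unwinding the composition law of $[\AM/\bar\GM]$ gives $e_\tau^s\cdot(\bar g,w)=(e_\tau^s\bar g,w)$ and $(\bar g,w)\cdot e_\sigma^s=(\bar g\,\bar w_{\sigma\tau}(e_\sigma^s),w)$; and since $\bar w_{\sigma\tau}$ is ${\rm Int}_{[\bar w]}\colon\bar\GM_\sigma\simto\bar\GM_{w\sigma}$ followed by the Levi embedding $\bar\GM_{w\sigma}\injo\bar\GM_\tau$, the key point yields $\bar w_{\sigma\tau}(e_\sigma^s)=e_{w\sigma}^s$ inside $R[\bar\GM_\tau]$. From the definition of $\bar\GM_{\sigma\tau}^\dag$ (taking $v=1$ in the equivalence relation and averaging, which is legitimate since $p\in R^\times$) one has $(\bar a,w)=(\bar a\,e_{\bar\UM_{w\sigma,\tau}},w)$ in $R[\bar\GM_{\sigma\tau}^\dag]$ for every $\bar a\in R[\bar\GM_\tau]$. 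Combining this with the centrality of $e_\tau^s$ in $R[\bar\GM_\tau]$, with the identity $e_\tau^s\,e_{\bar\UM_{w\sigma,\tau}}=e_{\bar\UM_{w\sigma,\tau}}\,e_{w\sigma}^s$ provided by \ref{compatinduc} and (\ref{compat2}) applied to the Levi $\bar\GM_{w\sigma}\subset\bar\GM_\tau$, and with the fact that $e_{\bar\UM_{w\sigma,\tau}}$ commutes with $R[\bar\GM_{w\sigma}]$ (a Levi normalizes the radical of its parabolic), one rewrites both $(e_\tau^s\bar g,w)$ and $(\bar g\,e_{w\sigma}^s,w)$ as $(\bar g\,e_{\bar\UM_{w\sigma,\tau}}e_{w\sigma}^s,w)$, which is the desired identity.

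\emph{Expected difficulty.} The substance is concentrated in the key point, i.e. the compatibility of Green/Lusztig parametrization and of its integral refinement with $\FM$-rational isomorphisms between these Levi subgroups, together with the elementary but slightly delicate verification, in the centrality step, that the conjugation action of $\bar\GM_\sigma^\dag/\bar\GM_\sigma$ on $\bar\GM_\sigma$ is genuinely realized by an element of $N_{\GL_n(\FM)}(\bar\GM_\sigma)$ (so that no field automorphism intervenes). Once these are settled, the bimodule identities are a routine bookkeeping of idempotents.
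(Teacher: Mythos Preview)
Your proof is correct and rests on the same two ingredients as the paper's: the invariance of $e_\rho^s$ under conjugation by elements of $N_{\GL_n(\FM)}(\bar\GM_\rho)$ (your ``key point''), and the identity $e_\tau^s\,e_{\bar\UM_{w\sigma,\tau}}=e_{\bar\UM_{w\sigma,\tau}}\,e_{w\sigma}^s$ coming from (\ref{compat2}). The paper's argument is organized a bit differently but is not substantively shorter.

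One simplification worth noting: your detour through the building in the centrality step is unnecessary. In the abstract category $[\AM/\bar\GM]$, the action of $(\bar 1,w)\in\bar\GM_\sigma^\dag$ on $\bar\GM_\sigma$ is \emph{defined} as conjugation by the permutation matrix $[\bar w]\in\Aut_\FM(\FM^I)$ (this is the map $\bar w_\sigma$ of \ref{abstract_appart}), so the realization by an element of $N_{\GL_n(\FM)}(\bar\GM_\sigma)$ is immediate and no passage to $G_{\iota\sigma}^\dag$ or choice of $\FM$-bases is needed. The paper simply observes this and concludes. For the bimodule identities, the paper packages the computation via the decomposition $\bar\GM_{\sigma\tau}^\dag=\bigsqcup_{w\in\Omega_{\sigma\tau}^\dag}\bar\GM_\tau/\bar\UM_{w\sigma,\tau}$ (with $\Omega_{\sigma\tau}^\dag$ a set of representatives of $W_\tau\backslash W_{\sigma\tau}^\dag$), which makes the application of (\ref{compat2}) a one-line chain of equalities; your elementwise verification amounts to the same thing.
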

\begin{proof}
 Any $w\in W_{\sigma}^{\dag}$ acts on $\bar\GM_{\sigma}=\prod_{J\in P_{\sigma}}{\rm Aut}_{\FM}(\FM^{J})$ by
permuting $P_{\sigma}$, and this action is by conjugation by some
element in ${\rm Aut}_{\FM}(\FM^{I})$. Therefore $w$ normalizes
$e^{s}_{\sigma}$. Since
$\bar\GM_{\sigma}^{\dag}=\bar\GM_{\sigma}W_{\sigma}^{\dag}$, the first assertion follows.

For the second one, let $\Omega_{\sigma\tau}^{\dag}$ be a set of representatives of
$W_{\tau}\ba W_{\sigma\tau}^{\dag}$. Then we have a decomposition of
$\bar\GM_{\tau}\times\bar\GM_{\sigma}^{\rm opp}$ sets
$$ \bar\GM_{\sigma\tau}^{\dag}=  \bigsqcup_{w\in\Omega_{\sigma\tau}^{\dag}}
\bar\GM_{\tau}/\UM_{w\sigma,\tau},$$
where $g_{\sigma}\in\bar\GM_{\sigma}$ acts on the summand 
$\bar\GM_{\tau}/\UM_{w\sigma,\tau}$ by right multiplication by $\bar
w_{\sigma\tau}(g_{\sigma})$.
It follows that
$$ R[\bar\GM_{\sigma\tau}^{\dag}]e_{\sigma}^{s}=  
\bigoplus_{w\in\Omega_{\sigma\tau}^{\dag}}R[\bar\GM_{\tau}/\UM_{w\sigma,\tau}]e_{w\sigma}^{s}=
\bigoplus_{w\in\Omega_{\sigma\tau}^{\dag}}e_{\tau}^{s}R[\bar\GM_{\tau}/\UM_{w\sigma,\tau}]=
e_{\tau}^{s}R[\bar\GM_{\sigma\tau}^{\dag}] ,$$
where the second equality is (\ref{compat2}).
\end{proof}

The lemma shows that the composition map
$R[\bar\GM_{\tau\nu}^{\dag}]\otimes_{R}R[\bar\GM_{\sigma\tau}^{\dag}] \To{}
R[\bar\GM_{\sigma\nu}^{\dag}]$ takes the submodule
$e_{\nu}^{s}R[\bar\GM_{\tau\nu}^{\dag}]e_{\tau}^{s}\otimes_{R} e_{\tau}^{s}R[\bar\GM_{\sigma\tau}^{\dag}]e_{\sigma}^{s}$ into
$e_{\nu}^{s}R[\bar\GM_{\sigma\nu}^{\dag}]e_{\sigma}^{s}$. This justifies the following
definition :

\begin{defn}
  We denote by $e^{s}R[\AM/\bar\GM]$ the  ``$R$-algebra'' whose set of objects is
  $\AM_{\bullet}$ and Hom sets are given
  by $\Hom(\sigma,\tau)=e_{\tau}^{s} R[\bar\GM_{\sigma\tau}^{\dag}]e_{\sigma}^{s}$.
Similarly we have a ``$R$-algebra'' $e^{s}R[\AM/\bar\GM]^{0}$.
\end{defn}

Now, let $\VC=(\VC_{\sigma})_{\sigma\in\AM_{\bullet}}$ be a left
``$R[\AM/\bar\GM]$-module''. For any pair $\sigma,\tau$, the structure map 
$\VC_{\sigma}\otimes_{R}R[\bar\GM_{\sigma\tau}^{\dag}]\To{}\VC_{\tau}$ induces a map
$e_{\sigma}^{s}\VC_{\sigma}\otimes_{R}e^{s}_{\tau}R[\bar\GM_{\sigma\tau}^{\dag}]e^{s}_{\sigma}
\To{} e_{\tau}^{s}\VC_{\tau}$, which equips the collection
 $( e^{s}_{\sigma}\VC_{\sigma})_{\sigma\in\AM_{\bullet}}$ with a structure of left
``$e^{s}R[\AM/\bar\GM]$-module'' that we denote by $e^{s}\VC$. We have a decomposition 
$ \VC=\bigoplus_{s} e^{s}\VC$ of $\VC$ as a direct sum of functors
$\AM/\bar\GM\To{}R-\rm Mod$. Here $s$ runs over semisimple conjugacy classes in
$\GL_{n}(\FM)$ of order
invertible in $R$.
In other words we have a categorical decomposition
$$ R[\AM/\bar\GM]-{\rm Mod} = \prod_{s} 
e^{s}R[\AM/\bar\GM]-{\rm Mod}.$$

Putting Fact \ref{equivcoef}, Lemma \ref{idempotents_building} and Theorem
\ref{hecke_level_0} together, we obtain the following result.

\begin{thm}
  The functor $V\mapsto \VC$ of \ref{hecke_level_0} induces an equivalence of categories from
$ \Rep^{s}_{R}(G)$ to the category of left ``$e^{s}R[\AM/\bar\GM]$-modules'' which are
cartesian as ``$e^{s}R[\AM/\bar\GM]^{0}$-modules''.
\end{thm}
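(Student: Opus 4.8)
The plan is to obtain this statement from Theorem \ref{hecke_level_0} by matching, on both sides, the product decomposition indexed by semisimple classes of order invertible in $R$. Write $\Phi$ for the functor $V\mapsto\VC$, $\VC_{\sigma}=V^{G^{+}_{\iota\sigma}}$, which by Theorem \ref{hecke_level_0} is an equivalence from $\Rep^{0}_{R}(G)$ onto the category $\DC$ of left $R[\AM/\bar\GM]$-modules that are cartesian as $R[\AM/\bar\GM]^{0}$-modules. The source splits as $\Rep^{0}_{R}(G)=\prod_{s}\Rep^{s}_{R}(G)$ (the Proposition following Lemma \ref{idempotents_building}), and the target carries the categorical decomposition $R[\AM/\bar\GM]-{\rm Mod}=\prod_{s}e^{s}R[\AM/\bar\GM]-{\rm Mod}$ displayed just before the definition of $e^{s}R[\AM/\bar\GM]$. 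So the first step is to check that the cartesian condition is compatible with this latter decomposition, i.e. that $\DC=\prod_{s}\DC_{s}$ where $\DC_{s}$ is the category of left $e^{s}R[\AM/\bar\GM]$-modules which are cartesian as $e^{s}R[\AM/\bar\GM]^{0}$-modules. For a module $\VC=\bigoplus_{s}e^{s}\VC$ the cartesianness is tested (proof of Theorem \ref{hecke_level_0}) through the maps $\VC_{\sigma}\simto\VC_{w\sigma}\To{}\VC_{\tau}^{\bar\UM_{w\sigma,\tau}}$; these respect the $s$-grading because $e^{s}_{\sigma}$ is central in $R[\bar\GM_{\sigma}^{\dag}]$, the isomorphism $\bar w_{\sigma}$ (conjugation by a permutation matrix, hence inner in $\GL_{n}(\FM)$) carries $e^{s}_{\sigma}$ to $e^{s}_{w\sigma}$, and $e^{s}_{\tau}R[\bar\GM^{\dag}_{w\sigma,\tau}]=R[\bar\GM^{\dag}_{w\sigma,\tau}]e^{s}_{w\sigma}$ by Lemma \ref{hecke_algebra_s}.

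The heart of the argument is then to prove that for $V\in\Rep^{0}_{R}(G)$ one has $V\in\Rep^{s}_{R}(G)$ if and only if $\VC_{\sigma}=e^{s}_{\sigma}\VC_{\sigma}$ for every simplex $\sigma$ (equivalently, for every vertex). I would do this by identifying $\VC_{\sigma}$ with the subspace $V^{G^{+}_{\iota\sigma}}=e^{+}_{\iota\sigma}V$ of $V$ and using three relations: $e^{s}_{\sigma}=e^{+}_{\sigma}e^{s}_{x}$ for $x\leq\sigma$ (formula \ref{defesigma}), the orthogonality $e^{+}_{x}e^{s}_{y}=e^{s}_{x}e^{s}_{y}$ (formula \ref{ortho}), and the fact that $e^{s}_{\sigma}$ is $G^{+}_{\sigma}$-bi-invariant, so $e^{+}_{\sigma}e^{s}_{\sigma}=e^{s}_{\sigma}=e^{s}_{\sigma}e^{+}_{\sigma}$. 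Combining the first two gives $e^{+}_{\sigma}e^{s}_{y}=e^{s}_{\sigma}e^{s}_{y}$ for any vertex $y$. If $V\in\Rep^{s}_{R}(G)$ then $V=\sum_{y}e^{s}_{y}V$, hence $\VC_{\sigma}=e^{+}_{\sigma}V=\sum_{y}e^{+}_{\sigma}e^{s}_{y}V=\sum_{y}e^{s}_{\sigma}e^{s}_{y}V=e^{s}_{\sigma}V=e^{s}_{\sigma}\VC_{\sigma}$. Conversely, if $\VC_{x}=e^{s}_{x}\VC_{x}$ for all vertices $x$, then, $V$ having level $0$, $V=\sum_{x}V^{G^{+}_{x}}=\sum_{x}e^{s}_{x}\VC_{x}=\sum_{x}e^{s}_{x}V$, so $V\in\Rep^{s}_{R}(G)$. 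This shows that $\Phi$ carries $\Rep^{s}_{R}(G)$ exactly into $\DC_{s}$.

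Finally, since $\Phi$ is an equivalence whose source and target both split as the products above and which sends each factor $\Rep^{s}_{R}(G)$ into the factor $\DC_{s}$, its restriction $\Rep^{s}_{R}(G)\To{}\DC_{s}$ is fully faithful (a fully faithful functor restricted to full subcategories) and essentially surjective: given $\MC\in\DC_{s}$, write $\MC=\Phi(V)$ with $V=\bigoplus_{t}V_{t}$, $V_{t}\in\Rep^{t}_{R}(G)$, and orthogonality of the $\DC_{t}$ forces $V=V_{s}$. This is the desired equivalence (and Fact \ref{equivcoef} enters only through Theorem \ref{hecke_level_0}). I expect the main obstacle to be the middle paragraph: keeping track of which of the various idempotents $e^{+}_{\sigma}$, $e^{s}_{x}$, $e^{s}_{\sigma}$ commute with or absorb one another, and making sure the identification of $\VC_{\sigma}$ with $V^{G^{+}_{\iota\sigma}}$ as a submodule of $V$ is used consistently so that the finite-group idempotent $e^{s}_{\sigma}\in R[\bar\GM_{\sigma}]$ genuinely acts on it; the rest is formal bookkeeping with the two product decompositions.
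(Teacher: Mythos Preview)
Your proof is correct and is precisely the detailed unpacking of what the paper leaves as a one-line assertion (``Putting Fact \ref{equivcoef}, Lemma \ref{idempotents_building} and Theorem \ref{hecke_level_0} together, we obtain the following result''). The paper offers no further argument; you have supplied the bookkeeping that the product decompositions on both sides match under $\Phi$, using exactly the same ingredients --- (\ref{defesigma}), (\ref{ortho}), the centrality lemma in \ref{hecke_algebra_s}, and the level-$0$ equivalence --- that the paper invokes.
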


Since the construction of  $e^{s}R[\AM/\bar\GM]$ only involves $\FM$, $n$ and $s$, 
we infer :

\begin{cor}
  Let $F'$ be another local field with residue field isomorphic to $\FM$. The categories
  $\Rep^{s}_{R}(\GL_{n}(F))$ and $\Rep^{s}_{R}(\GL_{n}(F'))$ are equivalent. 
\end{cor}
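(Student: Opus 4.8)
The plan is to deduce this corollary directly from the preceding theorem by the same ``residual data only'' argument already used for the level~$0$ corollary. Recall that the preceding theorem establishes an equivalence
\[
\Rep^{s}_{R}(\GL_{n}(F)) \;\simto\; \{\hbox{left ``$e^{s}R[\AM/\bar\GM]$-modules'' cartesian as ``$e^{s}R[\AM/\bar\GM]^{0}$-modules''}\},
\]
and the whole point of the construction of $[\AM/\bar\GM]$, $[\AM/\bar\GM]^{0}$, and the idempotents $e_{\sigma}^{s}$ is that they depend only on the combinatorial data $(\FM,n,s)$: the simplicial complex $\AM_{\bullet}$ and the group $W=\ZM^{I}\rtimes\SG_{n}$ are built purely from $I=\{1,\dots,n\}$; the reductive quotients $\bar\GM_{\sigma}=\prod_{J\in P_{\sigma}}\Aut_{\FM}(\FM^{J})$, the parabolics $\bar\PM_{\sigma,\tau}$ and their radicals $\bar\UM_{\sigma,\tau}$ are built from $\FM$ and the partitions $P_{\sigma}$; and $e_{\sigma}^{s}=\sum_{t}e_{t}^{\bar\GM_{\sigma}}$ is defined from $s$ via Lusztig series inside $\GL_{n}(\FM)$, with no reference to $F$ at all.

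So the first and only real step is the following observation: if $F'$ is a second local non-archimedean field with residue field $\FM'\cong\FM$, then fixing such an isomorphism of residue fields induces an isomorphism of the ``$R$-algebras with many objects''
\[
e^{s}R[\AM/\bar\GM]\;\cong\;e^{s}R[\AM'/\bar\GM'],
\qquad
e^{s}R[\AM/\bar\GM]^{0}\;\cong\;e^{s}R[\AM'/\bar\GM']^{0},
\]
compatibly (i.e.\ the second isomorphism is the restriction of the first to the sub-``algebra''). Indeed an isomorphism $\FM\simto\FM'$ induces, for each simplex, an isomorphism $\Aut_{\FM}(\FM^{J})\simto\Aut_{\FM'}(\FM'^{J})$ carrying the relevant parabolics and unipotent radicals to one another, hence an isomorphism of hom-sets $\bar\GM_{\sigma\tau}^{\dag}\simto\bar\GM'^{\dag}_{\sigma\tau}$ respecting the composition rule spelled out before Definition of $[\AM/\bar\GM]$; and since Lusztig's rational series and their idempotents are functorial in group isomorphisms, the central idempotents $e_{\sigma}^{s}$ on the two sides correspond. (One must note that $s$, a priori a conjugacy class in $\GL_{n}(\FM)$, transports along $\FM\simto\FM'$ to a conjugacy class in $\GL_{n}(\FM')$; the statement of the corollary implicitly uses this identification.)

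Granting this, an isomorphism of ``$R$-algebras with many objects'' induces an equivalence between the respective categories of left ``modules'', and it clearly preserves the cartesian condition (which is itself formulated purely in terms of the hom-sets $e_{\tau}^{s}R[\bar\GM_{\sigma\tau}^{0}]e_{\sigma}^{s}$, identified as in the proof of Theorem \ref{hecke_level_0} with $\bar\GM_{\tau}/\bar\UM_{w\sigma,\tau}$). Composing
\[
\Rep^{s}_{R}(\GL_{n}(F))\;\simto\;\{\hbox{cart.\ modules over }e^{s}R[\AM/\bar\GM]\}\;\simto\;\{\hbox{cart.\ modules over }e^{s}R[\AM'/\bar\GM']\}\;\simto\;\Rep^{s}_{R}(\GL_{n}(F'))
\]
gives the desired equivalence.

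I expect no serious obstacle here; the corollary is essentially a formal consequence of the theorem together with the transparently $F$-independent nature of all the auxiliary data. The only point requiring a modicum of care is the bookkeeping in the second step above: checking that an isomorphism $\FM\simto\FM'$ really does induce an isomorphism of the enriched Coxeter categories that is compatible with \emph{all} the structure (composition, the $W$-action embeddings $W_{\sigma}^{0}\injo\bar\GM_{\sigma}$, the idempotents, and the distinguished sub-``algebra'' $e^{s}R[\AM/\bar\GM]^{0}$). This is routine but slightly tedious, which is presumably why the paper states it as a one-line corollary rather than proving it in detail.
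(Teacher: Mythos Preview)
Your proposal is correct and takes essentially the same approach as the paper, which simply states ``Since the construction of $e^{s}R[\AM/\bar\GM]$ only involves $\FM$, $n$ and $s$, we infer'' before stating the corollary. You have spelled out in detail precisely what the paper leaves implicit in this one-line remark.
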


In particular, taking $s=1$ and $F'$ an unramified extension of $F$, 
  we have constructed an equivalence of categories such as predicted in \ref{main} i).

\subsection{Morita equivalence of Hecke ``algebras''}

In this section, we assume that $n=n'f$ and we denote by $\FM'$ the extension of $\FM$ of
degree $f$. To $(n',\FM')$  we associate the enriched Coxeter complex $[\AM'/\bar\GM']$ as in
\ref{abstract_appart}. In particular, $\AM'=\ZM^{I'}/\ZM_{\rm diag}$, where
$I'=\{1,\cdots, n'\}$, and $W'=\ZM^{I'}\rtimes\SG_{n'}$.
Next, to any  semisimple conjugacy class $s'\in \GL_{n'}(\FM')$ with order invertible
in $R$, we associate the Hecke ``$R$-algebra'' $e^{s'}R[\AM'/\bar\GM']$ as in \ref{hecke_algebra_s}. 

By choosing a basis of $\FM'$ over $\FM$, we get an embedding of $\GL_{n'}(\FM')$ into
$\GL_{n}(\FM)$, and we denote by $s$ the unique conjugacy class  in $\GL_{n}(\FM)$ that
contains $s'$. Note that $s$ does not depend on this choice of basis.

\begin{center}
$(*)$  \emph{We assume that the centralizer in $\GL_{n}(\FM)$ of an element of 
    $s'$ is contained in $\GL_{n'}(\FM')$.}
\end{center}
For simplicity, put
$\RC:=e^{\sG}R[\AM/\bar\GM]$ and
$\RC':=e^{\sG'}R[\AM'/\bar\GM']$.
Our aim is to prove that the categories $\RC-\rm Mod$ and $\RC'-\rm Mod$ are equivalent,
under the above assumption.
For this, we will construct a $(\RC,\RC')$-bimodule $\PC$ and show it
induces a Morita equivalence in the sense of Definition \ref{def_Morita}.

\alin{Embedding $[\AM'/W']$ in $[\AM/W]$} 
We fix a surjective map $\pi: I\To{} I'$ with fibers of cardinality $f$.
Dually, this map induces an embedding $\iota :\ZM^{I'}\injo \ZM^{I}$, as well as an
embedding  $\SG_{n'}\injo \SG_{n}$ which sends a permutation $w'$ to
   the unique permutation $w=\iota(w')$ which induces an increasing bijection
   $\pi^{-1}(r')\simto\pi^{-1}(w'(r'))$  for each $r'\in I'$.
In turn, these two maps induce 
 \begin{itemize}
 \item a group embedding $\iota:\,W'\injo W$, and
 \item a simplicial, dimension preserving, embedding
   $\iota:\,\AM'_{\bullet}\injo\AM_{\bullet}$, which is equivariant
    with respect to the previous embedding.
\end{itemize}

Therefore we have obtained an embedding of categories $[\AM'/W']\To{}[\AM/W]$.
Note that for any $\sigma'\in\AM'_{\bullet}$, we have $P_{\iota\sigma'}=\pi^{-1}(P_{\sigma'})$. 

\alin{Gradings}
  Consider the map $\rho:W\To{}\QM$, $((x_{1},\cdots,x_{n}),w)\mapsto
  \sum_{i=1}^{n}x_{i}$. Then $\rho$ is a group morphism with kernel $W^{0}$ and image
  $\frac 1n\ZM$. For $i\in \QM$, we will put  $W^{i}:=\rho^{-1}(i)$. This notation will
  propagate to all notation involving $W$, 
e.g. we put $W^{i}_{\sigma\tau}=W^{\dag}_{\sigma\tau}\cap W^{i}$. Note that Lemma \ref{hecke_level_0}
tells us that if $W^{i}_{\sigma\tau}$ is not empty, it is a principal homogeneous $W_{\tau}^{0}$-set.
In particular, the multiplication map induces a bijection
$$  W_{\tau\nu}^{j} \times_{W_{\tau}^{0}} W_{\sigma\tau}^{i}\simto W_{\sigma\nu}^{i+j}.$$

For $\sigma,\tau\in\AM_{\bullet}$, the composition $\bar\GM_{\tau}\times
W_{\sigma\tau}^{\dag}\To{}W_{\sigma\tau}^{\dag}\To{\rho}\QM$ factors through
$\bar\GM_{\sigma\tau}^{\dag}$, and we will denote by $\bar\GM_{\sigma\tau}^{i}$ the
inverse image of $i$. Note that $\bar\GM_{\sigma\tau}^{i}$ is also the quotient set of 
$\bar\GM_{\tau}\times W_{\sigma\tau}^{i}$ by the equivalence relation of
\ref{abstract_appart}. If $w\in W_{\sigma\tau}^{i}$, then
$\bar\GM_{\sigma\tau}^{i}\simeq\bar\GM_{\tau}/\bar\UM_{w\sigma,\tau}$. In particular we
see that the multiplication map induces a bijection
$$\bar\GM_{\tau\nu}^{j}\times_{\bar\GM_{\tau}}\bar\GM_{\sigma\tau}^{i}\simto \bar\GM_{\sigma\nu}^{i+j}.$$

The disjoint union $\bar\GM_{\sigma\tau}^{\dag}=\bigsqcup_{i}\bar\GM_{\sigma\tau}^{i}$
induces a grading
$R[\bar\GM_{\sigma\tau}^{\dag}]=\bigoplus_{i}R[\bar\GM_{\sigma\tau}^{i}]$, and in turn a
grading $\RC_{\sigma,\tau}=\bigoplus_{i}\RC_{\sigma,\tau}^{i}$. Note that
$\RC^{0}=e^{s}R[\AM/\bar\GM]^{0}$ so this notation is consistent with the former one.
The multiplication map 
$\RC_{\sigma,\tau}\otimes\RC_{\tau,\nu}\To{}\RC_{\sigma,\nu}$ takes 
$\RC_{\sigma,\tau}^{i}\otimes\RC_{\tau,\nu}^{j}$ into $\RC^{i+j}_{\sigma,\nu}$, so we may
think of $\RC$ as a graded ``ring''. Actually, the above bijection shows that the same
multiplication map induces an isomorphism
$$ \RC_{\sigma,\tau}^{i}\otimes_{\RC_{\tau}^{0}}\RC_{\tau,\nu}^{j}\simto\RC_{\sigma,\nu}^{i+j}.$$

Similarly, there is a map $\rho':\, W'\To{}\QM$ and we will use similar notation as above
for $W'$ and $\RC'$. We have $\rho'=\rho\circ\iota$, so that in particular we have
$\iota(W^{'i})\subset W^{i}$.

\begin{lemme} \label{lemmeW}
  Let $\sigma',\tau'\in \AM'_{\bullet}$. If $W^{i}_{\iota\sigma',\iota\tau'}\neq \emptyset$, then 
$W^{'i}_{\sigma',\tau'}\neq \emptyset$.
\end{lemme}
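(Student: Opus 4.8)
Recall that $W^i_{\sigma\tau} = W^\dag_{\sigma\tau}\cap W^i$ where $\rho(w) = \sum_i x_i$ for $w = ((x_1,\dots,x_n),w_0)$, and similarly $\rho' : W'\to\QM$ with $\rho'=\rho\circ\iota$. The statement to prove is: if there is some $w\in W^\dag_{\iota\sigma',\iota\tau'}$ with $\rho(w)=i$, then there exists some $w'\in W^\dag_{\sigma',\tau'}$ with $\rho'(w')=i$. Unwinding the definition, $w\in W^\dag_{\iota\sigma',\iota\tau'}$ means $\iota\tau'\subseteq w\cdot(\iota\sigma')$ as simplices in $\AM$.

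**The plan.** First I would recall the concrete description of the embedding $\iota$: it comes from a surjection $\pi:I\to I'$ with fibers of size $f$, inducing $\iota:\ZM^{I'}\injo\ZM^I$ (the dual/pullback map, so $\iota(x')_i = x'_{\pi(i)}$) and $\iota:\SG_{n'}\injo\SG_n$ sending $w'$ to the unique permutation that restricts to the increasing bijection $\pi^{-1}(r')\simto\pi^{-1}(w'(r'))$ on each fiber. The key structural fact I want to isolate is that the image $\iota(\AM')$ inside $\AM$ consists exactly of those (classes of) functions $\tilde x:I\to\ZM$ that are constant on the fibers of $\pi$. Since $\iota\sigma'$ and $\iota\tau'$ are simplices whose vertices all have representatives constant on fibers of $\pi$, I would next analyze what it means for $w\cdot(\iota\sigma')$ to contain $\iota\tau'$.

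**The main step.** The crux is to produce $w'$ from $w$. I would argue as follows: write $w=(t,w_0)\in\ZM^I\rtimes\SG_n$. Since $\iota\tau'\subseteq w\cdot\iota\sigma'$, for each vertex $y'$ of $\tau'$ the vertex $\iota y'$ of $\iota\tau'$ equals $w\cdot(\iota x')$ for some vertex $x'$ of $\sigma'$; concretely, picking representatives, $\iota y' \equiv t + w_0(\iota x') \pmod{\ZM_{\rm diag}}$ as functions on $I$. The left side is constant on fibers of $\pi$, and $\iota x'$ is constant on fibers of $\pi$, hence $w_0(\iota x')$ is constant on fibers of $w_0^{-1}(\hbox{fibers of }\pi)$. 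The heart of the matter is to show that the relevant combinatorial data forces $w$ to be "compatible with $\pi$" in the sense that $w_0$ permutes the fibers of $\pi$ among themselves (restricting to increasing bijections on them) and $t$ is constant on fibers — i.e. $w\in\iota(W')$ — at least after adjusting within $W_{\iota\tau'}$, which is allowed because $W^\dag_{\iota\sigma',\iota\tau'} = W^0_{\iota\tau'}\cdot(\hbox{anything})$ by Lemma \ref{hecke_level_0} and because $\rho$ is unchanged under left multiplication by $W^0_{\iota\tau'}$. Once $w=\iota(w')$, we get $w'\in W^\dag_{\sigma',\tau'}$ (because $\iota$ is a simplicial, $W'$-equivariant embedding, so $\iota\tau'\subseteq\iota(w'\sigma')$ implies $\tau'\subseteq w'\sigma'$), and $\rho'(w') = \rho(\iota w') = \rho(w) = i$, as desired. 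I expect the main obstacle to be exactly this compatibility claim: showing that a $w$ carrying $\iota\sigma'$ to a simplex containing $\iota\tau'$ can be replaced (modulo $W^0_{\iota\tau'}$) by an element of $\iota(W')$. The cleanest route is probably to use that a simplex in $\AM$ lying in $\iota(\AM')$ determines, and is determined by, a cyclic partition $P$ of $I$ refining nothing but the fibers of $\pi$ in the appropriate combinatorial sense; then $w$ sending such a simplex to another such simplex means $w_0$ induces a bijection between the corresponding partitions, and a short argument on fibers pins down $w_0\in\iota(\SG_{n'})$ and $t\in\iota(\ZM^{I'})$ after the allowed adjustment.

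**Remark on alternatives.** If the direct combinatorial argument gets unwieldy, an alternative is to work one vertex at a time: fix vertices $x_0'\in\sigma'$, $y_0'\in\tau'$ with $\iota y_0' = w\cdot\iota x_0'$, use the partitions $P_{\iota\sigma'} = \pi^{-1}(P_{\sigma'})$ and $P_{\iota\tau'}=\pi^{-1}(P_{\tau'})$ recorded in the excerpt, and observe that $w$ must match blocks of $\pi^{-1}(P_{\sigma'})$ to blocks of $\pi^{-1}(P_{\tau'})$ in a $\pi$-compatible way; summing the $t$-coordinate over a fiber of $\pi$ then shows $\rho(w)$ is "divisible" in the right way to descend to $\rho'$. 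Either way the numerical conclusion $\rho'(w')=i$ is immediate once the compatibility is in hand; no delicate estimate is involved, only the bookkeeping of fibers.
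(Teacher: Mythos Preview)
Your overall strategy --- replace $w$ by an element of $W^{0}_{\iota\tau'}\cdot w$ lying in $\iota(W')$ --- is viable, but you have not carried out what you yourself call ``the heart of the matter'', and your sketched route has a genuine problem: a simplex $\sigma\in\AM_\bullet$ is \emph{not} determined by its (cyclically ordered) partition $P_\sigma$; infinitely many simplices share the same $P_\sigma$, differing by translations. So the correspondence you invoke between simplices in $\iota(\AM'_\bullet)$ and certain partitions of $I$ does not exist as stated, and the conclusion $w_0\in\iota(\SG_{n'})$, $t\in\iota(\ZM^{I'})$ does not follow from partition data alone. To make your approach work one really needs a chamber reduction first: put $\sigma',\tau'$ in a common chamber $\delta'$ of $\AM'$, choose a chamber $\delta\supset\iota\delta'$ in $\AM$, use the lemma you cite to write $w=v\cdot c^{k}$ with $v\in W^{0}_{\iota\tau'}$ and $c$ the generator of $W^{\dag}_{\delta}$, then check that the containment $\iota\tau'\subset c^{k}\iota\sigma'$ forces $f\mid k$ and that $c^{f}$ differs from $\iota(c')$ only by an element of $W^{0}_{\iota\delta'}$. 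That works, but it is more labor than needed.

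The paper's proof bypasses the lifting entirely. After the same reduction to a common chamber $\delta'$, it identifies $\delta'$ with $\ZM/n'\ZM$ and a chosen $\delta\supset\iota\delta'$ with $\ZM/n\ZM$, so that $\sigma',\tau'$ become subsets $S',T'\subset\ZM/n'\ZM$ and $\iota\sigma',\iota\tau'$ become $fS',fT'\subset\ZM/n\ZM$ (the inclusion $\iota\delta'\subset\delta$ corresponding to multiplication by $f$). The two non-emptiness conditions then read
\[
T'\subset S'+in' \ \hbox{ in } \ZM/n'\ZM \qquad\hbox{and}\qquad fT'\subset fS'+in \ \hbox{ in } \ZM/n\ZM,
\]
respectively; since $n=fn'$ these are equivalent by a one-line divisibility check. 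No element of $W$ is ever lifted to $\iota(W')$.
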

\begin{proof}
  After maybe conjugating $\sigma'$ by some element in $W^{'0}$ we may assume that $\sigma'$
  and $\tau'$  are contained in a chamber $\delta'$. Let
  $c'$ be the unique element in $W_{\delta'}^{1/n'}$. Then 
$W^{'i}_{\sigma',\tau'}\neq\emptyset \Leftrightarrow \tau'\subset c^{'in'}\sigma'$, because $W_{\tau'}^{0}$ acts transitively on the set of chambers that contain  $\tau'$.
Let us choose a vertex $x'_{0}\in \delta'$, whence a unique labelling
$\delta'=\{x'_{0},\cdots,x'_{n'-1}\}$, as in \ref{abstract_appart}. On this labelling, 
 $c'$ is the circular permutation. Let us
identify $\delta'$ to $\ZM/n'\ZM$ via $x'_{k}\mapsto k \,({\rm mod}\, n')$. This
identifies $c'$ to the map $r\mapsto r+1$. Then $\sigma'$ and $\tau'$ correspond to
subsets $S',T'$ of $\ZM/n'\ZM$ and we have 
$$ W^{'i}_{\sigma',\tau'}\neq\emptyset \Leftrightarrow T'\subset S'+in'.$$

Now let $\delta$ be a chamber in $\AM_{\bullet}$ that contains $\iota\delta'$. The vertex
$x_{0}:=\iota x'_{0}$ gives an ordering $\delta=\{x_{0},\cdots,x_{n-1}\}$ and we have
$\iota x'_{k}= x_{kf}$. Let us identify $\delta$ to $\ZM/n\ZM$ as above. This identifies
the inclusion $\iota\delta'\subset \delta$ to the multipication by $f$ map $\ZM/n'\ZM\injo
\ZM/n\ZM$, hence the simplex $\iota\sigma'$ corresponds to $fS'$ and the simplex $\iota
\tau'$ to $fT'$. As above, we have
$$ W^{i}_{\iota\sigma',\iota\tau'}\neq\emptyset \Leftrightarrow fT'\subset fS'+in,$$
which is equivalent to $T'\subset S'+in'$ since $n=fn'$.
\end{proof}

\alin{Embedding $[\AM'/\bar\GM']$ in $[\AM/\bar\GM]$} 
 Next, we fix an $\FM$-basis $(\alpha_{1},\cdots,\alpha_{f})$ of $\FM'$. 
We associate to it the
$\FM$-linear isomorphism $\iota:\, {\FM'}^{I'}\simto \FM^{I}$ defined as follows.
Let $\eta : I'\times\{1,\cdots,f\}\simto I$ be the unique bijection which commutes with
the projections to $I'$ (i.e. $\pi\circ \eta$ coincides with the first projection
$\pi_{1}$) 
and which is increasing on each fiber of $\pi_{1}$. Then we let $\iota$ be the
$\FM$-linear map which takes $\alpha_{j}e'_{i'}$ to $e_{\eta(i',j)}$, where
 $(e_{i})_{i\in I}$ and $(e'_{i'})_{i'\in I'}$ are the
respective canonical basis of $\FM^{I}$ and ${\FM'}^{I'}$. This isomorphism induces in
turn an embedding
$\iota_{*} : {\rm Aut}_{\FM'}({\FM'}^{I'})\injo {\rm Aut}_{\FM}(\FM^{I})$ and we note that this
embedding restricts to the embedding $\iota:\,\SG_{n'}\injo \SG_{n}$ defined earlier.

For all $J'\subset I'$, the isomorphism $\iota$ restricts to 
  ${\FM'}^{J'}\simto\FM^{\pi^{-1}(J')}$, and therefore, for all
  $\sigma'\in\AM'_{\bullet}$,  $\iota_{*}$ takes
  $\bar\GM'_{\sigma'}$ into $\bar\GM_{\iota\sigma'}$, compatibly with the previously defined
  $W'\injo W$. In other words we now have  an embedding of categories
  $[\AM'/\bar\GM']\injo[\AM/\bar\GM]$.

\alin{Varieties}
As  explained in \ref{varietiesgln}, we can now see $\bar\GM_{\sigma'}$
 as a ``twisted'' Levi subgroups of $\bar\GM_{\iota\sigma'}$ with a
 ``canonical'' choice of a parabolic subgroup, and therefore we get a
 Deligne-Lusztig variety $Y_{\sigma'}$ with an action of
 $\bar\GM_{\sigma}\times\bar\GM_{\sigma'}^{\rm opp}$. We then define
$$ \PC_{\sigma'}^{0}:= H^{\rm
  dim}_{c}(Y_{\sigma'},\Lambda)e^{\bar\GM'_{\sigma}}_{s'}
\,\,\hbox{ in }\,\, 
\RC_{\iota\sigma'}^{0}\otimes_{\Lambda}
({\RC'}^{0}_{\sigma'})^{\rm opp}-\rm Mod
$$
\begin{lemme} \label{keylemma}
There is a family of isomorphisms
$(\phi_{\sigma',\tau'})_{\sigma',\tau'\in\AM'_{\bullet}}$ of
$(\RC_{\iota\tau'}^{0},{\RC'}_{\sigma'}^{0})$-bimodules
 $$\phi_{\sigma',\tau'}:\,\, \PC_{\sigma'}^{0}\otimes_{\RC_{\iota\sigma'}^{0}} \RC_{\iota\sigma',\iota\tau'}
\simto
 \RC'_{\sigma'\tau'}\otimes_{{\RC}'^{0}_{\tau'}}
 \PC_{\tau'}^{0}$$
which satisfy the following transitivity property. For any other
$\nu'\in\AM'_{\bullet}$, the following diagram
$$\xymatrix{ 
&
\RC'_{\sigma',\tau'}\otimes_{{\RC'}^{0}_{\tau'}}  \PC_{\tau'}^{0} \otimes_{\RC^{0}_{\iota\tau'}} \RC_{\iota\tau',\iota\nu'}
\ar[rd]^{\id_{\RC'_{\sigma'\tau'}}\otimes\phi_{\tau',\nu'}}
&
\\
\PC_{\sigma'}^{0}\otimes_{\RC_{\iota\sigma'}^{0}}
\RC_{\iota\sigma',\iota\tau'} \otimes_{\RC_{\iota\tau'}^{0}}
\RC_{\iota\tau',\iota\nu'} \ar[ru]^-{\phi_{\sigma',\tau'}\otimes \id_{\RC_{\iota\tau',\iota\nu'}}\,\,\,\,\;\;} \ar[d] 
&  &  \RC'_{\sigma'\tau'}\otimes_{\RC^{0}_{\tau'}}
 \RC'_{\tau'\nu'}\otimes_{\RC^{0}_{\nu'}}
 \PC_{\nu'}^{0} \ar[d] \\
\PC_{\sigma'}^{0}\otimes_{\RC_{\iota\sigma'}^{0}}\RC_{\iota\sigma',\iota\nu'}
\ar[rr]_{\phi_{\sigma',\nu'}} 
& &
\RC'_{\sigma'\nu'}\otimes_{\RC^{0}_{\nu'}}
 \PC_{\nu'}^{0} 
}$$
is commutative, where the vertical maps are given by composition in
$\RC$ and $\RC'$.
\end{lemme}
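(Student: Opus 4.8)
\emph{Proof plan.} The idea is that Lemma~\ref{keylemma} is the ``globalisation over one appartment'' of Proposition~\ref{prop_finite}: in each $\rho$-degree, the hom-modules of $\RC$ and $\RC'$ are the bimodules of Harish--Chandra induction for \emph{$F$-stable} parabolic subgroups of the finite groups $\bar\GM_{\iota\tau'}$, resp.\ $\bar\GM'_{\tau'}$, while $\PC^{0}_{\sigma'}$ is the Deligne--Lusztig Morita bimodule of Fact~\ref{Morita}. So $\phi_{\sigma',\tau'}$ should merely express the compatibility of this Morita bimodule with Harish--Chandra induction, which is the special case of Proposition~\ref{prop_finite} in which the parabolics $\mathbf{Q},\mathbf{Q'}$ of its Step~1 are taken $F$-stable.

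\emph{Construction of $\phi_{\sigma',\tau'}$.} First I would split both sides along the $\rho$-grading. Since $\iota(W^{'i})\subset W^{i}$ and, conversely, $W^{i}_{\iota\sigma',\iota\tau'}\neq\emptyset$ forces $W^{'i}_{\sigma'\tau'}\neq\emptyset$ by Lemma~\ref{lemmeW}, the degree-$i$ pieces of $\RC_{\iota\sigma',\iota\tau'}$ and of $\RC'_{\sigma'\tau'}$ are simultaneously zero or nonzero, so it is enough to produce, for each $i$ with $W^{'i}_{\sigma'\tau'}\neq\emptyset$, a homogeneous isomorphism $\phi^{i}_{\sigma',\tau'}$. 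Fix such an $i$, pick $w'\in W^{'i}_{\sigma'\tau'}$ and set $w:=\iota(w')$. By the description of the graded pieces $\bar\GM^{\bullet}_{\sigma\tau}$ recalled just before Lemma~\ref{lemmeW}, together with the Lemma of \ref{hecke_algebra_s}, $\RC^{i}_{\iota\sigma',\iota\tau'}$ is $e^{s}_{\iota\tau'}\Lambda[\bar\GM_{\iota\tau'}/\bar\UM_{w\iota\sigma',\iota\tau'}]$ with right $\RC^{0}_{\iota\sigma'}$-action twisted by $\bar w_{\iota\sigma'}$, and likewise for the degree-$i$ part of $\RC'_{\sigma'\tau'}$ via $w'$. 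Because $\bar\PM_{w\iota\sigma',\iota\tau'}$ and $\bar\PM'_{w'\sigma',\tau'}$ are defined by honest $\FM$-, resp.\ $\FM'$-flags, they are $F$-, resp.\ $F'$-stable; hence these are the bimodules of Harish--Chandra induction from $\bar\GM_{\iota(w'\sigma')}$ to $\bar\GM_{\iota\tau'}$ and from $\bar\GM'_{w'\sigma'}$ to $\bar\GM'_{\tau'}$, precomposed with the permutation isomorphisms $\bar w_{\iota\sigma'}$, resp.\ $\bar w'_{\sigma'}$ (which, by the functoriality of all the geometric constructions and property i) of Proposition~\ref{prop_finite}, identifies $\PC^{0}_{\sigma'}$ with $\PC^{0}_{w'\sigma'}$). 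Now apply Proposition~\ref{prop_finite} factor by factor over $J'\in P_{\tau'}$, with $\VC'_{2}$ the trivial flag and $\VC'_{1}$ the flag cut out by $\bar\PM'_{w'\sigma',\tau'}$ — here one uses $P_{\iota\tau'}=\pi^{-1}(P_{\tau'})$, which is exactly the condition that the $\iota$-image of $\VC'_{1}$ is an ``$\iota$-compatible flag'' in the sense of \ref{parabolicgln}. This yields
$$\Lambda[\bar\GM_{\iota\tau'}]\otimes_{\Lambda[\bar\GM_{\iota(w'\sigma')}]}\PC^{0}_{w'\sigma'}\ \simto\ \PC^{0}_{\tau'}\,e_{\bar\UM'_{w'\sigma',\tau'}}$$
(everything cut by $e^{s}$ and $e^{s'}$); translating through the identifications above gives $\phi^{i}_{\sigma',\tau'}$. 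Its $(\RC^{0}_{\iota\tau'},{\RC'}^{0}_{\sigma'})$-bilinearity is built in, since Proposition~\ref{prop_finite} produces a $(\Lambda P_{2},\Lambda P'_{1})$-bimodule map with $P_{2}=\bar\GM_{\iota\tau'}$ and right $P'_{1}$-action factoring through $\bar\GM'_{w'\sigma'}\cong\bar\GM'_{\sigma'}$. Finally, using property i) of Proposition~\ref{prop_finite} (changing $w'$ inside $W^{'i}_{\sigma'\tau'}=W^{'0}_{\tau'}w'$ conjugates $\VC'_{1}$ by a Weyl element), one checks $\phi^{i}_{\sigma',\tau'}$ is independent of the choice of $w'$, so the family $(\phi_{\sigma',\tau'})$ is well defined.

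\emph{The transitivity hexagon.} Fix degrees $i,j$ and pick $w'_{1}\in W^{'i}_{\sigma'\tau'}$, $w'_{2}\in W^{'j}_{\tau'\nu'}$; then $w'_{2}w'_{1}\in W^{'i+j}_{\sigma'\nu'}$, and since $\iota$ is a group homomorphism $\iota(w'_{2}w'_{1})=\iota(w'_{2})\iota(w'_{1})$ is the corresponding choice on the $\RC$-side. The flags cut inside $\bar\GM'_{\nu'}$ by $\bar\PM'_{w'_{2}w'_{1}\sigma',\nu'}$, $\bar\PM'_{w'_{2}\tau',\nu'}$ and the trivial flag form a chain $\VC'_{1}$ refining $\VC'_{2}$ refining $\VC'_{3}$ (transitivity of the parabolics $\bar\PM'_{\bullet}$ from \ref{abstract_appart}), and the composition maps of $\RC$ and $\RC'$ on the relevant graded pieces are precisely the Harish--Chandra transitivity isomorphisms for these flags, the twists $\bar w_{\bullet}$ matching by $\bar v_{\tau\nu}\circ\bar w_{\sigma\tau}=\bar{vw}_{\sigma\nu}$. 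Under these dictionaries the two paths around the hexagon become the two sides of the commutative diagram of Proposition~\ref{prop_finite} ii) for $\VC'_{1},\VC'_{2},\VC'_{3}$, combined with associativity of Harish--Chandra induction; the only delicate point is that the ``intermediate'' flag is $\bar\PM'_{w'_{1}\sigma',\tau'}$ transported into $\bar\GM'_{\nu'}$ through $\bar\GM'_{\tau'}\cong\bar\GM'_{w'_{2}\tau'}\injo\bar\GM'_{\nu'}$, i.e.\ equals $\bar\PM'_{w'_{2}w'_{1}\sigma',w'_{2}\tau'}$, which is checked by one more use of property i) (with $\gamma'$ a lift of $\bar w'_{2}$).

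I expect the main obstacle to be exactly this last bookkeeping: reconciling the built-in $W$- and $W'$-conjugations in the multiplications of $\RC$ and $\RC'$ with the flag combinatorics and with the functoriality clause of Proposition~\ref{prop_finite}. Everything else — the grading decomposition, the identification of graded pieces with Harish--Chandra bimodules, and the invocation of Proposition~\ref{prop_finite} — is essentially formal, and once the dictionary is fixed the hexagon is a diagram chase modelled on Step~3 of the proof of Proposition~\ref{prop_finite}.
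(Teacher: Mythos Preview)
Your plan is correct and follows the same route as the paper: decompose along the $\rho$-grading, treat each degree $i$ by first transporting $\sigma'$ to $w'\sigma'$ via a Weyl element $w'\in W'^{i}_{\sigma'\tau'}$ and then invoking Proposition~\ref{prop_finite} for the inclusion $\tau'\subset w'\sigma'$, and finally glue using Lemma~\ref{lemmeW}. The transitivity hexagon is reduced, as you say, to Proposition~\ref{prop_finite}~ii) together with the functoriality clause~i).

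The paper organises the same material differently, and the difference is worth noting. It isolates the two ``pure'' cases first: Step~1 is your invocation of Proposition~\ref{prop_finite} for an inclusion $\tau'\subset\sigma'$, while Step~2 treats the case $\tau'\in W'^{i}\sigma'$ by a direct geometric argument (conjugating the Deligne--Lusztig variety $Y_{\sigma'}$ by $[\bar w]$ and showing the resulting isomorphism $\alpha_{w}$ is independent of $w$ modulo $W'^{0}_{\sigma'}$). Step~3 is the explicit ``parallelogram'' compatibility between Steps~1 and~2, Step~4 defines the general $\phi^{i}_{\sigma',\tau'}$ as the composite and proves independence of the intermediate simplex using Step~3, and Step~5 is the transitivity. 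What you call ``bookkeeping'' is exactly Steps~2--4, and it is slightly more than a single appeal to property~i) of Proposition~\ref{prop_finite}: the independence of $\phi^{i}_{\sigma',\tau'}$ on $w'$ requires knowing that the geometric conjugation isomorphism $\PC^{0}_{\sigma'}\simto\PC^{0}_{w'\sigma'}$ itself does not see the ambiguity in $w'$ (this is the $\alpha_{wv}=\alpha_{w}$ computation), and the commutation you flag as ``the only delicate point'' is precisely the diagram of Step~3. Your outline identifies all the right ingredients, but when you write it out you will find that those two verifications each take a short paragraph rather than a single sentence.
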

\begin{proof}
\emph{Step 1.   Assume that $\tau'\subset\sigma'$.} In this case
we are in the setting of \ref{parabolicgln}. Namely, 
the parabolic subgroup
$\bar\PM_{\iota\sigma',\iota\tau'}=\bar\GM_{\iota\sigma'}\bar\UM_{\iota\sigma,\iota\tau'}$
of $\bar\GM_{\iota\tau'}$ is defined by the image under $\iota_{*}$ of the flag that
defines the parabolic subgroup
$\bar\PM'_{\sigma',\tau'}=\bar\GM'_{\sigma'}\bar\UM'_{\sigma',\tau'}$ of
$\bar\GM'_{\tau'}$. In particular we have
$\bar\PM'_{\sigma',\tau'}=\bar\PM_{\iota\sigma',\iota\tau'}\cap\bar\GM'_{\tau'}$.
Since moreover we have
$\RC^{0}_{\iota\sigma',\iota\tau'}=R[\bar\GM_{\iota\tau'}/\bar\UM_{\iota\sigma',\iota\tau'}]$ and
${\RC'}^{0}_{\sigma',\tau'}=R[\bar\GM'_{\tau'}/\bar\UM'_{\sigma',\tau'}]$, Proposition 
\ref{prop_finite} provides us with
 an isomorphism of $(\RC_{\iota\tau'}^{0},{\RC'}_{\sigma'}^{0})$-bimodules\footnote{Note
   that the tensor products have been switched, because of our convention to write ``left
   modules'' on the right (in accordance with the composition of morphisms in a category
   rather than with the action of a usual ring on a module, see \ref{modules})}
$$ \phi_{\sigma',\tau'}^{0}:\, 
\PC_{\sigma'}^{0}\otimes_{\RC_{\iota\sigma'}^{0}} \RC^{0}_{\iota\sigma',\iota\tau'}
\simto
 {\RC'}^{0}_{\sigma'\tau'}\otimes_{{\RC}'^{0}_{\tau'}}
 \PC_{\tau'}^{0}$$
Moreover, if $\nu'\subset \tau'$, the following diagram commutes by statement ii) of the
same proposition
$$\xymatrixcolsep{6cm}\xymatrix{
\PC_{\sigma'}^{0}\otimes_{\RC_{\iota\sigma'}^{0}}
\RC^{0}_{\iota\sigma',\iota\tau'} \otimes_{\RC_{\iota\tau'}^{0}}
\RC^{0}_{\iota\tau',\iota\nu'} \ar[r]^{(\id_{\RC^{0}_{\sigma'\tau'}}\otimes\phi^{0}_{\tau',\nu'}) \circ
(\phi^{0}_{\sigma',\tau'}\otimes \id_{\RC^{0}_{\iota\tau',\iota\nu'}})} \ar[d]_{\simeq} 
&   \RC^{0}_{\sigma'\tau'}\otimes_{\RC^{0}_{\tau'}}
 \RC^{0}_{\tau'\nu'}\otimes_{\RC^{0}_{\nu'}}
 \PC_{\nu'}^{0} \ar[d]^{\simeq} \\
\PC_{\sigma'}^{0}\otimes_{\RC_{\iota\sigma'}^{0}}\RC^{0}_{\iota\sigma',\iota\nu'}
\ar[r]_{\phi_{\sigma',\nu'}^{0}} 
&
\RC^{0}_{\sigma'\nu'}\otimes_{\RC^{0}_{\nu'}}
 \PC_{\nu'}^{0} 
}$$
Note that in this diagram the vertical maps are isomorphisms, so that we can express it as
an equality 
$$\phi_{\sigma',\nu'}^{0} =
(\id_{\RC^{0}_{\sigma'\tau'}}\otimes\phi^{0}_{\tau',\nu'}) \circ
(\phi_{\sigma',\tau'}^{0}\otimes \id_{\RC^{0}_{\iota\tau',\iota\nu'}}).$$

\medskip

\emph{Step 2.
Assume that $\tau'\in{W'}^{i}\cdot\sigma'$ for some $i\in\QM$.}
Pick $w\in {W'}^{i}$, such that $\tau'=w\sigma'$. We also denote by
$w$ the image $\iota(w)$ of $w$ in $W$.
Conjugation by the permutation matrix $[\bar w]$ in ${\rm Aut}_{\FM}(\FM^{I})$
induces an isomorphism of varieties $\bar w:\, Y_{\sigma'}\simto Y_{\tau'}$
which is equivariant with respect to the isomorphisms 
$\bar w_{\sigma'}:\bar\GM_{\sigma'}\simto\bar\GM_{\tau'}$ and 
$\bar w_{\iota\sigma'}:\bar\GM_{\iota
  \sigma'}\simto\bar\GM_{\iota\tau'}$. 
Recall that the latter isomorphisms are also given by conjugation under $[w]$ in ${\rm
  Aut}_{\FM}(\FM^{I})$, and coincide with the ``outer-conjugacy''  by $w$ seen
as an isomorphism $\sigma'\simto \tau'$ in the category $[\AM'/\bar\GM']$,
 resp.
 $\iota\sigma'\simto \iota\tau'$ in the category $[\AM/\bar\GM]$.
Recall also that the maps 
$$
\application{}{{\bar\GM}^{'-i}_{\tau',\sigma'}}
{\bar\GM'_{\tau'}}{x'}{wx'}
\hbox{ and }
\application{}{{\bar\GM}^{i}_{\iota\sigma',\iota \tau'}}{\bar\GM_{\iota \tau'}}
{x}{x w^{-1}}
$$
are bijections. It follows that the map $(x',y,x)\mapsto (x w^{-1})\cdot (\bar w y)\cdot (wx')$ is a
$\bar\GM'_{\iota \tau'}\times\bar\GM^{'\rm opp}_{\tau'}$-equivariant isomorphism
$$\alpha_{w}:\, {\bar\GM}^{'-i}_{\tau',\sigma'} \times_{\bar\GM'_{\sigma'}} Y_{\sigma'} 
\times_{\bar\GM_{\iota\sigma'}} \bar\GM^{i}_{\iota\sigma',\iota \tau'}
\simto Y_{\tau'}.$$
Moreover, let $v\in W_{\sigma'}^{0}$. We have
$\alpha_{wv}(x',y,x)=\alpha_{w}(vx', \bar v y, xv^{-1})$.
But the automorphism $\bar v$ of $Y_{\sigma'}$ is given by the action
of $([v],[v]^{-1})\in \bar\GM_{\iota\sigma'}\times\bar\GM_{\sigma'}$. 
 Therefore, $(vx', \bar v y, xv^{-1})=(x',y,x)$ in the contracted
 product above, and hence
we have $\alpha_{wv}=\alpha_{w}$.
Since ${W'}_{\sigma'}^{0}$ acts simply transitively on ${W'}_{\sigma'\tau'}^{i}$, we see
that $\alpha_{w}$ does not depend on the choice of $w$. We denote it by $\alpha_{i}$.
Taking the contracted product on the left with
$\bar\GM^{'i}_{\sigma',\tau'}$ and passing to cohomology we get an isomorphism
of $(\RC_{\iota\tau'}^{0},{\RC'}_{\sigma'}^{0})$-bimodules
$$ \phi_{\sigma',\tau'}^{i}:\, 
\PC_{\sigma'}^{0}\otimes_{\RC_{\iota\sigma'}^{0}} \RC^{i}_{\iota\sigma',\iota\tau'}
\simto
 {\RC'}^{i}_{\sigma'\tau'}\otimes_{{\RC}'^{0}_{\tau'}}
 \PC_{\tau'}^{0}$$

These isomorphisms compose as expected. Namely, taking into account that the
multiplication map induces isomorphisms like
$\RC^{i}_{\sigma,\tau}\otimes_{\RC^{0}_{\tau}} \RC^{j}_{\tau,\nu}\simto
\RC^{i+j}_{\sigma,\nu}$, we have for any $\nu'\in {W'}^{j}\tau'$ the following equality
$$ \phi^{i+j}_{\sigma',\nu'}=
(\id_{\RC^{i}_{\sigma'\tau'}}\otimes\phi^{j}_{\tau',\nu'}) \circ
(\phi_{\sigma',\tau'}^{i}\otimes \id_{\RC^{j}_{\iota\tau',\iota\nu'}}).$$


\medskip

\emph{Step 3.Compatibility between steps 1 and 2.} 
Here we pick two simplices $\tau'\subset \sigma'$ and an element $w\in {W'}^{i}$. 
In order to simplify the notation a bit, we put $\sigma=\iota\sigma'$ and $\tau=\iota\tau'$. 
We have a commutative diagram
$$\xymatrixcolsep{2cm}\xymatrix{
\PC^{0}_{\sigma'}\otimes_{{\RC}^{0}_{\sigma}} \RC^{0}_{\sigma,\tau} 
\ar[r]^{\phi^{0}_{\sigma',\tau'}}  \ar[d]_{[w]\otimes (w \circ - \circ w^{-1})}^{\simeq}
&
{\RC'}^{0}_{\sigma',\tau'}\otimes_{{\RC'}^{0}_{\tau'}}\PC^{0}_{\tau'} 
\ar[d]^{ (w \circ - \circ w^{-1})\otimes [w]}_{\simeq} \\
\PC^{0}_{w\sigma'}\otimes_{{\RC}^{0}_{w\sigma}} \RC^{0}_{w\sigma, w\tau} 
\ar[r]_{\phi^{0}_{w\sigma',w\tau'}}  
&
{\RC'}^{0}_{w\sigma',w\tau'}\otimes_{{\RC'}^{0}_{w\tau'}}\PC^{0}_{w\tau'}
}$$
where the vertical maps are given by conjugation by $[w]$ in ${\rm
  Aut}_{\FM}(\FM^{I})$. As in step $2$, these vertical maps are equivariant with respect to conjugation by
$[w]$ and we make them equivariant by tensoring  the first line on the left by
${\RC'}^{-i}_{w\sigma',\sigma'}$ and on the right by $\RC^{i}_{\tau,w\tau}$, as follows :
$$\xymatrixcolsep{2.5cm}\xymatrix{
{\RC'}^{-i}_{w\sigma',\sigma'}\otimes_{{\RC'}_{\sigma'}^{0}}
\PC^{0}_{\sigma'}\otimes_{{\RC}^{0}_{\sigma}} \RC^{0}_{\sigma,\tau} 
\otimes_{\RC^{0}_{\tau}}\RC^{i}_{\tau,w\tau}
\ar[r]^-{\id\otimes\phi^{0}_{\sigma',\tau'}\otimes\id}  \ar[d]_{w\otimes [w]\otimes (w \circ - \circ
  w^{-1})\otimes w^{-1}}^{\simeq}
&
{\RC'}^{-i}_{w\sigma',\sigma'}\otimes_{{\RC'}_{\sigma'}^{0}}
{\RC'}^{0}_{\sigma',\tau'}\otimes_{{\RC'}^{0}_{\tau'}}\PC^{0}_{\tau'} 
\otimes_{\RC^{0}_{\tau}}\RC^{i}_{\tau,w\tau}
\ar[d]^{w\otimes (w \circ - \circ w^{-1})\otimes [w]\otimes w^{-1}}_{\simeq} \\
\PC^{0}_{w\sigma'}\otimes_{{\RC}^{0}_{w\sigma}} \RC^{0}_{w\sigma,w\tau} 
\ar[r]_{\phi^{0}_{w\sigma',w\tau'}}  
&
{\RC'}^{0}_{w\sigma',w\tau'}\otimes_{{\RC'}^{0}_{w\tau'}}\PC^{0}_{w\tau'}
}$$
Now we tensor on the left the whole diagram by ${\RC'}^{i}_{\sigma',w\sigma'}$ and we
simplify terms using isomorphisms of the type
$\RC^{i}_{\sigma\tau}\otimes_{\RC^{0}_{\tau}}\RC^{j}_{\tau\nu}\simto \RC^{i+j}_{\sigma\nu}$
given by the multiplication.
We get a commutative diagram
$$\xymatrixcolsep{1.5cm}\xymatrix{
 \PC^{0}_{\sigma'} \otimes_{{\RC}^{0}_{\sigma}} \RC^{i}_{\sigma,w\tau} 
\ar[rr]^-{\phi_{\sigma',\tau'}^{0}\otimes\id_{\RC^{i}_{\tau,w\tau}}}
\ar[d]_{\phi_{\sigma',w\sigma'}^{i}\otimes\id_{\RC^{0}_{w\sigma,w\tau}}} && 
{\RC'}^{0}_{\sigma',\tau'}\otimes_{{\RC'}^{0}_{\tau'}}\PC^{0}_{\tau'} 
\otimes_{\RC^{0}_{\tau}}\RC^{i}_{\tau,w\tau} 
\ar[d]^{\id_{\RC^{0}_{w\sigma',w\tau'}}\otimes\phi_{\tau',w\tau'}^{i}} \\
{\RC'}^{i}_{\sigma',w\sigma'}\otimes_{{\RC'}^{0}_{w\sigma'}}
\PC^{0}_{w\sigma'}\otimes_{{\RC}^{0}_{w\sigma}} \RC^{0}_{w\sigma,w\tau} 
\ar[rr]_-{\id_{\RC^{'i}_{\sigma',w\sigma'}}\otimes\phi_{w\sigma',w\tau'}^{i}} &&
{\RC'}^{i}_{\sigma',w\tau'}\otimes_{{\RC'}^{0}_{w\tau'}}\PC^{0}_{w\tau'}
}$$
Note that the objects and  morphisms in this diagram only depend on $w\sigma'$ and $w\tau'$,
not on $w$.

\medskip

\emph{Step 4. The desired maps.}
Let now $\sigma',\tau'\in\AM'_{\bullet}$ be any pair of simplices. Assume that
${W'}^{i}_{\sigma',\tau'}\neq \emptyset$. So there is a simplex $\nu'\in {W'}^{i}\sigma'$
such that $\tau'\subset \nu'$. Consider the composition
$$ (\id_{{\RC'}^{i}_{\sigma'\nu'}}\otimes\phi^{0}_{\nu',\tau'}) \circ
(\phi^{i}_{\sigma',\nu'}\otimes \id_{\RC^{0}_{\iota\nu',\iota\tau'}}):\,
\PC_{\sigma'}^{0}\otimes_{\RC_{\iota\sigma'}^{0}} \RC^{i}_{\iota\sigma',\iota\tau'}
\simto
 {\RC'}^{i}_{\sigma'\tau'}\otimes_{{\RC}'^{0}_{\tau'}}
 \PC_{\tau'}^{0}$$
We will show it is independent of the choice of $\nu'$. So, let $\mu'$ be another
simplex in ${W'}^{i}\sigma'$ which contains $\tau'$. By  Step 2 we have a factorization
$\phi_{\sigma',\mu'}^{i}=(\id_{{\RC'}^{i}_{\sigma',\nu'}}\otimes\phi^{0}_{\nu',\mu'})\circ
(\phi^{i}_{\sigma',\nu'}\otimes\id_{\RC^{0}_{\iota\nu',\iota\mu'}})$. Thanks to this
factorization, we are left to prove the following equality
$$ \phi^{0}_{\nu',\tau'} = (\id_{{\RC'}^{0}_{\nu',\mu'}}\otimes\phi^{0}_{\mu',\tau'})\circ
(\phi^{0}_{\nu',\mu'}\otimes\id_{\RC^{0}_{\iota\mu',\iota\tau'}})
$$
Now, since $\mu'\in {W'}^{0}\nu'$ and both contain $\tau'$, there is $w\in {W'}_{\tau'}^{0}$ such that $\mu'=w\nu'$.
Applying the last diagram of Step 3, with $(\nu',\tau')$ instead of $(\sigma',\tau')$ and $i=0$, and
taking into account that $w\tau'=\tau'$, we
get exactly the desired equality.

\medskip

\emph{Step 5. Transitivity.} Pick three simplices $\sigma',\tau',\nu'$. Suppose that
${W'}^{i}_{\sigma'\tau'}$ and ${W'}^{j}_{\tau'\nu'}$ are non-empty and pick $w$, resp. $v$, in
these sets. We then have a diagram
$$ \xymatrixcolsep{2cm}\xymatrix{
\sigma'  \ar@{..>}[r]  \ar[rd]_{w}  \ar@/_1cm/[rrdd]_{vw} \ar@/^0.5cm/@{.>}[rr]
& \tau'  \ar@{..>}[r] \ar[rd]_{v} & \nu' \\
 & w\sigma' \ar@{^{(}-}[u] \ar[rd]_{v} &  v\tau' \ar@{^{(}-}[u] \\
& & vw \sigma' \ar@{^{(}-}[u] \ar@/_1cm/@{^{(}-}[uu]
}$$
We want to check that the morphisms associated to the dotted arrows in Step 4 compose as
expected $ \phi^{i+j}_{\sigma',\nu'}=
(\id_{\RC^{i}_{\sigma'\tau'}}\otimes\phi^{j}_{\tau',\nu'}) \circ
(\phi_{\sigma',\tau'}^{i}\otimes \id_{\RC^{j}_{\iota\tau',\iota\nu'}}).$ By definition, each one of them is the composition of the morphism associated to the diagonal arrow
in Step 2 and the one associated to the vertical arrow in Step 1, along the
corresponding ``triangle''. We saw in Step 2  that the morphisms associated to diagonal
arrows compose as expected. We saw in Step 1 that the morphisms associated to vertical
arrows compose as expected. Therefore it is sufficient to check that the diagram of
morphisms associated to the parallelogram commutes. But this is exactly what Step 3 tells us.

\medskip
\emph{Step 6. Conclusion.} Recall the decompositions
 $\RC_{\iota\sigma',\iota\tau'}=\bigoplus_{i\in\QM}\RC^{i}_{\iota\sigma',\iota\tau'}$ and
$\RC'_{\sigma',\tau'}=\bigoplus_{i\in\QM}\RC^{'i}_{\sigma',\tau'}$.
Define $\phi_{\sigma',\tau'}:=\bigoplus_{i\in \frac 1n \ZM}\phi^{i}_{\sigma',\tau'}$, where
we agree that $\phi^{i}_{\sigma',\tau'}=0$ whenever $W^{'i}_{\sigma'\tau'}=\emptyset$ (and
in particular if $i\notin \frac 1{n'}\ZM$).
The only point that remains to be checked is that $\phi_{\sigma',\tau'}$ is actually an
isomorphism, and for this we need to show that for all $i\in\QM$, we have
$W^{i}_{\iota\sigma',\iota\tau'}\neq\emptyset\Rightarrow W^{'i}_{\sigma',\tau'}\neq\emptyset$.
But this is Lemma \ref{lemmeW}.
\end{proof}

\alin{Construction of a $(\RC,\RC')$-bimodule} \label{cnstruc_bimodule}
For any $\sigma'\in\AM'_{\bullet}$ and $\sigma\in\AM_{\bullet}$ we put
$$ \PC_{\sigma',\sigma} := \PC_{\sigma'}^{0}\otimes_{\RC_{\iota\sigma'}^{0}}
\RC_{\iota\sigma',\sigma} .$$
The composition maps $\RC_{\iota\sigma',\sigma}\otimes\RC_{\sigma,\tau}\To{}\RC_{\iota\sigma',\tau}$
induce an obvious left $\RC$-module structure on the collection
$(\PC_{\sigma',\sigma})_{\sigma\in\AM_{\bullet}}$. On the other hand, define a map
$\RC'_{\sigma',\tau'}\otimes \PC_{\tau',\tau} \To{} \PC_{\sigma',\tau}$ as the following
composition of maps :
\begin{eqnarray*}
  \RC'_{\sigma',\tau'}\otimes \PC_{\tau',\tau} &\To{} &
\RC'_{\sigma',\tau'}\otimes_{\RC^{0}_{\tau'}} \PC_{\tau',\tau} =
\RC'_{\sigma',\tau'}\otimes_{\RC^{0}_{\tau'}}
\PC_{\tau'}^{0}\otimes_{\RC^{0}_{\iota\tau'}}\RC_{\iota\tau',\tau} \\
& \simto & \PC_{\sigma'}^{0}\otimes_{\RC^{0}_{\iota\sigma'}}\RC_{\iota\sigma',\iota\tau'}
\otimes_{\RC^{0}_{\iota\tau'}}\RC_{\iota\tau',\tau} \\
&\To{} & \PC_{\sigma'}^{0}\otimes_{\RC^{0}_{\iota\sigma'}}\RC_{\iota\sigma',\tau}
=\PC_{\sigma',\tau}
\end{eqnarray*}
In the second line, the isomorphism is
$\phi_{\sigma',\tau'}^{-1}\otimes\id_{\RC_{\iota\tau',\tau}}$. 
The big diagram in Lemma \ref{keylemma} insures that these action maps are transitive, so that we get
a right $\RC'$-module structure on the collection
$(\PC_{\tau',\tau})_{\tau'\in\AM_{\bullet}'}$.
On the other hand this structure clearly commutes with the left action of $\RC$, so we
have constructed a $(\RC,\RC')$-bimodule structure on the collection
$(\PC_{\sigma',\sigma})_{\sigma',\AM'_{\bullet},\sigma\in\AM_{\bullet}}$, in the sense of \ref{bimodules}.
We denote it by $\PC$ and we refer to Definition \ref{def_Morita} for the notion of Morita equivalence
in this context.

\begin{theo} \label{mainthm}
The bimodule  $\PC$ induces a Morita equivalence between $\RC$ and $\RC'$.
\end{theo}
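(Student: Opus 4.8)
The plan is to prove directly that the functor sending a left $\RC'$-module $M$ to $\PC\otimes_{\RC'}M$ is an equivalence $\RC'-{\rm Mod}\simto\RC-{\rm Mod}$, the two substantial inputs being the object-wise Morita equivalences of Fact~\ref{Morita} and the compatibility isomorphisms $\phi_{\sigma',\tau'}$ of Lemma~\ref{keylemma}; full faithfulness will be essentially formal, while essential surjectivity will be the hard point. Recall first that hypothesis $(*)$ guarantees, for each $\sigma'\in\AM'_{\bullet}$, that the centralizer in $\bar\GM_{\iota\sigma'}$ of any element of $s'\cap\bar\GM'_{\sigma'}$ lies in $\bar\GM'_{\sigma'}$; hence, exactly as in \ref{idempotentsgln}, Fact~\ref{Morita} applies to the Deligne--Lusztig variety $Y_{\sigma'}$ and $\PC^{0}_{\sigma'}$ is a Morita bimodule between $\RC^{0}_{\iota\sigma'}$ and ${\RC'}^{0}_{\sigma'}$; in particular $\PC^{0}_{\sigma'}$ is a finitely generated projective generator as a module on either side.

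By construction (see \ref{cnstruc_bimodule}), plugging in the representable left $\RC'$-module at $\sigma'$ gives $\PC\otimes_{\RC'}\RC'_{\sigma',-}\simeq\PC_{\sigma',-}$, where $\PC_{\sigma',-}=\PC^{0}_{\sigma'}\otimes_{\RC^{0}_{\iota\sigma'}}\RC_{\iota\sigma',-}$ as a left $\RC$-module. Since the functor is cocontinuous and the representables $\RC'_{\sigma',-}$ generate $\RC'-{\rm Mod}$, full faithfulness follows once we check bijectivity on $\Hom$-groups between these images. Using the Yoneda isomorphism $\Hom_{\RC}(\RC_{\iota\sigma',-},-)\simeq(-)_{\iota\sigma'}$ and the $\RC^{0}_{\iota\sigma'}$-projectivity of $\PC^{0}_{\sigma'}$ one gets
\[
\Hom_{\RC}(\PC_{\sigma',-},\PC_{\tau',-})\ \simeq\ \Hom_{\RC^{0}_{\iota\sigma'}}\bigl(\PC^{0}_{\sigma'},\ \PC^{0}_{\tau'}\otimes_{\RC^{0}_{\iota\tau'}}\RC_{\iota\tau',\iota\sigma'}\bigr),
\]
and substituting the isomorphism $\phi_{\tau',\sigma'}$ of Lemma~\ref{keylemma} into the second argument, followed by the Morita property of $\PC^{0}_{\sigma'}$, identifies this with $\RC'_{\tau'\sigma'}=\Hom_{\RC'}(\RC'_{\sigma',-},\RC'_{\tau',-})$. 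That this identification is compatible with composition is precisely the content of the large commutative (transitivity) diagram in Lemma~\ref{keylemma}, so the functor is fully faithful; being cocontinuous and fully faithful, it is an equivalence as soon as its image generates.

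For that last point it suffices to see that the objects $\PC_{\sigma',-}$, $\sigma'\in\AM'_{\bullet}$, generate $\RC-{\rm Mod}$; since $\PC^{0}_{\sigma'}$ is a generator of $\RC^{0}_{\iota\sigma'}-{\rm Mod}$, this is equivalent to the assertion that the representables $\RC_{\iota\sigma',-}$, $\sigma'\in\AM'_{\bullet}$, generate $\RC-{\rm Mod}$ — in other words, that in the $e^{s}$-cut category $[\AM/\bar\GM]$ every morphism factors, after passing to colimits, through a simplex of the sub-appartment $\iota\AM'_{\bullet}$. This is where the combinatorics of the embedding $\iota$ enters: one uses the $\QM$-gradings on $\RC$ and $\RC'$ recalled in \ref{hecke_algebra_s} (supported respectively on $\frac1n\ZM$ and $\frac1{n'}\ZM$), together with the explicit description of the local idempotents $e^{s}_{\sigma}$ coming from \ref{gln} and \ref{idempotents_building}; because $s$ is the image of $s'$ under the embedding forced by $(*)$, these idempotents kill everything not ``supported'' on $\iota\AM'$. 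The precise numerical fact that makes the two gradings match — so that no graded component of a hom-space $\RC_{\iota\sigma',\iota\tau'}$ is missed — is Lemma~\ref{lemmeW}, which asserts that $W^{i}_{\iota\sigma',\iota\tau'}\neq\emptyset$ implies $W^{'i}_{\sigma',\tau'}\neq\emptyset$, combined with the simply-transitive-action statements of \ref{hecke_level_0}. Granting this, $\PC\otimes_{\RC'}-$ is an equivalence, so $\PC$ induces a Morita equivalence in the sense of Definition~\ref{def_Morita}.

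I expect the essential-surjectivity step to be the main obstacle. The delicate point is not the object-wise Morita equivalences (classical for the finite groups $\bar\GM'_{\sigma'}\subset\bar\GM_{\iota\sigma'}$) nor the compatibility $\phi$ (established in Lemma~\ref{keylemma}), but the passage from these to an equivalence of the ``large'' module categories, which rests on showing that the lower-dimensional sub-appartment $\iota\AM'_{\bullet}$ nevertheless generates everything after cutting by $e^{s}$ — a statement whose proof runs through the grading match of Lemma~\ref{lemmeW}. Alternatively, one can package the argument by building the quasi-inverse $(\RC',\RC)$-bimodule $\QC$ with entries $\RC_{\sigma,\iota\sigma'}\otimes_{\RC^{0}_{\iota\sigma'}}\Hom_{\RC^{0}_{\iota\sigma'}}(\PC^{0}_{\sigma'},\RC^{0}_{\iota\sigma'})$ via the construction of \ref{cnstruc_bimodule} applied to the $\Hom$-duals of the $\phi_{\sigma',\tau'}$, and then verifying the bimodule isomorphisms $\PC\otimes_{\RC'}\QC\simto\RC$ and $\QC\otimes_{\RC}\PC\simto\RC'$; the combinatorial core of that verification is again Lemma~\ref{lemmeW}, and the transitivity and bilinearity needed are again purely formal consequences of Lemma~\ref{keylemma}.
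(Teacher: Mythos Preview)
Your strategy is sound and essentially parallels the paper's: the paper packages the argument via an abstract criterion (Proposition~\ref{equiv_induced}), checking that (i) $\PC_{\sigma',-}$ is induced from $\iota\sigma'$ and $\PC_{-,\iota\tau'}$ is induced from $\tau'$, (ii) $\PC_{\sigma',\iota\sigma'}$ is a Morita bimodule between $\RC_{\iota\sigma'}$ and $\RC'_{\sigma'}$, and (iii) every non-zero object of $\RC$ is isomorphic to some $\iota\sigma'$. Your full-faithfulness computation is precisely the unpacking of (i) and (ii), and your essential-surjectivity step is (iii).

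There is, however, a genuine gap in your treatment of (iii). You correctly state what is needed --- that $e^{s}_{\sigma}\neq 0$ forces $\sigma$ to be conjugate in $[\AM/\bar\GM]$ to some $\iota\sigma'$ --- but you attribute this to Lemma~\ref{lemmeW}. That lemma says nothing of the sort: it concerns only simplices \emph{already} of the form $\iota\sigma',\iota\tau'$ and compares $W^{i}_{\iota\sigma',\iota\tau'}$ with $W'^{i}_{\sigma',\tau'}$; its role is to guarantee that the map $\phi_{\sigma',\tau'}$ constructed in Lemma~\ref{keylemma} hits every graded piece (Step~6 there), and this is already consumed when you invoke the $\phi$'s for full faithfulness. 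It gives no information about an arbitrary $\sigma\in\AM_{\bullet}$.

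The missing argument is a separate combinatorial step, and it does not go through the grading at all. One must show: if $e^{s}_{\sigma}\neq 0$, i.e.\ the conjugacy class $s$ meets $\bar\GM_{\sigma}=\prod_{J\in P_{\sigma}}\Aut_{\FM}(\FM^{J})$, then every $J\in P_{\sigma}$ has cardinality divisible by $f$. This holds because hypothesis $(*)$ forces the centralizer of $s$ in $\GL_{n}(\FM)$ to lie in $\GL_{n'}(\FM')$, so any rational maximal torus containing an element of $s$ is a product of $\FM'^{\times}$-factors; in particular the block sizes $|J|$ are multiples of $f$. Once the $|J|$ are all divisible by $f$, a permutation $\bar w\in\SG_{n}$ takes $P_{\sigma}$ to $\pi^{-1}$ of a partition of $I'$, and a further translation in $\ZM^{I}$ moves the base vertex into $\iota(\AM')$, giving $t\bar w\sigma\in\iota(\AM'_{\bullet})$. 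This is the content of the last paragraph of the paper's proof of Theorem~\ref{mainthm}. With this in hand (and not Lemma~\ref{lemmeW}), your generation claim goes through and the argument is complete.
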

\begin{proof}
  We show that the hypothesis of Proposition \ref{equiv_induced} are satisfied. First of
  all, for all $\sigma'\in \AM'_{\bullet}$, the $\RC$-module $\sigma\mapsto
  \PC_{\sigma',\sigma}$ is, by construction, induced from $\iota\sigma'$ (in the sense of
  Definition \ref{def_induced}). Next, the isomorphisms 
$\phi_{\sigma',\tau'}: \PC_{\sigma',\iota\tau'}\simto \RC_{\sigma',\tau'}\otimes_{\RC^{0}_{\tau'}}\PC^{0}_{\tau'}$ 
and
$\phi_{\tau',\tau'}: \PC_{\tau',\iota\tau'}\simto
\RC_{\tau'}\otimes_{\RC^{0}_{\tau'}}\PC^{0}_{\tau'}$ combine to an isomorphism
$\PC_{\sigma',\iota\tau'}\simto
\RC_{\sigma',\tau'}\otimes_{\RC_{\tau'}}\PC_{\tau',\iota\tau'}$, which  
tells us that $\sigma'\mapsto \PC_{\sigma',\iota\tau'}$ is
  induced from $\tau'$. Therefore, hypothesis i) of \ref{equiv_induced} is satisfied.
Now, condition ii) of \ref{equiv_induced} is that $\PC_{\sigma',\iota\sigma'}$ should induce a
Morita equivalence between $\RC_{\iota\sigma'}$ and $\RC'_{\sigma'}$. To show this, consider
the evaluation map for any $\RC_{\iota\sigma'}$-module $M$
$$ \PC_{\sigma',\iota\sigma'} \otimes_{\RC_{\sigma'}}
\Hom_{\RC'_{\iota\sigma'}}(\PC_{\sigma',\iota\sigma'},
M)\To{}M.$$
Since
$\PC_{\sigma',\iota\sigma'}=\PC_{\sigma'}^{0}\otimes_{\RC^{0}_{\iota\sigma'}}\RC_{\iota\sigma'}
\simeq \RC'_{\sigma'}\otimes_{{\RC'}^{0}_{\sigma'}}\PC_{\sigma'}^{0}$,
it coincides with the evaluation map
$$ \PC_{\sigma'}^{0} \otimes_{\RC_{\sigma'}^{0}}
\Hom_{\RC_{\iota\sigma'}^{0}}(\PC_{\sigma'}^{0},
M)\To{}M$$
which is indeed an isomorphism since
$\PC_{\sigma'}^{0}$ induces a Morita equivalence  between
$\RC_{\iota\sigma'}^{0}$ and $\RC_{\sigma'}^{0}$. Similarly, the adjunction map 
$$M'\To{} \Hom_{\RC_{\iota\sigma'}}(\PC_{\sigma',\iota\sigma'},M'\otimes_{\RC'_{\sigma'}}\PC_{\sigma',\iota\sigma'})$$
is an isomorphism for any $\RC'_{\sigma'}$-module $M'$, hence we have the desired Morita
equivalence at $\sigma'$.

It remains to prove condition iii) of \ref{equiv_induced}. To this end, we need to prove that if the idempotent
$e_{\sigma}^{s}$ in $R[\bar\GM_{\sigma}]$ is non zero, then $\sigma$ is conjugate to a
simplex of the form $\iota\sigma'$. Now, $e_{\sigma}^{s}$  is non zero if and only if the
conjugacy class of $s$ in ${\rm Aut}_{\FM}(\FM^{I})$ intersects $\GM_{\sigma}$. Since the
centralizer of $s$ is contained in ${\rm Aut}_{\FM'}({\FM'}^{I'})$, this means that
$\GM_{\sigma}$ contains a maximal torus isomorphic to one of ${\rm Aut}_{\FM'}({\FM'}^{I'})$ and therefore
 any $J\in \PC_{\sigma}$ has cardinality divisible by $f$. It follows that we can find
 $\bar w\in\SG_{n}$ such that $\bar w(\PC_{\sigma})=\PC_{\bar w\sigma}$ is the inverse image of a partition
 of $I'$. Now choose a vertex $x_{0}\in \bar w\sigma$, whence an ordering
 $\bar w\sigma=\{x_{0},\cdots,x_{d}\}$. We can find $t\in \ZM^{I}$ such that $tx_{0}\in
 \iota(\AM')$. Then the fact that $\bar w(\PC_{\sigma})=\PC_{\bar w\sigma}$ is the inverse image of a partition
 of $I'$ implies inductively on $i$ that each $tx_{i}$ is also in $\iota(\AM')$, so that finally
 $t\bar w\sigma\in\iota(\AM'_{\bullet})$.
\end{proof}

Now we need to prove that the equivalences of categories given by this
theorem preserve the property of being \emph{cartesian as $\RC^{0}$-modules (resp. as ${\RC'}^{0}$-modules)}.
To this end, it is useful to express directly the equivalence in terms of
$\RC^{0}$-modules and ${\RC'}^{0}$-modules.
Let us thus consider 
$$ \PC^{0}_{\sigma',\sigma} := \PC_{\sigma'}^{0}\otimes_{\RC_{\iota\sigma'}^{0}}
\RC^{0}_{\iota\sigma',\sigma} $$
for all $\sigma'\in\AM'_{\bullet}$ and $\sigma\in\AM_{\bullet}$. Exactly as in
\ref{cnstruc_bimodule}, this collection of $R$-modules has a clear left action by $\RC^{0}$, and we get a
right  ${\RC'}^{0}$-action using the $0$-weight part of the isomorphisms
$\phi_{\sigma',\tau'}$ of Lemma \ref{keylemma}. Therefore this collection
$\PC^{0}=(\PC^{0}_{\sigma',\sigma})_{\sigma'\in\AM'_{\bullet}, \sigma\in\AM_{\bullet}}$ has a
$(\RC^{0},{\RC'}^{0})$-bimodule structure. 

\begin{theo}
   $\PC^{0}$ induces a Morita equivalence between $\RC^{0}$ and
  ${\RC'}^{0}$. Moreover :
  \begin{enumerate}
  \item the associated equivalences of categories commute with those of Theorem
    \ref{mainthm} via the restriction functors from $\RC$-modules (resp. ${\RC'}$-modules)
    to $\RC^{0}$-modules (resp. ${\RC'}^{0}$-modules).
  \item the associated equivalences of categories preserve cartesian modules.
  \end{enumerate}
\end{theo}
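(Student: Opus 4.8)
The plan is to mirror the proof of Theorem \ref{mainthm} almost verbatim, since $\PC^{0}$ is obtained from $\PC$ simply by replacing every occurrence of $\RC_{\iota\sigma',\sigma}$ by its degree-$0$ part $\RC^{0}_{\iota\sigma',\sigma}$ (equivalently, by forgetting the grading and keeping only the $W^{0}$-part everywhere). First I would check that $\PC^{0}$ indeed induces a Morita equivalence between $\RC^{0}$ and ${\RC'}^{0}$ by verifying the three hypotheses of Proposition \ref{equiv_induced}, exactly as in the proof of Theorem \ref{mainthm}: hypothesis i) follows because $\sigma\mapsto \PC^{0}_{\sigma',\sigma}$ is induced from $\iota\sigma'$ by construction, and because the degree-$0$ parts of the isomorphisms $\phi_{\sigma',\tau'}$ of Lemma \ref{keylemma} show $\sigma'\mapsto\PC^{0}_{\sigma',\iota\tau'}$ is induced from $\tau'$; hypothesis ii) is the statement that $\PC^{0}_{\sigma',\iota\sigma'}\simeq {\RC'}^{0}_{\sigma'}\otimes_{{\RC'}^{0}_{\sigma'}}\PC^{0}_{\sigma'}\simeq\PC^{0}_{\sigma'}$ induces a Morita equivalence between $\RC^{0}_{\iota\sigma'}$ and ${\RC'}^{0}_{\sigma'}$, which is Fact \ref{Morita}; and hypothesis iii) is word-for-word the same combinatorial argument on partitions of $I$ and $I'$ used at the end of the proof of Theorem \ref{mainthm}, and does not involve the grading at all.

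Next, for claim i), the point is that restriction of modules along $\RC^{0}\hookrightarrow\RC$ is compatible with the bimodule structures: since $\PC_{\sigma',\sigma}=\bigoplus_{i}\PC_{\sigma'}^{0}\otimes_{\RC^{0}_{\iota\sigma'}}\RC^{i}_{\iota\sigma',\sigma}$ and the isomorphism $\RC^{i}_{\sigma,\tau}\otimes_{\RC^{0}_{\tau}}\RC^{j}_{\tau,\nu}\simto\RC^{i+j}_{\sigma,\nu}$ expresses each graded piece as induced from its degree-$0$ part, one has a canonical identification of the restriction of $\PC$ to a $(\RC^{0},{\RC'}^{0})$-bimodule with $\PC^{0}$ enlarged along the gradings. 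I would make this precise by exhibiting a natural isomorphism, for every $\RC$-module $N$, between $\Hom_{\RC}(\PC,N)$ and $\Hom_{\RC^{0}}(\PC^{0},N_{|\RC^{0}})$, compatible with the ${\RC'}$-, resp. ${\RC'}^{0}$-, actions; concretely this comes from the adjunction between induction $\RC^{0}\text{-Mod}\to\RC\text{-Mod}$ and restriction, together with the fact that $\PC$ is built from $\PC^{0}$ by the analogous induction on the left. Dually one gets the comparison on the other side, and chasing the definitions of the functors (tensor with $\PC$ versus tensor with $\PC^{0}$) yields the commuting square of i).

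For claim ii), I would argue as follows. By claim i), if $V\in\Rep^{s}_{R}(G)$ corresponds under Theorem \ref{mainthm} to an $\RC'$-module and we restrict to $\RC^{0}$-modules, we get the image of the corresponding ${\RC'}^{0}$-module under the equivalence induced by $\PC^{0}$. So it suffices to show that the $\PC^{0}$-equivalence carries cartesian ${\RC'}^{0}$-modules to cartesian $\RC^{0}$-modules and back. Unwinding Definition \ref{cart_modules}, an $\RC^{0}$-module $\MC$ is cartesian exactly when each map $\MC_{\sigma}\to\MC_{\tau}^{\bar\UM_{\sigma\tau}}$ (the degree-$0$ structure map, for $\tau\subset\sigma$) is an isomorphism; under $\PC^{0}\otimes_{\RC^{0}}-$ this structure map is transported, using the isomorphisms $\phi^{0}_{\sigma',\tau'}$ of Step 1 of Lemma \ref{keylemma}, to the structure map of the image module, because $\phi^{0}_{\sigma',\tau'}$ was precisely constructed from Proposition \ref{prop_finite}, whose defining property \eqref{parab1} is that the bimodule $\PC^{0}_{\sigma'}$ intertwines the parabolic-induction/restriction structure maps (the $e_{U'_{1}}$ there encodes exactly the $\bar\UM$-invariants). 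Thus cartesianness is preserved; the reverse direction is symmetric. \textbf{Main obstacle.} The genuinely delicate point is making claim i) precise — identifying the restriction of the graded bimodule $\PC$ with the ungraded $\PC^{0}$ in a way that is functorial and respects both one-sided actions — and then verifying that the two equivalences literally agree on objects and morphisms rather than just up to some unspecified isomorphism; once that compatibility is nailed down, claim ii) reduces to the intertwining property already built into Lemma \ref{keylemma} and Proposition \ref{prop_finite}, hence is comparatively routine.
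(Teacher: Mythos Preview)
Your approach to the Morita equivalence of $\PC^{0}$ is essentially the paper's: verify the three hypotheses of Proposition~\ref{equiv_induced}. One small gap: hypothesis iii) does involve the grading. In the proof of Theorem~\ref{mainthm} one conjugates a given simplex into $\iota(\AM'_{\bullet})$ by an element $t\bar w\in W$; for $\RC^{0}$ you need this element to lie in $W^{0}$, and since $\bar w\in\SG_{n}\subset W^{0}$ already, the point is that the translation $t$ can be chosen in $(\ZM^{I})^{0}$. The paper notes this explicitly; your claim that the argument ``does not involve the grading at all'' is not quite right.

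For claim ii) you take a substantially longer route than necessary. Preservation of cartesian modules is already part of the conclusion of Proposition~\ref{equiv_induced} (the ``Moreover'' clause), whose proof shows directly that the map $a^{*}_{y',x'}$ for the image module is an isomorphism if and only if $a^{*}_{\iota y',\iota x'}$ is for the original one. So once you have verified the hypotheses of that proposition for $\PC^{0}$, claim ii) is immediate; there is no need to invoke claim i), nor to unwind the intertwining property of the $\phi^{0}_{\sigma',\tau'}$ by hand. Your sketch for ii) might be made to work, but it is both more laborious and vaguer than what is required.

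For claim i) your idea is correct and close to the paper's, but the paper is more concrete: rather than appealing abstractly to an induction--restriction adjunction, it writes down the obvious morphism of $(\RC^{0},{\RC'}^{0})$-bimodules $\PC^{0}\to \PC_{|\RC^{0}\times{\RC'}^{0}}$, which induces a natural map $\Hom_{\RC}(\PC,\MC)\to\Hom_{\RC^{0}}(\PC^{0},\MC)$, and checks it is an isomorphism at each $\sigma'$ using Example~\ref{exam_hom} together with $\PC_{\sigma',\iota\sigma'}\simeq \PC^{0}_{\sigma',\iota\sigma'}\otimes_{\RC^{0}_{\iota\sigma'}}\RC_{\iota\sigma'}$. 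The tensor-product side is handled dually via Lemma~\ref{tens_induced} and $\PC_{\sigma',\iota\sigma'}\simeq \RC'_{\sigma'}\otimes_{{\RC'}^{0}_{\sigma'}}\PC^{0}_{\sigma',\iota\sigma'}$. This is exactly the compatibility you were aiming for, but phrased so that the ``unspecified isomorphism'' worry you flag does not arise.
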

\begin{proof}
  The fact that $\PC^{0}$ induces a Morita equivalence follows from Proposition
  \ref{equiv_induced},  exactly as in Theorem
  \ref{mainthm}. The only change is that we have to choose the $t$ in
  $(\ZM^{I})^{0}$, at the end of that proof. Consequently, also ii) follows from
  Proposition \ref{equiv_induced}. 

To see i), consider the obvious morphism of
$(\RC^{0},{\RC'}^{0})$-bimodules $\PC^{0} \To{} \PC_{|\RC^{0}\times{\RC'}^{0}}$. This
induces the following morphism of ${\RC'}^{0}$-modules, natural in the $\RC$-module $\MC$ 
$$\Hom_{\RC}(\PC,\MC)\To{}\Hom_{\RC^{0}}(\PC^{0},\MC).$$
Let $\sigma'\in\AM_{\bullet}$. 
The  map $\Hom_{\RC}(\PC,\MC)_{\sigma'}\To{}\Hom_{\RC^{0}}(\PC^{0},\MC)_{\sigma'}$ is isomorphic,
by Example \ref{exam_hom}, to the restriction map
$\Hom_{\RC_{\iota\sigma'}}(\PC_{\sigma',\iota\sigma'},\MC_{\iota\sigma'}) \To{} 
\Hom_{\RC^{0}_{\iota\sigma'}}(\PC^{0}_{\sigma',\iota\sigma'},\MC_{\iota\sigma'})$
which is bijective since 
$\PC_{\sigma',\iota\sigma'}\simeq\PC_{\sigma',\iota\sigma'}^{0}\otimes_{\RC_{\iota\sigma'}^{0}}\RC_{\iota\sigma'}$. 
Therefore, the  displayed morphism is an isomorphism of ${\RC'}^{0}$-modules.

On the other hand, we also have a morphism of $\RC^{0}$-modules, natural
in the $\RC'$-module $\MC'$ 
$$\PC^{0}\otimes_{{\RC'}^{0}}\MC'\To{}\PC\otimes_{\RC'}\MC'.$$
For any $\sigma'\in\AM_{\bullet}$, the map
$(\PC^{0}\otimes_{{\RC'}^{0}}\MC')_{\iota\sigma'}\To{}(\PC\otimes_{\RC'}\MC')_{\iota\sigma'}$
is isomorphic, by Lemma \ref{tens_induced} to the map
$\MC'_{\sigma'}\otimes_{{\RC'}_{\sigma'}^{0}}\PC^{0}_{\sigma',\iota\sigma'}
\To{}\MC'_{\sigma'}\otimes_{{\RC'}_{\sigma'}}\PC_{\sigma',\iota\sigma'}$ which is
bijective since $\PC_{\sigma',\iota\sigma'}\simeq
 \RC'_{\sigma'}\otimes_{{\RC'}^{0}_{\sigma'}}\PC_{\sigma',\iota\sigma'}^{0}$.
Since any simplex $\sigma$ with $\RC_{\sigma}\neq 0$ is conjugate to some $\iota\sigma'$
(see the proof of the last theorem),
this shows that the second displayed map is an isomorphism of $\RC^{0}$-modules.
\end{proof}

\appendix

\section{``Rings'' and ``modules''}

\subsection{Generalities}
\alin{``Rings''} \label{rings}
In this paper,  a ``ring'' is a small category $\RC$ such that all
Hom-sets $\RC(x,y)$ are abelian groups, and all composition maps
$\RC(x,y)\times\RC(y,z)\To{}\RC(x,z)$, $(f,g)\mapsto g\circ f$ are bilinear, thus defining linear maps 
$\RC(x,y)\otimes\RC(y,z)\To{}\RC(x,z)$ that satisfy the usual transitivity properties.
Such gadgets appear in the literature under several other
names, like ``ringoids'', ``rings with several objects'', or ``small preadditive categories'', or ``small categories enriched over the
tensor category of abelian groups''. 
Note that for any $x\in\RC$, the abelian group $\RC(x):=\RC(x,x)$ is an ordinary ring with
unit. However, this ring is allowed to be the zero ring, in which case $x$ is called a
``zero object''.

More generally, if $R$ is a commutative ring, an ``$R$-algebra'' will be a small category $\RC$ such that all
Hom-sets $\RC(x,y)$ are $R$-modules, and all composition maps
are $R$-bilinear, thus defining linear maps 
$\RC(x,y)\otimes_{R}\RC(y,z)\To{}\RC(x,z)$ that satisfy the usual transitivity properties.

If $\AC$ is any small category, we denote by $R[\AC]$ the following ``$R$-algebra''. Its
objects are those of $\AC$ and Hom-sets are the free $R$-modules on Hom-sets of $\AC$,
\emph{i.e.} $R[\AC](x,y):=R[\AC(x,y)]$. If we think of $\AC$ as a ``monoid with many
objects'', then $R[\AC]$ is the $R$-algebra of that monoid.

\alin{``Modules''} \label{modules}
Let $\RC$ be an ``$R$-algebra''. A left ``$\RC$-module'' $\MC$ is an
$R$-linear functor $\MC:\, \RC\To{} R-{\rm Mod}$. It thus consists in a collection
$(\MC(x))_{x\in\RC}$ of $R$-modules, together with $R$-linear maps 
 $\MC(x)\otimes_{R}\RC(x,y)\To{}\MC(y)$, $m\otimes f\mapsto \MC(f)(x)$
 that define an action in the sense that
 the following diagrams should be commutative\footnote{Our way of
  displaying these maps suggest that they should define a \emph{right} module structure,
  but in terms of composition of maps, this is really a \emph{left} module structure. For
  example the diagram says that $\MC(x)$ is a left $\RC(x,x)$-module} :
$$\xymatrix{
\MC(x)\otimes_{R}\RC(x,y)\otimes_{R}\RC(y,z) \ar[r] \ar[d] &  \MC(y)\otimes_{R}\RC(y,z) \ar[d] \\
\MC(x)\otimes_{R}\RC(x,z)\ar[r] &\MC(z)
}$$
With natural transforms as morphisms, left ``$\RC$-modules''
form an $R$-linear abelian category which we will denote by  $\RC-\rm Mod$. 
Note that $\MC(x)=\{0\}$ for any zero object $x$.

Similarly, a \emph{right} ``$\RC$-module'' will be a \emph{contravariant} functor $\MC
:\RC\To{} R-\rm Mod$. These objects again form an abelian $R$-linear category that we will
denote by ${\rm Mod}-\RC$. Note that for any $R$-module $M$, the collection $(\Hom_{R}(\MC(x),M))_{x\in \RC}$ gets
a structure of left $\RC$-module, that we denote by $\Hom_{R}(\MC,M)$.

If $\AC$ is a small category, then any functor (resp. contravariant functor)
$\EC:\,\AC\To{} R-\rm Mod$ canonically extends to a left (resp. right) ``$R[\AC]$-module''.

\alin{Restriction and (co)induction} \label{restinduct}
For any object $x$, 
we have an obvious restriction functor $\MC\mapsto \MC(x),$
$ \RC-{\rm Mod} \To{} \RC(x)-{\rm Mod}$.
In the other direction, we can induce a left $\RC(x)$-module $M$  to an $\RC$-module
${\rm Ind}_{x}M$ by putting 
$${\rm Ind}_{x}M(y):=M\otimes_{\RC(x)}\RC(x,y)$$ for any object $y$, with the
action of $\RC$ given by the multiplication in $\RC$. Similarly, we can coinduce $M$ to an
$\RC$-module ${\rm Coind}_{x}M$ defined by
$$ {\rm Coind}_{x}M(y):=\Hom_{\RC(x)}(\RC(y,x),M)$$

\begin{lem}
  The induction functor (resp. the coinduction functor) 
is left adjoint (resp. right adjoint) to the restriction functor ${\rm Res}_{x}$. 
\end{lem}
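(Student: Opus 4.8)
The plan is to prove both statements by writing down explicit unit and counit natural transformations and checking the triangle identities; equivalently, by exhibiting mutually inverse natural bijections on the relevant Hom-groups. Conceptually there is nothing to these adjunctions beyond the general adjunction between restriction along the inclusion of the full subcategory on the single object $x$ and its left (resp. right) Kan extension, the formulas defining ${\rm Ind}_{x}$ and ${\rm Coind}_{x}$ being exactly the pointwise Kan extension formulas; but since the situation is so concrete I would simply spell everything out.

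For the induction adjunction (i.e. ${\rm Ind}_{x}$ left adjoint to ${\rm Res}_{x}$), I would take as unit $\eta_{M}:\,M\To{}{\rm Res}_{x}{\rm Ind}_{x}M$ the canonical isomorphism $({\rm Ind}_{x}M)(x)=M\otimes_{\RC(x)}\RC(x,x)\simto M$ of $\RC(x)$-modules, and as counit $\varepsilon_{\NC}:\,{\rm Ind}_{x}{\rm Res}_{x}\NC\To{}\NC$ the map which at each object $y$ is the map $\NC(x)\otimes_{\RC(x)}\RC(x,y)\To{}\NC(y)$ induced by the module structure maps of $\NC$; that this is well defined over $\RC(x)$ and is a morphism of $\RC$-modules in $y$ is immediate from $\NC$ being an $R$-linear functor and from the composition law in $\RC$. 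One triangle identity is then the statement that composing $\eta$ with $\varepsilon$ at the object $x$ is the identity, which is precisely the unit axiom for the $\RC(x)$-module $\NC(x)$; the other one reduces in the same way to the associativity and unit constraints of the module structures in play. Unwinding this gives the bijection $\Hom_{\RC}({\rm Ind}_{x}M,\NC)\simto \Hom_{\RC(x)}(M,\NC(x))$, $f\mapsto f(x)$, with inverse sending $g$ to the $\RC$-morphism whose value at $y$ is $m\otimes a\mapsto \NC(a)(g(m))$.

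The coinduction adjunction is the dual argument: the counit ${\rm Res}_{x}{\rm Coind}_{x}M\To{}M$ is evaluation at $1_{x}$ through the identification $({\rm Coind}_{x}M)(x)=\Hom_{\RC(x)}(\RC(x,x),M)\simto M$, and the unit $\NC\To{}{\rm Coind}_{x}{\rm Res}_{x}\NC$ sends, at each object $y$, an element $n\in\NC(y)$ to the $\RC(x)$-linear map $a\mapsto \NC(a)(n)$ on $\RC(y,x)$. That these are well-defined morphisms of $\RC$-modules and satisfy the triangle identities is again a direct transcription of the functoriality of $\NC$ and of the module axioms, and naturality in both variables is visible from the formulas, everything being assembled out of composition in $\RC$ and the structure maps of the modules.

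There is essentially no genuine obstacle here: the proof is entirely formal. The one point requiring care — and the only reason the bookkeeping is not completely transparent — is the sidedness convention flagged in the footnote to \ref{modules}: one must consistently read $\MC(x)\otimes_{R}\RC(x,y)\To{}\MC(y)$ as a left action written on the right, keeping the order of composition in $\RC$ straight, so that the iterated action maps and the tensor products $M\otimes_{\RC(x)}\RC(x,y)$ are all formed on compatible sides. With that convention fixed, all the maps above are forced and the identities hold on the nose.
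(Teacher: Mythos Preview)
Your proof is correct and follows essentially the same approach as the paper: both define the unit via the tautological isomorphism $M\simto M\otimes_{\RC(x)}\RC(x,x)$ and the counit via the action maps $\MC(x)\otimes_{\RC(x)}\RC(x,y)\To{}\MC(y)$, then observe that the triangle identities are immediate, with the coinduction case being dual. Your write-up is somewhat more detailed (you spell out the explicit Hom-bijection and flag the sidedness convention), but the underlying argument is the same.
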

\begin{proof}
  For an $\RC(x)$-module $M$, we have a tautological isomorphism $\id_{M}\otimes 1:\,M\simto
  {\rm Ind}_{x}M(x)=M\otimes_{\RC(x)}\RC(x)$, and this defines a natural
transform $\id\simto{\rm Res}_{x}\circ {\rm Ind}_{x}$. On the other hand, for an
$\RC$-module $\MC$, the action maps 
$\MC(x)\otimes_{R}\RC(x,y)\To{}\MC(y)$ factor through
$\MC(x)\otimes_{\RC(x)}\RC(x,y)$, due to diagram \ref{modules}, and this defines a natural
transform ${\rm Ind}_{x}\circ{\rm Res}_{x}\To{} \id$. It is immediately checked that these
natural transforms are the unit and counit of an adjunction between ${\rm Res}_{x}$ and
${\rm Ind}_{x}$. The case of ${\rm Coind}_{x}$ is similar.
\end{proof}

There are similar functors ${\rm Ind}_{x}$ and ${\rm Coind}_{x}$  and  similar adjunctions
for \emph{right} modules. 

\begin{DEf}\label{def_induced}
  We say that a left (resp. right) $\RC$-module $\MC$ is \emph{induced from $x$} if it is
  isomorphic to some ${\rm Ind}_{x}M$, or equivalently if  
 the canonical maps 
 \begin{center}
   $a_{x,y}:\,\MC(x)\otimes_{\RC(x)}\RC(x,y)\To{}\MC(y)$ (resp.
   $\RC(y,x)\otimes_{\RC(x)}\MC(x)\To{}\MC(y)$)
 \end{center}
are isomorphisms for all $y\in\RC$. 

Similarly, we say that $\MC$ is \emph{coinduced from $x$} if it is isomorphic to some
${\rm Coind}_{x}M$, or equivalently if the canonical maps 
\begin{center}
  $a_{y,x}^{*}:\,\MC(y)\To{} \Hom_{\RC(x)}(\RC(y,x),\MC(x))$ (resp. $\MC(y)\To{}
  \Hom_{\RC(x)}(\RC(x,y),\MC(x))$)
\end{center}
are isomorphisms for all $y\in\RC$. 
\end{DEf}

\begin{DEf} \label{cart_modules}
   We say that a left $\RC$-module $\MC$ is \emph{cartesian}  or \emph{locally coinduced},
   if all maps $a_{y,x}^{*}$ as above are isomorphisms, whenever $\RC(x,y)$ is non-zero.
\end{DEf}


\alin{``Bimodules''} \label{bimodules}
Let $\RC$, $\RC'$ be two ``$R$-algebras''. A
``$(\RC,\RC')$-bimodule'' is an $R$-bilinear functor ${\RC'}^{\rm opp}\times\RC\To{} R-\rm Mod$. Concretely, this consists in a collection
$(\BC(x',x))_{x\in\RC, x'\in \RC'}$ of $R$-modules together with action maps 
$$ \BC(x',x)\otimes_{R}\RC(x,y)\To{} \BC(x',y)
\hbox{ and }
\RC(y',x')\otimes_{R} \BC(x',x) \To{} \BC(y',x)$$
that make each collection $(\BC(x',x))_{x\in\RC}$ into a left $\RC$-module and each
collection $(\BC(x',x))_{x'\in\RC'}$ into a right $\RC'$-module, and which ``commute''in
the sense that the following diagrams are commutative
$$\xymatrix{
\RC(y',x')\otimes_{R}\BC(x',x)\otimes_{R}\RC(x,y)  \ar[r] \ar[d] &
\RC(y',x')\otimes_{R}\BC(x',y)  \ar[d]\\
\BC(y',x)\otimes_{R}\RC(x,y) \ar[r] &
\BC(y',y)
}$$

\emph{Examples.} 

i) If $\MC$ is a left ``$\RC$-module'' and $\MC'$ is a right ``$\RC'$-module'', we get an
``$(\RC,\RC')$-bimodule'' $\MC'\otimes_{R}\MC$ with underlying collection
$(\MC'\otimes_{R}\MC)(x',x)=\MC'(x')\otimes_{R}\MC(x)$ 

ii) If $\BC$ is a ``$(\RC,\RC')$-bimodule'', its ``dual''  $\BC^{*}$ defined by
$\BC^{*}(x,x'):=\Hom_{R}(\BC(x',x),R)$ is naturally a ``$(\RC',\RC)$-bimodule''.

iii) If $\MC,\NC$ are two ``$\RC$-modules'', we get a ``$(\RC,\RC)$-bimodule''
$\Hom_{R}(\MC,\NC)$ by putting $\Hom_{R}(\MC,\NC)(x',x):=\Hom_{R}(\MC(x'),\NC(x))$ with the obvious actions.

\alin{Ends and coends} \label{ends}
Let $\BC$ be a $(\RC,\RC)$-bimodule. Its \emph{end} is an
$R$-module denoted by
$\int_{\RC}\BC$ and defined by
$$ \int_{\RC} \BC : =  \ker\left( \prod_{x\in\RC} \BC(x,x) \rightrightarrows
  \prod_{y,z\in\RC} \Hom_{R}(\RC(y,z),\BC(y,z))\right)
$$ 
where the first map is a product of the maps $\BC(x,x)\To{}\Hom_{R}(\RC(x,z),\BC(x,z))$ adjoint
to $\BC(x,x)\otimes_{R}\RC(x,z)\To{}\BC(x,z)$ and the second map is a product of the maps
$\BC(x,x)\To{}\Hom_{R}(\RC(y,x),\BC(y,x))$ adjoint to $\RC(y,x)\otimes_{R}\BC(x,x)\To{}\BC(y,x)$.

Similarly, its \emph{coend} is an $R$-module denoted by $\int^{\RC}\BC$ and is defined by
$$ \int^{\RC} \BC := \coker\left(
 \bigoplus_{y,z\in\RC} \RC(z,y)\otimes_{R}\BC(y,z)
\rightrightarrows
 \bigoplus_{x\in\RC} \BC(x,x) 
\right)
$$
where the first map is a sum of maps $\RC(z,y)\otimes_{R}\BC(y,z)\To{}\BC(y,y)$ (giving the
left modules structure) and the second map is a sum of maps
$\RC(z,y)\otimes_{R}\BC(y,z)\To{}\BC(z,z)$ (giving the right module structure).

\begin{exam}\label{exam_hom}
  If $\MC,\NC$ are two $\RC$-modules, we have $\int_{\RC}\HC om_{R}(\MC,\NC)=
  \Hom_{\RC}(\MC,\NC)$.  We note that when $\MC$ is induced from an object $x$, Lemma
  \ref{restinduct} tells us that the canonical map $\int_{\RC}\HC om_{R}(\MC,\NC)\To{}
  \Hom_{R}(\MC(x),\NC(x))$ induces an isomorphism $$\Hom_{\RC}(\MC,\NC)\simto
  \Hom_{\RC(x)}(\MC(x),\NC(x)).$$
\end{exam}

\begin{DEf}
  If $\MC$, resp. $\NC$ is a right, resp. left, ``$\RC$-module'', we define their ``tensor
  product over $\RC$'' as the $R$-module
$$\MC\otimes_{\RC}\NC:=\int^{\RC}\MC\otimes_{R}\NC$$
\end{DEf}

When $\RC$ has a single object and is thus an ordinary ring, this is consistent with  the usual tensor
product over $\RC$. Moreover, classical adjunction properties of tensor products extend to
this context.

\begin{lemme}
There is a canonical isomorphism
$$\Hom_{R}(\MC\otimes_{\RC}\NC,M)\simto \Hom_{\RC}(\NC,\Hom_{R}(\MC,R))$$
natural in the left $\RC$-module $\NC$, the right $\RC$-module $\MC$, and the $R$-module $M$.
\end{lemme}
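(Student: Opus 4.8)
The plan is to reduce the statement to the ordinary tensor--hom adjunction over $R$, applied objectwise, combined with the elementary fact that the contravariant functor $\Hom_R(-,M)$ turns the cokernel defining a coend into the kernel defining an end. (Here I read the right-hand side as $\Hom_\RC(\NC,\Hom_R(\MC,M))$, with $\Hom_R(\MC,M)$ the left $\RC$-module of \ref{modules}, as the naturality clause in $M$ dictates.)

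Write $\PC:=\Hom_R(\MC,M)$ for the left $\RC$-module $x\mapsto\Hom_R(\MC(x),M)$. First I would unwind the definitions in \ref{ends}: by construction $\MC\otimes_\RC\NC=\int^\RC(\MC\otimes_R\NC)$ is the cokernel of the pair of maps $\bigoplus_{y,z}\RC(z,y)\otimes_R\MC(y)\otimes_R\NC(z)\rightrightarrows\bigoplus_x\MC(x)\otimes_R\NC(x)$, one built from the $\RC$-module structure of $\NC$, the other from that of $\MC$. Since $\Hom_R(-,M)$ is additive and sends cokernels to kernels, applying it yields
$$\Hom_R(\MC\otimes_\RC\NC,M)=\ker\Big(\prod_x\Hom_R(\MC(x)\otimes_R\NC(x),M)\rightrightarrows\prod_{y,z}\Hom_R(\RC(z,y)\otimes_R\MC(y)\otimes_R\NC(z),M)\Big).$$
Next I would apply the classical isomorphism $\Hom_R(A\otimes_R B,M)\cong\Hom_R(B,\Hom_R(A,M))$ to each factor. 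The $x$-th term becomes $\Hom_R(\NC(x),\PC(x))$ and the $(y,z)$-th term becomes $\Hom_R\big(\RC(z,y),\Hom_R(\NC(z),\PC(y))\big)$; after the relabelling $(y,z)\mapsto(z,y)$ these are precisely the terms of the two products computing $\int_\RC\HC om_R(\NC,\PC)$, where $\HC om_R(\NC,\PC)(x',x)=\Hom_R(\NC(x'),\PC(x))$ is the $(\RC,\RC)$-bimodule of Example iii in \ref{bimodules}. It then remains to check that the two parallel arrows match under these identifications: the arrow coming from the $\RC$-action on $\NC$ corresponds to the arrow in the end built from the $\RC$-module structure of $\NC$, and the arrow coming from the $\RC$-action on $\MC$ corresponds to the one built from the left $\RC$-module structure of $\PC=\Hom_R(\MC,M)$. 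Granting this, the two kernels coincide, and by Example \ref{exam_hom} we have $\int_\RC\HC om_R(\NC,\PC)=\Hom_\RC(\NC,\PC)$, which is the desired isomorphism. Naturality in $\NC$, $\MC$ and $M$ is then automatic, since the cokernel-to-kernel passage, the $R$-linear adjunction, and the rewriting as an end are each natural in all the data.

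The one genuinely delicate point is the comparison of the two parallel arrows; everything else is formal. Concretely, passing both arrows through the $R$-adjunction, one sees that a family $(\psi_x)\in\prod_x\Hom_R(\NC(x),\PC(x))$ lies in the kernel if and only if $\PC(f)\circ\psi_z=\psi_y\circ\NC(f)$ for every $f\in\RC(z,y)$, which is exactly naturality of $(\psi_x)$ as a transformation $\NC\Rightarrow\PC$; this is the verification I would write out. An alternative that packages this into a black box is to first prove once and for all the ``continuity'' isomorphism $\Hom_R(\int^\RC\BC,M)\cong\int_\RC\Hom_R(\BC(-,-),M)$ for an arbitrary $(\RC,\RC)$-bimodule $\BC$, and then specialise to $\BC=\MC\otimes_R\NC$ and apply the $R$-adjunction termwise; I would likely present it this way to keep the bookkeeping to a minimum.
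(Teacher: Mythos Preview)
Your proof is correct and follows essentially the same route as the paper's: apply the left-exact functor $\Hom_R(-,M)$ to turn the coend defining $\MC\otimes_\RC\NC$ into an end, then use classical $R$-adjunction termwise and identify the result with $\Hom_\RC(\NC,\Hom_R(\MC,M))$ via Example~\ref{exam_hom}. Your version is simply more explicit about the matching of the parallel arrows, which the paper leaves implicit, and you correctly spotted that the target should read $\Hom_R(\MC,M)$ rather than $\Hom_R(\MC,R)$.
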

\begin{proof}
Since $\Hom_{R}(-,M)$ is left exact,  we have 
  \begin{eqnarray*}
    \Hom_{R}(\MC\otimes_{\RC}\NC,M) 
& = & 
\int_{R}  (x',x)\mapsto \Hom_{R}(\MC(x')\otimes_{R}\NC(x),M)  \\
& \simto & \int_{R} (x',x)\mapsto \Hom_{R}(\NC(x), \Hom_{R}(\MC(x'),M)) \\
& = & \Hom_{\RC}(\NC,\Hom_{R}(\MC,R))
  \end{eqnarray*}
where the isomorphism is classical adjunction over $R$.
\end{proof}

\begin{coro} \label{tens_induced}
  If $\NC$ (resp. $\MC$) is induced from the object $x$, the canonical map
$\MC(x)\otimes_{R}\NC(x)\To{}\int^{\RC}\MC\otimes_{R}\NC$
 induces an isomorphism
 $$\MC(x)\otimes_{\RC(x)}\NC(x)\simto\MC\otimes_{\RC}\NC.$$
\end{coro}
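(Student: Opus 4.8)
The plan is to deduce the statement from the tensor–hom adjunction of the preceding Lemma together with the restriction/(co)induction adjunction of Lemma \ref{restinduct}, by a Yoneda-type argument. Recall first the elementary fact that a morphism of $R$-modules $c\colon A\To{}B$ is an isomorphism as soon as $\Hom_{R}(c,M)\colon \Hom_{R}(B,M)\To{}\Hom_{R}(A,M)$ is bijective for every $R$-module $M$ (take $M=A$ to produce a one-sided inverse $g$ with $gc=\id_{A}$, then use injectivity of $\Hom_{R}(c,B)$ to get $cg=\id_{B}$). So I only need to exhibit, naturally in $M$, a bijection $\Hom_{R}(\MC\otimes_{\RC}\NC,M)\simto\Hom_{R}(\MC(x)\otimes_{\RC(x)}\NC(x),M)$ and check it agrees with $\Hom_{R}(c,M)$, where $c\colon \MC(x)\otimes_{\RC(x)}\NC(x)\To{}\MC\otimes_{\RC}\NC$ is the map induced by the coend structure map $\MC(x)\otimes_{R}\NC(x)\To{}\int^{\RC}\MC\otimes_{R}\NC$.

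First suppose $\NC\cong{\rm Ind}_{x}\NC(x)$. The preceding Lemma (applied with its $R$-module argument taken to be $M$) gives a natural isomorphism $\Hom_{R}(\MC\otimes_{\RC}\NC,M)\cong\Hom_{\RC}(\NC,\Hom_{R}(\MC,M))$, where $\Hom_{R}(\MC,M)$ is the left $\RC$-module attached to the right $\RC$-module $\MC$ as in \ref{modules}. Since $\NC$ is induced from $x$, the adjunction of Lemma \ref{restinduct} identifies the right-hand side with $\Hom_{\RC(x)}\bigl(\NC(x),\Hom_{R}(\MC,M)(x)\bigr)=\Hom_{\RC(x)}\bigl(\NC(x),\Hom_{R}(\MC(x),M)\bigr)$. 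On the other hand, ordinary tensor–hom adjunction over the unital ring $\RC(x)$ gives $\Hom_{R}\bigl(\MC(x)\otimes_{\RC(x)}\NC(x),M\bigr)\cong\Hom_{\RC(x)}\bigl(\NC(x),\Hom_{R}(\MC(x),M)\bigr)$. Composing the three isomorphisms yields the desired natural bijection; tracing the units and counits of these adjunctions on an explicit element $m'\otimes m$ shows that the composite is exactly $\Hom_{R}(c,M)$, whence $c$ is an isomorphism by the remark above. For the second case, where $\MC\cong{\rm Ind}_{x}\MC(x)$, I would run the same argument with the roles of $\MC$ and $\NC$ interchanged: a right $\RC$-module is a left $\RC^{\opp}$-module, the coend $\int^{\RC}\MC\otimes_{R}\NC$ is symmetric in its two entries so that $\MC\otimes_{\RC}\NC=\NC\otimes_{\RC^{\opp}}\MC$, and the $\RC^{\opp}$-analogue of the preceding Lemma gives $\Hom_{R}(\MC\otimes_{\RC}\NC,M)\cong\Hom_{\RC^{\opp}}(\MC,\Hom_{R}(\NC,M))$; induction of $\MC$ then collapses this to $\Hom_{\RC(x)}(\MC(x),\Hom_{R}(\NC(x),M))$, which again matches $\Hom_{R}(\MC(x)\otimes_{\RC(x)}\NC(x),M)$.

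The only genuinely delicate point is the compatibility check, i.e.\ verifying that the composite of the abstract adjunction isomorphisms is precisely $\Hom_{R}(c,M)$ for the concrete comparison map $c$. This amounts to a diagram chase through the explicit unit/counit formulas recorded in the proof of Lemma \ref{restinduct} and in the proof of the preceding Lemma, and is pure bookkeeping rather than a real difficulty; everything else in the argument is formal.
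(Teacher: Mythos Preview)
Your proposal is correct and follows essentially the same route as the paper's proof: apply $\Hom_{R}(-,M)$, use the tensor--hom adjunction of the preceding Lemma to rewrite as $\Hom_{\RC}(\NC,\Hom_{R}(\MC,M))$, then use that $\NC$ is induced from $x$ (the paper phrases this via Example \ref{exam_hom}, you via Lemma \ref{restinduct}) to collapse to $\Hom_{\RC(x)}(\NC(x),\Hom_{R}(\MC(x),M))$, and conclude by Yoneda. You are slightly more explicit about the compatibility check and about the symmetric case where $\MC$ is induced, which the paper leaves implicit.
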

\begin{proof}
  For any $R$-module $M$, the previous lemma tells us that the functor $\Hom_R(-,M)$ takes
  the canonical map under consideration to the canonical map 
$$ \int_{\RC} (x',x)\mapsto \Hom_{\RC}(\NC(x),\Hom_{R}(\MC(x'),M)) \To{} \Hom_{R}(\NC(x),\Hom_{R}(\MC(x),M)).$$
But as noted above, the latter map induces an
isomorphism
$$ \Hom_{\RC}(\NC,\Hom_{R}(\MC,M)) \simto \Hom_{\RC(x)}(\NC(x),\Hom_{R}(\MC(x),M)),$$
which means that the map under consideration induces an isomorphism
 $$\Hom_{R}(\MC\otimes_{\RC}\NC,M) \simto\Hom_{R}(\MC(x)\otimes_{\RC(x)}\NC(x),M).$$
\end{proof}

\subsection{Morita equivalences}

\alin{``Bimodules'' as functors between categories of ``modules''} 
If $\CC$ is an $R$-linear category with finite limits and colimits, one
defines the end and coend of an $R$-bilinear functor $\BC:\,\RC^{\rm opp}\times\RC\To{}\CC$ by
the same formulas as in \ref{ends}. These are objects of $\CC$. An example of such a category
$\CC$ is $\RC'-\rm Mod$. Indeed, finite limits and colimits exist in this category and are
computed terms by terms, \emph{e.g.} $(\lim_{I} \MC'_{i})(x')=\lim_{I} \MC'_{i}(x')$.

Now, let $\BC$ be a
``$(\RC,\RC')$-bimodule'' as in \ref{bimodules} and let $\MC$ be an ``$\RC$-module''. Then $\HC
om_{R}(\BC,\MC)$ is an $R$-trilinear functor $\RC^{\rm opp}\times{\RC'}\times
\RC\To{}R-\rm Mod$, and can be seen as an $R$-bilinear functor 
$\HC:\,\RC^{\rm opp}\times\RC\To{}\RC'-\rm Mod$.
The end $\int_{\RC}\HC$ of $\HC$ is therefore a left ``$\RC'$-module'' that we will
denote by $\Hom_{\RC}(\BC,\MC)$.
 In this way we obtain a functor
$$\application{}{\RC-\rm Mod}{\RC'-\rm Mod}{\MC}{\Hom_{\RC}(\BC,\MC)}$$
Concretely, for any $x'\in\RC$, we have
$$\Hom_{\RC}(\BC,\MC)(x')=\Hom_{\RC}(\BC(x',-),\MC)=\int_{\RC}\Hom_{R}(\BC(x',x),\MC(x)).$$

On the other hand, let $\MC'$ be a left ``$\RC'$-module''. We get an $R$-trilinear
functor $\BC\otimes_{R}\MC' :\, \RC\times{\RC'}^{\rm opp}\times\RC'\To{} R-\rm Mod$ whence
an $R$-bilinear functor ${\RC'}^{\rm opp}\times\RC'\To{} \RC-\rm Mod$. As above we get a
functor
$$\application{}{\RC'-\rm Mod}{\RC-\rm Mod}{\MC'}{\BC\otimes_{\RC'}\MC'}$$
with $(\BC\otimes_{\RC'}\MC')(x) = \BC(-,x)\otimes_{\RC'}\MC'=\int^{\RC'}\BC(x',x)\otimes_{R}\MC'(x')$.

\begin{prop}
  The functor $\MC'\mapsto\BC\otimes_{\RC'}\MC'$ is left adjoint to the functor
  $\MC\mapsto \Hom_{\RC}(\BC,\MC)$.
\end{prop}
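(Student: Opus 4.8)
The plan is to establish the adjunction by exhibiting the unit and counit natural transformations and checking the triangle identities, using the end/coend formalism set up in \ref{ends} together with the adjunction over $R$ that was already proved in Lemma \ref{tens_induced}'s predecessor. First I would unwind the definitions: for a left ``$\RC'$-module'' $\MC'$ and a left ``$\RC$-module'' $\MC$, we have $(\BC\otimes_{\RC'}\MC')(x) = \int^{\RC'}\BC(x',x)\otimes_{R}\MC'(x')$ and $\Hom_{\RC}(\BC,\MC)(x') = \int_{\RC}\Hom_{R}(\BC(x',x),\MC(x))$. The goal is a natural isomorphism
$$ \Hom_{\RC}(\BC\otimes_{\RC'}\MC',\MC) \simeq \Hom_{\RC'}(\MC',\Hom_{\RC}(\BC,\MC)). $$

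The key computation is to rewrite both sides as a common double end. Using that $\Hom$ of ``$\RC$-modules'' is itself an end (Example \ref{exam_hom}: $\Hom_{\RC}(\MC,\NC)=\int_{\RC}\HC om_{R}(\MC,\NC)$), and that $\Hom_{R}(-,M)$ turns coends into ends (this was exactly the mechanism used in the proof of the lemma just above Corollary \ref{tens_induced}), I would compute
$$ \Hom_{\RC}(\BC\otimes_{\RC'}\MC',\MC) = \int_{\RC}\Hom_{R}\!\left(\textstyle\int^{\RC'}\BC(x',x)\otimes_{R}\MC'(x'),\,\MC(x)\right) = \int_{\RC}\int_{\RC'}\Hom_{R}(\BC(x',x)\otimes_{R}\MC'(x'),\MC(x)). $$
By the classical tensor-hom adjunction over $R$ the inner integrand is $\Hom_{R}(\MC'(x'),\Hom_{R}(\BC(x',x),\MC(x)))$, naturally in both variables. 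Then by the Fubini theorem for ends (ends commute with ends; equivalently both sides are the equalizer describing dinatural transformations in the two variables jointly) I would swap the two integrals, obtaining $\int_{\RC'}\int_{\RC}\Hom_{R}(\MC'(x'),\Hom_{R}(\BC(x',x),\MC(x)))$, and since $\Hom_{R}(\MC'(x'),-)$ is continuous (preserves limits, hence ends) this equals $\int_{\RC'}\Hom_{R}(\MC'(x'),\int_{\RC}\Hom_{R}(\BC(x',x),\MC(x))) = \int_{\RC'}\Hom_{R}(\MC'(x'),\Hom_{\RC}(\BC,\MC)(x')) = \Hom_{\RC'}(\MC',\Hom_{\RC}(\BC,\MC))$. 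All the isomorphisms above are manifestly natural in $\MC$ and $\MC'$, so composing them gives the desired natural isomorphism of Hom-sets, which is precisely the statement that $\BC\otimes_{\RC'}-$ is left adjoint to $\Hom_{\RC}(\BC,-)$.

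The main obstacle I anticipate is the Fubini step: justifying that the two iterated ends agree requires either invoking the general Fubini theorem for (co)ends or, to keep the paper self-contained, writing out the explicit equalizer/coequalizer diagrams and checking by hand that both iterated expressions compute the object of ``bi-dinatural'' families $\{f_{x',x}:\MC'(x')\to\Hom_{R}(\BC(x',x),\MC(x))\}$ compatible with the actions of both $\RC$ and $\RC'$. In this concrete setting all the relevant limits and colimits exist and are computed termwise (as noted at the start of the subsection for categories like $\RC'-\rm Mod$), so the verification is routine but slightly tedious bookkeeping; I would likely just cite the Fubini theorem for ends and the continuity of $\Hom$, since the paper is already working at the level of the end/coend calculus. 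An alternative route, perhaps cleaner for the reader, is to exhibit the unit $\MC'\to\Hom_{\RC}(\BC,\BC\otimes_{\RC'}\MC')$ and counit $\BC\otimes_{\RC'}\Hom_{\RC}(\BC,\MC)\to\MC$ directly from the evident evaluation and coevaluation maps and verify the two triangle identities; this avoids Fubini but trades it for a diagram chase. Either way the argument is formal, and I would present the end-calculus version as the short proof.
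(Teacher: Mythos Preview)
Your argument is correct: the Hom-set bijection via the end/coend calculus, using that $\Hom_{R}(-,M)$ converts coends to ends, the tensor--hom adjunction over $R$, Fubini for ends, and continuity of $\Hom_{R}(\MC'(x'),-)$, is a valid and standard proof of this adjunction. The paper, however, takes the alternative route you mention at the end: it writes down the counit $\BC\otimes_{\RC'}\Hom_{\RC}(\BC,\MC)\to\MC$ and the unit $\MC'\to\Hom_{\RC}(\BC,\BC\otimes_{\RC'}\MC')$ explicitly, by unravelling the kernel/cokernel descriptions of end and coend from \ref{ends} and using the canonical projection/inclusion maps, and then leaves the triangle identities to the reader as a consequence of the classical adjunction between $\BC(x',x)\otimes_{R}-$ and $\Hom_{R}(\BC(x',x),-)$. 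Your approach is cleaner and more conceptual, but it invokes Fubini for ends, which the paper has not stated or proved; the paper's approach is more hands-on and entirely self-contained with respect to the definitions already in the appendix, at the cost of some tedious bookkeeping. There is also a practical reason the paper prefers the explicit unit and counit: these maps (\ref{adj1}) and (\ref{adj2}) are precisely what is needed immediately afterwards in Definition \ref{def_Morita} and in the proof of Proposition \ref{equiv_induced}, where one must evaluate them at particular objects and compare them to ordinary evaluation maps.
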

\begin{proof}
  We first define a counit. Let $\MC$ be a left $\RC$-module. For any $x\in\RC$ and
  $x'\in\RC'$, consider the canonical map
$$\Hom_{\RC}(\BC,\MC)(x')=\Hom_{\RC}(\BC(x',-),\MC) \To{} \Hom_{R}(\BC(x',x),\MC(x)) $$
Taking the sum of the adjoint  maps, we get a map
$$ \bigoplus_{x'\in\RC'}\BC(x',x)\otimes_{R}\Hom_{\RC}(\BC,\MC)(x') \To{}\MC(x).$$
It is tediously but easily checked that this map  factors through the coend, giving a map
$$ (\BC\otimes_{\RC'}\Hom_{\RC}(\BC,\MC))(x) = \BC(-,x)\otimes_{\RC'}\Hom_{\RC}(\BC,\MC)
\To{} \MC(x).$$
When varying $x$, this collection of maps defines a morphism of $\RC$-modules
\ini
\begin{equation}
\BC\otimes_{\RC'}\Hom_{\RC}(\BC,\MC)\To{}\MC\label{adj1}
\end{equation}
which is clearly natural in $\MC$.

Let us now define the unit. Let $\MC'$ be a left $\RC'$-module. For any $x\in\RC$ and
  $x'\in\RC'$, consider the canonical map
$$  \BC(x',x)\otimes_{R} \MC'(x') \To{} \BC(-,x)\otimes_{\RC'}\MC'=(\BC\otimes_{\RC'}\MC')(x) $$
Taking the product of the adjoint maps, we get a map
$$ \MC'(x') \To{} \prod_{x\in\RC}\Hom_{R}(\BC(x',x),(\BC\otimes_{\RC'}\MC')(x)).$$
Another tedious and easy check insures that this map factors through the end, giving  a map
$$ \MC'(x') \To{} \Hom_{\RC}(\BC(x',-),\BC\otimes_{\RC'}\MC')=\Hom_{\RC}(\BC,\BC\otimes_{\RC'}\MC')(x').$$
Now, varying $x$, this collection of maps  defines a morphism of $\RC'$-modules
\ini\begin{equation}
 \MC' \To{} \Hom_{\RC}(\BC,\BC\otimes_{\RC'}\MC')\label{adj2}
\end{equation}
which is clearly natural in $\MC'$.

It remains to check that these maps satisfy the axioms of a counit-unit system. We leave
the reader check in detail that this indeed follows from the classical adjunction between
functors $\BC(x',x)\otimes_{R}-$ and $\Hom_{R}(\BC(x',x),-)$ for each
pair of objects $x',x$.
\end{proof}

\begin{DEf} \label{def_Morita}
  We say that $\BC$ induces a Morita equivalence between $\RC$ and $\RC'$ if the 
  functors of the last proposition are equivalences of categories or, equivalently, if all
  maps (\ref{adj1}) and (\ref{adj2}) are isomorphisms.
\end{DEf}

The following result is relevant for our applications in the main body of this paper.

\begin{prop} \label{equiv_induced}
 Assume  there is a map $\iota:\, \RC'\To{}\RC$ such that the following holds.
  \begin{enumerate}
  \item for all $x'\in\RC'$, the
  left $\RC$-module $x\mapsto \BC(x',x)$ is induced from $\iota x'$
and the right $\RC'$-module $y'\mapsto \BC(y',\iota x')$ is induced from $x'$ in the sense
of Definition \ref{def_induced}.

  \item for all $x'\in \RC'$, the bimodule $\BC(x',\iota x')$ induces a Morita equivalence
    between the $R$-algebras $\RC(\iota x')$ and $\RC'(x')$.
\item  any non zero object $x\in\RC$ is isomorphic to some $\iota x'$ in $\RC$.
  \end{enumerate}
Then $\BC$ induces a Morita equivalence between $\RC$ and $\RC'$. Moreover, the 
 two associated equivalences of categories preserve the propery of
 being cartesian.
\end{prop}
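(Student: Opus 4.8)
The two functors of the Proposition preceding Definition~\ref{def_Morita} form an adjunction with unit (\ref{adj2}) and counit (\ref{adj1}), and the plan is simply to show that both are isomorphisms, by evaluating objectwise and reducing to the ordinary one-object Morita situation of hypothesis ii). Fix $x'\in\RC'$ and write $P_{x'}:=\BC(x',\iota x')$, the $R$-bimodule which by ii) implements a Morita equivalence between the rings $\RC(\iota x')$ and $\RC'(x')$. Hypothesis i) says that the left $\RC$-module $x\mapsto\BC(x',x)$ is induced from $\iota x'$, so Example~\ref{exam_hom} yields, for every left $\RC$-module $\MC$, a canonical isomorphism $\Hom_{\RC}(\BC,\MC)(x')=\Hom_{\RC}(\BC(x',-),\MC)\simto\Hom_{\RC(\iota x')}(P_{x'},\MC(\iota x'))$; dually the right $\RC'$-module $y'\mapsto\BC(y',\iota x')$ is induced from $x'$, so Corollary~\ref{tens_induced} yields, for every left $\RC'$-module $\MC'$, a canonical isomorphism $(\BC\otimes_{\RC'}\MC')(\iota x')=\BC(-,\iota x')\otimes_{\RC'}\MC'\simto P_{x'}\otimes_{\RC'(x')}\MC'(x')$.

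\textbf{Conclusion of the Morita equivalence.} Unravelling the coend/end formulas that define the unit and counit, one checks that under these identifications the component of (\ref{adj2}) at $x'$ is exactly the ordinary unit $\MC'(x')\To{}\Hom_{\RC(\iota x')}(P_{x'},P_{x'}\otimes_{\RC'(x')}\MC'(x'))$ of the adjunction attached to the bimodule $P_{x'}$, and the component of (\ref{adj1}) at $\iota x'$ is exactly the ordinary counit $P_{x'}\otimes_{\RC'(x')}\Hom_{\RC(\iota x')}(P_{x'},\MC(\iota x'))\To{}\MC(\iota x')$. By hypothesis ii) these are isomorphisms, so (\ref{adj2}) is an isomorphism at every object of $\RC'$, hence an isomorphism. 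For (\ref{adj1}): at a zero object $x$ of $\RC$ both sides are modules over $\RC(x)=0$, hence vanish; and every non-zero object of $\RC$ is isomorphic to some $\iota x'$ by hypothesis iii), so the computation just made applies. Thus (\ref{adj1}) is an isomorphism as well, and $\BC$ induces a Morita equivalence.

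\textbf{Preservation of cartesian modules.} Let $\MC$ be a cartesian left $\RC$-module and let $y',x'\in\RC'$ with $\RC'(x',y')\neq0$; we must show that the coinduction map $a^{*}_{y',x'}$ of Definition~\ref{cart_modules} for $\Hom_{\RC}(\BC,\MC)$ is an isomorphism. Using that $\BC(x',-)$ is induced from $\iota x'$ and $\BC(-,\iota y')$ from $y'$, the tensor identifications give $\BC(x',\iota y')\simeq\RC'(x',y')\otimes_{\RC'(y')}P_{y'}\simeq P_{x'}\otimes_{\RC(\iota x')}\RC(\iota x',\iota y')$; since the Morita functors are faithful, $\RC'(x',y')\neq0$ forces $\BC(x',\iota y')\neq0$ and hence $\RC(\iota x',\iota y')\neq0$, so the cartesian isomorphism for $\MC$ holds at $(\iota x',\iota y')$. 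Applying $\Hom_{\RC(\iota y')}(P_{y'},-)$ to it and rewriting both sides with the objectwise identifications of the first paragraph, together with Hom-tensor adjunction and the isomorphism $\RC(\iota y',\iota x')\otimes_{\RC(\iota y')}P_{y'}\simeq\RC'(y',x')\otimes_{\RC'(x')}P_{x'}$, one recognizes the resulting isomorphism as $a^{*}_{y',x'}$ for $\Hom_{\RC}(\BC,\MC)$. Running the same argument with the roles of $\RC$ and $\RC'$ interchanged --- and invoking hypothesis iii) to restrict attention to objects of the form $\iota x'$, which is legitimate since $\RC(x,y)\neq0$ forces $x,y$ to be non-zero --- shows that $\BC\otimes_{\RC'}\MC'$ is cartesian whenever $\MC'$ is.

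\textbf{The main obstacle.} The only non-formal part is the matching carried out in the second paragraph: one must verify that the globally defined unit and counit, produced in the Proposition before Definition~\ref{def_Morita} through a somewhat indirect factorization through ends and coends, really do restrict to the naive adjunction maps of $P_{x'}$ under the canonical induced/coinduced isomorphisms. This, together with the parallel bookkeeping in the cartesian step --- where one must keep the left/right conventions of \ref{modules} and \ref{bimodules} straight while transporting the cartesian isomorphism across the objectwise Morita equivalences --- is lengthy but routine; no new idea beyond hypotheses i)--iii) is needed.
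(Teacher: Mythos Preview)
Your proof is correct and follows essentially the same approach as the paper: you reduce both the unit and counit to the one-object Morita adjunction for $P_{x'}=\BC(x',\iota x')$ via Example~\ref{exam_hom} and Corollary~\ref{tens_induced}, invoke hypothesis iii) for the counit, and for the cartesian statement you transport $a^{*}_{y',x'}$ for $\Hom_{\RC}(\BC,\MC)$ to $a^{*}_{\iota y',\iota x'}$ for $\MC$ through the same chain of Hom--tensor adjunctions and the identification $\BC(y',\iota x')\simeq P_{y'}\otimes_{\RC(\iota y')}\RC(\iota y',\iota x')\simeq \RC'(y',x')\otimes_{\RC'(x')}P_{x'}$, exactly as the paper does. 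The only cosmetic difference is that the paper packages the cartesian step as a biconditional (so that both equivalences preserve cartesian follows at once), whereas you argue each direction separately; your ``roles interchanged'' for $\BC\otimes_{\RC'}\MC'$ is most cleanly read as applying the biconditional chain to $\MC=\BC\otimes_{\RC'}\MC'$ and $\MC'\simeq\Hom_{\RC}(\BC,\MC)$, which is precisely the paper's argument.
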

\begin{proof}
We first show that the map  (\ref{adj1}) is an isomorphism for any $\RC'$-module
$\MC'$. To this end we shall evaluate it at any $\iota x'$, for $x'\in\RC'$.
Our assumption i) provides us with  an isomorphism
$$ \Hom_{\RC}(\BC,\MC)(x')\simto \Hom_{\RC(\iota x')}(\BC(x',\iota x'),\MC(\iota x')),$$
as in  example \ref{exam_hom}, as well as an isomorphism
$$ \BC(x',\iota x')\otimes_{\RC'(x')}\Hom_{\RC}(\BC,\MC)(x')
\simto (\BC\otimes_{\RC'}\Hom_{\RC}(\BC,\MC))(\iota x'),$$ 
thanks to Lemma \ref{tens_induced}.
Combining these two
isomorphisms gives the following one
$$(\BC\otimes_{\RC'}\Hom_{\RC}(\BC,\MC))(\iota x') \simeq 
\BC(x',\iota x')\otimes_{\RC'(x')}\Hom_{\RC(\iota x')}(\BC(x',\iota x'),\MC(\iota x')).$$
Tracking backward  definitions  shows that this isomorphism
 identifies  the counit (\ref{adj1}) at $\iota x'$ with the evaluation map of the RHS.
But our hypothesis ii) implies that this evaluation map is an isomorphism. Hence we have
proved that the counit (\ref{adj1}) is an isomorphism at any object of the form  $\iota x'$. By
hypothesis iii), it is an isomorphism at any $x\in\RC$.

The proof that the map (\ref{adj2}) is an isomorphism is very similar, except that we
don't need hypothesis ii) in this case.

It remains to prove the last assertion about cartesian modules. Let $\MC$ be an
$\RC$-module. It suffices to prove that $\MC$ is  cartesian if and only if $\MC'=\Hom_{\RC}(\BC,\MC)$ is cartesian.
To this aim, we have to study the map $a_{y',x'}^{*}$  of
\ref{def_induced} for the $\RC'$-module $\MC'=\Hom_{\RC}(\BC,\MC)$ and any $x',y'\in\RC'$. 
 We leave the reader convince himself that this map is the following  composition of  maps.
\begin{eqnarray*}
a_{y',x'}^{*}:\,\Hom_{\RC}(\BC,\MC)(y') &\simto &
  \Hom_{\RC(\iota y')}(\BC(y',\iota y'),\MC(\iota y')) \\
&\To{a^{*}_{\iota y',\iota x'}} & 
  \Hom_{\RC(\iota y')}(\BC(y',\iota y'),\Hom_{\RC(\iota x')}(\RC(\iota y',\iota x'),\MC(\iota x')))\\
&\simto & 
  \Hom_{\RC(\iota x')}(\BC(y',\iota y')\otimes_{\RC(\iota y')}\RC(\iota y',\iota x'),\MC(\iota x'))\\
&\simto & 
  \Hom_{\RC(\iota x')}(\BC(y',\iota x'),\MC(\iota x'))\\
&\simto & 
  \Hom_{\RC(\iota x')}(\RC'(y',x')\otimes_{\RC'(x')} \BC(x',\iota x'),\MC(\iota x'))\\
&\simto & 
  \Hom_{\RC'(x')}(\RC'(y',x'), \Hom_{\RC(\iota x')}(\BC(x',\iota x'),\MC(\iota x')))\\
& \simto &   \Hom_{\RC'(x')}(\RC'(y',x'), \Hom_{\RC}(\BC,\MC)(x'))\\
\end{eqnarray*}
In the first and last lines, we have applied \ref{exam_hom}, using
 that $y\mapsto \BC(y',y)$ is induced from $\iota y'$.
 The third and sixth lines are classical adjunction properties. 
The fourth line again uses that $y\mapsto \BC(y',y)$ is induced from $\iota y'$.
The fifth line is given by the right $\RC'$-module structure on $\BC(-,\iota x')$.
It is an isomorphism, by our  hypothesis that $y'\mapsto
\BC(y',\iota x')$ is induced from $x'$. It follows from this sequence of maps that
\begin{center}
  \emph{$a^{*}_{\iota y',\iota x'}$ is an isomorphism if and only if $a_{y',x'}^{*}$ is  an
    isomorphism.}
\end{center}
Going back to the definition of cartesian
modules, we see that, thanks to hypothesis iii), 
what remains to prove is 
$$\RC'(y',x')\neq 0\Leftrightarrow\RC(\iota y',\iota x')\neq 0.$$ 
But by hypothesis i) we have the folloving isomorphisms
$$ \RC'(y',x')\otimes_{\RC'(x')}\BC(x',\iota x') \simto\BC(y',\iota x')
{\buildrel\hbox{\tiny{$\sim$}}\over\longleftarrow}
\BC(y',\iota y')\otimes_{\RC(\iota y')}\RC(\iota y',\iota x')$$
and by hypothesis ii), the functors $-\otimes_{\RC'(x')}\BC(x',\iota x')$ and
$\BC(y',\iota y')\otimes_{\RC(\iota y')}-$ are faithful.
\end{proof}


\end{document}